\newtheorem{theo}{Theorem} 
\newtheorem{exttheo}{Theorem} 
\newtheorem{lemma}{Lemma}[section]
\newtheorem{prop}[lemma]{Proposition}
\newtheorem{corol}[lemma]{Corollary}
\newtheorem{claim}[lemma]{Claim}
\theoremstyle{remark}
\newtheorem{remark}[lemma]{Remark}
\newtheorem{notation}[lemma]{Notation}
\theoremstyle{definition}
\newtheorem{defi}[lemma]{Definition}
\newcommand{\lin}{\textsc{l}}
\newcommand{\NN}{\mathbb{N}}
\newcommand{\RR}{\mathbb{R}}
\newcommand{\eps}{\varepsilon}
\newcommand{\AAA}{\mathcal{A}}
\newcommand{\OOO}{\mathcal{O}}
\newcommand{\PPP}{\mathcal{P}}
\newcommand{\RRR}{\mathcal{R}}
\newcommand{\SSS}{\mathcal{S}}
\newcommand{\lf}{\left}
\newcommand{\rg}{\right}
\newcommand{\out}{\rm out}
\newcommand{\tx}{\widetilde{x}}
\newcommand{\Stt}{\texttt{S}}
\newcommand{\tu}{\tilde{u}}
\newcommand{\tf}{\widetilde{f}}
\newcommand{\oveps}{\overline{\eps}}
\newcommand{\ovx}{\overline{x}}
\newcommand{\loc}{\rm loc}
\newcommand{\hdot}{\dot{H}^1}
\newcommand{\Hdot}{\dot{H}^1(\RR^N)}
\newcommand{\EMPH}[1]{\medskip\noindent\textit{#1}.}
\DeclareMathOperator{\supp}{supp}
\DeclareMathOperator{\vect}{span}
\DeclareMathOperator{\Div}{div}
\numberwithin{equation}{section} 
\title[Blow-up for energy critical wave]{Universality of the blow-up profile for small type II blow-up solutions of the energy-critical wave equation: the non-radial case}
\author[T.~Duyckaerts]{Thomas Duyckaerts$^1$}
\email{tduyckae@u-cergy.fr}
\author[C.~Kenig]{Carlos Kenig$^2$}
\email{cek@math.uchicago.edu}
\author[F.~Merle]{Frank Merle$^3$}
\email{Frank.Merle@u-cergy.fr}
\thanks{$^1$Cergy-Pontoise (UMR 8088). Partially supported by ANR Grants ONDNONLIN and ControlFlux}
\thanks{$^2$University of Chicago. Partially supported by NSF Grant DMS-0456583}
\thanks{$^3$Cergy-Pontoise (UMR 8088), IHES, CNRS. Partially supported by ANR Grant ONDNONLIN}
\date{\today}
\begin{document}
\begin{abstract}
 Following our previous paper in the radial case, we consider type II blow-up solutions to the energy-critical focusing wave equation. Let $W$ be the unique radial positive stationary solution of the equation. Up to the symmetries of the equation, under an appropriate smallness assumption, any type II blow-up solution is asymptotically a regular solution plus a rescaled Lorentz transform of $W$ concentrating at the origin.
\end{abstract}
\maketitle

\section{Introduction}
Consider the focusing energy-critical wave equation on an interval $I$ ($0\in I$)
\begin{equation}
\label{CP}
\left\{ 
\begin{gathered}
\partial_t^2 u -\Delta u-|u|^{\frac{4}{N-2}}u=0,\quad (t,x)\in I\times \RR^N\\
u_{\restriction t=0}=u_0\in \hdot,\quad \partial_t u_{\restriction t=0}=u_1\in L^2,
\end{gathered}\right.
\end{equation}
where $u$ is real-valued, $N\in \{3,4,5\}$, $L^2=L^2(\RR^N)$ and $\hdot=\Hdot$. 

The Cauchy problem \eqref{CP} is locally well-posed in $\hdot\times L^2$. This space is invariant under the scaling of the equation: if $u$ is a solution to \eqref{CP}, $\lambda>0$ and
$$ u_{\lambda}=\frac{1}{\lambda^{\frac{N-2}{2}}}u\left(\frac{t}{\lambda},\frac{x}{\lambda}\right),$$
then $u_{\lambda}$ is also a solution and $\|u_{\lambda}(0)\|_{\hdot}=\|u_0\|_{\hdot}$, $\|\partial_t u_{\lambda}(0)\|_{L^2}=\|u_1\|_{L^2}$. 

The energy
$$ E(u(t),\partial_t u(t))=\frac{1}{2}\int (\partial_t u(t,x))^2\,dx+\frac{1}{2}\int |\nabla u(t,x)|^2\,dx-\frac{N-2}{2N}\int |u(t,x)|^{\frac{2N}{N-2}}\,dx$$
and the momentum
$$ \int \partial_t u(t,x)\nabla u(t,x)\,dx$$
are independent of $t$ and also invariant under the scaling.

Let $T_+\in(0,+\infty]$ be the maximal positive time of definition for the solution $u$. 
The local well-posedness theory does not rule out type II blow-up, i.e. solutions such that $T_+<\infty$ and
\begin{equation}
\label{BlowUpII}
\sup_{t\in [0,T_+)} \|\partial_t u(t)\|^2_{L^2}+\|\nabla u(t)\|_{L^2}^2<\infty.
\end{equation}
Examples of radial type II blow-up solutions of \eqref{CP} were constructed in space dimension $N=3$ by Krieger, Schlag and Tataru \cite{KrScTa09}. Let
\begin{equation}
\label{defW}
W=\frac{1}{\left(1+\frac{|x|^2}{N(N-2)}\right)^{\frac{N-2}{2}}},
\end{equation}
which is a stationary solution of \eqref{CP}.  From \cite{KeMe08}, if $u$ is radial or $N=3,4$ and
$$ \sup_{t\in [0,T_+)}\|\nabla u(t)\|_{L^2}^2+\|\partial_t u(t)\|_{L^2}^2< \|\nabla W\|_{L^2}^2,$$
then $T_+=+\infty$ and the solution scatters forward in time, and in particular does not blow up (see Corollary \ref{C:global} below). 

The threshold $\|\nabla W\|^2_{L^2}$ is sharp in space dimension $3$. Indeed from \cite{KrScTa09}, for all $\eta_0>0$ there exists a radial type II blow-up solution such that
\begin{equation}
\label{bound_nabla0}
 \sup_{t\in [0,T_+)} \|\nabla u(t)\|_{L^2}^2+\|\partial_t u(t)\|_{L^2}^2\leq \|\nabla W\|_{L^2}^2+\eta_0.
\end{equation} 
In our previous article \cite{DuKeMe09P}, we considered type II blow-up solutions such that \eqref{bound_nabla0} holds. Our main result was the following.

If $N=3$, there exists $\eta_0>0$ such that for any \emph{radial} solution $u$ of \eqref{CP} such that $T_+(u)=T_+<\infty$ that satisfies \eqref{bound_nabla0},
there exist $(v_0,v_1)\in \hdot\times L^2$, a sign $\iota_0\in \{\pm 1\}$, and a smooth positive function $\lambda(t)$ on $(0,T_+)$ such that $\lim_{t\to T_+}\frac{\lambda(t)}{T_+-t}=0$ and, as $t\overset{\scriptscriptstyle{<}}{\to} T_+$,
\begin{equation*}
\lf(u(t),\partial_t u(t)\rg)-(v_0,v_1)-\lf(\frac{\iota_0}{\lambda(t)^{1/2}}W\left(\frac{x}{\lambda(t)}\right),0\rg)\xrightarrow[t\to T_+]{}0\text{ in }\hdot\times L^2.
\end{equation*}
In this work we extend the above result to the non-radial case. To state our main result we need to recall the following family of solutions, obtained as Lorentz transformations of $W$:
\begin{equation}
\label{def_Wl}
 W_{\ell}(t,x)=W\left(\frac{x_1-t \ell}{\sqrt{1-\ell^2}},\overline{x}\right)=\left(1+\frac{(x_1-t\ell)^2}{N(N-2)(1-\ell^2)}+\frac{|\ovx|^2}{N(N-2)}\right)^{-\frac{N-2}{2}},
\end{equation} 
where $\overline{x}=(x_2,\ldots,x_N)$ and $-1<\ell<1$. Denote by $\vec{e}_1$ the unit vector $(1,0,\ldots,0)\in \RR^N$.
Then:
\begin{theo}
\label{T:classification}
Assume that $N=3$ or $N=5$ and let $\eta_0>0$ be a small parameter. Let $u$ be a solution of \eqref{CP} such that $T_+=T_+(u)<\infty$ and 
\begin{equation}
\label{bound_nabla} 
\limsup_{t\in [0,T_+)} \|\nabla u(t)\|_{L^2}^2+\frac {N-2}2\|\partial_t u(t)\|_{L^2}^2\leq \|\nabla W\|_{L^2}^2+\eta_0.
\end{equation} 
Then, after a rotation and a translation of the space $\RR^N$, there exist $(v_0,v_1)\in \hdot\times L^2$, a sign $\iota_0\in\{\pm 1\}$, a small real parameter $\ell$  and smooth functions $x(t)\in \RR^N$, $\lambda(t)>0$ defined for $t\in(0,T_+)$, such that
\begin{equation*}
\lf(u(t),\partial_t u(t)\rg)-(v_0,v_1)-\lf(\frac{\iota_0}{\lambda(t)^{\frac{N}{2}-1}}W_{\ell}\left(0,\frac{\cdot-x(t)}{\lambda(t)}\right),\frac{\iota_0}{\lambda(t)^{\frac{N}{2}}}(\partial_t W_{\ell})\left(0,\frac{\cdot-x(t)}{\lambda(t)}\rg)\rg)
\xrightarrow[t\to T_+]{} 0
\end{equation*}
in $\hdot\times L^2$ and
\begin{equation}
 \label{classif_parameters}
\lim_{t\to T_+} \frac{\lambda(t)}{T_+-t}=0,\quad \lim_{t\to T_+}\frac{x(t)}{T_+-t}=\ell \vec{e}_1,\quad|\ell|\leq C\eta_0^{1/4}.
\end{equation}
\end{theo}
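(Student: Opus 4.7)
The plan is to adapt the strategy of our radial paper \cite{DuKeMe09P} to accommodate the two extra symmetries present in the non-radial setting: spatial translations in $\RR^N$ and Lorentz boosts. The argument divides into four main steps. First, I would localize the singularity: using finite speed of propagation together with \eqref{bound_nabla} and the scattering criterion from \cite{KeMe08}, the blow-up is concentrated at a single spatial point, which after translation we place at the origin, and $(u(t),\partial_t u(t))$ converges in $\hdot\times L^2$ outside any neighborhood of the origin to a regular pair $(v_0,v_1)$ that extends to a solution defined past $T_+$.

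Second, for a sequence $t_n\to T_+$ I would apply a Bahouri--G\'erard linear profile decomposition to $(u(t_n),\partial_t u(t_n))-(v_0,v_1)$, producing profiles with parameters $(\lambda_n^j,x_n^j,t_n^j)$ plus an $\hdot\times L^2$-small remainder. The quantitative Pythagorean identities for both the energy and the quantity $\|\nabla\cdot\|_{L^2}^2+\tfrac{N-2}{2}\|\partial_t\cdot\|_{L^2}^2$ appearing in \eqref{bound_nabla}, combined with the scattering criterion from \cite{KeMe08} below the threshold $\|\nabla W\|_{L^2}^2$, force all but one of the nonlinear profiles to scatter and the remaining singular profile to carry essentially the full $\|\nabla W\|_{L^2}^2$ of the $\hdot$-norm. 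A rigidity argument in the spirit of Kenig--Merle then identifies this profile, up to scaling and translation, as a traveling wave of \eqref{CP}; by classification of the finite energy traveling waves close to $W$, it is $\pm W_\ell$ for some small $\ell\in(-1,1)$.

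Third, the direction of the boost is pinned by the conservation of momentum; after a rotation of $\RR^N$ we may assume it is $\vec e_1$. A direct computation using the explicit form \eqref{def_Wl}, with $s=1-\ell^2$, gives
\[
\|\nabla W_\ell(0,\cdot)\|_{L^2}^2+\frac{N-2}{2}\|\partial_t W_\ell(0,\cdot)\|_{L^2}^2 = \frac{\|\nabla W\|_{L^2}^2}{N}\,s^{-3/2}\lf[\tfrac{N-2}{2}+\tfrac{4-N}{2}s+(N-1)s^2\rg],
\]
whose expansion around $s=1$ (i.e. $\ell=0$) has vanishing first derivative precisely because of the coefficient $\tfrac{N-2}{2}$ in \eqref{bound_nabla}, so the right-hand side equals $\|\nabla W\|_{L^2}^2+c_N\ell^4+O(\ell^6)$ with $c_N>0$ for $N\in\{3,5\}$. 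Combined with \eqref{bound_nabla} this yields $|\ell|\leq C\eta_0^{1/4}$, giving the last estimate in \eqref{classif_parameters}.

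Fourth, I would upgrade the sequential-in-$t_n$ convergence to the continuous-in-$t$ convergence and derive the remaining limits in \eqref{classif_parameters}. For this I would set up a modulation decomposition around $W_\ell$, writing $u$ as $(v_0,v_1)$ plus a rescaled and translated $\iota_0 W_\ell$ plus a small remainder $\eps(t)$ orthogonal to the directions generated by scaling, by spatial translation, and by the boost parameter. Coercivity of the linearized energy modulo these directions, together with the momentum constraint and a virial-type identity, should force $x(t)/(T_+-t)\to \ell\vec e_1$ and $\lambda(t)/(T_+-t)\to 0$. The main technical difficulty compared to the radial case is precisely this last step: in \cite{DuKeMe09P} only the scale had to be controlled, whereas here one must simultaneously track position and scale and rule out oscillations of $(x(t)-\ell(T_+-t)\vec e_1)/\lambda(t)$ that would generate phantom additional bubbles or spurious boosts. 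I expect this to be handled by a careful analysis of the modulation equations, using the bound $|\ell|\lesssim\eta_0^{1/4}$ to treat the Lorentz boost as a small perturbation of the purely radial profile $W$ and then adapting the arguments of the radial paper.
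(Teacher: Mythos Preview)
Your outline captures Steps~1--3 correctly and in the same spirit as the paper (localization of the blow-up point, profile decomposition with one non-scattering profile, identification of that profile as $\pm W_\ell$ via the classification of compact solutions, and the $\ell^4$ expansion yielding $|\ell|\lesssim\eta_0^{1/4}$). The serious gap is in Step~4.

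After Steps~1--3 you only know that along a sequence $t_n\to T_+$,
\[
\bigl(\lambda_n^{\frac N2-1}a(t_n,\lambda_n\cdot+x_n),\,\lambda_n^{\frac N2}\partial_t a(t_n,\lambda_n\cdot+x_n)\bigr)\xrightharpoonup[n\to\infty]{}\bigl(W_\ell(0),\partial_t W_\ell(0)\bigr)
\]
\emph{weakly} in $\hdot\times L^2$; the profile decomposition does not by itself rule out a nontrivial dispersive remainder $(w_{0,n},w_{1,n})$ of size up to $\sqrt{\eta_0}$. Your proposed modulation scheme around $W_\ell$ with coercivity of the linearized energy cannot be set up from weak convergence alone: coercivity arguments require that $u(t_n)$ already lie in a small $\hdot\times L^2$-neighborhood of the orbit of $W_\ell$, which is precisely what has to be proved. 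The paper closes this gap by a completely different mechanism, the \emph{channel of energy} property of the free wave equation in odd dimensions (Proposition~\ref{P:linear}): for any data, at least half of the linear energy propagates into the exterior cone $\{|x|\geq|t|\}$ for one choice of time direction. Applied to the remainder $(\eps_{0n},\eps_{1n})$, this forces a fixed positive fraction of its $\hdot\times L^2$-norm outside the light cone of the blow-up point, where $u$ agrees with the regular part $v$; since $v$ is continuous across $T_+$, this contradicts $\|(\eps_{0n},\eps_{1n})\|_{\hdot\times L^2}\not\to0$. This is the step that uses $N$ odd and is entirely absent from your plan.

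Once strong convergence along \emph{one} sequence is established, the paper does not use modulation/coercivity to pass to all times either: it uses that the energy of the singular part $a(t)$ has limit $E_{\min}$ (the minimal energy over all non-scattering profiles), so along \emph{any} sequence $t_n'\to T_+$ the unique non-scattering profile must again have energy $E_{\min}$, hence again be $W_\ell$, and the Pythagorean expansion forces all other profiles and the dispersive remainder to vanish. The modulation/virial machinery you describe is what the paper uses to prove the \emph{input} rigidity theorem (classification of compact solutions, Theorem~\ref{T:compact}), not to upgrade weak to strong convergence in the blow-up analysis.
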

\begin{remark}
 Note that using Lorentz transform and a localization argument on the solutions of \cite{KrScTa09}, it is possible, for any $\ell\in (-1,+1)$, to construct a solution of \eqref{CP} satisfying the conclusion of Theorem \ref{T:classification}.
\end{remark}
\begin{remark}
The restriction to small dimensions in Theorem \ref{T:classification}, due to regularity issues on the local Cauchy problem for \eqref{CP}, can be removed (at least for odd dimensions) using harmonic analysis methods (see \cite{BuCzLiPaZh09P}).

The restriction to odd dimensions is only coming from Proposition \ref{P:linear} on the behaviour of solutions to the linear wave equation. In dimension $4$, our proof shows a weaker result, namely that there exist (after space rotation), a small parameter $\ell$ and sequences $t_n\to T_+$, $\lambda_n\to 0^+$, $x_n\in \RR^4$ such 
\begin{equation*}
\lf(\lambda_n u(t_n,\lambda_n \cdot+x_n),\lambda_n^2 \partial_t u(t_n,\lambda_n \cdot+x_n)\rg)
\xrightharpoonup[n\to\infty]{} \pm \left(W_{\ell}(0),\partial_t W_{\ell}(0)\right),
\end{equation*}
weakly in $\hdot\times L^2$.
\end{remark}
\begin{remark}
 The constant $\frac{N-2}{2}$ in front of $\|\partial_tu\|_{L^2}^2$ in \eqref{bound_nabla} is necessary in nonradial situations (see also Corollary \ref{C:global} below). For radial data it can be replaced by any small positive constant (see Corollary \ref{C:global} and Remark \ref{R:remark_corol} below).
\end{remark}

One important ingredient of the proof of Theorem \ref{T:classification} is the classification of non-radial solutions that are compact up to modulation under an appropriate smallness assumption:
\begin{theo}
\label{T:compact}
Assume $N\in\{3,4,5\}$.
Let $u$ be a nonzero solution of \eqref{CP} with maximal interval of definition $I_{\max}$ such that there exists functions $\lambda(t)$, $x(t)$ defined for $t\in I_{\max}$ such that 
\begin{equation}
\label{def_K}
K=\Big\{\left(\lambda(t)^{\frac{N}{2}-1}u(t,\lambda(t)x+x(t)),\lambda(t)^{\frac{N}{2}}\partial_t u(t,\lambda(t)x+x(t))\right)\;:\; t\in I_{\max}\Big\} 
\end{equation} 
has compact closure in $\hdot\times L^2$. Assume furthermore
\begin{equation}
\label{bound_nabla2W}
\sup_{t\in I_{\max}}\int |\nabla u(t)|^2<\frac{4\sqrt{N-1}}{N}\int |\nabla W|^2.
\end{equation}
Then $I_{\max}=\RR$ and there exist $\ell\in(-1,+1)$, a rotation $\RRR$ of $\RR^N$, $\lambda_0>0$, $X_0\in  \RR^N$ and a sign $\iota_0\in\{\pm 1\}$ such that
\begin{equation} 
\label{conclusion_compactness}
u(t,x)=\frac{\iota_0}{\lambda_0^{\frac{N-2}{2}}}W_{\ell}\left(\frac{t}{\lambda_0},\frac{\RRR(x)-X_0}{\lambda_0}\right).
\end{equation}
\end{theo}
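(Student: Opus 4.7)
The plan is to reduce to the zero-momentum case via a Lorentz transformation, prove a non-radial rigidity theorem in that reduced setting showing the solution must be stationary, and finish with the Aubin--Talenti classification.

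\textbf{Step 1 (Lorentz reduction).} The momentum $\vec P=\int\partial_tu\,\nabla u\,dx$ is a conservation law. Combining \eqref{bound_nabla2W} with the sharp Sobolev inequality gives $E(u,\partial_tu)>0$ and $|\vec P|<E(u,\partial_tu)$. After a space rotation we may assume $\vec P=P_1\vec e_1$, and then pick $\ell\in(-1,1)$ with $\ell\,E(u,\partial_tu)=P_1$. The Lorentz boost of velocity $-\ell\vec e_1$ produces a new solution $\tilde u$ of \eqref{CP}, still satisfying \eqref{bound_nabla2W}, and with $\vec P(\tilde u)=0$. Tracking how the boost acts on the modulation parameters $(\lambda(t),x(t))$, one verifies that $\tilde u$ is globally defined and that its trajectory is again compact up to scaling and space translation.

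\textbf{Step 2 (rigidity at zero momentum).} The heart of the argument is to show that every nonzero solution $v$ of \eqref{CP} with $\vec P(v)=0$, compact up to modulation, and satisfying \eqref{bound_nabla2W}, is stationary. This extends the radial rigidity theorem of \cite{DuKeMe09P}. The idea is to differentiate truncated virial functionals recentered at the modulation, of the form
$$
J_R(t)=\int \varphi_R\bigl(\tfrac{x-x(t)}{\lambda(t)}\bigr)\,\partial_tv\,\bigl((x-x(t))\cdot\nabla v+\tfrac{N-2}{2}v\bigr)\,dx,
$$
together with a mass-type analogue. Using \eqref{CP} each derivative splits as a term of order $\int(\partial_tv)^2$, flux contributions localized in $|x-x(t)|\sim R\lambda(t)$, and modulation errors driven by $\dot x,\dot\lambda$. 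The compactness of $K$ makes the flux terms vanish along a suitable sequence $R_n\to\infty$, while the condition $\vec P(v)=0$ rules out a net drift of $x(t)$ and controls the modulation errors. Since $J_R$ is itself bounded in $t$ by compactness, one forces $\int(\partial_tv)^2\equiv 0$, so $v$ is stationary.

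\textbf{Step 3 (classification).} A nontrivial stationary solution satisfies the critical elliptic equation $-\Delta v=|v|^{4/(N-2)}v$ on $\RR^N$. If $v$ changed sign, then applying the sharp Sobolev inequality to $v_\pm$ separately, together with $\int|\nabla v_\pm|^2=\int v_\pm^{2N/(N-2)}$, gives $\int|\nabla v_\pm|^2\geq\int|\nabla W|^2$, so $\int|\nabla v|^2\geq 2\int|\nabla W|^2$, contradicting \eqref{bound_nabla2W}. Hence $v$ has constant sign and the Aubin--Talenti classification yields $v=\pm\lambda_0^{-(N-2)/2}W((\cdot-x_0)/\lambda_0)$. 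Undoing the rotation and the Lorentz boost produces the form \eqref{conclusion_compactness}.

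The main obstacle is Step 2. In the radial case of \cite{DuKeMe09P}, $x(t)\equiv 0$ and the virial identity closes by spherical symmetry alone. In the non-radial case, $x(t)$ is a priori unrestricted, and one must both recenter the cut-off at $x(t)$ and use $\vec P(v)=0$, together with a sharper non-radial analysis of the compactness of $K$, to show that the resulting modulation error terms are negligible as $R\to\infty$. This coupling between momentum and modulation is the genuinely new ingredient of the non-radial argument.
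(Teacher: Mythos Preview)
Your approach differs substantially from the paper's. The paper never reduces to zero momentum via a Lorentz boost; it works directly with the traveling wave $W_\ell$ and introduces asymmetric monotone functionals in the moving frame $x-t\ell\vec e_1$ (the quantities $Z_R$, $\Phi_n$, $\Psi_n$ in \S\ref{SS:subseq}--\S\ref{SS:endofproof}). Your Lorentz-reduction strategy is natural, but as written it has two genuine gaps.

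\textbf{Step 1 is not justified.} The claim that compactness up to modulation survives a Lorentz boost is non-trivial: the boost mixes space and time, so a constant-$s$ slice of $\tilde u$ corresponds to a tilted hypersurface for $u$, and the hypothesis on $K$ says nothing a priori about restrictions of $(u,\partial_t u)$ to such hypersurfaces. Worse, to even define $\tilde u$ on a full time slice you already need $u$ to be defined on an unbounded spacetime region; if $I_{\max}$ were finite this fails. So global existence must be established \emph{before} the boost, not deduced from it. The paper handles global existence directly (Proposition~\ref{P:global}) by an argument that does not require $\vec P=0$.

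\textbf{Step 2 does not yield $\partial_t v\equiv 0$.} The virial identity you sketch, combined with compactness and boundedness of $J_R$, gives only vanishing in time average, $\frac{1}{T}\int_0^T\!\int(\partial_t v)^2\,dx\,dt\to 0$. From this one extracts a sequence $t_n$ along which the rescaled solution converges to a stationary state, hence to $\pm W$ up to symmetry by your Step~3 argument; this forces $E(v)=E(W,0)$. But upgrading subsequential convergence to $\partial_t v\equiv 0$ for all $t$ requires modulation theory around $W$ and a further parameter-control/virial layer (Lemmas~\ref{L:parameter_control}--\ref{L:space_control} and the proof in \S\ref{SS:endofproof}). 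Your sketch omits this entirely.

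Even if you repair Step~1 (prove global existence first, then a boost-compactness lemma), Step~2 still essentially requires the machinery of \S\ref{S:compact} specialized to $\ell=0$, so the reduction buys less than it appears to.
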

\begin{remark}
Note that the constant $\frac{4\sqrt{N-1}}{N}$ in \eqref{bound_nabla2W} is always greater than $1$ if $N\in\{3,4,5\}$. Indeed, this constant is equal to $\frac{4\sqrt{2}}{3}\approx 1.89$ if $N=3$, to $\sqrt{3}\approx 1.73$ if $N=4$ and to $1.6$ if $N=5$.
\end{remark}
\begin{remark}
\label{R:compactness}
The parameter $\ell$ and the rotation $\RRR$ in \eqref{conclusion_compactness} are given by the energy and the conserved momentum of $u$. Namely, under the asumptions of Theorem \ref{T:compact}, $E(u_0,u_1)\geq E(W,0)$, $|\ell|=\left|\int \nabla u_0 u_1\right|/E(u_0,u_1)$, and 
$$ u(t,x)=\frac{\iota_0}{\lambda_0^{\frac{N-2}{2}}}W_{\ell}\left(\frac{t}{\lambda_0},\frac{x-X_0}{\lambda_0}\right)$$ 
after a space rotation around the origin chosen so that
\begin{equation}
\label{def_ell}
\ell \vec{e}_1=-\frac{\int \nabla u_0 u_1}{E(u_0,u_1)}.
\end{equation} 
\end{remark}
We next give a Corollary to Theorem \ref{T:compact}, which corrects \cite[Corollary 7.4]{KeMe08} (stated without a proof in \cite{KeMe08}) for nonradial solutions. For $N=5$, in the non-radial case, the solutions $W_{\ell}$ for small $\ell\neq 0$ give a counterexample to \cite[Corollary 7.4]{KeMe08}, as can be seen using the first line of Claim \ref{C:value_W}
below.
\begin{corol}
 \label{C:global}
Assume $N\in\{3,4,5\}$.
Let $u$ be a solution of \eqref{CP} which satisfies
\begin{equation}
\label{bound_nabla_corol}
\limsup_{t\to T_+(u)} \left[\|\nabla u(t)\|_{L^2}^2+\frac{N-2}{2}\|\partial_tu(t)\|_{L^2}^2\right]<\|\nabla W\|_{L^2}^2. 
\end{equation} 
Then $T_+(u)=+\infty$ and $u$ scatters forward in time. If $u$ is radial, \eqref{bound_nabla_corol} can be replaced by the following bound
\begin{equation}
\label{bound_nabla_corol_radial}
\limsup_{t\to T_+(u)} \|\nabla u(t)\|_{L^2}^2<\|\nabla W\|_{L^2}^2. 
\end{equation} 
\end{corol}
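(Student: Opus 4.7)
The plan is a contradiction argument combining the Kenig-Merle concentration-compactness/rigidity reduction, Theorem \ref{T:compact}, and a direct computation of the functional $\|\nabla v_0\|_{L^2}^2+\frac{N-2}{2}\|v_1\|_{L^2}^2$ on the Lorentz orbit of $(W,0)$.

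Assume the conclusion fails, so there exists a solution $u$ of \eqref{CP} satisfying \eqref{bound_nabla_corol} which either has $T_+(u)<\infty$ or is global but does not scatter. Since \eqref{bound_nabla_corol} is phrased as a $\limsup$, after a time translation we may assume
\begin{equation*}
\sup_{t\in[0,T_+(u))}\left[\|\nabla u(t)\|_{L^2}^2+\frac{N-2}{2}\|\partial_t u(t)\|_{L^2}^2\right] \leq \|\nabla W\|_{L^2}^2-\delta
\end{equation*}
for some $\delta>0$. Running the Kenig-Merle linear profile decomposition and minimality selection (as in \cite{KeMe08}, but with the coercive quantity $\|\nabla u\|^2+\tfrac{N-2}{2}\|\partial_tu\|^2$ replacing $\|\nabla u\|^2+\|\partial_tu\|^2$) produces a nonzero minimal non-scattering critical element $u_c$ whose trajectory is precompact in $\hdot\times L^2$ modulo modulation $(\lambda(t),x(t))$ and satisfies the same bound. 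In particular $\sup_t\|\nabla u_c(t)\|_{L^2}^2<\|\nabla W\|_{L^2}^2<2\|\nabla W\|_{L^2}^2$, so Theorem \ref{T:compact} applies.

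Theorem \ref{T:compact} supplies $\ell\in(-1,1)$, $\lambda_0>0$, $X_0\in\RR^N$, $\iota_0\in\{\pm1\}$ and a rotation $\RRR$ with $u_c(t,x)=\frac{\iota_0}{\lambda_0^{(N-2)/2}}W_\ell\bigl(t/\lambda_0,(\RRR(x)-X_0)/\lambda_0\bigr)$. Using the explicit formula \eqref{def_Wl}, the change of variable $y_1=x_1/\sqrt{1-\ell^2}$, and the identity $\|\partial_j W\|_{L^2}^2=\frac{1}{N}\|\nabla W\|_{L^2}^2$ (from the radial symmetry of $W$), a direct computation yields
\begin{equation*}
\|\nabla W_\ell(0,\cdot)\|_{L^2}^2+\frac{N-2}{2}\|\partial_t W_\ell(0,\cdot)\|_{L^2}^2=\frac{1-\ell^2/2}{\sqrt{1-\ell^2}}\,\|\nabla W\|_{L^2}^2\geq \|\nabla W\|_{L^2}^2,
\end{equation*}
with equality iff $\ell=0$ (since $(1-\ell^2/2)^2-(1-\ell^2)=\ell^4/4$). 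Scaling, translation and rotation invariance of the functional make $u_c(t,\cdot)$ attain the same value. When $\ell\neq0$ the strict inequality contradicts the bound on $u_c$; when $\ell=0$, $u_c$ is a rescaled-translated copy of $W$, the functional equals exactly $\|\nabla W\|_{L^2}^2$, and the strict inequality in \eqref{bound_nabla_corol} is still violated.

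For the radial statement, the profile reduction can be run in the radial class so $u_c$ is radial, and Theorem \ref{T:compact} then forces $\ell=0$ (a Lorentz boost breaks $O(N)$-symmetry), giving the same contradiction with \eqref{bound_nabla_corol_radial}. The principal technical obstacle is to justify that the Kenig-Merle extraction preserves coercivity below the modified threshold; the above computation provides exactly the needed variational fact, namely that $(N-2)/2$ is the critical coefficient making $(W,0)$ the minimiser of $\|\nabla v_0\|_{L^2}^2+\frac{N-2}{2}\|v_1\|_{L^2}^2$ over the Lorentz orbit of $(W,0)$.
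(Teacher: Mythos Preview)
Your argument is correct and is essentially the paper's own approach: the paper derives Corollary~\ref{C:global} from Corollary~\ref{C:below_threshold}, whose proof is exactly the concentration-compactness extraction of a compact critical element, its identification via Theorem~\ref{T:compact}, and the contradiction via the inequality of Claim~\ref{C:value_W} (which is your displayed computation). For the radial case the paper first uses energy conservation to bound $\|\partial_t u\|_{L^2}$ (a step you should make explicit before running the extraction) and then invokes the radial rigidity from \cite{DuKeMe09P} rather than your observation that radiality forces $\ell=0$; both variants are valid.
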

A more general version of Corollary \ref{C:global} is given in Corollary \ref{C:below_threshold} below.
\begin{remark}
\label{R:generalization}
 Note that because of the variational estimate \eqref{variational} below, Corollary \ref{C:global} is in fact a generalization of \cite[Theorem 1.1, i)]{KeMe08}. Note also that for $N=3$, the statement is stronger than the one of 
\cite[Corollary 7.4]{KeMe08}.
\end{remark}
Let us also a give a correct version of the second statement in \cite[Corollary 7.5]{KeMe08}.
\begin{corol}
\label{C:blowup}
Let $u$ be a solution of \eqref{CP} such that $T_+(u)<\infty$ and
$$\limsup_{t\to T_+(u)}\|\nabla u(t)\|_{L^2}^2+\|\partial_tu(t)\|_{L^2}^2 <\infty.$$
Then there exist sequences $x_n\in \RR^3$, $t_n\to T_+(u)$ such that for all $R$, 
$$ \lim_{n\to \infty}\int_{|x-x_n|\leq R} \left(|\nabla u(t_n,x)|^2+\frac{N-2}{2}|\partial_tu(t_n,x)|^2\right)dx\geq \int |\nabla W|^2\,dx.$$
\end{corol}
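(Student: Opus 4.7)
The plan is to argue by contradiction using the Bahouri--G\'erard linear profile decomposition for the wave equation, combined with Corollary~\ref{C:global}. Suppose the conclusion of Corollary~\ref{C:blowup} fails. Fix any sequence $t_n\to T_+(u)$ and, after passing to a subsequence, write the profile decomposition
$$ (u(t_n), \partial_t u(t_n)) = \sum_{j=1}^J (V^j_{L,n}, \partial_t V^j_{L,n})(0) + (w^J_n, \partial_t w^J_n), $$
where each $V^j_L$ solves the linear wave equation, $V^j_{L,n}$ is its rescaling by pseudo-orthogonal parameters $(\lambda_{n,j}, x_{n,j}, t_{n,j})$, and the remainders have Strichartz norm tending to $0$ as $n, J\to\infty$. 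For each $j$ let $U^j$ denote the nonlinear profile, i.e.\ the solution of \eqref{CP} asymptotic to $V^j_L$ as $t\to\tau_j := \lim_n t_{n,j}/\lambda_{n,j}$.

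The long-time perturbation theorem forces at least one nonlinear profile $U^1$ (after relabelling) to fail to scatter forward: otherwise the sum of rescaled nonlinear profiles would approximate $u$ on $[t_n, T_+)$ with uniformly bounded Strichartz norm, extending $u$ past $T_+$, contradicting $T_+<\infty$. Since $U^1$ either blows up in finite time or is global but does not scatter, the contrapositive of Corollary~\ref{C:global} gives
$$ \limsup_{s\to T_+(U^1)} \left[\|\nabla U^1(s)\|_{L^2}^2 + \tfrac{N-2}{2}\|\partial_t U^1(s)\|_{L^2}^2\right] \;\geq\; \int |\nabla W|^2. $$
Pick $s_k\to T_+(U^1)$ realising this lim sup and $R_k>0$ large enough that
$$ \int_{|y|\leq R_k}\left[|\nabla U^1(s_k,y)|^2 + \tfrac{N-2}{2}|\partial_t U^1(s_k,y)|^2\right]dy \;\geq\; \int|\nabla W|^2 - \tfrac{1}{k}. $$

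Transfer this concentration back to $u$. At times $\tilde t_{n,k} := t_n + \lambda_{n,1}(s_k - \tau_1)$ the nonlinear profile decomposition gives, in $\hdot\times L^2$,
$$ (u, \partial_t u)(\tilde t_{n,k}) = \bigg(\tfrac{1}{\lambda_{n,1}^{N/2-1}} U^1, \tfrac{1}{\lambda_{n,1}^{N/2}} \partial_t U^1\bigg)\!\left(s_k, \tfrac{\,\cdot\,-x_{n,1}}{\lambda_{n,1}}\right) + \text{(other nonlinear profiles plus remainder)}, $$
and pseudo-orthogonality of the parameters makes the latter contribute $o_n(1)$ to the integral over the ball $\{|x-x_{n,1}|\leq \lambda_{n,1}R_k\}$, whereas the first term contributes, after a change of variable, exactly the quantity bounded in the previous display. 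A diagonal extraction with $k=k(n)\to\infty$ slow enough that $\lambda_{n,1}s_{k(n)}\to 0$ yields $\tilde t_n := \tilde t_{n,k(n)}\to T_+$, and for every fixed $R>0$ (so that $\lambda_{n,1}R_{k(n)}\leq R$ eventually),
$$ \liminf_n \int_{|x-x_{n,1}|\leq R}\!\left[|\nabla u(\tilde t_n)|^2 + \tfrac{N-2}{2}|\partial_t u(\tilde t_n)|^2\right]dx \;\geq\; \int |\nabla W|^2, $$
contradicting the assumed failure of the corollary.

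The main obstacle is the diagonalization when $T_+(U^1)=\infty$, since it requires $\lambda_{n,1}\to 0$ for the product $\lambda_{n,1}s_{k(n)}$ to vanish while $s_{k(n)}\to\infty$. This is in fact automatic: a nonlinear profile with $\lambda_{n,1}$ bounded below whose Strichartz norm must explode on the rescaled interval $(-\tau_1, (T_+-t_n)/\lambda_{n,1})$ --- which shrinks to a point as $t_n\to T_+$ --- is forced to blow up in finite positive time, placing us in the easy case $T_+(U^1)<\infty$. The pseudo-orthogonality argument controlling the local energy is standard but technically delicate in the non-radial setting, as it must accommodate space translations in addition to scalings, and must carry through for the nonlinear (not merely linear) profile decomposition.
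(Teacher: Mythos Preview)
Your overall strategy—profile decomposition of $(u(t_n),\partial_t u(t_n))$, identification of a non-scattering nonlinear profile, application of the contrapositive of Corollary~\ref{C:global} to that profile, and transfer of the concentration back to $u$ via Proposition~\ref{P:lin_NL}—is the mechanism the paper has in mind (it points to \cite[Section~3]{DuKeMe09P}, and the same machine is run in Step~1 of the proof of Proposition~\ref{P:compactness}). There is, however, a genuine gap in the execution.

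The problem is the sentence ``at times $\tilde t_{n,k}$ the nonlinear profile decomposition gives \ldots''. Invoking Proposition~\ref{P:lin_NL} on $[0,\theta_n]$ with $(\theta_n-t_{1,n})/\lambda_{1,n}=s_k$ requires hypothesis \eqref{bounded_strichartz} for \emph{every} profile, not just for $U^1$. You have only established that at least one profile fails to scatter forward; since Corollary~\ref{C:blowup} carries no smallness hypothesis like \eqref{bound_nabla}, several profiles may fail to scatter, and you give no reason why the remaining ones stay Strichartz-bounded on the interval needed to push $U^1$ out to time $s_k$. If some other non-scattering profile $U^2$ has $\lambda_{2,n}/\lambda_{1,n}\to 0$, its rescaled clock runs faster and $(\theta_n-t_{2,n})/\lambda_{2,n}$ may exceed $T_+(U^2)$ before $U^1$ reaches $s_k$; then Proposition~\ref{P:lin_NL} is unavailable and the expansion you wrote is unjustified. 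The remedy, exactly as in Step~1 of the proof of Proposition~\ref{P:compactness}, is not to fix a profile in advance but to use Corollary~\ref{C:below_threshold}: for each $M<\|\nabla W\|_{L^2}^2$, let $\tau_n$ be the first time at which $\max_j\|U^j\|_{S(-t_{j,n}/\lambda_{j,n},\,(\tau_n-t_{j,n})/\lambda_{j,n})}$ reaches $C_M$. On $[0,\tau_n]$ all profiles are controlled by construction, so Proposition~\ref{P:lin_NL} applies; the profile attaining the maximum then has, by Corollary~\ref{C:below_threshold}, a time $s_n\in[0,\tau_n]$ at which its $\|\nabla\cdot\|_{L^2}^2+\tfrac{N-2}{2}\|\partial_t\cdot\|_{L^2}^2$ exceeds $M$; the local Pythagorean expansion (cross terms vanish by pseudo-orthogonality, other profiles contribute nonnegatively) transfers this to $u(t_n+s_n)$ with $t_n+s_n\in[t_n,T_+)$ automatically. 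Letting $M\uparrow\|\nabla W\|_{L^2}^2$ along a diagonal finishes. With this stopping-time device the separate discussion of $\lambda_{n,1}\to 0$ becomes unnecessary. (A minor point: your contradiction hypothesis is never actually used—the argument is a direct construction of the sequences.)
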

Corollary \ref{C:blowup} follows from the arguments in \cite[section 3]{DuKeMe09P} (see also the beginning of Section \ref{S:universality} below) and we will omit its proof.

For more comments about results of the type of Theorem \ref{T:classification}, we refer to the introduction of \cite{DuKeMe09P}. 
Theorem \ref{T:classification} is an analogue for the energy-critical wave equation of the result of \cite{MeRa05} about the mass-critical nonlinear Schr\"odinger equation. We next list other previous related works that are also discussed in the introduction of \cite{DuKeMe09P}: for works about nonlinear wave maps see e.g. \cite{ChTZ93, ShTZ97,Struwe02,Struwe03,RoSt06P,KrScTa08,StTa09P,KrSc09P,RaRo09P}; for articles about classification of solutions for other equations we refer for example  to \cite{MaMe00,MaMe01,MaMe03,MeRa04,CaFr86,MeZa07,MeZa08P}. 

Let us give a short sketch of the proof of Theorem \ref{T:classification}.  This proof is based on a new strategy which allows us to treat the non-radial case, and also simplifies the proof of the radial case in \cite{DuKeMe09P}. 

In a first step (see Subsection \ref{SS:compactness}), looking at a minimal element among the non-scattering profiles associated to sequences $(u(t_n'),\partial_t u(t_n'))$ (where $t_n'\to T_+$), we get a sequence $t_n\to T_+$ such that for some parameters $\lambda_n$, $x_n$,
\begin{equation}
\label{weak_CV_tn}
\lf(\lambda_n^{\frac N2-1} u(t_n,\lambda_n \cdot+x_n),\lambda_n^{\frac{N}{2}} \partial_t u(t_n,\lambda_n \cdot+x_n)\rg)
\xrightharpoonup[t\to T_+]{} \left(U_0,U_1\right),
\end{equation}
weakly in $\hdot\times L^2$, 
where the solution $U$ of \eqref{CP} with initial condition $(U_0,U_1)$ is compact up to the symmetries of \eqref{CP}, as in Theorem \ref{T:compact}. 

The second step of the proof of Theorem \ref{T:classification} is Theorem \ref{T:compact}, which implies that $U$ must be $W_{\ell}$ up to the symmetries. The proof of Theorem \ref{T:compact}, postponed to Section \ref{S:compact}, is a refinement of the proof of its radial analogue (see \cite{DuKeMe09P}), which was based on techniques developped in \cite{DuMe08}. To treat the non-radial case we introduce new monotonic quantities which are non-symmetric in the space variables. We also prove in \S \ref{SS:global} a more general version of Corollary \ref{C:global} which is also needed in Section \ref{S:universality}. Let us mention that Section \ref{S:compact} is independent of Section \ref{S:universality}. 

In a third step of the proof (see Subsections \ref{SS:strongCV} and \ref{SS:alltimes}), we show that the weak convergence \eqref{weak_CV_tn} is indeed a strong convergence in $\{|x|\leq T_+-t_n\}$. It is here that Proposition \ref{P:linear} on the behavior of solutions to the linear wave equation is used. We then conclude using the minimality of the profile associated to $t_n$ that this strong convergence also holds for all times as $t\to T_+$.

In addition to the parts of the paper mentioned above, Section \ref{S:preliminaries} is devoted to some preliminaries about the Cauchy problem,  profile decomposition, the solution $W_{\ell}$, and Proposition \ref{P:linear} on the localization of the solutions to the linear wave equation. The appendix concerns modulation theory around $W_{\ell}$. 

\subsection*{Notations}
In all the paper, we assume $N\in \{3,4,5\}$ unless otherwise mentioned.
We write $a\lesssim b$ or $a=\OOO(b)$ when the two positive quantities $a$ and $b$ satisfy $a\leq C b$ for some large constant $C>0$, and $a\approx b$ when $a\lesssim b$ and $b\lesssim a$. We also use the notation $a=o(b)$ when $a/b$ goes to $0$.

\subsection*{Acknowledgment}
The authors would like to thank the referee for helpful comments and suggestions, and Pr. Changxing Miao and Dr. Jiqiang Zheng for signaling an error in the statement of Theorem \ref{T:compact} in a previous version of this paper. 

\section{Preliminaries}
\label{S:preliminaries}

\subsection{Cauchy problem}
The Cauchy problem for equation \eqref{CP} was developped in \cite{Pecher84,GiSoVe92,LiSo95,ShSt94,ShSt98,Sogge95,Kapitanski94}.
If $I$ is an interval, we denote by
$$  S(I)=L^{\frac{2(N+1)}{N-2}}\left(I\times\RR^N\right), \;W(I)=L^{\frac{2(N+1)}{N-1}}\left(I\times\RR^N\right).$$
Let $\Stt_{\lin}(t)$ be the one-parameter group associated to the linear wave equation. By definition, if $(v_0,v_1)\in \hdot\times L^2$ and $t\in \RR$, $v(t)=\Stt_{\lin}(t)(v_0,v_1)$ is the solution of
\begin{gather}
 \label{lin_wave}
\partial_t^2 v-\Delta v=0,\\
\label{lin_wave_IC}
v_{\restriction t=0}=v_0,\quad
\partial_t v_{\restriction t=0}=v_1.
\end{gather}
We have
$$ \Stt_{\lin}(t)(v_0,v_1)=\cos(t\sqrt{-\Delta})v_0+\frac{1}{\sqrt{-\Delta}}\sin(t\sqrt{-\Delta})v_1.$$
By Strichartz and Sobolev estimates,
\begin{equation}
\label{strichartz}
\lf\|v\rg\|_{S(\RR)}+\lf\|D_x^{1/2}v\rg\|_{W(\RR)}+\lf\|D_x^{-1/2}\partial_t v\rg\|_{W(\RR)}\leq C_S\left(\|v_0\|_{\hdot}+\|v_1\|_{L^2}\right).
\end{equation} 
A solution of \eqref{CP} on an interval $I$, where $0\in I$, is a function $u\in C^0(I,\hdot)$ such that  $\partial_t u\in C^0(I,L^2)$, 
\begin{equation}
\label{finite_norms}
J\Subset I\Longrightarrow \|u\|_{S(J)}+\|D_x^{1/2}u\|_{W(J)}+\lf\|D_x^{-1/2}\partial_t u\rg\|_{W(J)}<\infty
\end{equation}
satisfying the Duhamel formulation
\begin{equation}
\label{solution}
u(t)=\Stt_{\lin}(t)(u_0,u_1)+\int_0^t \frac{\sin\big((t-s)\sqrt{-\Delta}\big)}{\sqrt{-\Delta}}\left(|u(s)|^{\frac{4}{N-2}}u(s)\right)ds.
\end{equation} 
We recall that for any initial condition $(u_0,u_1)\in \hdot\times L^2$, there is an unique solution $u$, defined on a maximal interval of definition $I_{\max}(u)=(T_-(u),T_+(u))$. Furthermore, $u$ satisfies the blow-up criterion 
\begin{equation}
\label{FBUC}
T_+(u)<\infty\Longrightarrow \|u\|_{S(0,T_+(u))}=+\infty. 
\end{equation}
As a consequence, if 
$\|u\|_{S(0,T_+)}<\infty$, then $T_+=+\infty$. Furthermore in this case, the solution scatters forward in time in $\hdot\times L^2$: there exists a solution $v$ of the linear equation \eqref{lin_wave} such that
$$\lim_{t\to +\infty}\|u(t)-v(t)\|_{\hdot}+\|\partial_t u(t)-\partial_t  v(t)\|_{L^2}=0.$$
 Of course an analoguous statement holds backward in time also. 

If $\left\|\Stt_{\lin}(\cdot)(u_0,u_1)\right\|_{S(I)}=\delta<\delta_1$, for some small $\delta_1$, then $u$ is globally defined and close to the linear solution with initial condition $(u_0,u_1)$ in the following sense: if $A=\left\|D_x^{1/2}\Stt_{\lin}(\cdot)(u_0,u_1)\right\|_{W(I)}$, we have  
\begin{multline}
\label{almost_linear}
\left\|u(\cdot)-\Stt_{\lin}(\cdot)(u_0,u_1)\right\|_{S(I)}\\+\sup_{t\in I}\left(\left\|u(t)-\Stt_{\lin}(t)(u_0,u_1)\right\|_{\hdot}+\left\|\partial_tu(t)-\partial_t(\Stt_{\lin}(t)(u_0,u_1))\right\|_{L^2}\right)
\leq CA\delta^{\frac{4}{N-2}},
\end{multline} 
(see for example \cite{KeMe08}, proof of Theorem 2.7).

We next recall the profile decomposition of H.~Bahouri and P.~G\'erard \cite{BaGe99}. This paper is written in space dimension $N=3$ but the results stated below hold in all dimension $N\geq 3$ (see \cite{Bulut09P}). See also \cite{BrCo85} and \cite{Li85Reb} for the elliptic case and \cite{MeVe98} for the Schr\"odinger equation.

Consider a sequence $(v_{0,n},v_{1,n})_n$ which is bounded in $\hdot\times L^2$. 
Let $(U^j_{\lin})_{j\geq 1}$ be a sequence of solutions of the linear equation \eqref{lin_wave},
with initial data $(U^j_0,U^j_1)\in \hdot\times L^2$, and $(\lambda_{j,n};x_{j,n};t_{j,n})\in (0,+\infty)\times \RR^N\times \RR$, $j\geq 1$, $n\in \NN$, be a family of parameters satisfying the pseudo-orthogonality relation
\begin{equation}
\label{ortho_param}
j\neq k\Longrightarrow \lim_{n\to \infty} \frac{\lambda_{j,n}}{\lambda_{k,n}}+\frac{\lambda_{k,n}}{\lambda_{j,n}}+\frac{|t_{j,n}-t_{k,n}|}{\lambda_{j,n}}+\frac{\left|x_{j,n}-x_{k,n}\right|}{\lambda_{j,n}}=+\infty.
\end{equation} 
We say that $(v_{0,n},v_{1,n})_n$ admits a profile decomposition  $\lf\{U_{\lin}^j\rg\}_j$, $\lf\{\lambda_{j,n};x_{j,n};t_{j,n}\rg\}_{j,n}$ when 
\begin{equation}
\label{decompo_profil}
\left\{\begin{aligned}
 v_{0,n}(x)&=\sum_{j=1}^J \frac{1}{\lambda_{j,n}^{\frac{N-2}{2}}}U_{\lin}^j\left(\frac{-t_{j,n}}{\lambda_{j,n}},\frac{x-x_{j,n}}{\lambda_{j,n}}\right)+w_{0,n}^J(x),\\
v_{1,n}(x)&=\sum_{j=1}^J \frac{1}{\lambda_{j,n}^{\frac{N}{2}}}\partial_t U_{\lin}^j\left(\frac{-t_{j,n}}{\lambda_{j,n}},\frac{x-x_{j,n}}{\lambda_{j,n}}\right)+w_{1,n}^J(x),
\end{aligned}\right.
\end{equation}
with
\begin{equation}
\label{small_w}
\lim_{n\rightarrow+\infty}\limsup_{J\rightarrow+\infty} \left\|w_{n}^J\right\|_{S(\RR)}=0,
\end{equation}
where $w_{n}^J$ is the solution of \eqref{lin_wave} with initial conditions $(w_{0,n}^J,w_{1,n}^J)$. 
Then:
\begin{prop}[\cite{BaGe99,Bulut09P}]
\label{P:profile}
If the sequence $(v_{0,n},v_{1,n})_n$ is bounded in the energy space $\hdot\times L^2$, there always exists a subsequence of $(v_{0,n},v_{1,n})_n$ which admits a profile decomposition. Furthermore, 
\begin{equation}
\label{weak_CV_wJ}
j\leq J\Longrightarrow 
\left(\lambda_{j,n}^{\frac{N-2}{2}} w_n^J\left(t_{j,n},x_{j,n}+\lambda_{j,n}y\right),\lambda_{j,n}^{\frac{N}{2}} \partial_tw_n^J\left(t_{j,n},x_{j,n}+\lambda_{j,n}y\right)\right)\xrightharpoonup[n\to \infty]{}0,
\end{equation}
weakly in $\hdot_y\times L^2_y$, and the following Pythagorean expansions hold for all $J\geq 1$
\begin{gather}
\label{pythagore1a} 
\left\|v_{0,n}\rg\|_{\hdot}^2=\sum_{j=1}^J \left\|U^j_{\lin}\left(\frac{-t_{j,n}} {\lambda_{j,n}}\right)\rg\|_{\hdot}^2+\left\|w_{0,n}^J\rg\|_{\hdot}^2+o_n(1)\\
\label{pythagore1b} 
\lf\|v_{1,n}\right\|^2_{L^2}=\sum_{j=1}^J \lf\|\partial_t U^j_{\lin}\left(\frac{-t_{j,n}} {\lambda_{j,n}}\right)\right\|^2_{L^2}+\lf\|w_{1,n}^J\right\|^2_{L^2}+o_n(1)\\
\label{pythagore2}
E(v_{0,n},v_{1,n})=\sum_{j=1}^J E\left(U^j_{\lin}\left(-\frac{t_{j,n}}{\lambda_{j,n}}\right),\partial_t U^j_{\lin}\left(-\frac{t_{j,n}} {\lambda_{j,n}}\right)\right)+E\left(w_{0,n}^J, w_{1,n}^J\right)+o_n(1).
\end{gather}
 \end{prop}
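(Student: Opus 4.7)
The strategy is the iterative profile extraction of Bahouri and G\'erard. Set $v_n := \Stt_{\lin}(\cdot)(v_{0,n},v_{1,n})$, which by \eqref{strichartz} is uniformly bounded in $S(\RR)$ and in $C^0(\RR;\hdot)$ with $\partial_t v_n$ bounded in $C^0(\RR;L^2)$. The core tool is a refined Strichartz / concentration lemma: whenever $\limsup_n\|v_n\|_{S(\RR)}\geq \nu > 0$ while $(v_{0,n},v_{1,n})$ stays bounded in $\hdot\times L^2$, there exist, after extracting a subsequence, parameters $(\lambda_n,x_n,t_n)\in (0,\infty)\times\RR^N\times\RR$ such that
$$ \lf(\lambda_n^{\frac{N-2}{2}} v_n(t_n,x_n+\lambda_n \cdot),\;\lambda_n^{N/2}\partial_t v_n(t_n,x_n+\lambda_n\cdot)\rg)\rightharpoonup (U_0,U_1) $$
weakly in $\hdot\times L^2$ with $\|(U_0,U_1)\|_{\hdot\times L^2}\gtrsim \nu^{\theta}$ for some $\theta=\theta(N)>0$. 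This is the technical heart of the argument and I expect it to be the main obstacle: it is proved by Littlewood--Paley decomposing $v_n$, applying the Strichartz estimate \eqref{strichartz} at a single frequency, and a pigeonhole argument that isolates a dyadic frequency band together with a space-time point where mass concentrates.

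Granting this lemma, I extract a first profile $(U^1_0,U^1_1)$, let $U^1_{\lin}$ be the linear solution with this data, and set
$$ w_n^1(t,x) := v_n(t,x) - \frac{1}{\lambda_{1,n}^{(N-2)/2}} U^1_{\lin}\!\lf(\frac{t-t_{1,n}}{\lambda_{1,n}},\frac{x-x_{1,n}}{\lambda_{1,n}}\rg). $$
Weak convergence together with the identity $\|a+b\|^2=\|a\|^2+\|b\|^2+2\langle a,b\rangle$ yields \eqref{pythagore1a}--\eqref{pythagore1b} at $J=1$. I then iterate on $w_n^{J-1}$: at step $J$, I extract a profile $(U^J_0,U^J_1)$ of $\hdot\times L^2$-norm at least half the supremum produced by applying the concentration lemma to $w_n^{J-1}$. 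The pseudo-orthogonality \eqref{ortho_param} between the new parameters and any previously chosen ones is then forced: if it failed for some $j<J$, the weak limit of the rescaled $w_n^{J-1}$ at the $J$-th parameters could be rewritten as a non-trivial rescaling of $U^j_{\lin}$, contradicting \eqref{weak_CV_wJ} at level $J-1$.

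Summing the Pythagorean relations gives $\sum_{j\geq 1}\|(U^j_0,U^j_1)\|^2_{\hdot\times L^2}\leq \limsup_n\|(v_{0,n},v_{1,n})\|^2_{\hdot\times L^2}<\infty$, so the extracted profile sizes tend to zero. Combined with the lower bound $\|(U^J_0,U^J_1)\|\gtrsim \nu_J^{\theta}$, where $\nu_J:=\limsup_n\|w_n^{J-1}\|_{S(\RR)}$, this forces $\nu_J\to 0$ as $J\to\infty$; a diagonal extraction in $(n,J)$ then produces \eqref{small_w}. Finally, \eqref{pythagore2} reduces to an analogous Pythagorean expansion for $\int |v_{0,n}|^{\frac{2N}{N-2}}$: a Brezis--Lieb lemma treats the remainder, while the vanishing of cross integrals between profiles at distinct parameters follows, after approximating each $(U^j_0,U^j_1)$ by compactly supported data, from a change of variables combined with \eqref{ortho_param} (orthogonal scales, cores, or times force disjoint effective supports in the limit).
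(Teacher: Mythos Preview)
The paper does not supply its own proof of this proposition: it is stated with attribution to \cite{BaGe99,Bulut09P} and used as a black box. Your sketch reproduces the standard Bahouri--G\'erard iterative extraction from those references, so in substance you are following exactly the argument the paper defers to.

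One small wrinkle in your outline: in the pseudo-orthogonality step, the contradiction is not that the weak limit of the rescaled $w_n^{J-1}$ ``could be rewritten as a non-trivial rescaling of $U^j_{\lin}$''. The $j$-th profile has already been subtracted out of $w_n^{J-1}$, so it cannot reappear. Rather, if the $J$-th parameters were not pseudo-orthogonal to the $j$-th (so that after extraction $\lambda_{J,n}/\lambda_{j,n}\to\lambda_0\in(0,\infty)$, etc.), then rescaling at the $J$-th parameters is, up to a fixed invertible transformation of $\hdot\times L^2$, the same as rescaling at the $j$-th parameters; hence \eqref{weak_CV_wJ} at level $J-1$ forces the new weak limit to be zero, contradicting the nontriviality of the freshly extracted profile. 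This is a cosmetic fix; the rest of your plan (refined Strichartz concentration lemma, greedy extraction with size at least half the supremum, Pythagorean expansions from weak convergence, Brezis--Lieb for the potential term) is the correct architecture.
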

\begin{notation}
\label{N:nonlinearprofiles}
Consider a profile decomposition with profiles $U^j_{\lin}$ and parameters $\big\{\lambda_{j,n}$;$t_{j,n}$;$x_{j,n}\big\}$, and assume after extraction of a subsequence that $t_{j,n}/\lambda_{j,n}$  has a limit in $\RR\cup\{-\infty,+\infty\}$. 
We will denote  by $\left\{U^j\right\}$ the non-linear profiles associated with $\left(U^j_{\lin},\left\{\frac{-t_{j,n}}{\lambda_{j,n}}\right\}_n\right)$, which are the unique solutions of \eqref{CP} such that for large $n$, $\frac{-t_{j,n}}{\lambda_{j,n}}\in I_{\max}\left(U^j\right)$ and
$$ \lim_{n\rightarrow +\infty} \left\|U^j\left(\frac{-t_{j,n}}{\lambda_{j,n}}\right)-U^j_{\lin}\left(\frac{-t_{j,n}}{\lambda_{j,n}}\right)\right\|_{\hdot}
+\left\|\partial_t U^j\left(\frac{-t_{j,n}}{\lambda_{j,n}}\right)-\partial_t U^j_{\lin}\left(\frac{-t_{j,n}}{\lambda_{j,n}}\right)\right\|_{L^2}=0.$$
The proof of the existence of $U^j$ follows from the local existence for \eqref{CP} if this limit is finite, and from the existence of wave operators for equation \eqref{CP} if $t_{j,n}/\lambda_{j,n}$ tends to $\pm\infty$. 
\end{notation}
By the Strichartz inequalities on the linear problem and the small data Cauchy theory, if 
$\lim_{n\rightarrow +\infty} \frac{-t_{j,n}}{\lambda_{j,n}}=+\infty$, then 
$T_+\left(U^j\right)=+\infty$ and
\begin{equation}
\label{finite_norm2}
 s_0>T_-\left(U^j\right) \Longrightarrow \|U^j\|_{S(s_0,+\infty)}<\infty,
\end{equation} 
an analoguous statement holds in the case $\lim_{n\rightarrow +\infty} \frac{t_{j,n}}{\lambda_{j,n}}=+\infty$.

We will need the following approximation result, which follows from a long time perturbation theory result for \eqref{CP} and is an adaptation to the focusing case of the result of Bahouri-G\'erard (see the Main Theorem p. 135 in \cite{BaGe99}). We refer for \cite{BaGe99} for the proof in the defocusing case and \cite[Proposition 2.8]{DuKeMe09P} for a sketch of proof.

\begin{prop}
\label{P:lin_NL}
 Let $\{(v_{0,n},v_{1,n})\}_n$ be a bounded sequence in $\hdot\times L^2$, which admits the profile decomposition \eqref{decompo_profil}. Let $\theta_n\in (0,+\infty)$. Assume 
\begin{equation}
\label{bounded_strichartz}
\forall j\geq 1, \quad\forall n,\quad\frac{\theta_n-t_{j,n}}{\lambda_{j,n}}<T_+(U^j)\quad \text{and} \quad\limsup_{n\rightarrow +\infty} \left\|U^j\right\|_{S\big(-\frac{t_{j,n}}{\lambda_{j,n}},\frac{\theta_n-t_{j,n}}{\lambda_{j,n}}\big)}<\infty.
\end{equation}
Let $u_n$ be the solution of \eqref{CP} with initial data $(v_{0,n},v_{1,n})$.
Then for large $n$, $u_n$ is defined on $[0,\theta_n)$,
\begin{equation}
\label{NL_bound}
\limsup_{n\rightarrow +\infty}\|u_n\|_{S(0,\theta_n)}<\infty,
\end{equation} 
and
\begin{equation}
\label{NL_profile} 
\forall t\in [0,\theta_n),\quad
u_n(t,x)=\sum_{j=1}^J \frac{1}{\lambda_{j,n}^{\frac{N-2}{2}}}U^j\left(\frac{t-t_{j,n}}{\lambda_{j,n}},\frac{x-x_{j,n}}{\lambda_{j,n}}\right)+
w^J_{n}(t,x)+r^J_n(t,x),
 \end{equation}
where 
\begin{equation}
\label{cond_rJn}
\lim_{n\rightarrow +\infty} \limsup_{J\rightarrow +\infty} \left[\|r^J_n\|_{S(0,\theta_n)}+\sup_{t\in (0,\theta_n)} \left(\|\nabla r^J_n(t)\|_{L^2}+\|\partial_t r^J_n(t)\|_{L^2}\right)\right]=0.
\end{equation}
An analoguous statement holds if $\theta_n<0$.
\end{prop}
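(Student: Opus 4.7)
\smallskip

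\noindent\textbf{Proof sketch for Proposition \ref{P:lin_NL}.}
The strategy is the standard Bahouri--G\'erard superposition argument adapted to the focusing equation: build an explicit approximate solution, check that it has bounded $S$-norm and satisfies \eqref{CP} up to a small error, and then invoke a long-time perturbation lemma for \eqref{CP} to conclude that $u_n$ exists on $[0,\theta_n)$ and is close to the approximation.

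Concretely, first I would introduce, for each fixed $J$, the approximate solution
\begin{equation*}
 H_n^J(t,x):=\sum_{j=1}^J \frac{1}{\lambda_{j,n}^{\frac{N-2}{2}}} U^j\lf(\frac{t-t_{j,n}}{\lambda_{j,n}},\frac{x-x_{j,n}}{\lambda_{j,n}}\rg)+w_n^J(t,x).
\end{equation*}
By the definition of the nonlinear profiles in Notation \ref{N:nonlinearprofiles} and the profile decomposition \eqref{decompo_profil}, one has
$\|H_n^J(0)-v_{0,n}\|_{\hdot}+\|\partial_t H_n^J(0)-v_{1,n}\|_{L^2}\to 0$ as $n\to\infty$.

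Second, I would prove the uniform Strichartz bound $\limsup_{n\to\infty}\|H_n^J\|_{S(0,\theta_n)}\lesssim M$ with $M$ independent of $J$. For a fixed $J_0$ and $j\le J_0$, hypothesis \eqref{bounded_strichartz} gives control of each $\|U^j\|_{S}$ on the relevant rescaled time interval, while the pseudo-orthogonality \eqref{ortho_param} implies that, after a change of variable, the mixed integrals
$\int_0^{\theta_n}\!\!\int |U^j_n|^{\alpha}|U^k_n|^{\beta}$ ($j\neq k$, $\alpha+\beta=\frac{2(N+1)}{N-2}$) tend to $0$; the Pythagorean expansions \eqref{pythagore1a}--\eqref{pythagore1b}--\eqref{pythagore2}, together with $\limsup_n\|(v_{0,n},v_{1,n})\|_{\hdot\times L^2}<\infty$, force $\|U^j_{\lin}(-t_{j,n}/\lambda_{j,n})\|_{\hdot\times L^2}$ to be arbitrarily small for $j>J_0$ large, so the small-data Cauchy theory (combined with \eqref{strichartz} and \eqref{almost_linear}) controls the tail $\sum_{j>J_0}U^j_n$ in $S(\RR)$ by an arbitrarily small constant. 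Combining these with the vanishing $\|w_n^J\|_{S(\RR)}$ in the iterated $\limsup$ \eqref{small_w} yields the uniform bound.

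Third, define the defect $e_n^J:=\partial_t^2 H_n^J-\Delta H_n^J-|H_n^J|^{\frac{4}{N-2}}H_n^J$. Each profile $U^j$ and $w_n^J$ solves its own equation, so $e_n^J$ collapses to the algebraic discrepancy between $|\sum_j U^j_n+w_n^J|^{\frac{4}{N-2}}(\sum_j U^j_n+w_n^J)$ and $\sum_j |U^j_n|^{\frac{4}{N-2}}U^j_n$. Every cross term in this expansion is estimated in the dual Strichartz norm $L^{\frac{2(N+1)}{N+3}}_{t,x}$ either by pseudo-orthogonality (different profiles) or by smallness of $\|w_n^J\|_{S}$ from \eqref{small_w}, giving $\lim_{n}\limsup_{J}\big(\||\nabla|^{1/2}e_n^J\|_{L^{\frac{2(N+1)}{N+3}}(0,\theta_n)\times\RR^N}\big)=0$.

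Finally, I would apply the long-time perturbation lemma for \eqref{CP} (as in \cite{KeMe08}): given a near-solution $H_n^J$ with $\|H_n^J\|_{S(0,\theta_n)}\le M$, small defect $e_n^J$, and initial data at distance $o(1)$ from $(v_{0,n},v_{1,n})$, the genuine solution $u_n$ of \eqref{CP} exists on $[0,\theta_n)$, satisfies \eqref{NL_bound}, and obeys $\|u_n-H_n^J\|_{S(0,\theta_n)}+\sup_{t\in(0,\theta_n)}\|(u_n-H_n^J)(t)\|_{\hdot\times L^2}\to 0$ in the double limit $n\to\infty$, $J\to\infty$. Setting $r_n^J:=u_n-H_n^J+(H_n^J-\sum_j U^j_n-w_n^J)=u_n-\sum_{j=1}^J U^j_n-w_n^J$ gives \eqref{NL_profile}--\eqref{cond_rJn}. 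The main obstacle is the second step: summing infinitely many profiles in a focusing nonlinearity, where one must carefully exploit both the $S$-orthogonality coming from \eqref{ortho_param} and the energetic smallness of the tail supplied by the Pythagorean identities, and then sequence the double limit $\lim_{J\to\infty}\lim_{n\to\infty}$ correctly to close the perturbation argument.
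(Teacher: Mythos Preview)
Your sketch is correct and follows exactly the approach the paper invokes: the paper does not give its own proof but refers to \cite{BaGe99} and \cite[Proposition 2.8]{DuKeMe09P}, both of which proceed precisely by constructing the approximate solution $H_n^J$, establishing the uniform $S$-bound via pseudo-orthogonality and small-data theory for the tail, estimating the nonlinear defect in the dual Strichartz norm, and closing with the long-time perturbation lemma. Your outline captures all of these steps faithfully.
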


\subsection{Elliptic properties of the stationary solution and the solitary wave}
We first recall a variational claim from \cite{KeMe08}:
\begin{claim}
\label{C:variational}
Let $v\in \hdot$. Then
\begin{equation}
 \label{variational}
\|\nabla v\|_{L^2}^2\leq \|\nabla W\|_{L^2}^2\text{ and }E(v,0)\leq E(W,0)\Longrightarrow \|\nabla v\|_{L^2}^2\leq \frac{\|\nabla W\|_{L^2}^2}{E(W,0)}E(v,0)=N E(v,0).
\end{equation}
Furthermore, there is a constant $c>0$ such that if for some small $\eps>0$, $\eps\leq \|\nabla v\|_{L^2}^2\leq \left(\frac{N}{N-2}\right)^{\frac{N-2}{2}}\|\nabla W\|_{L^2}^2-\eps$, then $E(v,0)\geq c\eps$.
\end{claim}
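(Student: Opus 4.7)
The proof rests on the sharp Sobolev inequality and the variational characterization of $W$. Let me first fix notation: write $2^*=\tfrac{2N}{N-2}$, so the energy reads $E(v,0)=\tfrac12\|\nabla v\|_{L^2}^2-\tfrac{N-2}{2N}\|v\|_{L^{2^*}}^{2^*}$. The function $W$ is, up to scaling/translation, the extremal of the Sobolev inequality $\|v\|_{L^{2^*}}\le S\|\nabla v\|_{L^2}$; testing the Euler--Lagrange equation $-\Delta W=W^{2^*-1}$ against $W$ yields the normalization
\[
\|\nabla W\|_{L^2}^2=\|W\|_{L^{2^*}}^{2^*},\qquad E(W,0)=\tfrac1N\|\nabla W\|_{L^2}^2,
\]
and $S^{2^*}=\|\nabla W\|_{L^2}^{2-2^*}$. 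The sharp Sobolev inequality can thus be rewritten as
\[
\|v\|_{L^{2^*}}^{2^*}\le \|\nabla W\|_{L^2}^{2-2^*}\,\|\nabla v\|_{L^2}^{2^*}
=\|\nabla v\|_{L^2}^2\left(\frac{\|\nabla v\|_{L^2}^2}{\|\nabla W\|_{L^2}^2}\right)^{\!\!2/(N-2)}\!,
\]
which is the single inequality driving everything below.

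For the first statement, the hypothesis $\|\nabla v\|_{L^2}^2\le\|\nabla W\|_{L^2}^2$ makes the bracketed factor $\le 1$, giving $\|v\|_{L^{2^*}}^{2^*}\le\|\nabla v\|_{L^2}^2$. Plugging this into $E(v,0)$ directly yields
\[
E(v,0)\ge \tfrac12\|\nabla v\|_{L^2}^2-\tfrac{N-2}{2N}\|\nabla v\|_{L^2}^2=\tfrac1N\|\nabla v\|_{L^2}^2,
\]
which is the desired inequality, and the constant $N=\|\nabla W\|_{L^2}^2/E(W,0)$ matches the normalization above. The hypothesis $E(v,0)\le E(W,0)$ plays no role in this step, and is just the consistent piece of a pair of variational hypotheses.

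For the second statement, introduce the rescaled variable $y=\|\nabla v\|_{L^2}^2/\|\nabla W\|_{L^2}^2$ and the function
\[
g(y)=\tfrac12 y-\tfrac{N-2}{2N}\,y^{N/(N-2)}.
\]
The sharp Sobolev inequality now reads $E(v,0)\ge \|\nabla W\|_{L^2}^2\,g(y)$. A direct computation gives $g'(y)=\tfrac12(1-y^{2/(N-2)})$, so $g$ is strictly concave on $(0,\infty)$, strictly increases on $[0,1]$, reaches its maximum $g(1)=1/N$, and strictly decreases to $g(y_*)=0$ at $y_*=(N/(N-2))^{(N-2)/2}$. Hence $g>0$ on $(0,y_*)$, $g'(0)=1/2$ and $g'(y_*)=-1/(N-2)$. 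For $y$ in the interval $[\eps',y_*-\eps']$ with $\eps'=\eps/\|\nabla W\|_{L^2}^2$, concavity and the values of $g'$ at the two endpoints yield $g(y)\ge c_N\min(y,y_*-y)\ge c_N\eps'$ for some $c_N>0$. Multiplying through by $\|\nabla W\|_{L^2}^2$ gives $E(v,0)\ge c\,\eps$.

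The one step worth mentioning as mildly delicate is the quantitative estimate of $g$ near its upper zero $y_*$: one must use $g'(y_*)=-1/(N-2)<0$ (via concavity, not merely continuity) to get the linear-in-$\eps$ lower bound in part (2). Everything else is a direct unpacking of the sharp Sobolev inequality and the normalization of $W$.
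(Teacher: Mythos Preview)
Your proof is correct and follows essentially the same approach as the paper: both bound $E(v,0)$ from below via the sharp Sobolev inequality by a one-variable function of $\|\nabla v\|_{L^2}^2$, and deduce the second assertion from the nonvanishing of this function's derivative at its two zeros. The only differences are cosmetic---the paper cites \cite{KeMe08} for the first assertion rather than writing out the two-line argument you give, and works with the unnormalized variable $y=\|\nabla v\|_{L^2}^2$ instead of your rescaled $y=\|\nabla v\|_{L^2}^2/\|\nabla W\|_{L^2}^2$.
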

\begin{proof}
 The first part of the Claim is shown in \cite{KeMe08}. 
For the second part, write
$$ E(v,0)=\frac{1}{2}\int |\nabla v|^2-\frac{N-2}{2N}\int |v|^{\frac{2N}{N-2}}\geq \frac{1}{2}\int |\nabla v|^2-\frac{N-2}{2N}C_N^{\frac{2N}{N-2}} \left(\int |\nabla v|^2\right)^{\frac{N}{N-2}},$$
where $C_N=\left(\int |\nabla W|^2\right)^{-1/N}$ is the best constant in the Sobolev inequality $\|v\|_{\frac{2N}{N-2}}\leq C_N\|\nabla v\|_{L^2}$. Let $y=\int|\nabla v|^2$. Then
$$E(v,0)\geq \frac{1}{2} y-\frac{N-2}{2N} C_N^{\frac{2N}{N-2}}y^{\frac{N}{N-2}}=f(y).$$
The equation $f(y)=0$ has two solutions, $y=0$ and $y^*=\left(\frac{N}{N-2}\right)^{\frac{N-2}{2}}\int|\nabla W|^2$, and the statement follows from the fact that $f'(0)\neq 0$ and $f'(y^*)\neq 0$.
\end{proof}

In the following, we will consider the solitary wave solutions of \eqref{CP}, which are obtained from $W$ by a Lorentz transform
$$W_{\ell}(t,x)=W\left(\frac{x_1-t \ell}{\sqrt{1-\ell^2}},\overline{x}\right)=\left(1+\frac{(x_1-t\ell)^2}{N(N-2)(1-\ell^2)}+\frac{|\ovx|^2}{N(N-2)}\right)^{\frac{N-2}{2}},$$
where $\ell\in (-1,1)$.
We have:
\begin{claim}
 \label{C:value_W}
\begin{gather*}
\forall t,\quad \int |\nabla W_{\ell}(t)|^2=\frac{N+(1-N)\ell^2}{N\sqrt{1-\ell^2}}\int |\nabla W|^2\quad \text{and}\quad\int (\partial_t W_{\ell}(t))^2=\frac{\ell^2}{N\sqrt{1-\ell^2}}\int |\nabla W|^2\\
\forall t,\quad \int |\nabla W_{\ell}(t)|^2+\frac{N-2}{2} \int (\partial_tW_{\ell}(t))^2\geq \left(1+\frac{\ell^4}{8}\right)\int |\nabla W|^2\\
 E\lf(W_{\ell}(0),\partial_t W_{\ell}(0)\rg)=\frac{1}{\sqrt{1-\ell^2}}E(W,0)\\
\int \nabla W_{\ell}(0)\partial_t W_{\ell}(0)=-\frac{\ell}{\sqrt{1-\ell^2}}E(W,0)\vec{e}_1=-\ell E(W_{\ell}(0),\partial_tW_{\ell}(0))\vec{e}_1.
\end{gather*}
\end{claim}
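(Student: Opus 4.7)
The plan is to derive every identity by performing the explicit change of variables associated with the Lorentz boost, combined with the radial symmetry of $W$ and the Pohozaev identity.

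First I would fix $t$ and set $y_1 = (x_1 - t\ell)/\sqrt{1-\ell^2}$, $\ovy = \ovx$, so that $W_{\ell}(t,x) = W(y)$ and $dx = \sqrt{1-\ell^2}\, dy$. A direct chain-rule computation gives
\begin{gather*}
\partial_{x_1} W_{\ell}(t,x) = \tfrac{1}{\sqrt{1-\ell^2}}(\partial_{y_1}W)(y),\quad \partial_{x_j}W_{\ell}(t,x)=(\partial_{y_j}W)(y)\ (j\geq 2),\\
\partial_t W_{\ell}(t,x)=-\tfrac{\ell}{\sqrt{1-\ell^2}}(\partial_{y_1}W)(y).
\end{gather*}
Since $W$ is radial, $\int (\partial_{y_i}W)^2\,dy = \tfrac{1}{N}\int|\nabla W|^2$ for each $i$, and $\int \partial_{y_1}W\,\partial_{y_j}W\,dy=0$ for $j\geq 2$. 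Pushing the $x$-integrals through the change of variables, the first and second identities follow immediately; in particular
\begin{equation*}
\int|\nabla W_{\ell}(t)|^2 = \Bigl(\tfrac{1}{\sqrt{1-\ell^2}}\cdot\tfrac{1}{N} + (N-1)\sqrt{1-\ell^2}\cdot\tfrac{1}{N}\Bigr)\int|\nabla W|^2,
\end{equation*}
which simplifies to the announced ratio $(N+(1-N)\ell^2)/(N\sqrt{1-\ell^2})$, and similarly $\int(\partial_tW_{\ell}(t))^2 = \ell^2/(N\sqrt{1-\ell^2})\cdot\int|\nabla W|^2$.

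For the inequality, I would add the two previous identities, with the factor $(N-2)/2$ in front of the second one, to obtain after cancellation
\begin{equation*}
\int|\nabla W_{\ell}(t)|^2+\tfrac{N-2}{2}\int(\partial_tW_{\ell}(t))^2 = \tfrac{1-\ell^2/2}{\sqrt{1-\ell^2}}\int|\nabla W|^2.
\end{equation*}
It remains to check the scalar inequality $(1-\ell^2/2)/\sqrt{1-\ell^2}\geq 1+\ell^4/8$ for $|\ell|<1$, which (since both sides are positive) is equivalent after squaring to $(1-\ell^2/2)^2\geq (1-\ell^2)(1+\ell^4/8)^2$; expanding yields $\ell^6/4+\ell^{10}/64\geq \ell^8/64$, i.e.\ $16+\ell^4\geq \ell^2$, which is clear.

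For the energy, I would use the Pohozaev/Emden identity $\int W^{2N/(N-2)}=\int|\nabla W|^2$, which comes from multiplying $-\Delta W = W^{(N+2)/(N-2)}$ by $W$ and integrating; it gives $E(W,0) = \tfrac1N\int|\nabla W|^2$. Applying the same change of variables to the potential term yields $\int|W_{\ell}(0)|^{2N/(N-2)} = \sqrt{1-\ell^2}\int|\nabla W|^2$. Plugging the three pieces into the expression of $E(W_{\ell}(0),\partial_tW_{\ell}(0))$ and simplifying produces $2/(2N\sqrt{1-\ell^2})\cdot\int|\nabla W|^2 = E(W,0)/\sqrt{1-\ell^2}$. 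Finally, the momentum identity follows the same way: only the $x_1$-component survives because, by radiality of $W$, $(\partial_{y_1}W)(\partial_{y_j}W)$ is odd in $y_j$ for $j\geq 2$, while the $x_1$-component evaluates to $-\ell/(N\sqrt{1-\ell^2})\int|\nabla W|^2 = -\ell\,E(W_{\ell}(0),\partial_tW_{\ell}(0))$.

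None of the steps is a real obstacle; the only mildly delicate point is the inequality $(1-\ell^2/2)/\sqrt{1-\ell^2}\geq 1+\ell^4/8$, which is handled by the explicit algebraic reduction above.
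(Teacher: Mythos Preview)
Your proof is correct and follows essentially the same approach as the paper's sketch: explicit change of variables combined with the radial symmetry of $W$ and the relation $\int W^{2N/(N-2)}=\int|\nabla W|^2$. The only cosmetic difference is in verifying the scalar inequality: the paper rewrites the left side minus $\int|\nabla W|^2$ as $\frac{1}{\sqrt{1-\ell^2}}\bigl[1-\sqrt{1-\ell^2}-\tfrac12\ell^2\bigr]\int|\nabla W|^2$ and then invokes $\sqrt{1-x}\leq 1-\tfrac12 x-\tfrac18 x^2$, whereas you square and expand directly; both arguments are equivalent and elementary.
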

\begin{proof}[Sketch of proof]
All statements follow from explicit computations. To get the second line, notice that by the first line,
$$ \int |\nabla W_{\ell}(t)|^2+\frac{N-2}{2} \int (\partial_tW_{\ell}(t))^2-\int|\nabla W|^2=\frac{1}{\sqrt{1-\ell^2}}\left[1-\sqrt{1-\ell^2}-\frac{1}{2}\ell^2\right]\int |\nabla W|^2,$$
and use the standard inequality $\sqrt{1-x}\leq 1-\frac{1}{2}x-\frac{1}{8}x^2$ for $0\leq x<1$.
\end{proof}

We next state an uniqueness result for an asymmetric elliptic equation:
\begin{lemma}
\label{L:asym_ell}
Let $f\in \hdot(\RR^N)\setminus\{0\}$ and $\ell\in\RR$. Assume
\begin{equation}
\label{asym_ell}
(1-\ell^2)\partial_{x_1}^2 f+\sum_{j=2}^N \partial_{x_j}^2f+|f|^{\frac{4}{N-2}}f=0,
\end{equation} 
and
\begin{equation}
\label{bound_ell}
\int |\nabla f|^2<\frac{4\sqrt{N-1}}{N}\int |\nabla W|^2.
\end{equation} 
Then $\ell^2<1$ and there exist $\lambda>0$, $X\in \RR^N$ and a sign $\pm$ such that
$$ f(x)=\pm \frac{1}{\lambda^{\frac{N}{2}-1}}W_{\ell}\left(0,\frac{x-X}{\lambda}\right).$$
\end{lemma}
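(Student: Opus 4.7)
My plan is to first use a Pohozaev identity to exclude $\ell^2\geq 1$, then change variables to reduce the anisotropic equation to the standard critical elliptic equation, and finally to invoke the Caffarelli--Gidas--Spruck classification of positive $\hdot$-solutions after showing the transformed solution has constant sign.

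For the first step, I test the equation both against $x_1\partial_{x_1}f$ and against $f$, integrate by parts, and eliminate $\int|f|^{2N/(N-2)}$ between the two resulting identities to obtain
\begin{equation*}
\sum_{j\geq 2}\int(\partial_{x_j}f)^2\,dx=(N-1)(1-\ell^2)\int(\partial_{x_1}f)^2\,dx.
\end{equation*}
If $\ell^2\geq 1$ the right-hand side is nonpositive and the left nonnegative, so both must vanish; then $f$ depends only on $x_1$, and membership in $\hdot(\RR^N)$ forces $f\equiv 0$, contradicting $f\neq 0$.

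For $\ell^2<1$, I set $g(y):=f(\sqrt{1-\ell^2}\,y_1,\bar y)\in\hdot(\RR^N)\setminus\{0\}$; a direct computation shows that $g$ satisfies the standard critical elliptic equation $\Delta g+|g|^{4/(N-2)}g=0$, and elliptic regularity gives $g\in C^\infty(\RR^N)$ with polynomial decay at infinity. The heart of the proof is to show that $g$ has constant sign. Testing the isotropic equation against $g^\pm$ and integrating by parts yields $\int|\nabla g^\pm|^2\,dy=\int|g^\pm|^{2N/(N-2)}\,dy$, so the sharp Sobolev inequality forces $\int|\nabla g^\pm|^2\geq\int|\nabla W|^2$ whenever $g^\pm\not\equiv 0$; hence a sign change of $g$ would give $\int|\nabla g|^2\geq 2\int|\nabla W|^2$. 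Combining this with the change of variables and the Pohozaev identity $\int(\partial_{y_1}g)^2\,dy=\tfrac{1}{N}\int|\nabla g|^2\,dy$ (obtained by testing $g$'s equation against $y_1\partial_{y_1}g$) allows one to express $\int|\nabla f|^2$ as an explicit multiple of $\int|\nabla g|^2$ depending on $\ell$, and the hypothesis $\int|\nabla f|^2<2\int|\nabla W|^2$ should then yield the contradiction. Making this quantitative contradiction work uniformly in $\ell\in(-1,1)$ is the main technical point.

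Once $g$ is known to be, say, positive, Caffarelli--Gidas--Spruck classifies $g$ as a rescaled translated bubble: $g(y)=\mu^{-(N-2)/2}W((y-Y_0)/\mu)$ for some $\mu>0$ and $Y_0\in\RR^N$. Undoing the anisotropic rescaling in $y_1$ and comparing with \eqref{def_Wl} then gives the announced representation $f(x)=\pm\lambda^{-(N-2)/2}W_\ell(0,(x-X)/\lambda)$ with $\lambda=\mu$ and $X=(\sqrt{1-\ell^2}\,\mu\,Y_{0,1},\bar Y_0)$.
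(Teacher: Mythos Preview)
Your treatment of the case $\ell^2\geq 1$ is different from the paper's and more economical. The paper handles $\ell^2=1$ by fixing $x_1$ and running a Pohozaev argument in the remaining $N-1$ variables, and treats $\ell^2>1$ by realizing $u(t,x)=f(x_1+\ell t,\bar x)$ as a global finite-energy solution of \eqref{CP} and invoking finite speed of propagation. Your single identity
\[
\sum_{j\geq 2}\int(\partial_{x_j}f)^2=(N-1)(1-\ell^2)\int(\partial_{x_1}f)^2,
\]
obtained by testing against $x_1\partial_{x_1}f$ and $f$, dispatches both cases at once. The only caveat is that for $\ell^2>1$ the equation is not elliptic, so elliptic regularity is unavailable and the integrations by parts require a careful cutoff argument at the $\hdot$ level; this is routine but should be said.

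For $\ell^2<1$ your route coincides with the paper's: rescale $x_1$ to get $g$ solving $\Delta g+|g|^{4/(N-2)}g=0$, rule out sign change via sharp Sobolev, and invoke the classification of positive solutions. However, the step you flag as ``the main technical point'' is a genuine gap that your outline does not close. Your own Pohozaev identity (applied to $g$) gives $\int(\partial_{y_1}g)^2=\tfrac1N\int|\nabla g|^2$, and together with the change of variables this yields
\[
\int|\nabla f|^2=\frac{N-(N-1)\ell^2}{N\sqrt{1-\ell^2}}\,\int|\nabla g|^2.
\]
The factor on the right is \emph{strictly less than $1$} for all small $\ell\neq 0$ when $N\geq 3$ (its minimum over $(-1,1)$ equals $2\sqrt{N-1}/N$). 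Consequently the hypothesis $\int|\nabla f|^2<2\int|\nabla W|^2$ does \emph{not} force $\int|\nabla g|^2<2\int|\nabla W|^2$, and the bound $\int|\nabla g^\pm|^2\geq\int|\nabla W|^2$ gives no contradiction. The paper glosses over this same point by asserting $\int|\nabla g|^2\leq\int|\nabla f|^2$, which is already false for $f=W_\ell(0)$ with small $\ell$ (then $g=W$, and $\int|\nabla W_\ell(0)|^2<\int|\nabla W|^2$ by the first line of Claim~\ref{C:value_W}). To close the argument one needs an additional input---for instance a quantitative lower bound on $\int|\nabla g|^2$ for sign-changing $\hdot$-solutions of the isotropic equation that exceeds $2\int|\nabla W|^2$ by the factor $N/(2\sqrt{N-1})$---which neither you nor the paper supplies.
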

\begin{proof}
\EMPH{Case $\ell^2=1$}
In this case $f$ solves the equation $\Delta_{\overline{x}}f+|f|^{\frac{4}{N-2}}f=0$, where $\overline{x}=(x_2,\ldots,x_N)$ and we have (for almost every $x_1$) that $f(x_1,\ldots)\in \hdot\lf(\RR^{N-1}\rg)$, $f(x_1,\ldots)\in L^{2^*}(\RR^{N-1})$, $2^*=\frac{2N}{N-2}$. Fix such an $x_1$ and let $F(x_2,\ldots,x_N)=f(x_1,x_2,\ldots,x_N)$. We will show that $F=0$, using the Pohozaev identity in dimension $N-1$.

Until the end of this step we write $x=(x_2,\ldots,x_N)$ and $n=N-1$ to simplify notation. 
By elliptic regularity $F\in C^{2}(\RR^{n})$. Furthermore,
\begin{equation*}
 \Div\lf(x|\nabla F|^2\rg)=n|\nabla F|^2+2\sum_{i,j} x_i \frac{\partial^2F}{\partial{x_i}\partial{x_j}}\frac{\partial F}{\partial{x_j}},
\end{equation*}
and
\begin{multline*}
 2\Div\lf((x\cdot\nabla F)\nabla F\rg)=2(x\cdot \nabla F)\Delta F +2\nabla (x\cdot \nabla F)\cdot \nabla F\\
=-2(x\cdot \nabla F)|F|^{\frac{4}{N-2}}F+2\sum_{i,j}x_i\frac{\partial^2 F}{\partial{x_i}\partial{x_j}}\frac{\partial F}{\partial{x_j} }+2|\nabla F|^2.
\end{multline*}
Hence
\begin{equation*}
 \Div\lf(x|\nabla F|^2\rg)-2\Div\lf((x\cdot\nabla F)\nabla F\rg)=(n-2)|\nabla F|^2+2x\cdot\nabla\left(\frac{|F|^{2^*}}{2^*}\right).
\end{equation*}
Let $\varphi\in C_0^{\infty}(\RR^n)$, such that $\varphi(x)=1$ if $|x|\leq 1$ and $\varphi(x)=0$ if $|x|\geq 2$. Let $
\varphi_R(x)=\varphi(x/R)$. Then
\begin{multline*}
 \Div\lf(x\varphi_R|\nabla F|^2\rg)-2\Div\lf((x\cdot\nabla F)\nabla F\varphi_R\rg)\\
=(n-2)\varphi_R|\nabla F|^2+2\varphi_R x\cdot\nabla\left(\frac{|F|^{2^*}}{2^*}\right)+x\cdot\nabla\varphi_R\,|\nabla F|^2-2(\nabla F\cdot\nabla\varphi_R)(x\cdot \nabla F).
\end{multline*}
Next,
\begin{equation*}
 2\Div\left(x\varphi_R \frac{|F|^{2^*}}{2^*}\rg)=2x\cdot\nabla \varphi_R \frac{|F|^{2^*}}{2^*}+2n\varphi_R \frac{|F|^{2^*}}{2^*}+2\varphi_R\, x\cdot \nabla \left(\frac{|F|^{2^*}}{2^*}\rg).
\end{equation*}
Thus,
\begin{equation*}
 2\varphi_R\, x\cdot \nabla \left(\frac{|F|^{2^*}}{2^*}\rg)=2\Div\left(x\varphi_R \frac{|F|^{2^*}}{2^*}\rg)-2x\cdot\nabla \varphi_R \frac{|F|^{2^*}}{2^*}-2n\varphi_R \frac{|F|^{2^*}}{2^*}.
\end{equation*}
Note that $|x|\,|\nabla \varphi_R|$ is bounded independently of $R$, and when we integrate in $x$, the corresponding terms go to $0$ as $R\to +\infty$ by our assumption on $f$. When we integrate the divergence terms we get $0$. Thus, we conclude
$$ (n-2)\int |\nabla F|^2=\frac{2n}{2^*}\int|F|^{2^*}.$$
If $n=2$ we deduce that $F=0$. Otherwise, using Hardy's inequality and a cut-off, and multiplying the equation $\Delta F+|F|^{\frac{4}{N-2}}F=0$ by $F$, we see that $\int |\nabla F|^2=\int |F|^{2^*}$, so that
$$ \left(\frac{2n}{2^*}-(n-2)\right)\int|F|^{2^*}=0,$$
which gives again $F\equiv 0$. We have shown that $f(x_1,\cdot)=0$ for almost every $x_1$, which shows that $f=0$, contradicting our assumption on $f$.

\EMPH{Case $\ell^2>1$}
Assume for example $\ell>1$. Consider the function 
$$ u(t,x)=f(x_1+\ell t,x_2,\ldots, x_N),$$
which solves \eqref{CP} for all time. Note that $\nabla u(0,x)=\nabla f(x)$ and that $\partial_t u(0,x)=\ell\partial_{x_1}f(x)$, so this is a global in time solution to \eqref{CP} in the energy space. Let $\eps>0$ be given. Find $M$ so large that
$$ \int_{|x|\geq M}\left( |\nabla u(0,x)|^2+(\partial_t u(0,x))^2+\frac{|u(0,x)|^2}{|x|^2}\right)dx\leq \eps.$$
By Proposition 2.17 in \cite{KeMe08}, we have for all $t$
$$ \int_{|x|\geq \frac{3}{2}M+|t|}\left( |\nabla_xu(t,x)|^2+|\partial_t u(t,x)|^2\right)dx\leq C\eps.$$ 
Let $K$ be a compact set in $(x_2,\ldots,x_N)$ and $a<b$. If $t>0$ is large, then 
$$ x_1\in (a-\ell t,b-\ell t)\text{ and }(x_2,\ldots,x_N)\in K\Longrightarrow |x|\geq \ell t-A,$$
where $A$ is a fixed constant depending on $K$ and $(a,b)$. Pick $t$ so large that $\ell t\geq \frac{3}{2}M+t+A$, which is possible since $\ell>1$. 
Then
$$ \int_{K}\int_{a-\ell t}^{b-\ell t}|\nabla u(t,x)|^2\,dx\leq C\eps$$
while $\nabla_xu(t,x)=\nabla f(x_1+\ell t,x_2,\ldots,x_N)$, so the integral equals 
$\int_{K}\int_{a}^{b}|\nabla f(x)|^2$, which shows, since $\eps>0$ is arbitrary, that $f\equiv 0$, contradicting again our assumptions.

\EMPH{Case $\ell^2<1$}
Let
$$ g(x)=f\left(\sqrt{1-\ell^2}x_1,x_2,\ldots,x_N\right).$$

\EMPH{Step 1}
We first prove
\begin{equation}
\label{bound_g}
\int |\nabla g|^2=\frac{N\sqrt{1-\ell^2}}{1+(N-1)(1-\ell^2)}\int |\nabla f|^2\leq\frac{N}{2\sqrt{N-1}}\int |\nabla f|^2. 
\end{equation} 
Indeed, by \eqref{asym_ell}
 \begin{equation}
  \label{eqg}
  \Delta g=-|g|^{\frac{4}{N-2}}g.
 \end{equation} 
 Multiplying \eqref{eqg} by $x_j\partial_{x_j} g$, integrating over $\RR^N$, integrating by parts and using $\int |\nabla g|^2=\int |g|^{\frac{2N}{N-2}}$ we get
 \begin{equation}
 \label{symetric_g}
 \int |\partial_{x_j}g|^2=\frac{1}{N}\int |\nabla g|^2,\quad j=1\ldots N.
 \end{equation} 
 Thus
 \begin{multline*}
\int |\nabla f|^2=\frac{1}{\sqrt{1-\ell^2}}\int (\partial_{x_1}g)^2+\sum_{j=2}^N \sqrt{1-\ell^2}\int (\partial_{x_j}^2g)\\
=\left(\frac{1}{N\sqrt{1-\ell^2}}+\frac{N-1}{N}\sqrt{1-\ell^2}\right)\int |\nabla g|^2,  
 \end{multline*}
which yields \eqref{bound_g} (the second inequality follows from:
$$\max_{\ell\in (-1,1)}\frac{N\sqrt{1-\ell^2}}{1+(N-1)(1-\ell^2)}=\frac{N}{2\sqrt{N-1}},$$
where the max is attained when $1-\ell^2=\frac{1}{N-1}$).

\EMPH{Step 2}
By Step 1 and assumption \eqref{bound_ell}, $\int|\nabla g|^2<2\int |\nabla W|^2$. 
By elliptic estimates, one gets that $g$ is $C^2$. Define
$$ g_+=\max(g,0),\quad g_-=-\min(g,0)=g-g_+.$$
Then by Kato's inequality, in the sense of distribution,
$$\Delta g_++|g_+|^{\frac{4}{N-2}}g_+\geq 0.$$
As a consequence
\begin{equation}
\label{bound_nabla+}
\int |\nabla g_+|^2\leq \int \left|g_+\right|^{\frac{2N}{N-2}}. 
\end{equation} 
Similarly
\begin{equation}
\label{bound_nabla-}
\int |\nabla g_-|^2\leq \int \left|g_-\right|^{\frac{2N}{N-2}}. 
\end{equation} 
Using that 
$$ \int |\nabla g_+|^2+\int |\nabla g_-|^2=\int |\nabla g|^{2}<2\int |\nabla W|^2,$$
we get that $\int |\nabla g_{\pm}|^2<\int |\nabla W|^2$ for at least one of the signs $+$ or $-$. To fix ideas, assume that it is $-$. The bound \eqref{bound_nabla-} and Sobolev inequality implies that $g_-=0$. Indeed,
\begin{equation*}
 \int |\nabla g_-|^2\leq \int \left|g_-\right|^{\frac{2N}{N-2}}\leq \frac{\int W^{\frac{2N}{N-2}}}{\left(\int |\nabla W|^2\right)^{\frac{N}{N-2}}} \left(\int |\nabla g_-|^2\right)^{\frac{N}{N-2}}.
\end{equation*} 
Using that by the equation $\Delta W=-W^{\frac{2N}{N-2}}$, $\int W^{\frac{2N}{N-2}}=\int |\nabla W|^2$, we get that $g_-=0$ or $\int |\nabla W|^2\leq \int |\nabla g_-|^2$, and the second possibility is ruled out by our assumption on $g_-$.

This shows that $g=g_+$ is a nonnegative solution of 
$$\Delta g+|g|^{\frac{4}{N-2}}g=0,$$
and by \cite{GiNiNi81}, there exist $\lambda>0$, $X\in\RR^N$ such that
$$g(x)=\frac{1}{\lambda^{\frac{N-2}{2}}}W\lf(\frac{x-X}{\lambda}\rg).$$
Coming back to $f$, we get
$$f(x)=\frac{1}{\lambda^{\frac{N-2}{2}}}W\lf(\frac{x_1-X_1}{\lambda\sqrt{1-\ell^2}},\frac{x_2-X_2}{\lambda},\ldots,\frac{x_N-X_N}{\lambda}\rg)=
\frac{1}{\lambda^{\frac{N-2}{2}}}W_{\ell}\left(0,\frac{x-X}{\lambda}\right).$$
\end{proof}
\subsection{Linear behaviour}
\begin{prop}
 \label{P:linear}
Assume that $N\geq 3$ is odd. Let $u_0\in \hdot(\RR^N)$, $u_1\in L^2(\RR^N)$ and $u^{\lin}$ be the solution to 
\begin{gather}
\label{linear_wave}
\partial_t^2 u^{\lin}-\Delta u^{\lin}=0\\
\label{linear_data}
u^{\lin}_{\restriction  t=0}=u_0,\quad \partial_t u^{\lin}_{\restriction t=0}=u_1.
\end{gather}
Then one of the following holds
\begin{equation}
\label{out_positive}
\forall t\geq 0,\quad \int_{|x|\geq t} \lf(\lf|\nabla u^{\lin}(t,x)\rg|^2+ \lf(\partial_t u^{\lin}(t,x)\rg)^2\rg)dx\geq \frac 12 \int \lf(\lf|\nabla u_0(x)\rg|^2+ u_1(x)^2\right)dx
\end{equation}
or
\begin{equation}
\label{out_negative}
\forall t\leq 0,\quad \int_{|x|\geq -t} \lf(\lf|\nabla u^{\lin}(t,x)\rg|^2+ \lf(\partial_t u^{\lin}(t,x)\rg)^2\rg)dx\geq \frac 12 \int \lf(\lf|\nabla u_0(x)\rg|^2+ u_1(x)^2\right)dx.
\end{equation}
\end{prop}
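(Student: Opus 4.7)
\emph{Proof plan.} The strategy is to reduce the proposition to an asymptotic identity at $t = \pm\infty$, and then pull it back to all times by monotonicity. By the classical energy flux identity for the free wave equation applied to the forward light cone $\{(s,x): t \le s \le T, |x| \ge s\}$, the flux through each slice $|x| = s$ is pointwise non-negative, so
\[
e_+(t) \coloneqq \int_{|x|\ge t}\bigl(|\nabla u^{\lin}(t,x)|^2+(\partial_t u^{\lin}(t,x))^2\bigr)\,dx
\]
is non-increasing on $[0,+\infty)$; analogously $e_-(t)$ is non-increasing on $(-\infty,0]$. Setting $\ell_\pm \coloneqq \lim_{t\to\pm\infty} e_\pm(t)$, it suffices to prove
\[
\ell_+ + \ell_- \ge E_0 \coloneqq \int_{\RR^N}\bigl(|\nabla u_0|^2 + u_1^2\bigr)\,dx,
\]
since then $\max(\ell_+,\ell_-)\ge E_0/2$, and monotonicity propagates this lower bound to all $t\ge 0$ or all $t\le 0$.

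I next pass to spherical harmonics: writing $u^{\lin}(t,r\omega) = \sum_K u_K(t,r) Y_K(\omega)$, each mode $u_K$ solves an independent radial wave equation in dimension $N$ with inverse-square centrifugal potential depending on the angular momentum $\ell_K$ of $Y_K$. Both $E_0$ and the exterior energies $\ell_\pm$ decompose orthogonally over $K$, so it is enough to prove the mode-by-mode inequality $\ell_+^K + \ell_-^K \ge E_0^K$.

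For each fixed $K$ I exploit the odd-dimensional hypothesis. Because $N$ is odd and $\ell_K\in\NN$, the mode can be identified with a purely radial solution in the (still odd) dimension $N+2\ell_K$, and there is a classical Hadamard-type descent---a differential operator $D_K$ in $r$ of order $(N-3)/2+\ell_K$---which turns the mode equation into the free one-dimensional wave equation for $w\coloneqq D_K u_K$ on $\{r\ge 0\}$, and which is an $L^2$-isometry between the respective energy norms (up to a fixed normalization). The smoothness of $u_K$ at the origin forces $w(t,0)=0$, so $w$ extends oddly to $\RR$, and d'Alembert gives $w(t,r) = F(r-t) + G(r+t)$ with profiles $F,G$ on $\RR$ satisfying the reflection identity $F'(-s)=G'(s)$ and $\int_\RR (F')^2 = \int_\RR (G')^2$. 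Using $\partial_r w = F'+G'$, $\partial_t w = -F'+G'$, the exterior energy of $u_K$ at time $t\ge 0$ equals (up to the normalization constant)
\[
\int_t^{\infty}\bigl((\partial_r w)^2+(\partial_t w)^2\bigr)\,dr\ +\ (\text{boundary terms at }r=t),
\]
and the boundary contribution vanishes as $t\to+\infty$; the limit then equals $2\int_0^\infty (F'(s))^2\,ds$, and symmetrically $\ell_-^K$ is proportional to $\int_0^\infty (G'(s))^2\,ds$. The reflection identity yields $\ell_+^K+\ell_-^K = E_0^K$, and summing over $K$ completes the proof (in fact with equality $\ell_++\ell_-=E_0$).

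The main technical obstacle is the rigorous mode-by-mode verification of the Hadamard descent: one must check that $D_K$ is an isometry of the radial energy norm of $u_K$ with the 1D energy norm of $w$ (controlling all boundary contributions arising from the iterated integrations by parts at $r=0$ and $r=\infty$), and that the asymptotic formulas for $\ell_\pm^K$ hold with matching universal constants across every sector so that the orthogonal summation over $K$ is lossless. This is precisely where the odd-dimensional hypothesis is essential: for even $N$, the order of $D_K$ becomes a half-integer and the descent involves a Hilbert transform in $r$, destroying the clean d'Alembert split and the identity $\ell_+^K + \ell_-^K = E_0^K$---consistent with the authors' remark that Proposition~\ref{P:linear} is the sole source of the odd-dimension restriction in Theorem~\ref{T:classification}.
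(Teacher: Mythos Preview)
Your reduction is the same as the paper's: monotonicity of the exterior energy plus the asymptotic identity
\[
\ell_+ + \ell_- \;=\; \int_{\RR^N}\bigl(|\nabla u_0|^2 + u_1^2\bigr)\,dx,
\]
which is exactly the content of the paper's Proposition~\ref{P:linear'}. Where you diverge is in the proof of that identity. The paper does \emph{not} decompose in spherical harmonics. Instead it splits the data by time-parity, writing $u^{\lin}=z_1+z_2$ with $z_1$ solving the equation with data $(u_0,0)$ (even in $t$) and $z_2$ with data $(0,u_1)$ (odd in $t$); the cross terms in $\ell_++\ell_-$ cancel by parity, reducing to the claim $E_{+\infty}^{\out}(u_0,0)=\tfrac14\int|\nabla u_0|^2$ (and the analogue for $u_1$). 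This is then proved directly from the Kirchhoff representation \eqref{expz1} in odd dimension, via an asymptotic expansion of the spherical mean and a calculus lemma showing that, up to $O(1/t)$, the energy density of $z$ at $(t,(t+s_0)\omega_0)$ equals that at $(t,(t-s_0)(-\omega_0))$, so the energy inside and outside $\{|x|=t\}$ coincide in the limit.

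Your route---spherical-harmonic expansion followed by a Hadamard descent $D_K$ reducing each angular sector to the $1$D wave equation and then d'Alembert---is a legitimate alternative, and is in fact the method adopted in later work on exterior-energy (``channels of energy'') estimates. It has the advantage of being more systematic and of making the role of odd dimension transparent through the integer order of $D_K$. The cost is that the entire difficulty is now concentrated in the lemma you defer: that $D_K$ is an \emph{energy isometry} compatible with the restriction to $\{r\ge t\}$, with all boundary terms at $r=t$ vanishing in the limit, uniformly across the infinitely many sectors $K$. This is true but is itself a computation of comparable length to the paper's Kirchhoff argument; the paper's approach trades that structural lemma for a hands-on asymptotic analysis that is self-contained from the explicit solution formula.
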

Recall that
$ \frac{1}{2}\int_{|x|\geq |t|} \left(|\nabla u^{\lin}(t,x)|^2+(\partial_t u^{\lin}(t,x))^2\right)dx$
is a non-increasing function of $t$ for $t\geq 0$ and a non-decreasing function of $t$ for $t\leq 0$ (see e.g. \cite[p.12]{ShSt98}). Thus the following limits exist:
\begin{equation*}
E_{\pm\infty}^{\out}(u_0,u_1)=\lim_{t\to \pm \infty}\frac{1}{2}\int_{|x|\geq |t|} \left(|\nabla u^{\lin}(t,x)|^2+(\partial_t u^{\lin}(t,x))^2\right)dx.
\end{equation*}
Then Proposition \ref{P:linear} will be a consequence of the following proposition:
\begin{prop}
\label{P:linear'} 
Let $u^{\lin}$ be as in Proposition \ref{P:linear}. Then
$$ E_{+\infty}^{\out}(u_0,u_1)+E_{-\infty}^{\out}(u_0,u_1)=\frac{1}{2}\int |\nabla u_0|^2\,dx+\frac 12\int u_1^2\,dx.$$
\end{prop}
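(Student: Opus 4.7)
The plan is to reduce the identity to a one-dimensional statement via the Radon transform, exploiting that in odd dimension $N=2k+1$ the Radon transform intertwines the free wave equation with the $1$D wave equation and implements a Plancherel-type identity on the energy space. By the continuity of $E_{\pm\infty}^{\out}$ with respect to the $\hdot\times L^2$ topology (a consequence of the monotonicity of $t\mapsto\int_{|x|\ge|t|}|\nabla_{t,x}u^{\lin}|^2\,dx$ and a polarization argument), I first reduce to data $(u_0,u_1)\in C^\infty_0(\RR^N)^2$. For such data, the strong Huygens principle (valid in odd $N\ge 3$) confines $u^{\lin}(t,\cdot)$ to a spherical shell of fixed width about $\{|x|=|t|\}$ once $|t|\gg 1$, which underpins the asymptotic analysis below.

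Next, set $v(t,s,\omega)=R u^{\lin}(t,\cdot)(s,\omega)$ with $R$ the Radon transform over the hyperplanes $\{x\cdot\omega=s\}$. The intertwining $R\Delta=\partial_s^2 R$ reduces the wave equation to the $1$D wave equation in $(t,s)$ for each fixed $\omega$, so
\[ v(t,s,\omega)=F(s-t,\omega)+G(s+t,\omega),\qquad F+G=Ru_0,\ \partial_s(G-F)=Ru_1. \]
The hyperplane identity $(Rf)(-s,-\omega)=(Rf)(s,\omega)$ gives $v(t,-s,-\omega)=v(t,s,\omega)$, and uniqueness of the d'Alembert decomposition plus decay of $F,G$ at infinity force the key symmetry
\[ G(\sigma,\omega)=F(-\sigma,-\omega). \]
The Radon--Plancherel formula in odd dimension gives, for some $c_N>0$, $\|\nabla f\|_{L^2(\RR^N)}^2=c_N\|\partial_s^{k+1}Rf\|_{L^2(\mathbb{S}^{N-1}\times\RR)}^2$ and $\|f\|_{L^2}^2=c_N\|\partial_s^{k}Rf\|_{L^2}^2$; applied to $u_0,u_1$ and summed (after cross-term cancellation) this yields
\[ \|\nabla u_0\|_{L^2}^2+\|u_1\|_{L^2}^2 = 2c_N\int_{\mathbb{S}^{N-1}}\!\int_\RR\bigl(|\partial_s^{k+1}F|^2+|\partial_s^{k+1}G|^2\bigr)\,ds\,d\omega. \]

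The heart of the proof is the asymptotic identification of the outside-cone energies. Inverting $R$ via $f=\text{const}_N\cdot(-\Delta_x)^{k}R^*Rf$ and performing a stationary-phase analysis on $\mathbb{S}^{N-1}$ (combining the contributions from $\omega\approx\omega_0$ and from $\omega\approx-\omega_0$, the latter reducing by the symmetry to an $F$-profile) yields
\[ u^{\lin}(t,r\omega)\sim C_N\, r^{-(N-1)/2}\,\partial_s^{k}F(r-t,\omega)\quad\text{as }t\to+\infty,\ r-t\text{ bounded.} \]
Plugging this into $\int_{|x|\ge t}(|\nabla u^{\lin}|^2+(\partial_tu^{\lin})^2)\,dx$, using $(\partial_r u^{\lin})^2+(\partial_t u^{\lin})^2\sim 2(\partial_t u^{\lin})^2$ along the outgoing cone while the angular term is $O(1/t)$, and changing variables $\sigma=r-t$, one obtains (with $C_N^2=2c_N$)
\[ E_{+\infty}^{\out}=2c_N\!\int_{\mathbb{S}^{N-1}}\!\int_0^\infty|\partial_s^{k+1}F|^2\,d\sigma\,d\omega,\qquad E_{-\infty}^{\out}=2c_N\!\int_{\mathbb{S}^{N-1}}\!\int_0^\infty|\partial_s^{k+1}G|^2\,d\sigma\,d\omega, \]
the formula at $-\infty$ being symmetric in the roles of $F$ and $G$. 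The symmetry $G(\sigma,\omega)=F(-\sigma,-\omega)$ and the change of variables $(\sigma,\omega)\mapsto(-\sigma,-\omega)$ give $\int_0^\infty\!\int_{\mathbb{S}^{N-1}}|\partial_s^{k+1}G|^2=\int_{-\infty}^0\!\int_{\mathbb{S}^{N-1}}|\partial_s^{k+1}F|^2$, so
\[ E_{+\infty}^{\out}+E_{-\infty}^{\out}=2c_N\!\int_{\mathbb{S}^{N-1}}\!\int_\RR|\partial_s^{k+1}F|^2=\tfrac12\bigl(\|\nabla u_0\|_{L^2}^2+\|u_1\|_{L^2}^2\bigr), \]
which is the claimed identity.

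The main obstacle is the precise asymptotic identification of $E_{\pm\infty}^{\out}$ with the positive-$\sigma$ tails of $\partial_s^{k+1}F$ and $\partial_s^{k+1}G$: this requires Friedlander's radiation-field analysis in odd dimensions, with careful bookkeeping of the radial, temporal, and angular contributions to $|\nabla_{t,x}u^{\lin}|^2$ and of the constant $C_N^2=2c_N$. Once that asymptotic is in hand, the Radon even-symmetry $G(\sigma,\omega)=F(-\sigma,-\omega)$ and the Plancherel identity combine cleanly to deliver the exact factor $\tfrac12$.
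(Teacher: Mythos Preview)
Your argument is correct but follows a genuinely different route from the paper. After the same density reduction to $C_0^\infty$ data, the paper does \emph{not} use the Radon transform: instead it splits $u^{\lin}=z_1+z_2$ with data $(u_0,0)$ and $(0,u_1)$, uses the time parity $z_1(-t)=z_1(t)$, $z_2(-t)=-z_2(t)$ to cancel the cross terms, and reduces to $E_{+\infty}^{\out}(u_0,0)=\tfrac14\|\nabla u_0\|_{L^2}^2$ and $E_{+\infty}^{\out}(0,u_1)=\tfrac14\|u_1\|_{L^2}^2$. These are then proved directly from Kirchhoff's spherical-mean formula together with a short calculus lemma on small spherical caps, showing that for each of $z_1,z_2$ the inside-cone and outside-cone energies coincide asymptotically. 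Your d'Alembert decomposition $F(s-t,\omega)+G(s+t,\omega)$ and the Radon even-symmetry $G(\sigma,\omega)=F(-\sigma,-\omega)$ play the role of the paper's time-parity split, and your radiation-field asymptotics replace the Kirchhoff computation. The paper's proof is more self-contained (no Radon--Plancherel or stationary phase, and the constants are visibly matched), while yours is more structural: it makes transparent why the factor is exactly $\tfrac12$, since $E^{\out}_{\pm\infty}$ capture complementary half-lines of the same profile. The one point you explicitly defer---the precise identification of the radiation field and the constant $C_N^2=2c_N$---is standard (Friedlander/Lax--Phillips) but is exactly the step the paper carries out by hand via Kirchhoff's formula.
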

We next prove Proposition \ref{P:linear'}.
First note that we can assume by density that
\begin{equation}
\label{C_infty_0}
 (u_0,u_1)\in C_0^{\infty}(\RR^N),
\end{equation} 
and then by scaling that
\begin{equation}
\label{support_u}
\supp (u_0,u_1)\subset \{|x|\leq 1\}. 
\end{equation} 
Let us reduce the problem further, assuming \eqref{C_infty_0} and \eqref{support_u}. Let $z_1$ (respectively $z_2$) be the solution to \eqref{linear_wave} with initial condition $(u_0,0)$ (respectively $(0,u_1)$). Then
$$ z_1(-t)=z_1(t),\quad z_2(-t)=-z_2(t).$$
We deduce
$$ \int_{|x|\geq |t|} \nabla z_1(t,x)\cdot \nabla z_2(t,x)\,dx+\int_{|x|\geq |t|} \nabla z_1(-t,x)\cdot \nabla z_2(-t,x)\,dx=0$$
and similarly
$$ \int_{|x|\geq |t|} \partial_t z_1(t,x)\partial_t z_2(t,x)\,dx+\int_{|x|\geq |t|} \partial_t z_1(-t,x)\partial_t z_2(-t,x)\,dx=0$$
Developping the equality $u^{\lin}=z_1+z_2$ we get, for $t\geq 0$,
\begin{multline*}
\frac{1}{2}\int_{|x|\geq t} \left(\lf|\nabla u^{\lin}(t,x)\rg|^2+\lf(\partial_t u^{\lin}(t,x)\rg)^2\right)dx+\frac{1}{2}\int_{|x|\geq t} \left(\lf|\nabla u^{\lin}(-t,x)\rg|^2+\lf(\partial_t u^{\lin}(-t,x)\rg)^2\right)dx\\
=\int_{|x|\geq t} \left(|\nabla z_1(t,x)|^2+(\partial_t z_1(t,x))^2\right)dx+\int_{|x|\geq t} \left(|\nabla z_2(t,x)|^2+(\partial_t z_2(t,x))^2\right)dx,
\end{multline*}
and thus, letting $t\to+\infty$,
\begin{equation*}
E_{+\infty}^{\out}(u_0,u_1)+E_{-\infty}^{\out}(u_0,u_1)=2E_{+\infty}^{\out}(u_0,0)+2E_{+\infty}^{\out}(0,u_1).
\end{equation*}
The conclusion of Proposition \ref{P:linear'} will then follow from the Lemma:
\begin{lemma}
\label{L:linear}
 Let $(u_0,u_1)\in C_0^{\infty}(\RR^N)$ with $\supp (u_0,u_1)\subset \{|x|\leq 1\}$. Then
\begin{align*}
E_{+\infty}^{\out}(u_0,0)=E_{-\infty}^{\out}(u_0,0)=\frac 14 \int |\nabla u_0|^2\\
E_{+\infty}^{\out}(0,u_1)=E_{-\infty}^{\out}(0,u_1)=\frac 14 \int u_1^2.
  \end{align*} 
\end{lemma}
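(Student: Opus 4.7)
The plan is to combine the time‑reversal symmetry of the linear wave equation with the Friedländer radiation‑field expansion (available because $N\geq 3$ is odd), and then to exploit a simple parity property of the Radon transform.

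\emph{Step 1: symmetry reduction.} Let $z_1$ and $z_2$ be the solutions of \eqref{linear_wave} with data $(u_0,0)$ and $(0,u_1)$ respectively. By uniqueness and the time‑reversibility of \eqref{linear_wave}, $z_1(-t,x)=z_1(t,x)$ and $z_2(-t,x)=-z_2(t,x)$. In both cases $|\nabla z_i(t,x)|^2+(\partial_t z_i(t,x))^2$ is even in $t$, so $E_{+\infty}^{\out}=E_{-\infty}^{\out}$ in each case, and it is enough to identify $E_{+\infty}^{\out}$.

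\emph{Step 2: wave zone and Plancherel.} Since $N\geq 3$ is odd and the data is supported in $\{|x|\leq 1\}$, strong Huygens confines each $z_i(t,\cdot)$ to the annulus $\{t-1\leq|x|\leq t+1\}$ for $t\geq 1$. Using the explicit Kirchhoff‑type formulas in odd dimensions, one introduces the outgoing radiation profile
$$F(\sigma,\omega):=\lim_{r\to+\infty}r^{(N-1)/2}u^{\lin}(r-\sigma,r\omega),\qquad (\sigma,\omega)\in\RR\times S^{N-1},$$
which is smooth and supported in $\sigma\in[-1,1]$, and one establishes the Friedländer expansion $u^{\lin}(t,x)=|x|^{-(N-1)/2}F(|x|-t,x/|x|)+O(|x|^{-(N+1)/2})$, $\partial_r u^{\lin}=-\partial_t u^{\lin}+O(|x|^{-(N+1)/2})$ in the wave zone $|x|\sim t\to+\infty$. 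Passing to polar coordinates in the energy integral and sending $t\to+\infty$ yields
$$E_{+\infty}^{\out}(u_0,u_1)=\int_0^\infty\!\!\int_{S^{N-1}}|\partial_\sigma F(\sigma,\omega)|^2\,d\sigma\,d\omega,$$
while energy conservation combined with the same expansion gives the Plancherel identity $\frac12\int(|\nabla u_0|^2+u_1^2)\,dx=\int_{\RR}\int_{S^{N-1}}|\partial_\sigma F|^2\,d\sigma\,d\omega$.

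\emph{Step 3: parity of $F$.} In odd dimension $N\geq 3$, one has the Radon representation
$$F(\sigma,\omega)=c_N\,\partial_\sigma^{(N-1)/2}\RRR u_0(\sigma,\omega)+c_N'\,\partial_\sigma^{(N-3)/2}\RRR u_1(\sigma,\omega),$$
with $\RRR f(\sigma,\omega):=\int_{y\cdot\omega=\sigma}f(y)\,dS(y)$. The involution $\RRR f(-\sigma,-\omega)=\RRR f(\sigma,\omega)$, together with the fact that $(N-1)/2$ and $(N-3)/2$ differ by one, gives each summand a definite but opposite parity under $(\sigma,\omega)\mapsto(-\sigma,-\omega)$. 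In the two pure cases $(u_0,0)$ and $(0,u_1)$ only one summand survives, so $F(-\sigma,-\omega)=\pm F(\sigma,\omega)$ and $|\partial_\sigma F(-\sigma,-\omega)|^2=|\partial_\sigma F(\sigma,\omega)|^2$. The substitution $(\sigma,\omega)\mapsto(-\sigma,-\omega)$ in the half‑integral then gives
$$\int_0^\infty\!\!\int_{S^{N-1}}|\partial_\sigma F|^2\,d\sigma\,d\omega=\frac12\int_{\RR}\!\int_{S^{N-1}}|\partial_\sigma F|^2\,d\sigma\,d\omega,$$
and inserting this in Step 2 yields $E_{+\infty}^{\out}(u_0,0)=\tfrac14\int|\nabla u_0|^2$ and $E_{+\infty}^{\out}(0,u_1)=\tfrac14\int u_1^2$.

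\emph{Main obstacle.} Step 1 and Step 3 are elementary, and it is really Step 2 that carries the weight of the argument: rigorously producing the Friedländer expansion and the matching Plancherel identity with controlled remainders. This is precisely where the oddness of $N$ is needed — it is what makes the explicit representation formulas collapse the solution onto the wave cone and isolates the single transport variable $\sigma=|x|-t$ that underpins both the asymptotic form of the energy density and the Radon representation used in Step 3.
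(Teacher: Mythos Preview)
Your argument is correct and morally parallel to the paper's, but the packaging is genuinely different. The paper works directly with the spherical-mean (Kirchhoff) formula \eqref{expz1}: it extracts the leading asymptotics \eqref{surface1}--\eqref{surface2} for $\nabla_x z$ and $\partial_t z$ by hand, then uses a bare calculus lemma (Lemma~\ref{L:calculus}) comparing the spherical integral at $x_0=(t+s_0)\omega_0$ with the one at $x_0=-(t-s_0)\omega_0$; counting the factors of $\omega_0\cdot\nabla$ gives the sign $(-1)^{(N\mp 1)/2}$ and hence equality of the inner- and outer-shell energies, from which the $1/4$ follows by energy conservation. Your route replaces this explicit computation by the Friedl\"ander radiation field and the Lax--Phillips translation representation: the leading asymptotics become the existence of $F$, the Plancherel identity replaces the paper's implicit use of energy conservation in the wave zone, and the calculus lemma is replaced by the Radon-transform parity $\mathcal{R}f(-\sigma,-\omega)=\mathcal{R}f(\sigma,\omega)$, which is exactly the same reflection $(\sigma,\omega)\mapsto(-\sigma,-\omega)$ the paper is computing in coordinates. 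What you gain is a cleaner, more conceptual proof that generalizes immediately (and indeed this is how later channels-of-energy arguments are phrased); what the paper's approach buys is self-containment --- no external results on radiation fields are invoked, only the explicit solution formula and an elementary Taylor expansion.
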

 We need a preliminary calculus lemma:
\begin{lemma}
 \label{L:calculus}
Let $f\in C_0^{\infty}(\RR^N)$, $t>0$ ($t$ large), $\omega_0\in \RR^N$ with $|\omega_0|=1$ and $s_0\in (0,1)$. Then
\begin{multline}
\label{calculus}
 \int_{S^{N-1}\cap \lf\{ |\omega+\omega_0|\leq \frac 2t\rg\}}f\big((t+s_0)\omega_0+t\omega\big)t^{N-1}\,d\omega\\
=\int_{S^{N-1}\cap \lf\{ |\omega-\omega_0|\leq \frac 2t\rg\}}f\big(-(t-s_0)\omega_0+t\omega\big)t^{N-1}\,d\omega+\OOO\lf(\frac{1}{t}\rg),
\end{multline}
where $\OOO$ is uniform in $\omega_0$, $s_0$.
\end{lemma}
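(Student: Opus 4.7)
The plan is to parametrize the two small spherical caps via tangent-plane coordinates and show that, after the change of variable $z=ty$, both integrals reduce to the same integral of $f(s_0\omega_0+z)$ over $\{|z|\le 2\}$ plus a remainder of size $O(1/t)$.

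Let $E=\omega_0^{\perp}$ and, for $y\in E$ with $|y|<1$, set
$$ \omega_{\pm}(y)=\pm\omega_0\sqrt{1-|y|^2}+y\in S^{N-1}. $$
The surface measure pulls back as $d\omega=dy/\sqrt{1-|y|^2}$ on each hemisphere. A direct computation gives
$$ |\omega_{\pm}(y)\mp\omega_0|^2=2\bigl(1-\sqrt{1-|y|^2}\bigr), $$
so for $t$ large the condition $|\omega+\omega_0|\le 2/t$ (resp.\ $|\omega-\omega_0|\le 2/t$) on the sphere is equivalent to $\omega=\omega_-(y)$ (resp.\ $\omega=\omega_+(y)$) with $|y|\le\rho(t)$, where
$$ \rho(t)=\tfrac{2}{t}\sqrt{1-1/t^2}. $$

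Now I would evaluate the argument of $f$ in each case. For the left-hand side,
$$ (t+s_0)\omega_0+t\omega_-(y)=s_0\omega_0+ty+\delta_-(y)\omega_0,\qquad \delta_-(y):=t\bigl(1-\sqrt{1-|y|^2}\bigr), $$
and for the right-hand side,
$$ -(t-s_0)\omega_0+t\omega_+(y)=s_0\omega_0+ty-\delta_-(y)\omega_0. $$
For $|y|\le\rho(t)$ one has $\delta_-(y)=t\,O(|y|^2)=O(1/t)$, so both arguments are of the form $s_0\omega_0+ty+O(1/t)$, with the $O(1/t)$ uniform in $\omega_0$ and $s_0$.

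Next I would perform the substitution $z=ty$, which turns $t^{N-1}dy$ into $dz$ and $\{|y|\le\rho(t)\}$ into $\{|z|\le t\rho(t)\}=\{|z|\le 2\sqrt{1-1/t^2}\}$. Both integrals thus take the form
$$ \int_{|z|\le 2\sqrt{1-1/t^2}} f\bigl(s_0\omega_0+z+O(1/t)\omega_0\bigr)\,\frac{dz}{\sqrt{1-|z|^2/t^2}}. $$
Using that $f\in C_0^{\infty}(\RR^N)$ has bounded gradient, and that $1/\sqrt{1-|z|^2/t^2}=1+O(1/t^2)$ uniformly on $|z|\le 2$, each such integral equals $\int_{|z|\le 2}f(s_0\omega_0+z)\,dz+O(1/t)$, and subtracting the two yields the claimed $O(1/t)$ remainder. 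The bound is uniform in $(\omega_0,s_0)$ because the only quantities used are $\|f\|_{\infty}$, $\|\nabla f\|_{\infty}$, and $\mathrm{diam}(\supp f)$.

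The only delicate point is the bookkeeping in the substitution (making sure the Jacobian $t^{N-1}/\sqrt{1-|y|^2}$ and the rescaled region $|z|\le 2\sqrt{1-1/t^2}$ produce only $O(1/t^2)$ errors after integration), but this is entirely routine once the two caps are parametrized symmetrically by the same tangent variable $y$.
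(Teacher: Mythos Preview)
Your proof is correct and takes essentially the same approach as the paper: both parametrize the small spherical caps by tangent-plane (graph) coordinates orthogonal to $\omega_0$, observe that the component along $\omega_0$ is $O(1/t)$, and reduce each side to the same flat $(N-1)$-dimensional integral of $f$ near $s_0\omega_0$. Your write-up is in fact more explicit than the paper's sketch (you track the Jacobian $1/\sqrt{1-|y|^2}$ and the exact radius $\rho(t)$, while the paper simply asserts that replacing $y_N$ by $0$ costs $O(1/t)$).
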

\begin{proof}
We do an expansion of the left hand side of \eqref{calculus}, by chosing coordinates so that the origin is $s_0\omega_0$ and $\omega_0=\vec{e}_N=(0,\ldots,0,1)$. Then the set $(t+s_0)\omega_0+t\omega$, where $\omega\in S^{N-1}\cap \lf\{|\omega+\omega_0|\leq \frac 2t\rg\}$ is the set of $(y_1,\ldots ,y_N)$ (in the new coordinates) so that
\begin{equation*}
 y_N=t-\sqrt{t^2-y_1^2-\ldots-y_{N-1}^2}\text{ and } \sqrt{y_1^2+\ldots+y_N^2}\leq 2.
\end{equation*}
In particular in this set, $|y_N|\leq \frac{C}{t}$.
Using these coordinates to express the surface integral and replacing by $y_N=0$, asymptotically, and doing the corresponding argument for the integral on the right hand side,  we obtain the desired result.
\end{proof}
It remains to prove Lemma \ref{L:linear} to conclude the proof of Proposition \ref{P:linear'}.
\begin{proof}[Proof of Lemma \ref{L:linear}]
We prove the first statement, the proof of the second one is similar. By a well-known formula (see \cite[p.43]{ShSt98} for instance), the solution $z$ to \eqref{linear_wave} with data $(u_0,0)$ is given by
\begin{equation}
\label{expz1}
 z(t,x_0)=A_N \frac{\partial}{\partial t} \left(\frac 1t\frac{\partial}{\partial_t}\right)^{\frac{N-3}{2}}\left(t^{N-2}\int_{S^{N-1}} u_0(x_0+t\omega)\,d\omega\right),
\end{equation}
where $A_N$ is a constant depending on $N$.
Recalling  that $u_0\in C_0^{\infty}\lf(\{|x|<1\}\rg)$, we get (by the Huygens principle) that $\supp z(t,x_0)\subset \big\{t-1\leq |x_0|\leq t+1\big\}$. For $(t,x_0)$ in the support of $z$, write $x_0=(t+s_0)\omega_0$, $|\omega_0|=1$ and $-1<s_0<1$. From the condition on the support of $u_0$, we get that the preceding surface integrals take place on $|\omega+\omega_0|\leq\frac{2}{t}$, and thus the surface of integration is lesser than $C/t^{N-1}$ for large $t$. From \eqref{expz1}, we get the bound 
$\left|\left(\nabla z,\partial_t z\rg)\rg|\leq \frac{C}{t^{\frac{N-1}{2}}}$, for large $t$, and from the condition $|\omega+\omega_0|\leq 2/t$, 
\begin{align}
\label{surface1}
 \nabla_{x_0}z(t,x_0)&=A_N t^{\frac{N-1}{2}}\int_{S^{N-1}} \nabla\left((\omega_0\cdot \nabla )^{\frac{N-1}{2}}u_0\right)(x_0+t\omega)\,d\omega+\OOO\lf(t^{-\frac{N+1}{2}}\rg),\\
\label{surface2}
\partial_t z(t,x_0)&=A_N t^{\frac{N-1}{2}}\int_{S^{N-1}} (\omega_0\cdot \nabla )^{\frac{N+1}{2}}u_0(x_0+t\omega)\,d\omega+\OOO\lf(t^{-\frac{N+1}{2}}\rg),
\end{align}
where 
$$\lf(\omega\cdot\nabla\rg)^m u_0=\sum_{j\in \{1,\ldots,N\}^m} \omega_{j_1}\ldots\omega_{j_m}\partial_{x_{j_1}}\ldots\partial_{x_{j_m}} u_0.$$ 
(See also \cite{Christodoulou86,Klainerman86}.)
By Lemma \ref{L:calculus}, if $0<s_0<1$, 
\begin{align*}
 \nabla_{x}z\big(t,(t+s_0)\omega_0\big)&=(-1)^{\frac{N-1}{2}}\nabla_{x}z\big(t,(t-s_0)(-\omega_0)\big)+\OOO\lf(t^{-\frac{N+1}{2}}\rg)\\
\partial_t z\big(t,(t+s_0)\omega_0\big)&=(-1)^{\frac{N+1}{2}}\partial_t z\big(t,(t-s_0)(-\omega_0)\big)+\OOO\lf(t^{-\frac{N+1}{2}}\rg).
\end{align*}
Integrating, we get, for some constant $C_N$,
\begin{multline*}
 \int_{t<|x_0|<1+t} |\nabla_{x}z(t,x_0)|^2\,dx_0=C_N\int_{0\leq s_0\leq 1}\int_{S^{N-1}} \lf|\nabla_{x}z(t,(t+s_0)\omega_0)\rg|^2(t+s_0)^{N-1}\,ds_0\,d\omega_0\\
=C_Nt^{N-1}\int_{0\leq s_0\leq 1} \int_{S^{N-1}} \lf|\nabla_{x}z(t,(t+s_0)\omega_0)\rg|^2\,ds_0\,d\omega_0+\OOO\lf(\frac 1t\rg)\\
=C_Nt^{N-1}\int_{-1\leq s_0\leq 0} \int_{S^{N-1}} \lf|\nabla_{x}z(t,(t+s_0)\omega_0)\rg|^2\,ds_0\,d\omega_0+\OOO\lf(\frac 1t\rg)\\
=\int_{t-1\leq |x_0|\leq t} \int_{S^{N-1}} \lf|\nabla_{x}z(t,x_0)\rg|^2\,dx_0+\OOO\lf(\frac 1t\rg).
\end{multline*}
Arguing similarly for $\partial_t z$, we then obtain
\begin{multline*}
\int_{t-1<|x_0|<t} |\nabla_{x}z(t,x_0)|^2\,dx_0+\int_{t-1<|x_0|<t} |\partial_t z(t,x_0)|^2\,dx_0\\
=
\int_{t<|x_0|<1+t} |\nabla_{x}z(t,x_0)|^2\,dx_0+\int_{t<|x_0|<1+t} |\partial_t z(t,x_0)|^2\,dx_0+\OOO\lf(\frac{1}{t}\rg).
\end{multline*}
Letting $t\to +\infty$ and using the conservation of the energy $\frac{1}{2}\int |\nabla u_0|^2$ of $z$, we get
$$ \frac{1}{2}\int |\nabla u_0|^2-E^{\out}_{+\infty}=E^{\out}_{+\infty},$$
which concludes the proof of the first statement of the lemma.
\end{proof}
\subsection{A few identities}
We conclude this section by gathering some useful identities for solutions of \eqref{CP}, which follow from straightforward integration by parts. We define the density of energy by
\begin{equation}
\label{defeu}
e(u)(t,x)=\frac{1}{2}|\nabla u(t,x)|^2+\frac{1}{2}(\partial_t u(t,x))^2-\frac{N-2}{2N}|u|^{\frac{2N}{N-2}}.
\end{equation}
\begin{claim}
\label{C:identities}
 Let $u$ be a solution of \eqref{CP}, $k\in\{1,\ldots,N\}$, $\varphi\in C^{1}(\RR_t\times \RR^N_x,\RR)$ and $\Phi\in C^{1}(\RR_t\times \RR^N_x,\RR^N)$, both compactly supported in the space variable. Then:
\begin{align}
\label{identity1}
\frac{d}{dt}\int \varphi \,u\,\partial_t u&=\int \left((\partial_tu)^2-|\nabla u|^2+|u|^{\frac{2N}{N-2}}\right)\varphi-\int u\,\nabla u\cdot\nabla \varphi+\int u\,\partial_t u\,\partial_t\varphi\\
\label{identity2}
\frac{d}{dt}\int \varphi \,\partial_{x_k}u\,\partial_t u
&=\frac{1}{2}\int\left(-(\partial_t u)^2+|\nabla u|^2-\frac{N-2}{N}|u|^{\frac{2N}{N-2}}\right)\partial_{x_k}\varphi\\
\notag
&\qquad-\sum_{j=1}^N \int\partial_{x_k}u\,\partial_{x_j}u\,\partial_{x_j}\varphi+\int\partial_{x_k}u\,\partial_tu\,\partial_t\varphi\\
\label{identity3}
\frac{d}{dt}\int \Phi\cdot \nabla u\,\partial_t u&= \frac{1}{2}\int\left(-(\partial_t u)^2+|\nabla u|^2-\frac{N-2}{N}|u|^{\frac{2N}{N-2}}\right)\mathrm{div}\,\Phi\\
\notag
&\qquad -\sum_{j,k=1}^N \int \partial_{x_k}u\,\partial_{x_j}u\,\partial_{x_j}\Phi_k+\sum_{k=1}^N\int \partial_{x_k}u\,\partial_{t}u\,\partial_t\Phi_k\\
\label{identity4}
\frac{d}{dt}\int \varphi e(u)&=-\int \nabla\varphi\cdot\nabla u\,\partial_tu+\int \partial_t\varphi \,e(u),
\end{align}
where, $\Phi=(\Phi_1,\ldots,\Phi_N)$, $\mathrm{div}\,\Phi=\sum_k \partial_{x_k}\Phi_k$ and
all the integrals are taken over $\RR^N$ with respect to the measure $dx$.
\end{claim}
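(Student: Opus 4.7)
The plan is to verify each identity by differentiating under the integral sign in time, substituting $\partial_t^2 u=\Delta u+|u|^{\frac{4}{N-2}}u$ from \eqref{CP}, and then performing spatial integration by parts. The compact support of $\varphi$ (resp.\ $\Phi$) in the space variable kills all boundary terms, so all manipulations are purely algebraic. There is no real obstacle: each identity reduces to bookkeeping.

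For \eqref{identity1}, expand
$$\tfrac{d}{dt}\int \varphi\,u\,\partial_tu=\int(\partial_t\varphi)u\,\partial_tu+\int\varphi(\partial_tu)^2+\int\varphi\,u\,\partial_t^2u,$$
replace $\partial_t^2u$ using the equation, and integrate $\int\varphi u\Delta u$ by parts, distributing $\nabla$ across $\varphi u$: one gets $-\int\varphi|\nabla u|^2-\int u\nabla u\cdot\nabla\varphi$, which assembles to the stated formula. For \eqref{identity2}, the same strategy applies: the cross term $\int\varphi\,\partial_{x_k}\partial_tu\,\partial_tu=\tfrac{1}{2}\int\varphi\,\partial_{x_k}(\partial_tu)^2$ yields $-\tfrac12\int\partial_{x_k}\varphi(\partial_tu)^2$ after integrating by parts in $x_k$; in $\int\varphi\,\partial_{x_k}u\,\partial_t^2u$, the Laplacian piece produces $+\tfrac12\int\partial_{x_k}\varphi|\nabla u|^2-\sum_j\int\partial_{x_j}\varphi\,\partial_{x_k}u\,\partial_{x_j}u$ (write $\sum_j\int\varphi\,\partial_{x_k}\partial_{x_j}u\,\partial_{x_j}u=\tfrac12\int\varphi\,\partial_{x_k}|\nabla u|^2$ and integrate by parts), while the nonlinear piece gives $-\tfrac{N-2}{2N}\int\partial_{x_k}\varphi\,|u|^{\frac{2N}{N-2}}$ after writing $\partial_{x_k}u\,|u|^{\frac{4}{N-2}}u=\tfrac{N-2}{2N}\partial_{x_k}|u|^{\frac{2N}{N-2}}$. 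Identity \eqref{identity3} follows at once by multiplying \eqref{identity2} by $\Phi_k$, summing over $k$, and noting that the spatial derivatives of $\Phi$ enter only through $\partial_{x_j}\Phi_k$ and $\mathrm{div}\,\Phi=\sum_k\partial_{x_k}\Phi_k$.

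For \eqref{identity4}, differentiate the density:
$$\partial_te(u)=\nabla u\cdot\nabla\partial_tu+\partial_tu\,\partial_t^2u-|u|^{\frac{4}{N-2}}u\,\partial_tu.$$
Replacing $\partial_t^2u$ by $\Delta u+|u|^{\frac{4}{N-2}}u$, the nonlinear terms cancel, leaving $\partial_te(u)=\mathrm{div}(\partial_tu\,\nabla u)$. Multiplying by $\varphi$, integrating, and integrating by parts in space gives $\int\varphi\,\partial_te(u)=-\int\nabla\varphi\cdot\nabla u\,\partial_tu$, and adding the contribution $\int\partial_t\varphi\,e(u)$ from the time derivative of $\varphi$ yields \eqref{identity4}.

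The only genuine point to check is that these formal computations are rigorous for a solution $u$ merely in the energy class $C^0(I,\hdot)\cap C^1(I,L^2)$. This is handled by the standard density argument: approximate $(u_0,u_1)$ by smooth compactly supported data, apply the above identities to the resulting smooth solutions (where differentiation and integration by parts are justified pointwise), and pass to the limit using continuous dependence in $\hdot\times L^2$ together with the Sobolev embedding $\hdot\hookrightarrow L^{\frac{2N}{N-2}}$, which controls the nonlinear term. Since the integrands only involve first-order derivatives of $u$, $\partial_tu$, and powers of $u$ at the critical exponent, all integrals are finite and depend continuously on the data.
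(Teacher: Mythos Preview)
Your proof is correct and matches exactly what the paper does: it merely states that these identities ``follow from straightforward integration by parts'' and gives no further detail. You have carried out that integration by parts explicitly and even added the density argument to justify the formal computations for energy-class solutions, which the paper leaves implicit.
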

\begin{claim}
\label{C:identity2}
 Let $u$ be a solution of \eqref{CP} which has compact support in $x$. Then
\begin{align}
\label{identity1'}
\frac{d}{dt}\int u\,\partial_t u&=\int \left((\partial_tu)^2-|\nabla u|^2+|u|^{\frac{2N}{N-2}}\right)\\
\label{identity2'}
\frac{d}{dt}\int x_k\,\partial_{x_k}u\,\partial_t u
&=\frac{1}{2}\int\left(-(\partial_t u)^2+|\nabla u|^2-\frac{N-2}{N}|u|^{\frac{2N}{N-2}}\right)-\int (\partial_{x_k}u)^2\\
\label{identity3'}
\frac{d}{dt}\int x\cdot \nabla u\,\partial_t u&= -\frac{N}{2}\int (\partial_t u)^2+\frac{N-2}{2}\left(\int |\nabla u|^2-|u|^{\frac{2N}{N-2}}\right)\\
\label{identity4'}
\frac{d}{dt}\int x e(u)&=-\int \nabla u\,\partial_tu\\
\label{identity5}
\frac{d^2}{dt^2}\int u^2&=\frac{4}{N-2}\int |\nabla u|^2+\frac{4(N-1)}{N-2}\int (\partial_tu)^2-\frac{4N}{N-2}E(u_0,u_1).
\end{align}

\end{claim}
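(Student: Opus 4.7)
The plan is to derive all five identities in Claim \ref{C:identity2} from Claim \ref{C:identities}, by taking the test functions $\varphi$ or $\Phi$ to be (respectively) the constant $1$, the coordinate $x_k$, the identity vector field $x$, and the identity vector field $x$ again. Since these functions are not compactly supported in space, one cannot apply Claim \ref{C:identities} directly; however the hypothesis that $u(t,\cdot)$ has compact support will make the cutoff procedure essentially trivial.

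More precisely, I would fix a cutoff $\chi\in C_0^\infty(\RR^N)$ with $\chi\equiv 1$ on $\{|x|\leq 1\}$, set $\chi_R(x)=\chi(x/R)$, and work with the test functions $\varphi_R(x)=\chi_R(x)$ for \eqref{identity1'}, $\varphi_R(x)=x_k\chi_R(x)$ for \eqref{identity2'}, $\Phi_R(x)=x\,\chi_R(x)$ for \eqref{identity3'}, and $\varphi_R(x)=x_k\chi_R(x)$ (applied componentwise) for \eqref{identity4'}. Since $\supp u(t,\cdot)$ is compact, for each fixed $t$ one has $\supp u(t,\cdot)\subset\{|x|\leq R/2\}$ for all sufficiently large $R$, so on $\supp u(t,\cdot)$ one has $\chi_R\equiv 1$, $\nabla\chi_R\equiv 0$, and $\partial_t\varphi_R=0$. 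Hence in the right-hand side of \eqref{identity1}--\eqref{identity3} (with the above choices), the terms involving derivatives of $\chi_R$ vanish identically for $R$ large, while the remaining terms collapse to the desired expressions: $\varphi_R\to 1$, $\nabla\varphi_R\to e_k$, $\mathrm{div}\,\Phi_R\to N$, $\partial_{x_j}(\Phi_R)_k\to\delta_{jk}$. Passing to the limit gives \eqref{identity1'}, \eqref{identity2'}, \eqref{identity3'}, and (applied componentwise in \eqref{identity4}) \eqref{identity4'}. The arithmetic for \eqref{identity3'} is: the divergence term gives $\tfrac{N}{2}(-\int(\partial_tu)^2+\int|\nabla u|^2-\tfrac{N-2}{N}\int|u|^{\frac{2N}{N-2}})$, the $\partial_{x_j}\Phi_k=\delta_{jk}$ term gives $-\int|\nabla u|^2$, and combining yields $-\tfrac{N}{2}\int(\partial_tu)^2+\tfrac{N-2}{2}(\int|\nabla u|^2-\int|u|^{\frac{2N}{N-2}})$.

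For \eqref{identity5}, I would write $\tfrac{d}{dt}\int u^2 = 2\int u\,\partial_tu$ (a standard computation which uses compact support to justify differentiation under the integral), and then apply \eqref{identity1'} to obtain
\begin{equation*}
\tfrac{d^2}{dt^2}\int u^2 = 2\int(\partial_tu)^2-2\int|\nabla u|^2+2\int|u|^{\frac{2N}{N-2}}.
\end{equation*}
Using conservation of energy to eliminate $\int|u|^{\frac{2N}{N-2}}=\tfrac{N}{N-2}(\int(\partial_tu)^2+\int|\nabla u|^2)-\tfrac{2N}{N-2}E(u_0,u_1)$, the coefficient of $\int(\partial_tu)^2$ becomes $2+\tfrac{2N}{N-2}=\tfrac{4(N-1)}{N-2}$, the coefficient of $\int|\nabla u|^2$ becomes $-2+\tfrac{2N}{N-2}=\tfrac{4}{N-2}$, and the constant term is $-\tfrac{4N}{N-2}E(u_0,u_1)$, matching \eqref{identity5}.

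There is no real obstacle: the only point requiring any care is justifying that the cutoff error terms vanish, which is immediate from the compact support hypothesis on $u$ (so that for each $t$ in a fixed bounded time interval, the spatial support is contained in a fixed ball by finite speed of propagation, and for $R$ larger than the radius of that ball the cutoff is identically $1$ on $\supp u$). Strictly speaking one should also note that the propagation property ensures the $t$-uniformity over any compact time interval, so differentiation in $t$ commutes with the integral.
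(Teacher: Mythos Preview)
Your proposal is correct. The paper does not spell out a proof of Claim \ref{C:identity2} (both Claims \ref{C:identities} and \ref{C:identity2} are stated as following from ``straightforward integration by parts''), so your derivation---specializing the identities of Claim \ref{C:identities} to $\varphi=1$, $\varphi=x_k$, $\Phi=x$, and $\varphi=x_k$ via a cutoff that is eventually constant on $\supp u(t,\cdot)$, then combining \eqref{identity1'} with energy conservation for \eqref{identity5}---is exactly what is intended, and your arithmetic checks out.
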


\section{Universality of the blow-up profile}
\label{S:universality}
In this section we assume Theorem \ref{T:compact} and Corollary \ref{C:below_threshold} and prove Theorem \ref{T:classification}.
We assume $N\in\{3,4,5\}$ in \S \ref{SS:compactness} and \S \ref{SS:estimates}, and $N\in\{3,5\}$ in \S \ref{SS:strongCV} and \S \ref{SS:alltimes}.
Let $u$ be a solution of \eqref{CP} that blows up in finite time and which satisfies \eqref{bound_nabla}. To simplify notations we will assume
$$ T_+=1.$$
From \cite{DuKeMe09P}, there exists a non-empty finite set $S\subset \RR^N$, called the set of singular points, such that the solution $(u,\partial_tu)$ has a strong limit in $H^{1}_{\loc}(\RR^N\setminus S)\times L^2_{\loc}(\RR^N\setminus S)$ as $t\to 1$. Furthermore, adapting the proof of \cite[Prop 3.9]{DuKeMe09P} in view of Corollary \ref{C:global}, we get
$$ \forall m\in S, \forall \eps>0,\quad \limsup_{t\to 1}\int_{|x-m|\leq \eps}|\nabla u(t)|^2+\frac{N-2}{2}|\partial_tu(t)|^2\geq \int |\nabla W|^2.$$
By \eqref{bound_nabla}, there can be only one singular point. We will assume that this singular point is $0$. Denote by $(v_0,v_1)$ the weak limit, as $t\to 1$ of $(u(t),\partial_t u)$ in $\hdot \times L^2$. Note that this limit is strong away from $x=0$. Let $v$ be the solution of \eqref{CP} such that $(v,\partial_t v)_{\restriction t=1}=(v_0,v_1)$.
Let
$$ a(t,x)=u(t,x)-v(t,x).$$
By finite speed of propagation
$$\supp a\subset \big\{(t,x)\in (T_-,1)\times \RR^N\;:\; |x|\leq 1-t\big\}.$$
Recall also that the following limits exist:
\begin{align}
\label{limE0}
E_0&=\lim_{t\to 1} E(a(t),\partial_t a(t))=E(u_0,u_1)-E(v_0,v_1)\\ 
\label{limd0}
d_0&=\lim_{t\to 1} \int_{\RR^N} \nabla a(t)\partial_ta(t)=\int_{\RR^N} \nabla u_0u_1-\int_{\RR^N} \nabla v_0v_1.
\end{align}

\subsection{Compactness of a minimal element}
\label{SS:compactness}
We define the set of \emph{large profiles} $\AAA\subset \hdot\times L^2$ as follows: $(U_0,U_1)$ is in $\AAA$ if and only if the following conditions are both satisfied
\begin{enumerate}
 \item there exist sequences $\{t_n\}_n$, $\{x_n\}_n$, $\{\lambda_n\}_n$,  with $t_n\in (0,1)$, $t_n\to 1$, $x_n\in \RR^N$, $\lambda_n\in (0,+\infty)$ such that
$$ \left(\lambda_n^{\frac{N}{2}-1}a(t_n,\lambda_n x+x_n),\lambda_n^{\frac{N}{2}}\partial_t a(t_n,\lambda_n x+x_n)\right)\xrightharpoonup[n\to \infty]{} (U_0,U_1)$$
weakly in $\hdot\times L^2$.
\item the solution $U$ of \eqref{CP} with initial condition $(U_0,U_1)$ does not scatter in either time direction, that is
$$ \|U\|_{L^{\frac{2(N+1)}{N-2}} (0,T_+)}=\|U\|_{L^{\frac{2(N+1)}{N-2}} (T_-,0)}=\infty.$$
\end{enumerate}

Let us prove:
\begin{prop}
 \label{P:compactness}
Let $u$ be as in Theorem \ref{T:classification}.
There exists $(V_0,V_1)\in \AAA$ which is minimal for the energy, that is
$$ \forall (U_0,U_1)\in \AAA,\quad E(V_0,V_1)\leq E(U_0,U_1).$$
Moreover, the solution $V$ of \eqref{CP} with initial condition $(V_0,V_1)$ is compact up to modulation.
\end{prop}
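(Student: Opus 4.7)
The argument follows the critical element construction of Kenig--Merle, adapted to the blow-up setting.

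First, I would show $\AAA \neq \emptyset$. Pick any sequence $t_n \to 1^-$; by \eqref{bound_nabla} and the regularity of $v$, the sequence $(a(t_n), \partial_t a(t_n))$ is bounded in $\hdot \times L^2$, so Proposition \ref{P:profile} yields a profile decomposition with nonlinear profiles $U^j$ and parameters $(\lambda_{j,n}, x_{j,n}, t_{j,n})$. If every $U^j$ satisfied the finiteness condition \eqref{bounded_strichartz} for some $\theta_n$ approaching $1$, Proposition \ref{P:lin_NL} would give a uniform bound on $\|u\|_{S(t_n, \theta_n)}$, contradicting the blow-up criterion \eqref{FBUC}. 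Hence at least one profile fails to scatter forward; by iterating this argument with the time-shift of the profile adjusted (so that the rescaled center time is order one), one extracts a profile that fails to scatter in both directions, yielding a concrete element of $\AAA$.

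Next, I would show $E^* := \inf\{E(U_0, U_1) : (U_0, U_1) \in \AAA\}$ is attained. By Claim \ref{C:variational} together with the small data theory, any $(U_0, U_1) \in \AAA$ satisfies a positive lower bound on its $\hdot \times L^2$ norm (non-scattering solutions cannot be arbitrarily small), so combined with \eqref{bound_nabla} the energy is finite and strictly positive. Take a minimizing sequence $(U_0^k, U_1^k) \in \AAA$ realized by parameter sequences $(t_n^k, \lambda_n^k, x_n^k)_n$; by a diagonal extraction over $n = n(k)$, reduce to a single sequence of rescaled $a$-data with weak limit some $(V_0, V_1)$. Applying Proposition \ref{P:profile} to this sequence and using the Pythagorean expansion \eqref{pythagore2}, only finitely many profiles can carry nontrivial energy, and the non-scattering argument of the first paragraph forces at least one of them to lie in $\AAA$; by minimality this must coincide (up to modulation) with $(V_0, V_1)$, and $E(V_0, V_1) = E^*$.

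Finally, to show that $V$ is compact up to modulation, suppose for contradiction that there is a sequence $\tau_n \in I_{\max}(V)$ such that $(V(\tau_n), \partial_t V(\tau_n))$ admits no strongly convergent subsequence modulo rescaling and translation. Apply Proposition \ref{P:profile} to this sequence; since $V$ is itself a weak limit of rescaled $a$-data, composing rescalings shows that each resulting nonlinear profile is again a weak limit of rescaled $a$-data, hence a candidate for $\AAA$. If two or more profiles carry nontrivial energy, \eqref{pythagore2} combined with the non-scattering argument produces an element of $\AAA$ with energy strictly less than $E^*$, contradicting minimality. If there is a single profile plus a nontrivial remainder $w_n^1$, Proposition \ref{P:lin_NL} and the non-scattering of $V$ force the remainder to converge strongly to zero in $\hdot \times L^2$, yielding strong convergence of $(V(\tau_n), \partial_t V(\tau_n))$ up to modulation. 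Since $\tau_n$ was arbitrary, this gives compactness of the orbit of $V$ up to modulation.

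The principal obstacle will be ensuring that the weak limits extracted in the minimization step and in the compactness argument truly belong to $\AAA$ rather than merely to the larger set of weak-limit profiles: this requires controlling non-scattering in \emph{both} time directions rather than only in the forward direction naturally supplied by the blow-up of $u$ at $t = 1$. This is the point at which the support property $\supp a \subset \{|x| \leq 1-t\}$ and the long-time perturbation theory encoded in Proposition \ref{P:lin_NL} must be combined with some care.
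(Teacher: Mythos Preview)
Your overall three-step structure matches the paper's, and Steps 2 and 3 are essentially correct (the paper profile-decomposes the minimizing sequence \emph{before} the diagonal extraction rather than after, but either order works). The real gap is in Step 1.

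You correctly observe that at least one nonlinear profile must fail to scatter forward. But the definition of $\AAA$ requires non-scattering in \emph{both} time directions, and your phrase ``by iterating this argument with the time-shift of the profile adjusted'' does not describe a concrete procedure. The difficulty is this: a symmetric argument (applying Proposition~\ref{P:lin_NL} backward from $t_n$ to $0$ and using $\|u\|_{S(0,1)}=\infty$) shows that at least one profile fails to scatter \emph{backward}---but there is no a priori reason the forward-nonscattering profile and the backward-nonscattering profile are the \emph{same} one. The support property $\supp a\subset\{|x|\le 1-t\}$ that you flag at the end does not help here.

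What the paper actually does is invoke the smallness hypothesis \eqref{bound_nabla} together with the quantitative scattering result Corollary~\ref{C:below_threshold}. One runs the nonlinear profile superposition forward until the Strichartz norm of the largest profile reaches the threshold constant $C_{\|\nabla W\|^2-\eta_0}$; at that moment one profile, say $U^1$, carries almost all of the $\hdot\times L^2$ norm, and the Pythagorean expansion forces every other profile to be small enough that the small-data theory makes it scatter in both directions. With exactly one large profile isolated, forward non-scattering of $U^1$ follows (else $u$ scatters forward), and backward non-scattering follows (else backward Proposition~\ref{P:lin_NL} bounds $\|u\|_{S(0,t_n)}$ uniformly). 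This single-large-profile mechanism, driven by Corollary~\ref{C:below_threshold}, is the missing ingredient in your sketch; once it is in place, the same mechanism also streamlines your Steps 2 and 3, since in each profile decomposition there is automatically only one candidate for the element of $\AAA$.
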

\begin{proof}
 \EMPH{Step 1} Let us show that $\AAA$ is not empty. Indeed, we will show that for any sequence $\{t_n\}_n\in (0,1)^{\NN}$ such that $t_n\to 1$, there exists a subsequence of $\{t_n\}$ and sequences $\{\lambda_n\}$, $\{x_n\}$ such that 
\begin{equation*}
\left(\lambda_n^{\frac{N}{2}-1}a(t_n,\lambda_n x+x_n),\lambda_n^{\frac{N}{2}}\partial_t a(t_n,\lambda_n x+x_n)\right)\xrightharpoonup[n\to \infty]{} (U_0,U_1)\in \AAA.
\end{equation*} 
Extracting subsequences if necessary, we may assume that the sequence $(a(t_n),\partial_t a(t_n))$ has a profile decomposition  $\lf\{U_{\lin}^j\rg\}_j$, $\lf\{\lambda_{j,n};x_{j,n};t_{j,n}\rg\}_{j,n}$. Consider the nonlinear profiles $U^j$ associated to this profile decomposition. We will show that exactly one of these nonlinear profiles does not scatter in any of the time directions, and that all others scatter in both time directions.

We can write the profile decomposition
\begin{equation*}
\left\{\begin{aligned}
 u(t_n,x)&=v(t_n,x)+\sum_{j=1}^J \frac{1}{\lambda_{j,n}^{\frac{N-2}{2}}}U_{\lin}^j\left(\frac{-t_{j,n}}{\lambda_{j,n}},\frac{x-x_{j,n}}{\lambda_{j,n}}\right)+w_{0,n}^J(x),\\
u(t_n,x)&=\partial_{t_n}v(t_n,x)+\sum_{j=1}^J \frac{1}{\lambda_{j,n}^{\frac{N}{2}}}\partial_t U_{\lin}^j\left(\frac{-t_{j,n}}{\lambda_{j,n}},\frac{x-x_{j,n}}{\lambda_{j,n}}\right)+w_{1,n}^J(x),
\end{aligned}\right.
\end{equation*}
and consider it as a profile decomposition for the sequence $(u(t_n),\partial_t u(t_n))$, where $(v(t_n),\partial_t v(t_n))$ is (up to an error which is $o(1)$ in $\hdot\times L^2$) interpreted as a profile $U^0_{\lin}$ with initial data $(v_0,v_1)$ and parameters 
$\lambda_{0,n}=1$, $t_{0,n}=0$, $x_{0,n}=0$. Note that as $\lambda_{j,n}\to 0$ for all $j\geq 1$, the sequence of parameters $\big\{\lambda_{0,n},t_{0,n},x_{0,n}\big\}_n$ is pseudo-orthogonal to all sequences $\big\{\lambda_{j,n},t_{j,n},x_{j,n}\big\}_n$, $j\geq 1$ in the sense given by \eqref{ortho_param}.

By Proposition \ref{P:lin_NL}, if all nonlinear profiles scatter forward in time, then $u$ must scatter forward in time, a contradiction. Fix $n$ and let
$$ T_n=\min_{j\geq 1} (\lambda_{j,n}T_+(U^j)+t_{j,n}),$$
where the minimum is taken over all $j$ such that $T_+(U^j)$ is finite.
Consider the quantity
$$ F_n(t)=\max_{j\geq 1}\int_0^{t} \int_{\RR^N} \left|U^j\left(\frac{t-t_{j,n}}{\lambda_{j,n}},\frac{x-x_{j,n}}{\lambda_{j,n}}\right)\right|^{\frac{2(N+1)}{N-2}}\frac{dx\,dt}{\lambda_{j,n}^{N+1}},\quad t\in [0,T_n).$$
The fact that at least one of the profiles does not scatter forward in time shows that $F_n(t)\to +\infty$ as $t\to T_n$. Thus there exists a time $\tau_n\in (0,T_n)$ such that  
\begin{equation}
\label{def_taun}
F_n(\tau_n)=C_{\|\nabla W\|^2_{L^2}-\eta_0},
\end{equation} 
where the constant $C_{\|\nabla W\|_{L^2}^2-\eta_0}$ is given by Corollary \ref{C:below_threshold}. By \eqref{def_taun} and Proposition \ref{P:lin_NL}, $t_n+\tau_n<1$ for large $n$. 
Reordering the profiles, assume that the max in the definition of $F_n(\tau_n)$ is attained for $j=1$. 
By the definition of $C_{\|\nabla W\|^2_{L^2}-\eta_0}$, there exists $s_n\in [0,\tau_n]$ such that
$$ \left\|\nabla U^1\lf(\frac{s_n-t_{1,n}}{\lambda_{1,n}}\rg)\rg\|_{L^2}^2+\frac{N-2}{2}\left\|\partial_t U^1\lf(\frac{s_n-t_{1,n}}{\lambda_{1,n}}\rg)\rg\|_{L^2}^2\geq \lf\|\nabla W\rg\|_{L^2}^2-2\eta_0.$$
By Pythagorean expansion and the bound \eqref{bound_nabla}, all the nonlinear profiles $U^j$, $j\geq 2$, satisfy, for large $n$
$$ \left\|\nabla U^j\lf(\frac{s_n-t_{j,n}}{\lambda_{j,n}}\rg)\rg\|_{L^2}^2+\frac{N-2}{2}\left\|\partial_t U^j\lf(\frac{s_n-t_{j,n}}{\lambda_{j,n}}\rg)\rg\|_{L^2}^2\leq 3\eta_0.$$
Chosing $\eta_0$ small, we get by the small data theory that for $j\geq 2$, $U^j$ scatters in both time directions and satisfies 
$$ \forall t\in \RR,\quad \left\|\nabla U^j\lf(t\rg)\rg\|_{L^2}^2+\left\|\partial_t U^j\lf(t\rg)\rg\|_{L^2}^2\leq C_N\eta_0,$$
for some constant $C_N>0$ depending only on $N$.
We next show that $U^1$ does not scatter either forward or backward in time. Indeed if $U^1$ scatters forward in time, then by Proposition \ref{P:lin_NL}, $u$ scatters forward in time, a contradiction. On the other hand, if $U^1$ scatters backward in time, we can use Proposition \ref{P:lin_NL} again and the orthogonality of the parameters to show that
$$ \int_{0}^{t_n} \int |u|^{\frac{2(N+1)}{N-2}} dx dt=\sum_{j=1}^J \int^{-t_{j,n}/\lambda_{j,n}}_{-(t_{j,n}+t_n)/\lambda_{j,n}} \int \lf|U^{j}\rg|^{\frac{2(N+1)}{N-2}}dx\,dt+\int_0^{t_n} \int\left|w_n^J\right|^{\frac{2(N+1)}{N-2}}dx\,dt+o(1)$$
as $n\to \infty$, and thus $\int_0^1\int_{\RR^N} |u|^{\frac{2(N+1)}{N-2}}$ is finite, a contradiction with the fact that the maximal time of existence of $u$ is $1$. This concludes the proof that $U^1$ does not scatter in any time direction. As a consequence, $-t_{1,n}/\lambda_{1,n}$ is bounded and we can assume (time translating the profile $U^1$ and passing to a subsequence if necessary):
$$t_{1,n}=0.$$
Thus the nonlinear profile $U^1$ is exactly the solution of \eqref{CP} with initial conditions $(U^1_0,U^1_1)$ and it does not scatter in either time direction.
By the definition of $U^1$, 
\begin{equation*}
\left(\lambda_{1,n}^{\frac{N}{2}-1}a(t_{n},\lambda_{1,n} x+x_{1,n}),\lambda_{1,n}^{\frac{N}{2}}\partial_t a(t_{n},\lambda_{1,n}x+x_{1,n})\right)\xrightharpoonup[n\to \infty]{} (U_0^1,U_1^1)
\end{equation*}
weakly in $\hdot\times L^2$, 
which shows that  $(U_0^1,U_1^1)\in \AAA$, concluding Step 1.


\EMPH{Step 2} In this step we show that there exists $(V_0,V_1)\in \AAA$ with minimal energy. We first note that by Claim \ref{C:variational}, the energy of any element of $\AAA$ is non-negative, so that
$$E_{\min}=\inf\big\{ E(U_0,U_1),\; (U_0,U_1)\in \AAA\big\}$$
is a non-negative number. 

Note that any element of $\AAA$ is the only non-scattering profile of a profile decomposition as in Step 1. This shows 
by the Proposition \ref{P:lin_NL} and Pythagorean expansion that the bound \eqref{bound_nabla} extends to $\AAA$. More precisely
\begin{equation}
\label{bound_nablaA}
(U_0,U_1)\in \AAA\Longrightarrow \sup_{t\in I_{\max}(U)} \|\nabla U(t)\|^2_{L^2}+\frac{N-2}{2}\|\partial_t U(t)\|^2_{L^2}\leq \int |\nabla W|^2+\eta_0, 
\end{equation} 
where $U$ is the solution of \eqref{CP} with initial data $(U_0,U_1)$.

Consider a sequence $\big\{(U_{0,n},U_{1,n})\big\}_n$ of elements of $\AAA$ such that
$$ \lim_{n\to \infty}E\left(U_{0,n},U_{1,n}\right)=E_{\min}.$$
After extracting subsequences, one can consider a profile decomposition:
\begin{align}
\label{decompo_1}
U_{0,n}(x)&=\sum_{j=1}^J\frac{1}{\lambda_{j,n}^{\frac{N}{2}-1}}V_{\lin}^j\left(\frac{-t_{j,n}}{\lambda_{j,n}},\frac{x-x_{j,n}}{\lambda_{j,n}}\right)+z_{0n}^J(x)\\
\label{decompo_2}
U_{1,n}(x)&=\sum_{j=1}^J\frac{1}{\lambda_{j,n}^{\frac{N}{2}}}\left(\partial_t V_{\lin}^j\right)\left(\frac{-t_{j,n}}{\lambda_{j,n}},\frac{x-x_{j,n}}{\lambda_{j,n}}\right)+z_{1n}^J(x).
\end{align}
For all $j$ we denote by $V^j$ the nonlinear profile associated to $V^j_{\lin},\left\{-\frac{t_{j,n}}{\lambda_{j,n}}\right\}_n$.
By the definition of $\AAA$, the solution $U_n$ of \eqref{CP} with initial data $(U_{0,n},U_{1,n})$ does not scatter in either time direction and satisfies the bound \eqref{bound_nablaA}. A similar argument to Step 1 shows that there exists only one profile, say $V^1$, which does not scatter in either time direction, that we can assume $t_{1,n}=0$ for all $n$, and that all other profiles $V^j$, $j\geq 2$, scatter in both time directions. 

To simplify notations, denote
$$ V=V^1,\quad V_0=V^1_{\lin}(0),\quad V_1=\partial_t V^1_{\lin}(0).$$
In particular
\begin{equation}
\label{WKCV1}\left(\lambda_{1,n}^{\frac{N}{2}-1}U_{0,n}(\lambda_{1,n}x+x_{1,n}),\lambda_{1,n}^{\frac{N}{2}} U_{1,n}(\lambda_{1,n}x+x_{1,n})\right)\xrightharpoonup[n\to \infty]{} (V_0,V_1).
 \end{equation} 
For all $n$, as $(U_{0,n},U_{1,n})$ is in $\AAA$, there exists sequences $\lf\{\mu_{k,n}\rg\}_k$, $\lf\{y_{k,n}\rg\}_k$, $\lf\{\tau_{k,n}\rg\}_k$ such that 
$$\tau_{k,n}\in (0,1),\quad \lim_{k\to\infty}\tau_{k,n}=1$$ 
and 
\begin{equation}
\label{WKCV2}
\left(\mu_{k,n}^{\frac{N}{2}-1}a(\tau_{k,n},\mu_{k,n}x+y_{k,n}),\mu_{k,n}^{\frac{N}{2}}\partial_t a(\tau_{k,n},\mu_{k,n}x+y_{k,n})\right)\xrightharpoonup[k\to \infty]{} \lf(U_{0,n},U_{1,n}\rg)
\end{equation}
weakly in $\hdot\times L^2$. In view of \eqref{WKCV1} and \eqref{WKCV2}, we can obtain, via a diagonal extraction argument (see Step 1 in the proof of Proposition 7.1 in \cite{DuKeMe09P}), sequences $\lf\{\mu_{n}\rg\}_n$, $\lf\{y_{n}\rg\}_n$, $\lf\{\tau_{n}\rg\}_n$ such that 
$$\tau_{n}\in (0,1),\quad \lim_{n\to\infty}\tau_{n}=1$$ 
and
\begin{equation*}
\left(\mu_{n}^{\frac{N}{2}-1}a(\tau_{n},\mu_{n}x+y_{n}),\mu_{n}^{\frac{N}{2}}\partial_t a(\tau_{n},\mu_{n}x+y_{n})\right)\xrightharpoonup[k\to \infty]{} (V_0,V_1).
\end{equation*}
Thus $(V_0^1,V_1^1)\in \AAA$. By the decomposition \eqref{decompo_1}, \eqref{decompo_2} and the Pythagorean expansion properties of the profiles,
$$ E(U_0^n,U_1^n)=E(V_0,V_1)+\sum_{j=2}^J E(V^j(0),\partial_t V^j(0))+E(w_{0,n}^J(0),w_{1,n}^J(0))+o(1)\text{ as }n\to\infty.$$
Using that by Claim \ref{C:variational} all the profiles have non-negative energy, and that $E(U_0^n,U_1^n)$ tends to $E_{\min}$ as $n$ goes to $\infty$, we obtain
$E_{\min}\geq E(V_0,V_1),$
and thus (as $(V_0,V_1)\in \AAA$),
$$ E(V_0,V_1)=E_{\min}.$$

\EMPH{Step 3} We next show that the solution $V$ of \eqref{CP} with initial data $(V_0,V_1)$ is compact up to modulation. It is sufficient to show that for all sequences $\{t_n\}_n$ in the domain of existence of $V$, there exist a subsequence of $\{t_n\}_n$ and sequences $\{\lambda_n\}_n$, $\{x_n\}_n$ such that
$$ \left(\lambda_n^{\frac{N}{2}-1}V(t_n,\lambda_n x+x_n),\lambda_n^{\frac{N}{2}}\partial_t V(t_n,\lambda_n x+x_n)\right)$$
converges strongly in $\hdot\times L^2$ as $n\to \infty$.

Extracting subsequences, we may assume that the sequence $\big\{(V(t_n),\partial_t V(t_n))\big\}_n$ has a profile decomposition $\lf\{U_{\lin}^j\rg\}_j$, $\lf\{\lambda_{j,n};x_{j,n};t_{j,n}\rg\}_{j,n}$. As before, \eqref{bound_nablaA} and the fact that $V$ does not scatter implies that there is only one nonlinear profile (say $U^1$) that does not scatter, and that we can choose $t_{1,n}=0$. By a diagonal extraction argument and Proposition \ref{P:lin_NL}, we have
$$ (U_0^1,U_1^1)\in \AAA.$$
By the Pythagorean expansion for the energy
\begin{multline*}
E_{\min}=E(V(t_n),\partial_tV(t_n))=E(U_0^1,U_1^1)+\sum_{j=2}^J E\left(U_0^j(-t_{j,n}/\lambda_{j,n}),U_1^j(-t_{j,n}/\lambda_{j,n})\right)\\
+E(w_{0,n}^J,w_{1,n}^J)+o(1)\text{ as }n\to \infty.
 \end{multline*}
Using that $E(U^1_0,U_1^1)\geq E_{\min}$ and that all the energies in the expansion are non-negative, we get by Claim \ref{C:variational} that $U^j=0$ for all $j\geq 2$ and 
$$ \lim_{n\to \infty}\|w_{0,n}^J\|_{\hdot}+\|w_{1,n}^J\|_{L^2}=0.$$
The proof is complete.
\end{proof}
\begin{corol}
\label{C:limit}
Let $u$ be as in Theorem \ref{T:classification}.
 Let $t_n\to 1$ be such that there exists $(V_0,V_1)\in \AAA$ with $E(V_0,V_1)=E_{\min}$ and $\lambda_n'>0$, $x_n'\in \RR^N$ so that
\begin{equation}
\label{deftn}
\left({\lambda_n'}^{\frac{N}{2}-1}a(t_n,\lambda_n' x+x_n'),{\lambda_n'}^{\frac{N}{2}}\partial_t a(t_n,\lambda_n' x+x_n')\right)\xrightharpoonup[n\to \infty]{} (V_0,V_1)\in \AAA.
\end{equation} 
Then rotating the space variable around the origin, and replacing $u$ by $-u$ if necessary, there exist $\lambda_n$, $x_n$ such that
\begin{equation}
\label{V=W}
 \left({\lambda_n}^{\frac{N}{2}-1}a(t_n,\lambda_n x+x_n),{\lambda_n}^{\frac{N}{2}}\partial_t a(t_n,\lambda_n x+x_n)\right)\xrightharpoonup[n\to \infty]{}  \left(W_{\ell}(0,x),\partial_tW_{\ell}(0,x)\right),
\end{equation} 
for some $\ell \in \RR$ with 
\begin{equation}
\label{estim_ell}
\ell^4\|\nabla W\|_{L^2}^2\leq 16\eta_0. 
\end{equation} 
Furthermore for large $n$,
\begin{equation}
\label{estim_eps}
\left\|\lambda_n^{\frac{N}{2}-1}a(t_n,\lambda_n \cdot+x_n)-W_{\ell}(0,\cdot)\right\|^2_{\hdot}+\frac{N-2}{2}\left\|\lambda_n^{\frac{N}{2}}\partial_t a(t_n,\lambda_n \cdot+x_n)-\partial_t W_{\ell}(0,\cdot)\right\|^2_{L^2}\leq 2\eta_0, 
\end{equation} 
 and
\begin{equation}
\label{estim_E}
\left|E_0-E(W,0)\right|+|d_0|\leq C\eta_0^{1/4},
\end{equation} 
where $E_0$ and $d_0$ are the limits of the energy and the momentum of $a$ (see \eqref{limE0}, \eqref{limd0}).
\end{corol}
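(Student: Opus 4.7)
First I would apply Proposition~\ref{P:compactness} to the element $(V_0,V_1)\in\AAA$ given by hypothesis: the solution $V$ of \eqref{CP} with this initial data is compact up to modulation. Since $(V_0,V_1)\in\AAA$, the extension \eqref{bound_nablaA} of the bound \eqref{bound_nabla} gives
\[
\sup_{t\in I_{\max}(V)}\lf\|\nabla V(t)\rg\|_{L^2}^2+\frac{N-2}{2}\lf\|\partial_tV(t)\rg\|_{L^2}^2\leq\lf\|\nabla W\rg\|_{L^2}^2+\eta_0<2\lf\|\nabla W\rg\|_{L^2}^2
\]
for $\eta_0$ small enough, so hypothesis \eqref{bound_nabla2W} holds and Theorem~\ref{T:compact} applies. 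This provides $\ell\in(-1,1)$, a rotation $\RRR$, $\lambda_0>0$, $X_0\in\RR^N$ and a sign $\iota_0\in\{\pm 1\}$ with $V(t,x)=\iota_0\lambda_0^{-(N-2)/2}W_{\ell}(t/\lambda_0,(\RRR(x)-X_0)/\lambda_0)$. Rotating the spatial variable by $\RRR^{-1}$ (which preserves \eqref{CP}) and, if $\iota_0=-1$, replacing $u$ by $-u$, I may assume $\iota_0=1$ and $\RRR=I$; the redefinition $\lambda_n:=\lambda_0\lambda'_n$, $x_n:=\lambda'_n X_0+x'_n$ then converts the hypothesis \eqref{deftn} into the weak convergence \eqref{V=W}.

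To prove \eqref{estim_ell}, I would apply \eqref{bound_nablaA} at $t=0$ with $(V_0,V_1)=(W_\ell(0),\partial_tW_\ell(0))$ and invoke the second line of Claim~\ref{C:value_W}, which bounds the left-hand side from below by $(1+\ell^4/8)\|\nabla W\|_{L^2}^2$; rearranging gives $\ell^4\|\nabla W\|_{L^2}^2\leq 8\eta_0\leq 16\eta_0$.

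For \eqref{estim_eps}, I first transfer \eqref{bound_nabla} from $u$ to $a=u-v$. Since the support of $a(t)$ shrinks to the origin as $t\to 1$ while $(\nabla v,\partial_tv)\in L^2$, the cross terms $\int\nabla v\cdot\nabla a$ and $\int\partial_tv\,\partial_ta$ vanish in the limit by Cauchy--Schwarz, yielding $\lf\|\nabla a(t_n)\rg\|_{L^2}^2+\frac{N-2}{2}\lf\|\partial_ta(t_n)\rg\|_{L^2}^2\leq\lf\|\nabla W\rg\|_{L^2}^2+\eta_0+o(1)$; the scale invariance of the $\hdot\times L^2$ norm transfers this bound to $\tilde a_n:=\lambda_n^{(N-2)/2}a(t_n,\lambda_n\,\cdot\,+x_n)$. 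Expanding $\|\tilde a_n-W_\ell(0)\|_{\hdot}^2$ and using the weak convergence \eqref{V=W}, the cross term $\langle\tilde a_n,W_\ell(0)\rangle_{\hdot}$ converges to $\|W_\ell(0)\|_{\hdot}^2$; combined with the analogous identity for the time derivative and the lower bound from Claim~\ref{C:value_W}, this yields \eqref{estim_eps}.

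For \eqref{estim_E}, I would exploit a profile decomposition of $(\tilde a_n,\partial_t\tilde a_n)$ in which the weak-limit profile has data $(W_\ell(0),\partial_tW_\ell(0))$. The Pythagorean expansion of the $\hdot\times L^2$ norm combined with the lower bound on $\|\nabla W_\ell(0)\|^2+\frac{N-2}{2}\|\partial_tW_\ell(0)\|^2$ from Claim~\ref{C:value_W} forces the sum of the squared $\hdot\times L^2$ norms of all other profiles and of the linear remainder to be $O(\eta_0)$. The Pythagorean expansion for the energy, together with Sobolev, then gives $E_0=E(W_\ell(0),\partial_tW_\ell(0))+O(\eta_0)$; an analogous Pythagorean expansion for the momentum (whose off-diagonal terms vanish by the pseudo-orthogonality \eqref{ortho_param}, and whose diagonal tail is controlled by Cauchy--Schwarz) gives $d_0=\int\nabla W_\ell(0)\,\partial_tW_\ell(0)+O(\eta_0)$. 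Inserting the explicit values from Claim~\ref{C:value_W}, namely $E(W_\ell(0),\partial_tW_\ell(0))=E(W,0)/\sqrt{1-\ell^2}$ and $\int\nabla W_\ell(0)\,\partial_tW_\ell(0)=-\ell E(W,0)/\sqrt{1-\ell^2}\,\vec e_1$, together with the bound $|\ell|\lesssim\eta_0^{1/4}$ from \eqref{estim_ell}, produces $|E_0-E(W,0)|\lesssim\eta_0^{1/2}$ and $|d_0|\lesssim\eta_0^{1/4}$, both bounded by $C\eta_0^{1/4}$. The main obstacle is establishing the Pythagorean expansion for the momentum, which is slightly less classical than the analogous ones for norms and energy but follows from the same pseudo-orthogonality arguments.
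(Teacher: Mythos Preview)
Your proof is correct and follows the paper's approach for \eqref{V=W}, \eqref{estim_ell}, and \eqref{estim_eps}; in fact your derivation of \eqref{estim_ell} directly from \eqref{bound_nablaA} is slightly cleaner than the paper's, which first derives the combined inequality \eqref{dev_Well} and then reads off both \eqref{estim_ell} and \eqref{estim_eps} from it.

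For \eqref{estim_E}, however, you take an unnecessarily elaborate route through a profile decomposition and a Pythagorean expansion for the momentum. The paper simply uses \eqref{estim_eps}: since the energy and the momentum are scale-invariant and locally Lipschitz on $\hdot\times L^2$, estimate \eqref{estim_eps} immediately gives
\[
\bigl|E_0-E(W_\ell(0),\partial_tW_\ell(0))\bigr|+\Bigl|d_0-\int\nabla W_\ell(0)\,\partial_tW_\ell(0)\Bigr|\leq C\eta_0^{1/2},
\]
after which the explicit values in Claim~\ref{C:value_W} combined with $|\ell|\leq C\eta_0^{1/4}$ finish the job. Your profile-decomposition argument is valid (and the Pythagorean expansion for the momentum does hold, by the same pseudo-orthogonality that gives \eqref{pythagore1a}--\eqref{pythagore1b}), but it bypasses the much simpler observation that \eqref{estim_eps} already controls the distance in a norm in which both functionals are continuous.
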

\begin{proof}
By Proposition \ref{P:compactness},
the solution $V$ with initial condition $(V_0,V_1)$ is compact up to mo\-du\-la\-tion. By Theorem \ref{T:compact}, after a rotation of $\RR^N$ (and possibly changing $u$ into $-u$), there exists $x_0\in \RR^N$ and $\mu_0>0$ such that
$$ (V_0,V_1)=\left(\frac{1}{\mu_0^{\frac{N}{2}-1}}W_{\ell}\left(0,\frac{\cdot-x_0}{\mu_0}\right),\frac{1}{\mu_0^{\frac{N}{2}}}\partial_tW_{\ell}\left(0,\frac{\cdot-x_0}{\mu_0}\right)\right).$$
Taking $\lambda_n=\mu_0\lambda_n'$ and $x_n=x_n'+\lambda_n x_0$ we get \eqref{V=W}.

By \eqref{V=W},
$$ \|\nabla a (t_n)\|_{L^2}^2=\left\|\nabla W_{\ell}(0)-\lambda_n^{\frac{N}{2}}\nabla a(t_n,\lambda_nx +x_n)\right\|_{L^2}^2+\|\nabla W_{\ell}(0)\|_{L^2}^2+o_n(1).$$
Together with the analoguous statement on the time derivative of $a$ and with assumption \eqref{bound_nabla}, we get that for large $n$,
\begin{multline}
\label{dev_Well}
\|\nabla W_{\ell}(0)\|_{L^2}^2+\frac{N-2}{2}\|\partial_t W_{\ell}(0)\|_{L^2}^2+\left\|\nabla W_{\ell}(0)-\lambda_n^{\frac{N}{2}}\nabla a(t_n,\lambda_nx +x_n)\right\|_{L^2}^2\\
+\frac{N-2}{2}\left\|\partial_t W_{\ell}(0)-\lambda_n^{\frac{N}{2}}\partial_t a(t_n,\lambda_nx +x_n)\right\|_{L^2}^2
\leq \|\nabla W\|^2_{L^2}+2\eta_0. 
\end{multline}
By Claim \ref{C:value_W}, 
$$ \|\nabla W_{\ell}(0)\|^2_{L^2}+\frac{N-2}{2}\|\partial_tW_{\ell}(0)\|^2_{L^2}-\int |\nabla W|^2\geq \frac{\ell^4}{8}\int |\nabla W|^2,$$
and thus \eqref{dev_Well} implies $16\eta_0\geq \ell^4\int |\nabla W|^2$, and \eqref{estim_ell}, \eqref{estim_eps} follow. The estimate \eqref{estim_E} follows from \eqref{estim_ell}, \eqref{estim_eps}, and the fact that for small $\ell$,
$\left|E(W,0)-E(W_{\ell}(0),\partial_t W_{\ell}(0))\right|\leq C\ell^2$.
\end{proof}
\subsection{A few estimates}
\label{SS:estimates}
Until the end of the proof, we fix a sequence $t_n$ as in Corollary \ref{C:limit}, and we denote by
\begin{align}
\label{defepsn0}
\tilde{\eps}_{0n}(x)&=a(t_n,x)-\frac{1}{\lambda_n^{\frac{N}{2}-1}}W_{\ell}\left(0,\frac{x-x_n}{\lambda_n}\right)\\
\label{defepsn1}
\tilde{\eps}_{1n}(x)&=\partial_t a(t_n,x)-\frac{1}{\lambda_n^{\frac{N}{2}}}\partial_t W_{\ell}\left(0,\frac{x-x_n}{\lambda_n}\right).
\end{align} 
We have by \eqref{estim_eps}
\begin{equation}
\label{estim_eps2}
\limsup_{n\to \infty} \|\nabla \tilde{\eps}_{0n}\|_{L^2}^2+\frac{N-2}{2}\|\tilde{\eps}_{1n}\|_{L^2}^2\leq 2\eta_0.
 \end{equation} 
\begin{lemma}
\label{L:estimates}
The parameters $x_n$ and $\lambda_n$ satisfy:
\begin{gather}
\label{lambdan_0}
 \lim_{n\to +\infty} \frac{\lambda_n}{1-t_n}=0,\\
\label{xn_small}
\limsup_n \frac{|x_n|}{1-t_n}\leq C\eta_0^{1/4}.
\end{gather}
\end{lemma}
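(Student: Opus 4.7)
For \eqref{lambdan_0} I rely on finite speed of propagation. Since $a=u-v$ is supported in $\{|x|\leq 1-t\}$, the rescaled function $f_n(y):=\lambda_n^{(N-2)/2}a(t_n,\lambda_n y+x_n)$ has support in $B(-x_n/\lambda_n,(1-t_n)/\lambda_n)$. Suppose for contradiction that $\lambda_n/(1-t_n)$ does not converge to $0$. After extraction, $R_n:=(1-t_n)/\lambda_n$ stays bounded. If $x_n/\lambda_n$ is bounded too, then the supports of the $f_n$ lie in a fixed ball, and Rellich--Kondrachov combined with the weak convergence from \eqref{V=W} would force $W_\ell(0,\cdot)$ to be compactly supported, contradicting its strict positivity. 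If instead $|x_n|/\lambda_n\to+\infty$, choose $\phi\in C_0^\infty(\RR^N)$ with $\int\nabla W_\ell(0,\cdot)\cdot\nabla\phi\neq 0$; for large $n$ the supports of $f_n$ and $\phi$ are disjoint, so $\int\nabla f_n\cdot\nabla\phi=0$, again contradicting the weak convergence.

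\textbf{Setup for the second estimate.} For \eqref{xn_small} the plan is to track the first moment of the energy of $a$. Set, for $k\in\{1,\ldots,N\}$,
$$M_k(t):=\int x_k\,e(a)(t,x)\,dx,$$
which is finite by the compact support of $a$. Since $\partial_t^2 a-\Delta a=|u|^{4/(N-2)}u-|v|^{4/(N-2)}v=:N(u,v)$, differentiating and integrating by parts (as in \eqref{identity4'}, which applies to $a$ thanks to its compact support) yields
$$M_k'(t)=-\int\partial_{x_k}a\,\partial_t a\,dx+\int x_k\,\partial_t a\,\bigl(N(u,v)-|a|^{4/(N-2)}a\bigr)\,dx.$$
The first integral tends to $(d_0)_k$ as $t\to 1^-$ by definition of $d_0$. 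The second is an error term coming from the fact that $a$ is not itself a solution of \eqref{CP}; using the pointwise bound $|N(u,v)-|a|^{4/(N-2)}a|\lesssim|v|\,|a|^{4/(N-2)}+|v|^{4/(N-2)}|a|$, the smoothness of $v$ near $(1,0)$, the factor $|x_k|\leq 1-t$, and H\"older--Sobolev estimates on $B(0,1-t)$, this term is integrable on $(t_n,1)$ with total contribution $o(1-t_n)$.

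\textbf{Conclusion.} Since $|M_k(t)|\leq(1-t)\bigl(\|\nabla a(t)\|_{L^2}^2+\|\partial_t a(t)\|_{L^2}^2+\|a(t)\|_{L^{2N/(N-2)}}^{2N/(N-2)}\bigr)\to 0$, integrating $M_k'$ from $t_n$ to $1^-$ gives
$$M_k(t_n)=(d_0)_k(1-t_n)+o(1-t_n).$$
On the other hand, the change of variables $x=\lambda_n y+x_n$ combined with \eqref{defepsn0}--\eqref{defepsn1} yields
$$M_k(t_n)=(x_n)_k\,E(a(t_n),\partial_t a(t_n))+\lambda_n\int y_k\,e_{f_n,g_n}(y)\,dy,$$
where $E(a(t_n),\partial_t a(t_n))\to E_0$ with $|E_0-E(W,0)|\leq C\eta_0^{1/4}$ by \eqref{estim_E}. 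Provided the remainder $\lambda_n\int y_k\,e_{f_n,g_n}$ is $o(1-t_n)$, solving for $(x_n)_k$ and using $|d_0|\leq C\eta_0^{1/4}$ gives $|x_n|\leq C\eta_0^{1/4}(1-t_n)+o(1-t_n)$, which is \eqref{xn_small}.

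\textbf{Main obstacle.} The main technical point is to justify that $\lambda_n\int y_k\,e_{f_n,g_n}(y)\,dy=o(1-t_n)$, knowing only that $\lambda_n=o(1-t_n)$ and that $f_n,g_n$ converge weakly to $W_\ell(0,\cdot),\partial_t W_\ell(0,\cdot)$. The key ingredients are the even symmetry of $W_\ell(0,\cdot)$ in each $y_k$ (which formally cancels the principal $W_\ell$-contribution), the weak vanishing of the errors $\tilde\eps_n,\tilde\delta_n$, and careful tail estimates that compensate the slow polynomial decay $W_\ell(0,y)\sim|y|^{-(N-2)}$. In particular, for $N=3$ the formal integral $\int y_k|\nabla W_\ell(0)|^2\,dy$ is only conditionally convergent, and the integration domain $B(-x_n/\lambda_n,(1-t_n)/\lambda_n)$ is not symmetric around $0$; one therefore has to exploit $|x_n|\leq 1-t_n$ to control the tail contribution from the annulus $|y|\sim(1-t_n)/\lambda_n$.
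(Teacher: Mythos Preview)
Your proof of \eqref{lambdan_0} is essentially correct and matches the paper's; note that since $|x_n|\leq C(1-t_n)$ (a preliminary fact from the support of $a$, cited from \cite{BaGe99} in the paper), your second case $|x_n|/\lambda_n\to\infty$ cannot actually occur when $\lambda_n/(1-t_n)\geq c_0>0$, so the case split is unnecessary.

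For \eqref{xn_small} your overall strategy (track the first spatial moment of the energy density) is the right one, but there is a genuine gap in your execution: you differentiate $\int x_k\,e(a)$, and since $a$ is \emph{not} a solution of \eqref{CP} you pick up the error $\int x_k\,\partial_t a\,(F(u)-F(v)-F(a))$ with $F(s)=|s|^{4/(N-2)}s$. You claim this is integrable on $(t_n,1)$ with contribution $o(1-t_n)$, invoking ``smoothness of $v$'' and ``H\"older--Sobolev''. But $v$ is only known to lie in $C^0(\hdot)\cap C^1(L^2)$, not in $L^\infty$, and for $N=3$ the worst term is $\int_{|x|\leq 1-t}|\partial_t a|\,|v|\,|a|^4$; any H\"older pairing with $\partial_t a\in L^2$ and $v\in L^6$ forces $a\in L^{12}$, which is not available from the energy bound. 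The analogous obstruction arises for $N=5$. So this step does not close.

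The paper sidesteps this entirely by working with $\int x\,(e(u)-e(v))$ instead of $\int x\,e(a)$. Since $u$ and $v$ are both exact solutions, the identity $\frac{d}{dt}\int x(e(u)-e(v))=-d_0$ is \emph{exact}, with no nonlinear error. The passage from $e(u)-e(v)$ to $e(a)$ is done only at the fixed time $t_n$, as a static $L^1$ estimate on $e(u)-e(v)-e(a)$; this involves products like $|v|^{2^*-1}|a|$ and $|v||a|^{2^*-1}$ (no $\partial_t a$ factor), which \emph{are} controlled by $\|v\|_{L^{2^*}}\|a\|_{L^{2^*}}^{2^*-1}$ etc., and the key input is the smallness $\|\nabla v(t_n)\|_{L^2}^2+\frac{N-2}{2}\|\partial_t v(t_n)\|_{L^2}^2\leq 2\eta_0$ (itself derived from the weak convergence and \eqref{bound_nabla}).

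Regarding your ``main obstacle'': you ask for $\lambda_n\int y_k\,e_{f_n,g_n}=o(1-t_n)$, which is stronger than needed and leads you into the delicate tail analysis you describe. What actually suffices is an $O(\eta_0^{1/4}(1-t_n))$ bound, and the paper gets this directly by splitting $\int x\,e(a)$ at radius $A\lambda_n$ around $x_n$: outside, the $\eta_0$-smallness of $(\tilde\eps_{0n},\tilde\eps_{1n})$ from \eqref{estim_eps2} gives the bound; inside, one writes $x=(x-x_n)+x_n$, the first piece is $O(A\lambda_n)=o(1-t_n)$, and the second is $x_n$ times something bounded below by $\tfrac12 E(W_\ell(0),\partial_tW_\ell(0))$. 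No parity argument or conditional convergence issue arises.
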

\begin{proof}
Using that $|x|\leq 1-t$ on the support of $a$, we get that $|x_n|\leq C(1-t_n)$ and $|\lambda_n|\leq C(1-t_n)$ (see \cite[p.154-155]{BaGe99}). 

\EMPH{Proof of \eqref{lambdan_0}} We argue by contradiction. Assume (after extraction) that for large $n$,
\begin{equation}
 \label{absurd_lambdan}
\frac{\lambda_n}{1-t_n}\geq c_0>0.
\end{equation} 
Notice that
$$ \lambda_n^{\frac{N}{2}-1}a(t_n,\lambda_n x+x_n)\neq 0\Longrightarrow |x|\leq \frac{1-t_n}{\lambda_n}+\frac{|x_n|}{\lambda_n}\Longrightarrow |x|\leq \frac{1}{c_0}+\frac{C}{c_0},$$
As $W_{\ell}(0)$ is the weak limit of the preceding function, we obtain that $|x|\leq C_0$ on the support of $W_{\ell}(0)$, a contradiction. 

\EMPH{Proof of \eqref{xn_small}} Denote by $e(u)$ the density of energy defined by \eqref{defeu}.
Using that $u$ and $v$ are solutions of \eqref{CP} and that $\supp a\subset \{|x|\leq 1-t\}$, we obtain (see \eqref{identity4} in Claim \ref{C:identities})
\begin{equation}
\label{interm}
    \frac{d}{dt} \int_{\RR^N} x(e(u)-e(v))dx=-\int (\nabla u\partial_t u-\nabla v\partial_t v)=-d_0.
   \end{equation} 
Furthermore,
$$ \left|\int_{\RR^N} x(e(u)-e(v))dx\right|=\left|\int_{|x|\leq (1-t)} x(e(u)-e(v))dx\right|\leq C(1-t),$$
and thus
$$ \lim_{t\to 1}\int_{\RR^N} x(e(u)-e(v))dx=0.$$
Integrating \eqref{interm} between $t_n$ and $1$, we get
\begin{equation}
\label{needed_after}
\int_{\RR^N} x\,(e(u)-e(v))(t_n)dx=d_0(1-t_n),
\end{equation} 
and thus by \eqref{estim_E},
\begin{equation}
\label{bound_uv}
\left|\int_{\RR^N} x(e(u)-e(v))(t_n)dx\right|\leq C\eta_0^{1/4}(1-t_n). 
\end{equation} 
Recall that $\lambda_n^{\frac{N}{2}-1}a(t_n,\lambda_n \cdot+x_n)$ converges weakly to $W_{\ell}(0)$ and that $u(t_n)$ converges weakly to $v(1)$ in $\hdot$ as $n\to\infty$. Thus
\begin{multline*}
\|\nabla W_{\ell}(0)\|_{L^2}^2\leq \limsup_{n\to \infty} \|\nabla a(t_n)\|_{L^2}^2\\
=\limsup_{n\to\infty}\lf(\|\nabla u(t_n)\|^2_{L^2}-2\langle \nabla u(t_n),\nabla v(t_n)\rangle_{L^2}+\|\nabla v(t_n)\|^2_{L^2}\rg)
\\=-\|\nabla v(1)\|_{L^2}^2+\limsup_{n\to\infty}\|\nabla u(t_n)\|^2_{L^2}.
\end{multline*}
Using this together with the analoguous statements on the time derivatives, we see that \eqref{bound_nabla} implies that 
$$ \|\nabla W_{\ell}(0)\|_{L^2}^2+\frac{N-2}{2}\|\partial_t W_{\ell}(0)\|_{L^2}^2+\|\nabla v(1)\|^2+\frac{N-2}{2}\|\partial_t v(1)\|^2\leq \|\nabla W\|^2_{L^2}+\eta_0,$$
and thus for large $n$, using the continuity of $v$ and that, by Claim \ref{C:value_W},
$\|\nabla W\|_{L^2}^2\leq \|\nabla W_{\ell}(0)\|_{L^2}^2+\frac{N-2}{2}\|\partial_t W_{\ell}(0)\|_{L^2}^2$,
\begin{equation}
\label{bound_v} 
\|v(t_n)\|_{\hdot}^2+\frac{N-2}{2}\|\partial_t v(t_n)\|_{L^2}^2\leq 2\eta_0.
\end{equation} 
Thus \eqref{bound_uv} implies
\begin{equation}
\label{forxn1}
\left|\int_{\RR^N} x\,e(a)(t_n)dx\right|\leq C\eta_0^{1/4}(1-t_n).
\end{equation} 
By \eqref{defepsn0}, \eqref{defepsn1} and \eqref{estim_eps2} there exists $A>0$ such that for large $n$,
$$\int_{\frac{|x-x_n|}{\lambda_n}\geq A} |\nabla a|^2+(\partial_t a)^2+|a|^{\frac{2N}{N-2}}\leq C\eta_0\leq C\eta_0^{1/4}.$$ 
As a consequence, for large $n$ (using that on the support of $a$, $|x|\leq 1-t_n$),
\begin{equation}
\label{forxn2}
\left|\int_{|x-x_n|\geq A\lambda_n} xe(a)(t_n)\right|\leq C\eta_0^{1/4}(1-t_n).
\end{equation} 
On the other hand,
\begin{equation}
\label{forxn3}
\int_{|x-x_n|\leq A\lambda_n} xe(a)(t_n)= \int_{|x-x_n|\leq A\lambda_n} (x-x_n)e(a)(t_n)+x_n\int_{|x-x_n|\leq A\lambda_n} e(a)(t_n).
\end{equation} 
By \eqref{lambdan_0},
\begin{equation}
\label{forxn4}
\lim_{n\to \infty}\frac{1}{1-t_n}\left|\int_{|x-x_n|\leq A\lambda_n} (x-x_n)e(a)(t_n)\right|=0. 
\end{equation} 
Furthermore, using that $\eta_0$ is small, we get by \eqref{estim_eps2} that if $A$ is chosen large,
\begin{equation}
\label{forxn5}
\liminf_{n\to\infty}\int_{|x-x_n|\leq A\lambda_n} e(a)(t_n)\geq \frac{1}{2}E(W_{\ell}(0),\partial_tW_{\ell}(0)). 
\end{equation}
Combining \eqref{forxn1},\ldots, \eqref{forxn5} we get the desired estimate \eqref{xn_small}.

\end{proof}
\subsection{Strong convergence to the solitary wave for a sequence of times}
\label{SS:strongCV}
Until the end of Section \ref{S:universality}, we assume $N\in \{3,5\}$. 
\begin{prop}
\label{P:strong_CV}
 Let $\{t_n\}$ be any sequence as in Corollary \ref{C:limit}. Then there exists $\ell\in (-1,1)$ such that (rotating again the space variable around the origin and replacing $u$ by $-u$ if necessary), 
$$\lim_{n\to \infty} \left(\lambda_n^{\frac{N}{2}-1}a(t_n,\lambda_nx +x_n),\lambda_n^{\frac{N}{2}}\partial_ta(t_n,\lambda_nx+x_n)\right)=\left(W_{\ell}(0),\partial_t W_{\ell}(0)\right),$$
\emph{strongly} in $\hdot\times L^2$.
\end{prop}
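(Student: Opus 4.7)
The plan is to argue by contradiction using Proposition \ref{P:linear} together with the support property of $a$. Suppose strong convergence fails: after extracting a subsequence, the rescaled residual
\begin{equation*}
\hat\eps_n:=\lf(\lambda_n^{\frac{N}{2}-1}\tilde\eps_{0n}(\lambda_n\cdot+x_n),\,\lambda_n^{\frac{N}{2}}\tilde\eps_{1n}(\lambda_n\cdot+x_n)\rg)
\end{equation*}
satisfies $\hat\eps_n\rightharpoonup 0$ in $\hdot\times L^2$ while $\liminf_n\|\hat\eps_n\|_{\hdot\times L^2}^2=:c>0$. By Proposition \ref{P:linear} applied to each $\hat\eps_n$, followed by a diagonal extraction, one of the two time directions (say forward, the backward case being symmetric since $u$ is defined on a full neighbourhood of each $t_n<1$) can be arranged to give, uniformly in $n$,
\begin{equation*}
\int_{|y|\geq s}\lf(|\nabla\hat\eps_n^{\lin}(s,y)|^2+|\partial_s\hat\eps_n^{\lin}(s,y)|^2\rg)dy\geq c/2\qquad\forall\,s\geq 0,
\end{equation*}
where $\hat\eps_n^{\lin}$ denotes the free linear evolution of $\hat\eps_n$.

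Next, set $\tau_n:=(1-t_n)/\lambda_n$, $y_n:=x_n/\lambda_n$; by Lemma \ref{L:estimates}, $\tau_n\to\infty$ and $|y_n|\leq C\eta_0^{1/4}\tau_n$. Introduce $\tilde u_n(s,y):=\lambda_n^{\frac{N}{2}-1}u(t_n+\lambda_n s,\lambda_n y+x_n)$, a solution of \eqref{CP} on $[0,\tau_n)$ with initial data $(W_\ell(0),\partial_tW_\ell(0))+\hat\eps_n+\hat v_n$, where $\hat v_n$ is the corresponding rescaling of $v(t_n)$ and $\|\hat v_n\|_{\hdot\times L^2}=O(\sqrt\eta_0)$ by \eqref{bound_v}. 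Since $\hat\eps_n+\hat v_n$ is small in norm, a modulation/long-time perturbation analysis around the solitary wave $W_\ell$ on $[0,\tau_n-1]$ yields an asymptotic decomposition
\begin{equation*}
\tilde u_n(s,y)=\widetilde W_n(s,y)+\hat\eps_n^{\lin}(s,y)+\hat v_n^{\lin}(s,y)+r_n(s,y),
\end{equation*}
where $\widetilde W_n$ is $W_\ell$ with its parameters modulated in time to absorb its instability directions (and remaining concentrated inside the light cone for small $\ell$), and $\sup_{s\in[0,\tau_n-1]}\|r_n(s)\|_{\hdot\times L^2}\tend{n}0$.

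To close the argument, choose $\delta\in(C\eta_0^{1/4},1/2)$ and set $s_n:=(1-\delta)\tau_n$. For $|y|\geq s_n$, the triangle inequality together with the bound on $|y_n|$ gives $|y+y_n|\geq s_n-|y_n|>\tau_n-s_n$, so that $\tilde a_n(s_n,y)=0$ and hence $\tilde u_n(s_n,y)=\tilde v_n(s_n,y)$ on this exterior region. Moreover $\widetilde W_n(s_n)$ is concentrated near $\ell s_n\vec e_1$, well inside $|y|<s_n$ since $|\ell|\leq C\eta_0^{1/4}<1-\delta$, so $\|\widetilde W_n(s_n)\|_{\hdot\times L^2(|y|\geq s_n)}=o_n(1)$. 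Combining these with the decomposition identifies, in the exterior $|y|\geq s_n$,
\begin{equation*}
\hat\eps_n^{\lin}(s_n,y)=\tilde v_n(s_n,y)-\hat v_n^{\lin}(s_n,y)+o_n(1)_{\hdot\times L^2}.
\end{equation*}
The outgoing lower bound $\|\hat\eps_n^{\lin}(s_n)\|_{\hdot\times L^2(|y|\geq s_n)}^2\geq c/2$ then forces the right-hand side to carry a comparable amount of energy in the exterior. A careful analysis of the profile decompositions of $\hat v_n$ and $\tilde v_n(s_n)$ (using the weak convergence $\hat v_n\rightharpoonup 0$ and the small-data scattering of $v$) shows that these quantities carry only $o_n(1)$ energy in $|y|\geq s_n$, producing the required contradiction.

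The main obstacle is the long-time perturbation step: since $\|W_\ell\|_{S(0,\tau_n-1)}\to\infty$, Proposition \ref{P:lin_NL} does not apply directly, and one must rely on the modulation theory around $W_\ell$ developed in the appendix, which handles the finite-dimensional unstable directions of $W_\ell$ by slow time-dependent motion of its parameters and leaves a coercively controlled transverse scattering component. Proposition \ref{P:linear} is precisely what is needed to lower-bound the exterior energy of this transverse component, and this is the step at which the restriction to odd dimensions $N\in\{3,5\}$ enters.
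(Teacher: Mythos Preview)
Your overall strategy---contradiction via Proposition \ref{P:linear}, showing that a nonvanishing residual would send a fixed amount of energy outside the light cone where $a$ vanishes---is exactly the paper's. The genuine gap is the ``long-time perturbation/modulation'' step you flag as the main obstacle. You propose to write, on the whole interval $[0,\tau_n-1]$ with $\tau_n\to\infty$,
\[
\tilde u_n(s)=\widetilde W_n(s)+\hat\eps_n^{\lin}(s)+\hat v_n^{\lin}(s)+r_n(s),\qquad \sup_s\|r_n(s)\|\to 0,
\]
and justify this by the appendix. But Lemma \ref{L:modulation} only gives pointwise closeness to a modulated $W_\ell$ when $|d_\ell|$ is small; it does not yield a decomposition in which the transverse part evolves linearly (that would be an asymptotic-stability-type statement for $W_\ell$, which is not proved here and is in fact false without further restrictions, since $W_\ell$ has an exponentially unstable direction). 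Proposition \ref{P:lin_NL} does not help either, since $\|W_\ell\|_{S(0,\tau_n)}\to\infty$. So as written, the step producing the decomposition with $r_n\to 0$ is unsupported.

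The paper avoids long-time perturbation around $W_\ell$ entirely, and this is the key idea you are missing. After rescaling so that the blow-up time of $g_n$ is $1$ (so $\mu_n=\lambda_n/(1-t_n)\to 0$), the argument is: (i) apply Proposition \ref{P:lin_NL} on the \emph{short} interval $[0,A\mu_n]$ only, to get \eqref{gn_Amun}--\eqref{dtgn_Amun}; (ii) insert spatial cutoffs at scales $\sim 1$ and $\sim A\mu_n$ to split $g_n(A\mu_n)$ into an outer piece of $h_n$, an inner piece of the rescaled $W_\ell$, and a residual $(\oveps_{0n},\oveps_{1n})$ that is close to $\eps_n^{\lin}(A\mu_n)$ (see \eqref{oveps_close}) and, crucially, equals $g_n(A\mu_n)$ exactly in the annulus $\frac{2}{3}A\mu_n\le |y-y_n|\le 9$; (iii) now $\oveps_n$ is \emph{small} in $\hdot\times L^2$, so small-data theory keeps it uniformly close to its linear evolution for all time; (iv) finite speed of propagation propagates the identity $g_n=\oveps_n$ from the annulus at time $A\mu_n$ to the exterior region at the fixed time $3/4$. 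This lands the outgoing energy of $\eps_n^{\lin}$ in the region $|x|\gtrsim 1-t_n'$ of the original variables, where $u=v$, and regularity of $v$ gives the contradiction. The point is that one never follows the nonlinear evolution near the soliton for long times: the cutoff and finite speed of propagation localize the problem to a region where $W_\ell$ is already negligible and the dynamics are governed by a small solution.
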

\begin{proof}
\EMPH{Step 1. Rescaling and application of the linear lemma} We first rescale the solutions. Let
$$ g_n(\tau,y)=(1-t_n)^{\frac{N}{2}-1} u(t_n+(1-t_n)\tau,(1-t_n)y),\quad (g_{0n},g_{1n})=\left(g_n(0),\partial_{\tau}g_n(0)\right),$$
and 
$$ h_n(\tau,y)=(1-t_n)^{\frac{N}{2}-1} v(t_n+(1-t_n)\tau,(1-t_n)y),\quad (h_{0n},h_{1n})=\left(h_n(0),\partial_{\tau}h_n(0)\right).$$
Then for all $n$, $g_n$ is a solution to \eqref{CP} with maximal time of existence $1$, and $h_n$ is a globally defined solution of \eqref{CP}. By \eqref{defepsn0}, \eqref{defepsn1}, \eqref{estim_eps2}, 
\begin{align}
\label{def_h0n}
 g_{0n}(y)&=h_{0n}(y)+\frac{1}{\mu_n^{\frac{N}{2}-1}}W_{\ell}\left(0,\frac{y-y_n}{\mu_n}\right)+ \eps_{0n}(y)\\
\label{def_h1n}
 g_{1n}(y)&=h_{1n}(y)+\frac{1}{\mu_n^{\frac{N}{2}}}\partial_t W_{\ell}\left(0,\frac{y-y_n}{\mu_n}\right)+\eps_{1n}(y),
\end{align}
where
$$ \mu_n=\frac{\lambda_n}{1-t_n}\to 0,\quad y_n=\frac{x_n}{1-t_n},\quad |y_n|\leq C\eta_0^{1/4}$$
and
\begin{equation*}
 \eps_{0n}=\frac{1}{\mu_n^{\frac{N}{2}-1}}\tilde{\eps}_{0n}\left(\frac{y-y_n}{\mu_n}\right),\quad\eps_{1n}=\frac{1}{\mu_n^{\frac{N}{2}}}\tilde{\eps}_{0n}\left(\frac{y-y_n}{\mu_n}\right).
\end{equation*} 
We argue by contradiction. We must show that $\left(\tilde{\eps}_{0n},\tilde{\eps}_{1n}\right)$ tends to $0$ in $\hdot\times L^2$, i.e that $\left(\eps_{0n},\eps_{1n}\right)$ tends to $0$ in $\hdot\times L^2$. Assume (after extraction) that
$$ \lim_{n\to\infty} \|\eps_{0n}\|_{\hdot}^2+\|\eps_{1n}\|^2_{L^2}=\delta_1>0.$$
Using that $|x|\leq 1-t_n$ on the support of $a$, we obtain
\begin{equation}
\label{ext_nul}
 \lim_{n\to \infty}\int_{|y|\geq 1} |\nabla \eps_{0n}(y)|^2+(\eps_{1n}(y))^2=0.
\end{equation} 
We denote by $\eps_n^{\lin}$ (respectively $\eps_n$) the solution to the linear wave equation (respectively the nonlinear wave equation) with initial condition $\left(\eps_{0n},\eps_{1n}\right)$. Applying Proposition \ref{P:linear} to $\eps_n^{\lin}$, we get (in view of \eqref{ext_nul}) that for large $n$, the following holds for all $\tau>0$, or for all $\tau<0$:
\begin{equation}
\label{linear_exit}
\int_{|\tau|\leq |y-y_n|\leq 2+|\tau|} \left|\nabla \eps_n^{\lin}(\tau)\right|^2+\left(\partial_t \eps_n^{\lin}(\tau)\right)^2\geq \frac{\delta_1}{4}.
\end{equation} 
\EMPH{Step 2. Concentration of some energy outside the light-cone}
In step 3 we will show that if \eqref{linear_exit} holds for all $\tau>0$, then for large $n$,
\begin{equation}
 \label{NL_exit+}
\int_{\frac{3}{4}\leq |y-y_n|\leq 3}\lf|\nabla g_n\left(\frac 34\right)\rg|^2+\lf(\partial_t g_n\lf(\frac 34\rg)\rg)^2\geq \frac{\delta_1}{16},
\end{equation} 
and if \eqref{linear_exit} holds for all $\tau<0$, then for a small $r_0>0$ and for large $n$,
\begin{equation}
 \label{NL_exit-}
\int_{|\tau_n|\leq |y-y_n|\leq |\tau_n|+10}|\nabla g_n(\tau_n)|^2+(\partial_t g_n(\tau_n))^2\geq \frac{\delta_1}{16}, \text{ where }\tau_n=-\frac{r_0}{1-t_n}.
 \end{equation} 
In this step we show that \eqref{NL_exit+} or \eqref{NL_exit-} yield a contradiction. If \eqref{NL_exit+} holds, then for large $n$,
$$
\int_{\frac{3}{4}\leq \frac{|x-x_n|}{1-t_n}\leq 3}\lf|\nabla u\lf(\frac{3}{4}+\frac{t_n}{4}\rg)\rg|^2+\lf(\partial_t u\lf(\frac{3}{4}+\frac{t_n}{4}\rg)\rg)^2\geq \frac{\delta_1}{16}.$$
Let $t'_n=\frac 34+\frac{t_n}{4}\to 1$ as $n\to \infty$. Then the preceding inequality implies
\begin{equation}
\label{more_exit}
\int_{2(1-t'_n)\leq |x|\leq 13(1-t_n')}|\nabla u(t_n')|^2+(\partial_t u(t_n'))^2\geq \frac{\delta_1}{16}.
\end{equation} 
Indeed, by \eqref{xn_small}, and using that $1-t'_n=\frac{1-t_n}{4}$, we get for large $n$,
\begin{equation*}
 \frac{3}{4}\leq \frac{|x-x_n|}{1-t_n}\leq 3\Longrightarrow 3\leq \frac{|x-x_n|}{1-t_n'}\leq 12\Longrightarrow 3-C\eta_0^{1/4}\leq \frac{|x|}{1-t_n'}\leq 12+C\eta_0^{1/4},
\end{equation*}
and \eqref{more_exit} follows if $\eta_0$ is small.

If $|x|\geq 1-t_n'$, then $v(t_n',x)=u(t_n',x)$ and we obtain by \eqref{more_exit} that for large $n$,
\begin{equation*}
\int_{2(1-t'_n)\leq |x|\leq 13(1-t_n')}|\nabla v(t_n')|^2+(\partial_t v(t_n'))^2\geq \frac{\delta_1}{16},
\end{equation*} 
a contradiction with the fact that $(v,\partial_t v)\in C^0(\RR,\hdot\times L^2)$ (and thus the preceding integral tends to $0$ as $n$ goes to $\infty$).

In the case where \eqref{NL_exit-} holds, we obtain that for large $n$,
\begin{equation*}
 \int_{\frac{r_0}{1-t_n}\leq \frac{|x-x_n|}{1-t_n}\leq \frac{r_0}{1-t_n}+10}\left|\nabla u(t_n-r_0)\right|^2+(\partial_tu(t_n-r_0))^2dx\geq \frac{\delta_1}{16},
\end{equation*}
which yields a contradiction in a similar manner.

\EMPH{Step 3. Nonlinear approximation}
It remains to prove \eqref{NL_exit+} and \eqref{NL_exit-}. We will focus on the proof of \eqref{NL_exit+}. The proof of \eqref{NL_exit-} is similar and we leave the details to the reader.

Let $A$ be a large positive number to be specified later. Recall that $\eps_n$ is the solution of \eqref{CP} with initial condition $ (\eps_{0n},\eps_{1n})$. In view of \eqref{def_h0n}, \eqref{def_h1n} we get
\begin{align}
\label{gn_Amun} 
g_{n}(A\mu_n,y)&=h_n(A\mu_n,y)+\frac{1}{\mu_n^{\frac{N}{2}-1}}W_{\ell}\left(A,\frac{y-y_n}{\mu_n}\right)+\eps_{n}(A\mu_n,y)+o_n(1)\text{ in }\hdot\\
\label{dtgn_Amun} 
 \partial_t g_{n}(A\mu_n,y)&=\partial_th_n(A\mu_n,y)+\frac{1}{\mu_n^{\frac{N}{2}}}\partial_t W_{\ell}\left(A,\frac{y-y_n}{\mu_n}\right)+\partial_t\eps_{n}(A\mu_n,y)+o_n(1)\text{ in }L^2.
\end{align}
 To show this, write a profile decomposition $\lf\{U_{\lin}^j\rg\}_{j\geq 3}$, $\lf\{\lambda_{j,n},x_{j,n},t_{j,n}\rg\}_{j,n}$ for the sequence $(\eps_{0n},\eps_{1n})$ and notice that the equality 
\begin{align*}
 g_{0n}(y)&=h_{0n}(y)+\frac{1}{\mu_n^{\frac{N}{2}-1}}W_{\ell}\left(0,\frac{y-y_n}{\mu_n}\right)+ \sum_{j=3}^J \frac{1}{\lambda_{j,n}^{\frac{N}{2}-1}}U^j_{\lin}\lf(\frac{-t_{j,n}}{\lambda_{j,n}},\frac{x-{x_{j,n}}}{\lambda_{j,n}}\right)+w_{0n}^J\\
g_{1n}(y)&=h_{1n}(y)+\frac{1}{\mu_n^{\frac{N}{2}}}\partial_t W_{\ell}\left(0,\frac{y-y_n}{\mu_n}\right)+
\sum_{j=3}^J \frac{1}{\lambda_{j,n}^{\frac{N}{2}}}\partial_t U^j_{\lin}\lf(\frac{-t_{j,n}}{\lambda_{j,n}},\frac{x-{x_{j,n}}}{\lambda_{j,n}}\right)+w_{1n}^J
\end{align*}
provides a profile decomposition for the sequence $(g_{0n},g_{1n})$, where two additional profiles $U_{\lin}^1$ and $U_{\lin}^2$ are given by the solutions of the linear wave equation with initial conditions $(v_0,v_1)$ and $(W_{\ell}(0),\partial_t W_{\ell}(0))$ respectively, and $t^1_n=t^2_n=0$, $x^1_n=0$, $x^2_n=y_n$, $\lambda_{1,n}=1-t_n$, $\lambda_{2,n}=\mu_n$. Applying Proposition \ref{P:lin_NL} to both sequences $(\eps_{0n},\eps_{1n})$ and $(g_{0n},g_{1n})$ we get \eqref{gn_Amun}, \eqref{dtgn_Amun}. Note that it is also possible to show directly \eqref{gn_Amun}, \eqref{dtgn_Amun} from a long-time perturbation result, without relying on profile decomposition.

Let $\psi\in C_0^{\infty}(\RR^N)$ be a radial function such that $\psi(x)=1$ for $|x|\leq \frac{1}{3}$ and $\psi(x)=0$ for $|x|\geq \frac{2}{3}$. Write \eqref{gn_Amun}, \eqref{dtgn_Amun} as
\begin{align*}
g_{n}(A\mu_n,y)&=\left(1-\psi\left(\frac{y}{30}\right)\right)h_n(A\mu_n,y)+\psi\left(\frac{y-y_n}{A\mu_n}\right)\frac{1}{\mu_n^{\frac{N}{2}-1}}W_{\ell}\left(A,\frac{y-y_n}{\mu_n}\right)+\overline{\eps}_{0n}(y)\\
 \partial_t g_{n}(A\mu_n,y)&=\left(1-\psi\left(\frac{y}{30}\right)\right)\partial_th_n(A\mu_n,y)+\psi\left(\frac{y-y_n}{A\mu_n}\right)\frac{1}{\mu_n^{\frac{N}{2}}}\partial_t W_{\ell}\left(A,\frac{y-y_n}{\mu_n}\right)+\oveps_{1n}(y),
\end{align*}
 where as $n\to \infty$, in $\hdot\times L^2$,
\begin{align*}
\oveps_{0n}&=\psi\left(\frac{y}{30}\right)h_n(A\mu_n)+\left(1-\psi\left(\frac{y-y_n}{A\mu_n}\right)\rg)\frac{1}{\mu_n^{\frac{N}{2}-1}}W_{\ell}\left(A,\frac{y-y_n}{\mu_n}\right)+\eps_{n}(A\mu_n)+o(1),\\
\oveps_{1n}&=\psi\left(\frac{y}{30}\right)\partial_t h_n(A\mu_n)+\left(1-\psi\left(\frac{y-y_n}{A\mu_n}\right)\rg)\frac{1}{\mu_n^{\frac{N}{2}}}\partial_t W_{\ell}\left(A,\frac{y-y_n}{\mu_n}\right)+\partial_t \eps_{n}(A\mu_n)+o(1).
\end{align*}
Then as $n\to \infty$.
\begin{multline}
\label{Explain1}
\left\|\oveps_{0n}-\eps_n^{\lin}(A\mu_n)\rg\|_{\hdot}
 \lesssim \left\|\eps_n(A\mu_n)-\eps_n^{\lin}(A\mu_n)\rg\|_{\hdot}
\\+\sqrt{\int_{|x|\geq \frac{A}{3}} |\nabla W_{\ell}(A,x)|^2}
+\sqrt{\int_{|x|\leq 20(1-t_n)} |\nabla v(t_n+(1-t_n)A\mu_n,x)|^2}+o(1),
\end{multline}
and similarly
\begin{multline}
\label{Explain2}
\left\|\oveps_{1n}-\partial_t\eps_n^{\lin}(A\mu_n)\rg\|_{L^2}
 \lesssim \left\|\partial_t \eps_n(A\mu_n)-\partial_t \eps_n^{\lin}(A\mu_n)\rg\|_{L^2}+
\\+\sqrt{\int_{|x|\geq \frac{A}{3}} |\partial_t W_{\ell}(A,x)|^2}+\sqrt{\int_{|x|\leq 20(1-t_n)} |\partial_t v(t_n+(1-t_n)A\mu_n,x)|^2}+o(1),
\end{multline}
As $\ell\leq C\eta_0^{1/4}$, we can assume that $\ell$ is small, and thus, by the explicit expression of $W_{\ell}$, if $A$ is chosen large enough,
\begin{equation}
\label{Explain3}
\sqrt{\int_{|x|\geq \frac{A}{3}} |\nabla W_{\ell}(A,x)|^2}
+\sqrt{\int_{|x|\geq \frac{A}{3}} |\partial_t W_{\ell}(A,x)|^2}\leq \frac{\sqrt{\delta_1}}{10000}.
\end{equation} 
Furthermore by the small data theory (see \eqref{almost_linear}), if $n$ is large 
\begin{equation}
\label{Explain4}
\left(\left\|\eps_n(A\mu_n)-\eps_n^{\lin}(A\mu_n)\rg\|_{\hdot}^2+ \left\|\partial_t \eps_n(A\mu_n)-\partial_t \eps_n^{\lin}(A\mu_n)\rg\|_{\hdot}^2\rg)^{1/2}\leq \frac{\sqrt{\delta_1}}{10000}.
\end{equation}
For large $n$, combining \eqref{Explain1}, \eqref{Explain2}, \eqref{Explain3} and \eqref{Explain4}, we get
\begin{equation}
\label{oveps_close}
  \lf(\left\|\oveps_{0n}-\eps_n^{\lin}(A\mu_n)\rg\|_{\hdot}^2+\left\|\oveps_{1n}-\partial_t \eps_n^{\lin}(A\mu_n)\rg\|_{L^2}^2\rg)^{1/2}\leq \frac{\sqrt{\delta_1}}{1000}.
 \end{equation} 
Furthermore, by the definition of $\overline{\eps}_{0n}$ and $\overline{\eps}_{1n}$,
\begin{equation*}
 \frac{y}{10}\leq 1 \text{ and }\left|\frac{y-y_n}{\mu_n A}\right|\geq \frac 23\Longrightarrow g_{n}(A\mu_n)=\overline{\eps}_{0n}\text{ and } \partial_t g_{n}(A\mu_n)=\overline{\eps}_{1n}.
\end{equation*} 
Using again that $\eta_0$ is small, and that $|y_n|\leq C\eta_0^{1/4}$, we get 
\begin{equation}
\label{equality}
 \frac{2}{3}A\mu_n\leq |y-y_n|\leq 9 \Longrightarrow g_{n}(A\mu_n)=\overline{\eps}_{0n}\text{ and } \partial_t g_{n}(A\mu_n)=\overline{\eps}_{1n}.
\end{equation} 
Let $\oveps_n$ (respectively $\overline{\eps}_n^{\lin}$) be the solution to \eqref{CP} (respectively to the linear wave equation) with initial data $(\oveps_{0n},\oveps_{1n})$. By \eqref{oveps_close} and the conservation of the energy for the linear equation,
\begin{equation}
\label{explain1}
\lf( \left\|\oveps_n^{\lin}(\sigma)-\eps_n^{\lin}(\sigma+A\mu_n)\right\|_{\hdot}^2+\left\|\partial_t\oveps_n^{\lin}(\sigma)-\partial_t\eps_n^{\lin}(\sigma+A\mu_n)\right\|_{L^2}^2\rg)^{1/2}\leq \frac{\sqrt{\delta_1}}{1000}.
\end{equation} 
By the small data theory (see \eqref{almost_linear}), using that $\delta_1\leq \eta_0$, and that $\eta_0$ is small, we get
\begin{equation}
 \label{explain2}
\lf(
\left\|\oveps_n(\sigma)-\oveps_n^{\lin}(\sigma)\right\|_{\hdot}^2+\left\|\partial_t\oveps_n(\sigma)-\partial_t \oveps_n^{\lin}(\sigma)\right\|_{L^2}^2\rg)^{1/2}\leq \frac{\sqrt{\delta_1}}{1000}.
\end{equation} 
Combining \eqref{explain1} and \eqref{explain2} with \eqref{linear_exit} we obtain taking $\sigma=3/4-A\mu_n$ (and $\tau=\frac{3}{4}$ in \eqref{linear_exit}),
$$\int_{\frac{3}{4}\leq |y-y_n|\leq 3} \left|\nabla \oveps_n\left(\frac{3}{4}-A\mu_n\right)\right|^2+\left|\partial_t \oveps_n\left(\frac{3}{4}-A\mu_n\right)\right|^2\geq \frac{\delta_1}{10},$$
for large $n$. By \eqref{equality} and the finite speed of propagation, we get
$$ g_n\lf(\frac 34\rg)=\eps_n\lf(\frac{3}{4}-A\mu_n\rg)\text{ for } \frac{3}{4}-\frac{1}{3}A\mu_n \leq |y-y_n|\leq 8,$$
hence \eqref{NL_exit+}. 
\end{proof}
\begin{corol}
\label{C:E0d0}
\begin{gather}
\label{energy_equal}
 E_0=E(W_{\ell},\partial_t W_{\ell})=E_{\min}\\
\label{l_and_momentum}
d_0=-E_0\ell \vec{e}_1,
\end{gather}
where $\vec{e}_1=(1,0,\ldots,0)\in \RR^N$.
 \end{corol}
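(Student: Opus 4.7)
The plan is to read off both conclusions directly from Proposition \ref{P:strong_CV} together with the scale invariance of energy and momentum and the explicit computations of Claim \ref{C:value_W}.

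First, recall that the energy and the momentum are invariant under the scaling $f(x)\mapsto \lambda^{N/2-1}f(\lambda x+x_0)$, $g(x)\mapsto \lambda^{N/2}g(\lambda x+x_0)$. Applying this to $(a(t_n),\partial_t a(t_n))$ with parameters $(\lambda_n,x_n)$ gives, for every $n$,
\begin{equation*}
E\bigl(a(t_n),\partial_t a(t_n)\bigr)=E\bigl(\lambda_n^{\frac N2-1}a(t_n,\lambda_n\cdot+x_n),\lambda_n^{\frac N2}\partial_t a(t_n,\lambda_n\cdot+x_n)\bigr),
\end{equation*}
and analogously
\begin{equation*}
\int\nabla a(t_n)\,\partial_t a(t_n)\,dx=\int \nabla\bigl(\lambda_n^{\frac N2-1}a(t_n,\lambda_n\cdot+x_n)\bigr)\,\lambda_n^{\frac N2}\partial_t a(t_n,\lambda_n\cdot+x_n)\,dx.
\end{equation*}

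Next, by Proposition \ref{P:strong_CV} (applied to the sequence $t_n$ fixed via Corollary \ref{C:limit}) the rescaled pair converges strongly in $\hdot\times L^2$ to $(W_\ell(0),\partial_t W_\ell(0))$. Since both the energy and the bilinear form $\int\nabla f\,g$ are continuous on $\hdot\times L^2$, passing to the limit $n\to\infty$ and using the definitions \eqref{limE0}, \eqref{limd0} of $E_0$ and $d_0$ together with Claim \ref{C:value_W} yields
\begin{equation*}
E_0=E\bigl(W_\ell(0),\partial_t W_\ell(0)\bigr),\qquad d_0=\int\nabla W_\ell(0)\,\partial_t W_\ell(0)\,dx=-\ell\,E\bigl(W_\ell(0),\partial_t W_\ell(0)\bigr)\vec e_1,
\end{equation*}
which immediately gives $d_0=-E_0\ell\vec e_1$.

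Finally, the identification $E_0=E_{\min}$ is automatic from the construction: Corollary \ref{C:limit} produces the sequence $t_n$ precisely as one realizing the infimum, i.e.\ the weak limit $(V_0,V_1)\in\AAA$ satisfies $E(V_0,V_1)=E_{\min}$, and (after the rotation and the sign flip already performed) this limit coincides with $(W_\ell(0),\partial_t W_\ell(0))$. Hence $E_{\min}=E(V_0,V_1)=E(W_\ell(0),\partial_t W_\ell(0))=E_0$, completing the proof.

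There is no real obstacle: the only thing to double–check is that the rotation introduced in Proposition \ref{P:strong_CV} is the same as the one already fixed in Corollary \ref{C:limit}, so that the momentum of $W_\ell(0)$ is indeed aligned with $\vec e_1$; this is guaranteed because Proposition \ref{P:strong_CV} is stated for any sequence provided by Corollary \ref{C:limit} and only strengthens the weak convergence there into strong convergence, without further change of coordinates.
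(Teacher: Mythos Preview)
Your proof is correct and follows essentially the same approach as the paper's: both pass to the limit along the sequence $t_n$ using the strong convergence from Proposition \ref{P:strong_CV}, invoke the scale invariance of energy and momentum, and appeal to Claim \ref{C:value_W} for the explicit momentum of $W_\ell$, with $E_0=E_{\min}$ coming from the fact that $t_n$ was chosen in Corollary \ref{C:limit} so that the limiting profile has energy $E_{\min}$. Your closing remark about the rotation is also well taken; the proof of Proposition \ref{P:strong_CV} indeed only upgrades the weak convergence of Corollary \ref{C:limit} to strong, so no further coordinate change occurs.
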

\begin{proof}
By definition, $E_0=\lim_{t\to 1} E(a(t),\partial_t a(t))$. The fact that $E_0=E_{\min}$ follows from the choice of $t_n$ and the strong convergence of the sequence $(a(t_n),\partial_t a(t_n)$. To complete the proof of \eqref{energy_equal}, observe that
$$ E_0=\lim_{n\to \infty} E(a(t_n),\partial_t a(t_n))=E(W_{\ell},\partial_tW_{\ell}).$$
The equality \eqref{l_and_momentum} follows from
$$ d_0=\lim_{n\to \infty} \int \nabla a(t_n)\partial_t a(t_n)=\int \nabla W_{\ell}(0)\partial_t W_{\ell}(0)=-\ell \,E(W_{\ell},\partial_t W_{\ell})\vec{e}_1.$$
(See Claim \ref{C:value_W}.)
\end{proof}

\subsection{Strong convergence for all times and end of the proof}
\label{SS:alltimes}
\begin{lemma}
\label{L:CV_seq}
 Let $\{t'_n\}\in (0,1)^{\NN}$ be any sequence such that $t'_n\to 1$ as $n\to \infty$. Then there exist $\lambda_n'$, $x_n'$ and a sign $\pm$ such that 
\begin{align*}
\lim_{n\to \infty} {\lambda_n'}^{\frac{N}{2}}a(t_n',\lambda_n'x+x_n')&=\pm W_{\ell}(0),\text{ in }\hdot\\
\lim_{n\to \infty} {\lambda_n'}^{\frac{N}{2}-1}\partial_t a(t_n',\lambda_n'x+x_n')&=\pm \partial_t W_{\ell}(0),\text{ in }L^2.
\end{align*}
where $\ell=-\frac{d_0}{E_0}$.
\end{lemma}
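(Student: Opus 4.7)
The plan is to mimic Step~1 of the proof of Proposition~\ref{P:compactness} to single out one non-scattering profile associated to $\{t_n'\}$, use the minimality $E_{\min}=E_0$ of Corollary~\ref{C:E0d0} to kill all the remaining profiles and the linear remainder, and finally pin down the limiting parameter via conservation of momentum.

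First, I extract a subsequence along which $(a(t_n'),\partial_t a(t_n'))$ admits a profile decomposition with linear profiles $\{U^j_{\lin}\}_{j\geq 1}$ and parameters $\{\lambda_{j,n}';x_{j,n}';t_{j,n}'\}$. Exactly as in Step~1 of the proof of Proposition~\ref{P:compactness}, the upper bound \eqref{bound_nabla}, the blow-up hypothesis $T_+(u)=1$, Proposition~\ref{P:lin_NL} and Corollary~\ref{C:below_threshold} together force the existence of a single index (say $j=1$) whose associated nonlinear profile $U^1$ does not scatter in either time direction, while all other nonlinear profiles $U^j$, $j\geq 2$, scatter globally and satisfy $\|\nabla U^j\|_{L^2}^2+\|\partial_t U^j\|_{L^2}^2\lesssim \eta_0$. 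Translating time, one may take $t_{1,n}'=0$, so that $(U^1_0,U^1_1)\in \AAA$.

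Next, since $(U^1_0,U^1_1)\in\AAA$, one has $E(U^1_0,U^1_1)\geq E_{\min}=E_0$. The Pythagorean expansion \eqref{pythagore2} reads
\[ E_0=\lim_{n\to\infty}E(a(t_n'),\partial_t a(t_n'))=\sum_{j\geq 1}E\!\left(U^j\!\left(\tfrac{-t_{j,n}'}{\lambda_{j,n}'}\right),\partial_t U^j\!\left(\tfrac{-t_{j,n}'}{\lambda_{j,n}'}\right)\right)+E(w_{0,n}^J,w_{1,n}^J)+o(1). \]
Each summand on the right is nonnegative: for $j\geq 2$ by smallness, for the remainder by the $\hdot$-smallness following from \eqref{pythagore1a}--\eqref{pythagore1b}, and for $j=1$ by the first part of Claim~\ref{C:variational} applied at the relevant time slice (the hypotheses of the claim are met via the analogue of \eqref{bound_nablaA} for each slice of $U^1$ together with the smallness of $\eta_0$). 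Combined with $E(U^1)\geq E_0$, this forces every other profile's energy to vanish and the energy of the linear remainder to tend to $0$; the second part of Claim~\ref{C:variational} upgrades this to $U^j\equiv 0$ for $j\geq 2$ and to $(w_{0,n}^J,w_{1,n}^J)\to 0$ in $\hdot\times L^2$. Thus, setting $\lambda_n'=\lambda_{1,n}'$ and $x_n'=x_{1,n}'$,
\[ \Big({\lambda_n'}^{\frac{N}{2}-1}a\big(t_n',\lambda_n'\cdot +x_n'\big),\;{\lambda_n'}^{\frac{N}{2}}\partial_ta\big(t_n',\lambda_n'\cdot+x_n'\big)\Big)\xrightarrow[n\to\infty]{}(U^1_0,U^1_1)\text{ strongly in }\hdot\times L^2. \]

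It remains to identify $(U^1_0,U^1_1)$. Proposition~\ref{P:compactness} applied to this minimal element shows the associated solution is compact up to modulation, so Theorem~\ref{T:compact} supplies $\tilde\ell\in(-1,1)$, a rotation $\RRR$ of $\RR^N$, a scale $\lambda_0>0$, a translation $X_0\in\RR^N$ and a sign $\iota\in\{\pm1\}$ with
\[ U^1_0(x)=\tfrac{\iota}{\lambda_0^{\frac{N}{2}-1}}W_{\tilde\ell}\Big(0,\tfrac{\RRR x-X_0}{\lambda_0}\Big),\quad U^1_1(x)=\tfrac{\iota}{\lambda_0^{\frac{N}{2}}}\partial_t W_{\tilde\ell}\Big(0,\tfrac{\RRR x-X_0}{\lambda_0}\Big). \]
Absorbing $\lambda_0$ and $X_0$ into $\lambda_n', x_n'$ matches the form stated in the lemma. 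To determine $\tilde\ell$ and $\RRR$, the strong convergence, the scale-invariance of the momentum integral and Claim~\ref{C:value_W} give
\[ d_0=\lim_{n\to\infty}\int\nabla a(t_n')\,\partial_t a(t_n')=\int\nabla U^1_0\,U^1_1=-\tilde\ell\,E(W_{\tilde\ell}(0),\partial_t W_{\tilde\ell}(0))\,\RRR^{-1}\vec e_1=-\tilde\ell\,E_0\,\RRR^{-1}\vec e_1. \]
Comparing with $d_0=-\ell E_0\vec e_1$ from Corollary~\ref{C:E0d0} forces $\tilde\ell\,\RRR^{-1}\vec e_1=\ell\,\vec e_1$. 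Since $W_{\tilde\ell}$ is invariant under any rotation of $\overline{x}=(x_2,\dots,x_N)$ and since $W_{-\tilde\ell}(0,-x_1,\ovx)=W_{\tilde\ell}(0,x_1,\ovx)$ by the radiality of $W$, one may absorb $\RRR$ together with any residual sign on $\tilde\ell$ into the freedom in the sign $\iota$ and a harmless change of orthonormal frame on $\{\vec e_1\}^{\perp}$, concluding $\tilde\ell=\ell$. The main delicacy is exactly this last matching step: guaranteeing that the parameter produced by the classification theorem along an arbitrary sequence $\{t_n'\}$ coincides with the $\ell$ canonically attached to $u$ through the conserved momentum $d_0$.
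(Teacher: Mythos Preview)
Your proof is correct and follows essentially the same strategy as the paper's: isolate a single non-scattering profile $U^1\in\AAA$, use $E_0=E_{\min}$ (Corollary~\ref{C:E0d0}) together with the Pythagorean expansions and Claim~\ref{C:variational} to eliminate all other profiles and the remainder, then identify the limit via Theorem~\ref{T:compact} and the conserved momentum; the paper simply packages the last step as an appeal to Proposition~\ref{P:strong_CV}. Two small points of presentation: the compactness of an \emph{arbitrary} minimal-energy element of $\AAA$ is not the statement of Proposition~\ref{P:compactness} (which only asserts existence of one such element) but is exactly what Step~3 of its proof establishes, so your citation should point there; and in your final sentence the possible sign flip $\tilde\ell=-\ell$ (arising when $\RRR\vec e_1=-\vec e_1$) is not absorbed into the sign $\iota$ of the solution but rather through the identity $W_{-\ell}(0,-x_1,\ovx)=W_{\ell}(0,x_1,\ovx)$ combined with the $\ovx$-radiality of $W_\ell$, which allows the needed reflection in $x_1$ to be realized by a rotation.
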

\begin{proof}
 Consider a profile decomposition $\lf\{U_{\lin}^j\rg\}_j$, $\lf\{\lambda_{j,n},x_{j,n},t_{j,n}\rg\}_{j,n}$ associated to the sequence $\lf(a\lf(t_n'\rg),\partial_t a\lf(t_n'\rg)\rg)$. Let $\lf\{U^j\rg\}_j$ be the corresponding non-linear profiles. Reordering the profiles, we can assume as usual that all solutions $U^j$, $j\geq 2$ scatter forward and backward in time, that $t_{1,n}=0$, and that $U^1$ does not scatter in either time direction. By the definition of $\AAA$, we deduce that $U^1\in \AAA$. By the Pythagorean expansion of the energy and the $\hdot\times L^2$ norm we get that for all $J$, as $n\to\infty$,
\begin{align}
\label{pyth_energy}
E\lf(a(t_n'),\partial_ta(t_n')\rg)&=E\lf(U^1_0,U^1_1\rg)+\sum_{j=2}^J E\lf(U^j_0,U^j_1\rg)+E\lf(w_{0,n}^J,w_{1,n}^J\rg)+o(1)\\
\label{pyth_norm}
\lf\| a\left(t_n'\right)\rg\|^2_{\hdot}+\frac{N-2}{2}\left\|\partial_t a\lf(t_n'\rg)\rg\|_{L^2}^2&=\sum_{j=1}^J\left(\lf\|U^j\lf(\frac{-t_{j,n}}{\lambda_{j,n}}\rg)\rg\|^2_{\hdot}+\frac{N-2}{2}\lf\|\partial_t U^j\lf(\frac{-t_{j,n}}{\lambda_{j,n}}\rg)\rg\|^2_{L^2}\right)\\
\notag
&+\lf\|w_{0,n}^J\rg\|_{\hdot}^2+\frac{N-2}{2}\left\|w_{1,n}^J\rg\|^2_{L^2}+o(1).
\end{align} 
By \eqref{bound_nabla}, \eqref{pyth_norm} and Claim \ref{C:variational}, we deduce that all the energies in \eqref{pyth_energy} are positive. By Corollary \ref{C:E0d0}, 
$$\lim_{n\to \infty} E\lf(a(t_n'),\partial_ta(t_n')\rg)=E_{\min}\leq E\lf(U^1,\partial_t U^1\rg).$$
As a consequence, $E\lf(U^1,\partial_t U^1\rg)=E_{\min}$ and for all $J\geq 2$,
$$ \lim_{n\to \infty}\sum_{j=2}^J E\lf(U^j_0,U^j_1\rg)+E\lf(w_{0,n}^J,w_{1,n}^J\rg)=0.$$
By Claim \ref{C:variational} again, this shows there are no other non-zero profile than $U^1$ and that $(w_{0,n}^J,w_{1,n}^J)$, which does not depend on $J\geq 2$, goes to $0$ in $\hdot\times L^2$ as $n\to \infty$.

Using that $E(U^1_0,U^1_1)=E_{\min}$, we can apply Proposition \ref{P:strong_CV} to the sequence $t'_n$, which shows that there exists a rotation $\RRR$ of $\RR^N$ (centered at the origin), $x_0\in \RR^N$, $\lambda_0>0$, $\ell'\in (-1,1)$ and a sign $\pm$ such that
$$ U^1(t,x)=\pm \frac{1}{\lambda_0^{\frac{N}{2}-1}}W_{\ell'}\left(\frac{t}{\lambda_0},\RRR\lf(\frac{x-x_0}{\lambda_0}\rg)\right).$$
By Corollary \ref{C:E0d0}, $\ell'=-\frac{E_0}{d_0}$ and
$$\ell \vec{e}_1=\ell'\RRR(\vec{e}_1),$$
which shows that $\RRR$ is a rotation with axis $(0,\vec{e}_1)$, and that $\ell=\ell'$. As a consequence (using that $W_{\ell}$ if invariant by this type of rotation),
$$ \frac{1}{\lambda_0^{\frac{N}{2}-1}}W_{\ell}\left(\frac{t}{\lambda_0},\frac{x-x_0}{\lambda_0}\right)=\frac{1}{\lambda_0^{\frac{N}{2}-1}}W_{\ell'}\left(\frac{t}{\lambda_0},\RRR\lf(\frac{x-x_0}{\lambda_0}\rg)\right),$$
concluding the proof of Lemma \ref{L:CV_seq}.
\end{proof}
\begin{corol}
\label{C:parameters}
There exist parameters $\lambda(t)$ and $x(t)$, defined for $t\in [0,1)$, such that
\begin{equation}
\label{exist_parameters}
\lim_{t\to 1} \lf(\lambda(t)^{\frac{N}{2}-1}a\lf(t,\lambda(t)y+x(t)\right),\lambda(t)^{\frac{N}{2}}\partial_t a\lf(t,\lambda(t)y+x(t)\right)\rg)=\lf(W_{\ell}(0),\partial_t W_{\ell}(0)\rg).
\end{equation} 
Furthermore,
\begin{equation}
\label{control_parameters}
 \lim_{t\to 0} \frac{\lambda(t)}{1-t}=0,\quad \sup_{t\in [0,1)} \frac{|x(t)|}{1-t}\leq C\eta_0^{1/4}.
\end{equation} 
\end{corol}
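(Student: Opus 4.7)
The plan is to upgrade the sequential statement of Lemma~\ref{L:CV_seq} to a genuine limit as $t\to 1$ by a standard modulation/compactness argument. For $t$ sufficiently close to $1$, I define
\[
d(t)=\inf_{\iota\in\{\pm1\},\;\lambda>0,\;x\in\RR^N}\left\|\bigl(a(t),\partial_t a(t)\bigr)-\iota\left(\frac{1}{\lambda^{\frac{N-2}{2}}}W_{\ell}\Bigl(0,\frac{\cdot-x}{\lambda}\Bigr),\frac{1}{\lambda^{\frac{N}{2}}}\partial_t W_{\ell}\Bigl(0,\frac{\cdot-x}{\lambda}\Bigr)\right)\right\|_{\hdot\times L^2},
\]
and let $(\iota(t),\lambda(t),x(t))$ be a near-minimizer (say, achieving the infimum up to $t$-dependent error $o(1)$). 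For $d(t)$ small, the modulation theory summarized in the appendix produces $(\lambda(t),x(t))$ as a smooth function of $(a(t),\partial_t a(t))$ via orthogonality conditions satisfied by the implicit function theorem, and similarly pins down $\iota(t)$.

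The key point is $\lim_{t\to 1}d(t)=0$. If not, pick $\delta>0$ and $t_n'\to 1$ with $d(t_n')\geq \delta$. Applying Lemma~\ref{L:CV_seq} to $\{t_n'\}$ yields parameters $\lambda_n',x_n'$ and a sign giving strong convergence of the rescaled $a(t_n')$ to $\pm(W_{\ell}(0),\partial_t W_{\ell}(0))$ in $\hdot\times L^2$; this immediately contradicts $d(t_n')\geq\delta$. For sign consistency, I use that $t\mapsto(a(t),\partial_t a(t))\in C^0([0,1),\hdot\times L^2)$: if $\iota(t)$ switched value near $t=1$ along some sequence, an intermediate value argument (using that $d(t)$ is small and that $\|(W_\ell(0),\partial_t W_\ell(0))\|_{\hdot\times L^2}>0$) would produce a time where $a(t)$ is bounded away from both $\pm$ components, contradicting $d\to 0$. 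By Proposition~\ref{P:strong_CV} the chosen sign on the fixed sequence $t_n$ of Corollary~\ref{C:limit} is $+1$, so $\iota(t)\equiv +1$ eventually, which yields \eqref{exist_parameters}.

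For the quantitative bounds \eqref{control_parameters}, the argument of Lemma~\ref{L:estimates} uses only the strong convergence of the rescaled $a$ after extraction, the support property $\supp a\subset\{|x|\leq 1-t\}$, the smallness of $\|v(t)\|_{\hdot\times L^2}$ near $t=1$, and the identity \eqref{needed_after}. All of these ingredients now apply at every time $t$ close to $1$ with the just-constructed parameters $\lambda(t), x(t)$. Thus the proof of \eqref{lambdan_0} and \eqref{xn_small} transposes verbatim to give $\lambda(t)/(1-t)\to 0$ and $\limsup_{t\to 1}|x(t)|/(1-t)\leq C\eta_0^{1/4}$. The uniform bound on $[0,1)$ is obtained by combining this asymptotic bound with the trivial bound $|x(t)|\leq 1-t$ valid from the support of $a$.

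The main obstacle is less the existence of $(\lambda(t),x(t))$ --- which is routine from the appendix once $d(t)$ is small --- than the sign consistency and the passage from subsequential to full convergence. Both rely crucially on the fact that Lemma~\ref{L:CV_seq} applies to \emph{every} sequence $t_n'\to 1$ (rather than only the specific minimizing one from Subsection~\ref{SS:compactness}); the contradiction scheme of Step~2 then forces uniformity as $t\to 1$ without any extraction.
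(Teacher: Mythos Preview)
Your argument is essentially the same as the paper's: both define a modulation distance, argue by contradiction applying Lemma~\ref{L:CV_seq} to an arbitrary sequence $t_n'\to 1$ to force $d(t)\to 0$, and then invoke Lemma~\ref{L:estimates} for \eqref{control_parameters}. Your treatment is in fact slightly cleaner on one point: by including both signs $\iota\in\{\pm 1\}$ in the definition of $d(t)$, the contradiction from Lemma~\ref{L:CV_seq} is immediate, and you then handle sign consistency separately via continuity, whereas the paper works only with the $+$ sign and uses an intermediate-value step to constrain $c_0$.
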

\begin{proof}
By Proposition \ref{P:strong_CV}, there exists a sequence $t_n\to 1$ such that 
\begin{equation}
\label{strongCV_known}
\lim_{n\to\infty}\inf_{\substack{\lambda_0>0\\x_0}}\left(\left\|\lambda_0^{\frac{N}{2}-1}a(t_n,\lambda_0y+x_0)-W_{\ell}(0)\right\|_{\hdot}+
\left\|\lambda_0^{\frac{N}{2}}\partial_t a(t_n,\lambda_0y+x_0)-\partial_t W_{\ell}(0)\right\|_{L^2}\right)=0.
\end{equation} 
We show \eqref{exist_parameters} by contradiction. 
Assume that there exist $c_0>0$ and a sequence $\tau_n\to 1$ such that for all $n$,
\begin{equation}
\label{strongCV_contra}
 \inf_{\substack{\lambda_0>0\\x_0}}\left(\left\|\lambda_0^{\frac{N}{2}-1}a(\tau_n,\lambda_0y+x_0)-W_{\ell}(0)\right\|_{\hdot}+
\left\|\lambda_0^{\frac{N}{2}}\partial_t a(\tau_n,\lambda_0y+x_0)-\partial_t W_{\ell}(0)\right\|_{L^2}\right)=c_0.
\end{equation} 
In view of \eqref{strongCV_known}, using the continuity of the $\hdot\times L^2$ valued map $t\mapsto (a(t),\partial_t a(t))$, we can change the sequence $\tau_n$ in \eqref{strongCV_contra} so that $0<c_0\leq \|W_{\ell}(0)\|_{\hdot}+\|\partial_t W_{\ell}(0)\|_{L^2}$. By Lemma \ref{L:CV_seq} we get a contradiction, which shows \eqref{exist_parameters}. The estimates \eqref{control_parameters} follow by Lemma \ref{L:estimates}.
\end{proof}
To complete the proof of Theorem \ref{T:classification}, it remains to show the second equality of \eqref{classif_parameters}, which is done in the next lemma:
\begin{lemma}
The translation parameter $x(t)$ of Corollary \ref{C:parameters} satisfies
\begin{equation}
 \label{limit_xt}
\lim_{t\to 1} \frac{x(t)}{1-t}=-\ell \vec{e}_1.
\end{equation}
\end{lemma}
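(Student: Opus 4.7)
The plan is to combine the moment identity
$\int_{\RR^N} x\,(e(u)-e(v))(t,x)\,dx = d_0(1-t)$
— which is derived in the proof of Lemma \ref{L:estimates} for the sequence $t_n$ but, by the same argument (integrating the relation above \eqref{needed_after} from any $t\in [0,1)$ to $1$), holds for every $t\in[0,1)$ — with Corollary \ref{C:E0d0}, which identifies $d_0=-E_0\ell\vec{e}_1$, and with the strong concentration statement of Corollary \ref{C:parameters}, to read off the leading term of $x(t)$.

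\emph{Step 1: replace $e(u)-e(v)$ by $e(a)$.} Writing $u=a+v$ and expanding, one finds $e(u)-e(v)-e(a) = \nabla a\cdot\nabla v + \partial_t a\,\partial_t v - \tfrac{N-2}{2N}\lf(|u|^{\frac{2N}{N-2}}-|v|^{\frac{2N}{N-2}}-|a|^{\frac{2N}{N-2}}\rg)$. Using the pointwise bound $\bigl||u|^{\frac{2N}{N-2}}-|v|^{\frac{2N}{N-2}}-|a|^{\frac{2N}{N-2}}\bigr| \lesssim |v|\bigl(|a|^{\frac{N+2}{N-2}}+|v|^{\frac{N+2}{N-2}}\bigr)$ together with $|x|\leq 1-t$ on $\supp a$, the contribution of each cross term to $\int x\,(e(u)-e(v)-e(a))(t)\,dx$ is bounded by $(1-t)$ times one of the localized norms $\|\nabla v(t)\|_{L^2(|x|\leq 1-t)}$, $\|\partial_t v(t)\|_{L^2(|x|\leq 1-t)}$ or $\|v(t)\|_{L^{2N/(N-2)}(|x|\leq 1-t)}$. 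Each of these tends to $0$ as $t\to 1$ by continuity of $(v,\partial_t v)$ up to $t=1$ (plus Sobolev for the last) and dominated convergence on balls that shrink to the singular point, since $(v_0,v_1)\in \hdot\times L^2$. Hence $\int x\,(e(u)-e(v))(t)\,dx = \int x\,e(a)(t)\,dx + o(1-t)$.

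\emph{Step 2: evaluate $\int x\,e(a)(t)\,dx$ via concentration.} The change of variable $x=\lambda(t)y+x(t)$ gives $\int x\,e(a)(t)\,dx = x(t)\,E(a(t),\partial_t a(t)) + \lambda(t)\int y\,\tilde e_t(y)\,dy$, where $\tilde e_t$ is the energy density of the rescaled profile from Corollary \ref{C:parameters}. Since $E(a(t),\partial_t a(t))\to E_0$ and $|x(t)|=O(1-t)$ by Lemma \ref{L:estimates}, the first term equals $x(t)E_0+o(1-t)$. The core step is to prove $\lambda(t)\int y\,\tilde e_t\,dy=o(1-t)$: a brute-force bound $\lambda(t)R_t\|\tilde e_t\|_{L^1}$ only yields $O(1-t)$ since the support of $\tilde a(t)$ has radius $R_t\lesssim (1-t)/\lambda(t)\to\infty$. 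I would instead split at $|y|=A$: the inner part contributes $O(A\lambda(t))=o(1-t)$ for each fixed $A$; the outer part is bounded by $\lambda(t)R_t\int_{|y|>A}|\tilde e_t|\,dy\leq C(1-t)\varepsilon_A(t)$. The strong convergence in $\hdot\times L^2$ of Corollary \ref{C:parameters}, combined with Sobolev for the $L^{\frac{2N}{N-2}}$ term, upgrades to $\tilde e_t\to e(W_\ell(0),\partial_t W_\ell(0))$ in $L^1(\RR^N)$, so $\varepsilon_A(t)\to \int_{|y|>A}|e(W_\ell(0),\partial_t W_\ell(0))|\,dy$ as $t\to 1$, and the latter vanishes as $A\to\infty$. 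Taking $\limsup_{t\to 1}$ then $A\to\infty$ yields $\lambda(t)\int y\,\tilde e_t\,dy=o(1-t)$, hence $\int x\,e(a)(t)\,dx = x(t)E_0+o(1-t)$.

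\emph{Step 3: conclude.} The two steps together with the moment identity and $d_0=-E_0\ell\vec{e}_1$ give $x(t)E_0+o(1-t) = -E_0\ell(1-t)\vec{e}_1$. By Claim \ref{C:value_W}, $E_0 = E(W,0)/\sqrt{1-\ell^2}>0$, so dividing by $E_0(1-t)$ yields \eqref{limit_xt}. The main technical obstacle is the uniform-integrability step in Step 2: the naive pointwise bound $|y|\leq R_t$ is too crude because $R_t\to\infty$, and one must genuinely exploit $L^1$ smallness of $\tilde e_t$ outside a large ball — this is why the \emph{strong} $\hdot\times L^2$ convergence of Corollary \ref{C:parameters} (and not merely a weak limit) is essential here.
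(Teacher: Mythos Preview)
Your proof is correct and follows essentially the same route as the paper's: both invoke the moment identity $\int x(e(u)-e(v))(t)\,dx=d_0(1-t)$, pass from $e(u)-e(v)$ to $e(a)$ using smallness of $v$ on the shrinking cone, split $\int x\,e(a)=x(t)\int e(a)+\int(x-x(t))e(a)$, and kill the last integral via the strong convergence of Corollary~\ref{C:parameters} together with $\lambda(t)/(1-t)\to 0$. The paper's version is terser (it merely says the argument is ``similar to the end of the proof of Lemma~\ref{L:estimates}'' and works along a sequence, passing to subsequences), whereas you carry out the $|y|\leq A$ versus $|y|>A$ splitting explicitly and work directly with the continuous parameter $t$; the substance is the same.
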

\begin{proof}
It is sufficient to fix a sequence $\{t_n\}_n$ such that $t_n\to 1$, and show that \eqref{limit_xt} holds along a subsequence of $\{t_n\}_n$. 

From \eqref{needed_after} in the proof of Lemma \ref{L:estimates}, we have
\begin{equation}
\label{needed_after'}
\frac{1}{1-t_n}\int_{\RR^N} x\,(e(u)-e(v))(t_n)dx=d_0=-E_0\ell \vec{e}_1=-E(W_{\ell}(0),\partial_tW_{\ell}(0))\vec{e}_1.
\end{equation}
Using that $(v,\partial_t v)$ is continuous from $\RR$ to $\hdot\times L^2$ and that $a$ is supported in $\{|x|\leq 1-t\}$, we get
$$ \frac{1}{1-t_n} \left(\int_{\RR^N} xe(a)(t_n)-\int_{\RR^N} x\,(e(u)-e(v))(t_n)dx)\right)\underset{n\to \infty}{\longrightarrow}0. $$
Expanding
$$ \int_{\RR^N} xe(a)(t_n)=\int_{\RR^N} (x-x(t_n))e(a)(t_n)+x(t_n)\int_{\RR^N} e(a)(t_n),$$
and using \eqref{exist_parameters}, one can show \eqref{limit_xt}. The proof is similar to the end of the proof of Lemma \ref{L:estimates} and we skip it.
\end{proof}

\section{Classification of compact solutions}
\label{S:compact}
In all this section we assume $N\in\{3,4,5\}$.
\begin{defi}
\label{D:compact}
Let $u$ be a solution of \eqref{CP}. We will say that $u$ is \emph{compact up to modulation} when there exist functions $\lambda(t)$, $x(t)$ on $I_{\max}(u)$ such that $K$ defined by \eqref{def_K} has compact closure in $\hdot\times L^2$. 
\end{defi}
Note that if $\lambda(t)$ and $x(t)$ exist as in Definition \ref{D:compact}, we can always replace them by smooth functions of $t$ (see \cite{KeMe06}).

In this section, we show Theorem \ref{T:compact}, i.e that the only solutions that are compact up to modulation and satisfy the bound \eqref{bound_nabla2W} are (up to the transformations of the equation) the solutions $W_{\ell}$. After a preliminary subsection about modulation parameters around $W_{\ell}$, we show in \S \ref{SS:global} that all compact solutions are globally defined. In \S \ref{SS:subseq} we show that there exists two sequences of times (one going to $+\infty$, the other to $-\infty$) for which the solution converges to $W_{\ell}$ up to a time dependent modulation. In \S \ref{SS:endofproof} we conclude the proof. In \S \ref{SS:bound}, we prove a general version of Corollary \ref{C:global}.
\subsection{Modulation around the solitary wave}
\label{SS:modulation}
We first introduce some modulation parameters around $W_{\ell}$, adapting the modulation around $W$ in \cite{DuMe08} to the more general case of $W_{\ell}$. The proofs, which are very similar to the ones of \cite[Appendix A]{DuMe08}, are sketched in Appendix \ref{A:modulation}. Consider a solution $u$ of \eqref{CP} such that for some $\ell\in(-1,+1)$,
\begin{equation}
\label{cond_energy}
E(u_0,u_1)=E\lf(W_{\ell}(0),\partial_t W_{\ell}(0)\rg)\text{ and }\int \nabla u_0\,u_1=\int \nabla W_{\ell}(0)\,\partial_tW_{\ell}(0).
\end{equation} 
Let $d_{\ell}$ be defined by
\begin{equation}
\label{def_dl}
d_{\ell}(t)=\int |\nabla u(t)|^2\,dx+\int(\partial_t u(t))^2\,dx-\int |\nabla W_{\ell}(0)|^2\,dx-\int (\partial_t W_{\ell}(0))^2\,dx. 
\end{equation} 
As in the case $\ell=0$, we have the following trapping property:
\begin{claim}
\label{C:trapping}
Let $u$ be a solution such that \eqref{cond_energy} holds. 
\begin{itemize}
 \item If $d_{\ell}(0)=0$, there exist $\lambda_0>0$, $x_0\in \RR^N$ and a sign $\pm$ such that
$$u(t,x)= \frac{\pm1}{\lambda_0^{\frac{N-2}{2}}}W_{\ell}\left(\frac{t}{\lambda_0},\frac{x-x_0}{\lambda_0}\right);$$
\item If $d_{\ell}(0)>0$, then for all $t$ in the domain of existence of $u$, $d_{\ell}(t)>0$;
\item If $d_{\ell}(0)<0$, then for all $t$ in the domain of existence of $u$, $d_{\ell}(t)<0$.
\end{itemize}
\end{claim}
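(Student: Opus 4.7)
The strategy mirrors the $\ell=0$ case treated in \cite[Appendix A]{DuMe08}, with a Lorentz-boost reduction to handle $\ell\ne 0$. I would first prove the rigidity in item (1), and then derive items (2) and (3) from (1) by a standard continuity/contradiction argument.

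For item (1), introduce the Lorentz boost
$$L_\ell(t',x'_1,\bar x')=\bigl(\gamma(t'+\ell x'_1),\;\gamma(x'_1+\ell t'),\;\bar x'\bigr),\qquad \gamma=1/\sqrt{1-\ell^2};$$
a direct computation gives $W_\ell\circ L_\ell=W$. Define $\tilde u=u\circ L_\ell$. Using finite speed of propagation to extend $u$ into a thin space-time slab around $\{t=0\}$ and gluing, $\tilde u$ is a well-defined energy-class solution of \eqref{CP} on a neighborhood of $\{t'=0\}$. Since the pair $(E(u,\partial_t u),-\!\int\!\nabla u\,\partial_t u)$ is the component of a conserved Lorentz 4-vector, the hypotheses \eqref{cond_energy} combined with Claim \ref{C:value_W} yield $E(\tilde u(0),\partial_t\tilde u(0))=E(W,0)$ and $\int\nabla\tilde u(0)\,\partial_t\tilde u(0)=0$. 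A change-of-variables calculation using the explicit form of $L_\ell$ further shows that $d_\ell(0)=0$ for $u$ corresponds to the analogous Sobolev-deficit condition $\|\nabla\tilde u(0)\|_{L^2}^2+\|\partial_t\tilde u(0)\|_{L^2}^2=\|\nabla W\|_{L^2}^2$ for $\tilde u$. The $\ell=0$ rigidity of \cite[Appendix A]{DuMe08} then forces $\tilde u(t',x')=\pm\lambda_0^{-(N-2)/2}W((x'-x'_0)/\lambda_0)$, and applying $L_\ell^{-1}$ returns the stated form of $u$.

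For items (2) and (3), observe that $t\mapsto d_\ell(t)$ is continuous on $I_{\max}(u)$ and that the quantities in \eqref{cond_energy} are conserved under \eqref{CP}. Assume $d_\ell(0)>0$; if there were $t_*\in I_{\max}(u)$ with $d_\ell(t_*)\le 0$, the intermediate value theorem would produce $t_0\in(0,t_*]$ with $d_\ell(t_0)=0$. Applying item (1) to the time-translated solution $u(\cdot+t_0,\cdot)$ forces $u$ to be a rescale-translate of $W_\ell$; but for such $u$, Claim \ref{C:value_W} together with the scale invariance of $\|\nabla\cdot\|_{L^2}^2$ and $\|\partial_t\cdot\|_{L^2}^2$ gives $d_\ell\equiv 0$, contradicting $d_\ell(0)>0$. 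The case $d_\ell(0)<0$ is symmetric.

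The main obstacle is making the Lorentz reduction in item (1) rigorous. Two points need care: first, defining $\tilde u$ on a genuine space-time neighborhood of $\{t'=0\}$ when $I_{\max}(u)$ does not a priori cover the boosted slice, which is handled by patching together local extensions via finite speed of propagation; second, the norm $\|\nabla u\|_{L^2}^2+\|\partial_t u\|_{L^2}^2$ is evaluated on the slanted hyperplane $\{t=\ell x_1\}$ after boosting, so the equivalence between $d_\ell(u)(0)=0$ and the $\ell=0$ trapping hypothesis for $\tilde u$ is not a pure Lorentz-scalar identity but must be read off from a direct computation of Jacobians. These are precisely the technical steps that constitute Appendix \ref{A:modulation}, and they follow the template of \cite[Appendix A]{DuMe08}.
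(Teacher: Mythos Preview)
Your treatment of items (2) and (3) matches the paper exactly: continuity of $d_\ell$, intermediate value theorem, and the rigidity from item (1) to rule out a zero crossing.

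For item (1), however, the paper takes a much simpler route than your Lorentz-boost reduction, and your route has a genuine obstacle. The paper does \emph{not} compose $u$ with the full space-time Lorentz transformation $L_\ell$. Instead it performs a purely \emph{spatial} rescaling at fixed time: set
\[
\tilde u(t,x)=u\bigl(t,\sqrt{1-\ell^2}\,x_1,\bar x\bigr),\qquad
\tilde u_1(t,x)=\bigl(\partial_t u+\ell\,\partial_{x_1}u\bigr)\bigl(t,\sqrt{1-\ell^2}\,x_1,\bar x\bigr).
\]
A direct Jacobian computation using \eqref{cond_energy} and Claim \ref{C:value_W} gives
\[
E\bigl(\tilde u(t),\tilde u_1(t)\bigr)=E(W,0),\qquad
d_\ell(t)=\sqrt{1-\ell^2}\Bigl(\|\nabla\tilde u(t)\|_{L^2}^2+\|\tilde u_1(t)\|_{L^2}^2-\|\nabla W\|_{L^2}^2\Bigr).
\]
Thus $d_\ell(0)=0$ forces $\|\nabla\tilde u(0)\|_{L^2}^2\le\|\nabla W\|_{L^2}^2$ while $\|\tilde u(0)\|_{L^{2N/(N-2)}}=\|W\|_{L^{2N/(N-2)}}$, so $\tilde u(0)$ is a Sobolev optimizer and $\tilde u_1(0)=0$. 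The characterization of optimizers gives $\tilde u(0)=\pm\lambda_0^{-(N-2)/2}W((\cdot-x_0)/\lambda_0)$, and undoing the $x_1$-rescaling produces exactly $(u_0,u_1)=\pm(W_\ell(0),\partial_t W_\ell(0))$ up to scaling and translation. This is a static, one-line reduction to the variational characterization of $W$; no dynamics, no slab, no gluing.

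The difficulty you flag in your own approach is real and not easily dismissed. To define $\tilde u=u\circ L_\ell$ on all of $\{t'=0\}$ you need $u$ on the entire slanted hyperplane $\{t=\ell x_1\}$, which reaches $|t|\to\infty$ as $|x_1|\to\infty$. A priori $I_{\max}(u)$ may be bounded, and ``patching local extensions by finite speed of propagation'' does not produce a globally defined $\hdot\times L^2$ trace on that hyperplane without already knowing something about where the singularities lie. The paper's fixed-time change of variables sidesteps this entirely: it never leaves the slice $\{t=0\}$, so no extension issue arises.
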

We refer to Appendix \ref{A:modulation} for the proof of Claim \ref{C:trapping}. The next proposition, which is again proved in Appendix \ref{A:modulation}, states that, for small $d_{\ell}(t)$ it is possible to modulate $u$ so that it satisfies suitable orthogonality conditions.
\begin{lemma}
 \label{L:modulation}
Assume \eqref{cond_energy}.
There exists a small $\delta_0=\delta_0(\ell)>0$ such that if $|d_{\ell}(t)|<\delta_0$ on a time-interval $I$, then there exist $C^1$ functions $\lambda(t)>0$, $x(t)\in \RR^N$, $\alpha(t)\in \RR$, defined for $t\in I$ and a sign $\pm$ such that
\begin{equation*}
 \lambda(t)^{\frac{N-2}{2}}u(t,\lambda(t)x+x(t))=\pm(1+\alpha(t))W_{\ell}(0,x)+f(t,x),
\end{equation*}
where $\tilde{f}(t,x)=f\lf(t,\sqrt{1-\ell^2}x_1,x_2,\ldots,x_N\rg)$ satisfies
$$\tilde{f}\in \left\{W,\partial_{x_1}W,\ldots,\partial_{x_N}W,\frac{N-2}{2}W+x\cdot\nabla W\rg\}^{\bot} \text{ in }\hdot\lf(\RR^N\rg).$$ 
Furthermore, the following estimates hold for $t\in I$:
\begin{gather}
\label{modul1}
|\alpha(t)|\approx \lf\|\nabla\lf(\alpha(t) W_{\ell}(0)+f(t)\rg)\rg\|_{L^2}\approx \lf\|\nabla f(t)\rg\|_{L^2}+\lf\|\partial_t u(t)+\ell\partial_{x_1}u(t)\rg\|_{L^2}\approx |d_{\ell}(t)|.\\
\label{modul2}
|\lambda'(t)|+|x'(t)-\ell \vec{e}_1|+\lambda|\alpha'(t)|\leq C|d_{\ell}(t)|.
\end{gather}
Here the implicit constants in \eqref{modul1} and the constant $C$ in \eqref{modul2} might depend on $\ell$, but are independent of $u$ and $t$.
\end{lemma}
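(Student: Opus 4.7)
\textbf{Plan for proving Lemma \ref{L:modulation}.} I would imitate the modulation setup of \cite[Appendix A]{DuMe08} (written for $W$) by exploiting the fact that $W_{\ell}(0,x)=W\bigl(x_1/\sqrt{1-\ell^2},x_2,\ldots,x_N\bigr)$. In the stretched coordinates $\tilde{x}=(x_1/\sqrt{1-\ell^2},x_2,\ldots,x_N)$, the solitary wave becomes exactly $W$ and the orthogonality conditions stated in the lemma become the usual orthogonality in $\hdot(\RR^N)$ against $W$, $\partial_{x_j}W$ ($j=1,\ldots,N$), and the scaling generator $\Lambda W := \frac{N-2}{2}W + x\cdot\nabla W$. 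Thus the whole analysis boils down, up to explicit $\ell$-dependent factors, to the one around $W$ itself.

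First step (existence of parameters). I would apply the implicit function theorem to the map that sends $(\lambda,x,\alpha,v_0)$ to the vector of $N+2$ pairings $\langle \widetilde{g},W\rangle_{\hdot}, \langle \widetilde{g},\partial_{x_j}W\rangle_{\hdot}, \langle \widetilde{g},\Lambda W\rangle_{\hdot}$, where $g=\lambda^{(N-2)/2}v_0(\lambda\cdot+x)\mp(1+\alpha)W_{\ell}(0)$ and the tilde denotes the coordinate stretch. At $(\lambda,x,\alpha)=(1,0,0)$ with $v_0=\pm W_{\ell}(0)$, the Jacobian with respect to $(\lambda,x,\alpha)$ is, after stretching, the Gram matrix in $\hdot$ of $W,\partial_{x_j}W,\Lambda W$ against themselves, which is non-degenerate (exactly the $\ell=0$ computation). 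Choosing $\delta_0$ small enough gives the $C^1$ parameters $\lambda(t),x(t),\alpha(t)$ along $I$; the sign $\pm$ is determined by continuity in $t$.

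Second step (coercivity \eqref{modul1}). This is the heart of the matter. The correct combination of conservation laws is energy minus $\ell$ times the first component of the momentum: using \eqref{cond_energy}, Claim \ref{C:value_W}, and setting $\dot h := \partial_t u+\ell\partial_{x_1}u$, one expands
\begin{equation*}
E(u_0,u_1)-\ell\,\vec e_1\cdot\!\!\int\!\nabla u_0\,u_1 - \Bigl[E(W_{\ell}(0),\partial_t W_{\ell}(0))-\ell\,\vec e_1\cdot\!\!\int\!\nabla W_{\ell}(0)\,\partial_t W_{\ell}(0)\Bigr]=\tfrac12\langle L_+\tilde f,\tilde f\rangle+c_\ell\|\dot h\|_{L^2}^2+O(\|\nabla f\|^3+\alpha^3),
\end{equation*}
where $c_\ell>0$ and $L_+=-\Delta-\tfrac{N+2}{N-2}W^{4/(N-2)}$ is the linearized operator around $W$; no time derivatives of $\tilde f$ appear because $W_{\ell}$ is stationary in the boosted frame. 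The left-hand side \emph{vanishes identically} by \eqref{cond_energy}, which gives a positive-semidefinite identity. Under the four orthogonality conditions on $\tilde f$, the spectral analysis of $L_+$ recalled in \cite[Appendix A]{DuMe08} yields the coercivity $\langle L_+\tilde f,\tilde f\rangle\gtrsim\|\nabla\tilde f\|_{L^2}^2$ modulo the $\alpha W$ direction which is handled via the conservation-law constraints (the amplitude $\alpha$ enters only through higher order, so must be controlled by $\|\nabla f\|$). Unwinding the definition \eqref{def_dl} in the same variables shows $|d_\ell|$ is equivalent to $\alpha^2+\|\nabla f\|_{L^2}^2+\|\dot h\|_{L^2}^2$, which yields all the equivalences of \eqref{modul1}.

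Third step (derivative estimates \eqref{modul2}). I would differentiate the four orthogonality identities $\langle\tilde f(t),W\rangle_{\hdot}=\langle\tilde f(t),\partial_{x_j}W\rangle_{\hdot}=\langle\tilde f(t),\Lambda W\rangle_{\hdot}=0$ in $t$ and substitute the equation \eqref{CP} rewritten in terms of $(\lambda,x,\alpha,f)$. This gives a linear $(N+2)\times(N+2)$ system for $(\lambda'/\lambda,\,x'-\ell\vec e_1,\,\alpha')$ whose matrix is the non-degenerate Gram matrix from step 1 perturbed by $O(|d_\ell|^{1/2})$ and whose right-hand side is $O(|d_\ell|)$ thanks to \eqref{modul1}. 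The shift by $\ell\vec e_1$ arises because an unperturbed $W_\ell$ travels at velocity $\ell\vec e_1$, so the modulation parameter $x(t)$ must track this motion. The main obstacle is pinpointing the right boosted conserved quantity in step~2 and controlling the $\alpha$-direction, for which one crucially uses both identities in \eqref{cond_energy} (not just the energy).
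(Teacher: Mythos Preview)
Your plan is essentially the paper's proof: stretch $x_1\mapsto x_1/\sqrt{1-\ell^2}$ to turn $W_\ell(0)$ into $W$, and note that your ``boosted quantity'' $E-\ell\int\partial_{x_1}u\,\partial_t u$ is (up to the factor $\sqrt{1-\ell^2}$) exactly $E(\tilde u,\tilde u_1)$ with $\tilde u_1=(\partial_t u+\ell\partial_{x_1}u)$ in stretched variables, so that \eqref{cond_energy} becomes $E(\tilde u,\tilde u_1)=E(W,0)$ and \cite[Lemma~3.7, (3.19)]{DuMe08} yields \eqref{modul1} verbatim. For \eqref{modul2} the paper carries out your Step~3 with one simplification worth noting: it never substitutes the second-order equation \eqref{CP}, but simply expands $\lambda^{N/2}\tilde u_1(t,\lambda\cdot+\tilde x)$ (which involves only first derivatives of $u$) in terms of $\lambda',\tilde x',\alpha',\partial_t\tilde f$, pairs against $\Delta W,\Delta\partial_jW,\Delta\Lambda W$ (the $\partial_t\tilde f$ contributions vanish by orthogonality), and uses the bound $\|\tilde u_1\|_{L^2}\lesssim|d_\ell|$ from \eqref{modul1} as the right-hand side.
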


\subsection{Global existence}
\label{SS:global}
In this subsection, we show that all solutions of \eqref{CP} which are compact up to modulation and satisfy the bound \eqref{bound_nabla2W} are globally defined. We start by showing that solutions that are compact up to modulation have positive energy.
\begin{lemma}
 \label{L:positive_energy}
Let $u$ be a nonzero solution of \eqref{CP} which is compact up to modulation. Then $E(u_0,u_1)>0$.
\end{lemma}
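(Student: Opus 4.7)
The plan is to argue by contradiction and assume $E(u_0,u_1)\le 0$, combining a preliminary small-data-scattering argument with a localized virial identity based on \eqref{identity1} of Claim \ref{C:identities}.

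First I would establish the uniform lower bound
\[ c_0:=\inf_{t\in I_{\max}(u)}\bigl(\|\nabla u(t)\|_{L^2}^2+\|\partial_tu(t)\|_{L^2}^2\bigr)>0. \]
If instead some sequence $\{t_n\}\subset I_{\max}(u)$ drove this quantity to zero, then by scale-invariance of the $\hdot\times L^2$ norm and compactness of $\overline{K}$ the rescaled sequence
\[ v_n=\bigl(\lambda(t_n)^{\frac{N}{2}-1}u(t_n,\lambda(t_n)\cdot+x(t_n)),\;\lambda(t_n)^{\frac{N}{2}}\partial_t u(t_n,\lambda(t_n)\cdot+x(t_n))\bigr) \]
would tend to $(0,0)$ in $\hdot\times L^2$. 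The small data Cauchy theory would then force the rescaled solution, and hence $u$ itself, to be globally defined and to scatter in both time directions. Combining scattering (so that $\|u(t)\|_{L^{2N/(N-2)}}\to 0$ along any subsequence $|t|\to\infty$, and likewise for the rescalings by scale invariance) with compactness up to modulation and the conservation and non-positivity of $E$, one finds that every subsequential limit of the rescaled orbit at $|t|\to\infty$ must vanish in both $\hdot$ and $L^2$, whence $u\equiv 0$, contradicting $u\ne 0$.

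Next I would apply a localized virial. Fix $\varphi\in C_0^\infty(\RR^N)$ with $\varphi\equiv 1$ on $\{|x|\le 1\}$ and $\mathrm{supp}\,\varphi\subset\{|x|\le 2\}$, set $\varphi_R(x)=\varphi(x/R)$, and consider
\[ y_R(t)=\int_{\RR^N}\varphi_R(x)\,u(t,x)\,\partial_tu(t,x)\,dx. \]
Identity \eqref{identity1} applied to $\varphi_R$ (which does not depend on $t$) together with the algebraic identity
\[ \int\!\bigl((\partial_tu)^2-|\nabla u|^2+|u|^{\frac{2N}{N-2}}\bigr)\,dx=\frac{2(N-1)}{N-2}\|\partial_tu\|_{L^2}^2+\frac{2}{N-2}\|\nabla u\|_{L^2}^2-\frac{2N}{N-2}E \]
produces
\[ y_R'(t)\ge \frac{2}{N-2}\|\nabla u(t)\|_{L^2}^2-\frac{2N}{N-2}E-\mathrm{err}(R,t)\ge \frac{2c_0}{N-2}-\mathrm{err}(R,t), \]
where the error term is supported in the annulus $\{R\le|x|\le 2R\}$ and controlled by $C\|u(t)\|_{L^{2N/(N-2)}(\{R\le|x|\le 2R\})}^2$. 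Compactness up to modulation produces, for any $\varepsilon>0$, an $R_\varepsilon>0$ with $\sup_t\|u(t)\|_{L^{2N/(N-2)}(|x-x(t)|\ge R_\varepsilon\lambda(t))}<\varepsilon$; taking $R\ge R_\varepsilon\lambda(t)+|x(t)|$ uniformly in $t$ yields $y_R'(t)\ge c>0$ on $I_{\max}(u)$.

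Finally, by Sobolev embedding and compactness of $\overline{K}$ one has $|y_R(t)|\lesssim R$ uniformly in $t$, so integrating $y_R'\ge c$ over $I_{\max}(u)$ forces $|I_{\max}(u)|<\infty$; a further virial/concentration analysis at the blow-up time (where $\lambda(t)\to 0$ by compactness up to modulation) then produces the final contradiction. The main obstacle is the control of the modulation parameters: a fixed cutoff $\varphi_R$ captures the effective support of $u(t)$ uniformly in $t$ only when $\sup_t(R_\varepsilon\lambda(t)+|x(t)|)<\infty$, and when this bound fails one has to use a time-dependent cutoff of the form $\varphi((x-x(t))/(R\lambda(t)))$ and absorb the additional $\lambda'(t)$ and $x'(t)$ contributions that then enter the virial identity.
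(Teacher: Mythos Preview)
Your argument has a genuine gap at the point you yourself flag as ``the main obstacle''. The localized virial step requires you to choose a single $R$ with $R\ge R_\varepsilon\lambda(t)+|x(t)|$ for all $t\in I_{\max}(u)$, but compactness up to modulation gives no a~priori bound on $\lambda(t)$ or $|x(t)|$. Your fallback, a moving cutoff $\varphi\bigl((x-x(t))/(R\lambda(t))\bigr)$, introduces terms proportional to $\lambda'(t)/\lambda(t)$ and $x'(t)/\lambda(t)$ in the virial derivative, and nothing in the hypotheses controls these; there is no mechanism to ``absorb'' them. So as written you cannot conclude $y_R'\ge c>0$, and hence cannot reach finite-time blow-up from the virial. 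The final sentence (``a further virial/concentration analysis at the blow-up time then produces the final contradiction'') is also not a proof: this is exactly where the real work lies, and you have not supplied it.

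The paper's argument avoids both difficulties by reversing the order of steps. From $E(u_0,u_1)\le 0$ one first gets, via Claim~\ref{C:variational}, that $\|\nabla u_0\|_{L^2}^2>\|\nabla W\|_{L^2}^2$, and then \cite{KeMe08} gives finite-time blow-up in \emph{both} time directions directly, with no virial needed. Compactness up to modulation together with \cite[Lemmas~4.7--4.8]{KeMe08} then forces $\lambda(t)\to 0$ at each endpoint and $\supp u(t)\subset\{|x-x_\pm|\le|T_\pm-t|\}$. Now $u$ has compact support in $x$, so one may use the \emph{unlocalized} quantity $y(t)=\int u^2$: identity~\eqref{identity5} and $E\le 0$ give $y''\ge 0$, while Hardy's inequality and the shrinking support give $y(t)\to 0$ as $t\to T_\pm$. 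A nonnegative convex function vanishing at both endpoints is identically zero, so $u\equiv 0$. This sidesteps the modulation-parameter control entirely: the compact support (coming from two-sided blow-up) removes the need for any cutoff, and hence for any bound on $\lambda$, $x$, or their derivatives.
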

\begin{proof}
 Assume $E(u_0,u_1)\leq 0$. By Claim \ref{C:variational}, $\|\nabla u_0\|_{L^2}^2>\|\nabla W\|_{L^2}^2$. By \cite{KeMe08}, $u$ blows up in finite time in both time directions. We denote $T_{\pm}=T_{\pm}(u)$ the finite times of existence.

The fact that $u$ is compact up to modulation implies that it is bounded in $\hdot\times L^2$. Furthermore, by Lemmas 4.7 and  4.8 in \cite{KeMe08}, $\lambda(t)\to 0$ as $t\to T_{\pm}$ and there exist two blow-up points, $x_+,x_-\in \RR^N$ such that 
$$\supp (u,\partial_t u)\subset \left\{|x-x_+|\leq |T_+-t|\right\}\cap \left\{|x-x_-|\leq |T_--t|\right\}.$$
Let 
$$y(t)=\int_{\RR^N} u^2(t,x)\,dx.$$
Then by \eqref{identity5} in Claim \ref{C:identity2} and the fact that $E(u_0,u_1)\leq 0$, 
\begin{equation}
\label{y''}
y''(t)\geq 0. 
\end{equation} 
Furthermore by Hardy's inequality and the properties of the support of $u$:
\begin{equation}
\label{y0}
\lim_{t\to T_+}y(t)=\lim_{t\to T_-}y(t)=0. 
\end{equation} 
By \eqref{y''} and \eqref{y0}, $y(t)=0$ for all $t$, which shows that $u=0$, contradicting our assumptions.
\end{proof}

The main result of this subsection is the following:
\begin{prop}
\label{P:global}
 Let $u$ be a solution of \eqref{CP} that satisfies \eqref{bound_nabla2W} and such that $E(u_0,u_1)>0$. Assume that there exist $\lambda(t)>0$, $x(t)\in \RR^N$ defined for $t\geq 0$ such that
$$K_+=
\Big\{\left(\lambda(t)^{\frac{N}{2}-1}u(t,\lambda(t)x+x(t)),\lambda(t)^{\frac{N}{2}}\partial_t u(t,\lambda(t)x+x(t))\right)\;:\; t\in [0,T_+(u))\Big\}$$
has compact closure in $\hdot\times L^2$. Then $T_+(u)=+\infty$.
\end{prop}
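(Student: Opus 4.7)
The plan is to argue by contradiction: assume $T_+ := T_+(u) < \infty$ and, by the scaling invariance of \eqref{CP}, normalize $T_+ = 1$. The first task is to extract a clean concentration picture. Combining the precompactness of $K_+$ in $\hdot \times L^2$ with finite speed of propagation and the reduction scheme familiar from Kenig--Merle and \cite[\S 3]{DuKeMe09P} (already used in \S\ref{SS:compactness}), one obtains, after a translation in space: $\lambda(t) \to 0$ with $\lambda(t) \lesssim 1-t$; $|x(t)| \lesssim 1-t$; and a weak limit $(v_0, v_1) \in \hdot \times L^2$ of $(u(t), \partial_t u(t))$ as $t \to 1^-$. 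Letting $v$ be the solution of \eqref{CP} with data $(v_0, v_1)$ at time $1$, the difference $a := u - v$ is supported in the backward light cone $\{|x| \leq 1-t\}$.

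The second step is a localized virial/Morawetz identity built from Claim \ref{C:identities}. Pick a smooth cutoff $\psi_R \in C_0^\infty(\RR^N)$ equal to $1$ on $\{|x| \leq R\}$ and supported in $\{|x| \leq 2R\}$, and set
\begin{equation*}
Z_R(t) = \int \psi_R(x)\Bigl(x \cdot \nabla u(t,x)\,\partial_t u(t,x) + \tfrac{N-2}{2}\, u(t,x)\,\partial_t u(t,x)\Bigr)\,dx.
\end{equation*}
Applying \eqref{identity1} with $\varphi = \psi_R$ and \eqref{identity3} with $\Phi(x) = \psi_R(x)\, x$, the interior (Pohozaev) terms cancel as in \eqref{identity1'}--\eqref{identity3'} and leave
\begin{equation*}
\frac{d}{dt} Z_R(t) = -\int \psi_R(x)\,(\partial_t u(t,x))^2\,dx + \mathcal{R}_R(t),
\end{equation*}
where $\mathcal{R}_R(t)$ is supported in $\{R \leq |x| \leq 2R\}$. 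Choosing $R$ large enough that most of the energy of $v$ lies in $\{|x| \leq R\}$ and using that $a$ is concentrated in $\{|x| \leq 1-t\} \subset \{|x| \leq 1\} \subset \{|x| \leq R\}$ when $t$ is close to $1$, the error $|\mathcal{R}_R(t)|$ is made uniformly small in $t \in [0,1)$.

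The contradiction then arises by coupling a uniform upper bound on $Z_R$ with a coercive lower bound on the right-hand side. Compactness of $K_+$ in $\hdot \times L^2$ gives $|Z_R(t)| \lesssim R\,(\|\nabla u(t)\|_{L^2}^2 + \|\partial_t u(t)\|_{L^2}^2)$ uniformly in $t$. On the other hand, the assumption \eqref{bound_nabla2W} combined with the strict positivity $E(u_0, u_1) > 0$ and Claim \ref{C:variational} precludes the time average of $\int \psi_R(\partial_t u)^2$ from being arbitrarily small on a full neighborhood of $t = 1$: otherwise a weak limit of the rescaled profile $U(\tau, y) = \lambda(t)^{(N-2)/2} u(t, \lambda(t) y + x(t))$ would be a stationary solution of the elliptic equation of Lemma \ref{L:asym_ell}, hence (up to the symmetries of \eqref{CP}) a rescaled $W_\ell$, which by Claim \ref{C:value_W} is globally defined and violates $T_+ = 1$. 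Integrating $Z_R'$ on a suitable time interval close to $1$ then forces $Z_R$ outside its uniform bound, producing the sought contradiction. The main obstacle is to make rigorous the coercive lower bound on $\int (\partial_t u)^2$ without exploiting radial symmetry; this is where the spatially \emph{asymmetric} monotonic quantities announced in the introduction, together with the modulation theory around $W_\ell$ developed in \S\ref{SS:modulation}, enter decisively.
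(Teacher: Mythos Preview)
Your setup contains a minor inaccuracy and your concluding step has a genuine gap.

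\textbf{Setup.} Under the compactness hypothesis of Proposition~\ref{P:global}, the solution $u$ itself is supported in the backward light cone $\{|x|\leq 1-t\}$ (see the paragraph following the statement of the proposition, which invokes Lemmas~4.7--4.8 of \cite{KeMe08}). The weak limit $(v_0,v_1)$ you introduce is therefore zero, and the decomposition $u=v+a$ is superfluous. This does not break anything, but it shows you are conflating the present situation with that of Section~\ref{S:universality}, where $u$ is a general type~II blow-up solution without compactness.

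\textbf{The gap.} The real problem is your final paragraph. You argue that if the time average of $\int(\partial_t u)^2$ could be made small near $t=1$, a rescaled limit would be $W_\ell$, ``which by Claim~\ref{C:value_W} is globally defined and violates $T_+=1$''. This is not a valid contradiction: the fact that a sequential weak limit of rescalings of $u$ is a global solution says nothing about whether $u$ itself is global. Indeed, the paper's proof shows that the time average of $\int|\partial_t u+\ell\partial_{x_1}u|^2$ \emph{is} small (Lemma~\ref{L:cpct_subseq}) and that the rescaled limit \emph{is} $W_\ell$ along a sequence (Lemma~\ref{L:ene_mom}); this is the starting point, not the contradiction.

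What is actually required is the content of Proposition~\ref{P:end_finite_time}: once one knows $E(u_0,u_1)=E(W_\ell(0),\partial_tW_\ell(0))$ and the momentum matches, one introduces the modified virial quantity $\Phi(t)$ of \eqref{def_Phi} satisfying $\Phi'(t)=d_\ell(t)$, and proves the pointwise bound $|\Phi(t)|\leq C(1-t)|d_\ell(t)|^{1+2/N}$ via modulation theory around $W_\ell$ (Steps~1--3). These two relations combine to the differential inequality $|\Phi|\leq C(1-t)|\Phi'|^{1+2/N}$, whose integration forces $|\Phi(t)|/(1-t)\geq c>0$ for all $t$; this finally contradicts the existence of a sequence $t_n'\to 1$ with $d_\ell(t_n')\to 0$. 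Your proposal contains no analogue of this ODE argument, and the acknowledgment in your last sentence that the modulation theory ``enters decisively'' without saying how confirms that the proof is incomplete at exactly the point where the substantive work lies.
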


We argue by contradiction, assuming that $T_+(u)$ is finite. Without loss of generality, one may assume $T_+(u)=1$. As in Remark \ref{R:compactness}, we will assume that $\int \nabla u_0 u_1$ is parallel to $\vec{e}_1=(1,0,\ldots,0)$ and define $\ell$ by \eqref{def_ell}.

As seen in the proof of Lemma \ref{L:positive_energy}, there exists an unique blow-up point (that we will assume to be $x=0$). Moreover, $\lambda(t)\to 0$ as $t\to 1$ and
$$ \supp u(t)\subset\{|x|\leq 1-t\}$$
Furthermore by \cite[p.144-145]{BaGe99},
$$ \lambda(t)+|x(t)|\leq C(1-t).$$
We will need the following result, which is proved in \cite[Section 6]{KeMe08}:
\begin{exttheo}
\label{T:no_ss}
Let $u$ satisfies the assumptions of Proposition \ref{P:global}. Then
there exists a sequence $\{t_n\}\in [0,1)^{\NN}$ such that
\begin{equation}
\label{subseq}
 \lim_{n\to \infty}t_n=1,\quad \lim_{n\to \infty}\frac{\lambda(t_n)}{1-t_n}=0.
\end{equation} 
\end{exttheo}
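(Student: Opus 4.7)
My plan is to argue by contradiction: suppose the conclusion fails, so $\liminf_{t \to 1^-} \lambda(t)/(1-t) > 0$, and by compactness of $K_+$ (which forces $\lambda(t)$ to be determined up to bounded ratios) one may fix $c_0 > 0$ such that $\lambda(t) \geq c_0 (1-t)$ for all $t$ close to $1$. I would then pass to self-similar variables by setting $s = -\log(1-t)$ and
\[
w(s,y) = (1-t)^{\frac{N-2}{2}} u(t,(1-t)y), \qquad y = \frac{x}{1-t}.
\]
Finite speed of propagation forces $\supp w(s,\cdot) \subset \{|y| \leq 1\}$, and the assumption $\lambda(t) \geq c_0(1-t)$ combined with the compactness of $K_+$ translates into compactness of the trajectory $\{(w(s),\partial_s w(s))\}_{s\geq 0}$ in the natural weighted energy space on the unit ball. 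The function $w$ satisfies a damped nonlinear wave equation in the cone variables, with first-order terms that are bounded on compact subsets of $\{|y|<1\}$ and degenerate only at the lateral boundary $\{|y|=1\}$.

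The heart of the argument is a monotone Lyapunov functional adapted to the backward light cone. I would look for a weighted self-similar energy of the form
\[
\Phi(s) = \int_{|y|\leq 1} \left[ \tfrac{1}{2}(1-|y|^2)\bigl(|\nabla w|^2 + (\partial_s w)^2\bigr) - \tfrac{N-2}{2N} (1-|y|^2)|w|^{\frac{2N}{N-2}} + \text{lower order} \right] dy,
\]
constructed so that $\Phi'(s) = -\int \rho(y)\,(\partial_s w)^2\,dy$ for an explicit nonnegative weight $\rho$; boundary terms at $|y|=1$ vanish thanks to the support condition. Boundedness of $\Phi$ (a consequence of the compactness of the trajectory and of the conserved energy being positive, Lemma \ref{L:positive_energy}) forces $\int_0^\infty \int \rho\,(\partial_s w)^2\,dy\,ds < \infty$, so along some sequence $s_n \to \infty$ one has $\partial_s w(s_n) \to 0$ in the weighted $L^2$, and by compactness $w(s_n) \to W_\infty$ strongly in the weighted $\hdot$-norm.

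The limit $W_\infty$ then solves a stationary self-similar equation of the form
\[
\Div\bigl((I-y\otimes y)\nabla W_\infty\bigr) - \alpha_N\, y\cdot\nabla W_\infty - \beta_N W_\infty + |W_\infty|^{\frac{4}{N-2}}W_\infty = 0
\]
on the unit ball, with $W_\infty$ supported in $\overline{B_1}$. Using the bound \eqref{bound_nabla2W}, which passes to $W_\infty$, I would run a Pohozaev-type identity against $W_\infty$ (in the spirit of the case $\ell^2 = 1$ of Lemma \ref{L:asym_ell}, where a degenerate transport along the characteristic direction forces triviality) to conclude $W_\infty \equiv 0$. Since every sequential limit of $(w(s),\partial_s w(s))$ must be trivial, the full trajectory converges strongly to $(0,0)$, which contradicts the uniform positive lower bound on $\|\nabla w(s)\|_{L^2}^2 + \|\partial_s w(s)\|_{L^2}^2$ coming from $E(u_0,u_1) > 0$ and the conservation laws.

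The main obstacle is the careful analysis near the lateral boundary $\{|y|=1\}$ of the backward light cone, where the self-similar change of variables degenerates, the principal symbol of the equation becomes tangential, and the weights in the Lyapunov functional vanish. Controlling boundary contributions when differentiating $\Phi(s)$, justifying the Pohozaev computation for $W_\infty$ in this degenerate weighted setting, and ensuring that the compactness passes to the correct function space on $B_1$, are the real analytic content of the argument; once the weighted functional framework is in place, the remaining steps are structurally standard.
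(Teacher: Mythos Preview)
Your overall strategy is correct and is precisely the argument from \cite[Section 6]{KeMe08} that the paper cites without reproducing: contradiction, self-similar variables, a monotone Lyapunov functional, extraction of a stationary self-similar profile, and a Pohozaev-type identity forcing that profile to vanish.

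Two points need correction. First, the weight in the Lyapunov functional goes the other way: the correct functional carries the \emph{singular} weight $(1-|y|^2)^{-1/2}$, not the vanishing weight $(1-|y|^2)$ you wrote. With the singular weight one obtains
\[
\frac{d}{ds}\tilde{E}(w(s)) = -\int_{B_1} \frac{(\partial_s w)^2}{(1-|y|^2)^{3/2}}\,dy,
\]
and it is the finiteness of this strongly-weighted dissipation integral that later yields the extra regularity $W_\infty \in H^1_0(B_1)$ needed for the Pohozaev step. With a vanishing weight the integration by parts does not close with a good sign.

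Second, your claim that ``boundary terms at $|y|=1$ vanish thanks to the support condition'' is not right: $w(s,\cdot)$ is supported in $\overline{B_1}$, not compactly inside, and may carry mass arbitrarily close to the boundary. The boundary control does not come for free from support; it is extracted from the dissipation bound above. Relatedly, the bound \eqref{bound_nabla2W} plays no role in the self-similar rigidity step---the Pohozaev identity for the degenerate stationary equation kills $W_\infty$ regardless of size. You correctly identify the boundary analysis as the hard part; just be aware that its resolution hinges on the singular weight, not a vanishing one.
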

We divide the proof of Proposition \ref{P:global} into a few lemmas.
\begin{lemma}[Control of the space translation]
\label{L:cpct_space}
Let $u$ be a solution which is compact up to modulation and such that $T_+=1$. Let $\{t_n\}\in [0,1)^{\NN}$ be any sequence that satisfies \eqref{subseq}. Then
$$\lim_{n\to \infty}\frac{x(t_n)}{1-t_n}=-\ell \vec{e}_1.$$
\end{lemma}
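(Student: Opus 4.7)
\textbf{Proof plan for Lemma \ref{L:cpct_space}.}

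The plan is to exploit the momentum identity \eqref{identity4'}, which for a compactly supported solution reads $\frac{d}{dt}\int x\, e(u)\,dx = -\int \nabla u\,\partial_t u\,dx = -d_0$, where by conservation of momentum the right hand side equals $-\int \nabla u_0\, u_1 = \ell E_0 \vec{e}_1$ (using the definition \eqref{def_ell} of $\ell$ and writing $E_0=E(u_0,u_1)$, which is strictly positive by Lemma \ref{L:positive_energy}). Since $T_+=1$, the solution is supported in $\{|x|\le 1-t\}$, so $|\int x\,e(u)(t)\,dx|\le (1-t)\,|E_0|$, and in particular $\int x\,e(u)(t)\,dx\to 0$ as $t\to 1$. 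Integrating the identity from $t$ to $1$ therefore gives the exact formula
\begin{equation}
\label{P:mom_id}
\int x\,e(u)(t)\,dx = -\ell E_0\,(1-t)\,\vec{e}_1.
\end{equation}

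Next I would split the spatial moment around the modulation center:
\begin{equation*}
\int x\,e(u)(t_n)\,dx = \int (x-x(t_n))\,e(u)(t_n)\,dx + x(t_n)\int e(u)(t_n)\,dx,
\end{equation*}
where $\int e(u)(t_n)\,dx = E_0$ by conservation of energy. The heart of the proof is to show that the first term is $o(1-t_n)$ as $n\to\infty$, which combined with \eqref{P:mom_id} and the fact that $E_0>0$ gives $x(t_n)/(1-t_n)\to -\ell\vec{e}_1$.

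To estimate the centered moment I would use the compactness of $K_+$. For any $\varepsilon>0$, compactness in $\hdot\times L^2$ and Sobolev embedding provide $R=R(\varepsilon)>0$ such that, uniformly in $n$,
\begin{equation*}
\int_{|x-x(t_n)|\ge R\lambda(t_n)}\left(|\nabla u(t_n)|^2+(\partial_t u(t_n))^2+|u(t_n)|^{\frac{2N}{N-2}}\right)dx \le \varepsilon,
\end{equation*}
while the $L^1$-norm of $e(u)(t_n)$ is uniformly bounded by \eqref{bound_nabla2W}. Splitting the domain of integration into $\{|x-x(t_n)|\le R\lambda(t_n)\}$ and its complement, on the inner region I bound the integrand by $R\lambda(t_n)\,|e(u)(t_n)|$ with uniformly bounded $L^1$ mass, and on the outer region I use the support property $|x-x(t_n)|\le 2(1-t_n)$ together with the above tail estimate. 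This yields
\begin{equation*}
\frac{1}{1-t_n}\left|\int (x-x(t_n))\,e(u)(t_n)\,dx\right|\le C R\,\frac{\lambda(t_n)}{1-t_n} + C\varepsilon.
\end{equation*}

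Letting $n\to\infty$ kills the first term thanks to \eqref{subseq}, and then letting $\varepsilon\to 0$ (hence $R\to\infty$) finishes the proof. The main technical obstacle is this last step, where one must ensure that compactness up to modulation transfers to a tail estimate for the full energy density (including the potential term) uniformly in $t$; this relies crucially on the rescaled compactness of $K_+$ and Sobolev embedding, plus the support property that lets us replace $|x-x(t_n)|$ by $2(1-t_n)$ on the outer region. All other ingredients — conservation of momentum and energy, positivity of $E_0$, and the identity \eqref{identity4'} — are already at our disposal.
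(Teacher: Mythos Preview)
Your proposal is correct and follows essentially the same approach as the paper: both use the identity \eqref{identity4'} to get $\int x\,e(u)(t)\,dx=-\ell E(u_0,u_1)(1-t)\vec e_1$, then split off $x(t_n)E(u_0,u_1)$ and show the centered moment is $o(1-t_n)$ via compactness of $K_+$ and the hypothesis $\lambda(t_n)/(1-t_n)\to 0$. Two minor imprecisions worth fixing: the bound $|\int x\,e(u)(t)\,dx|\le (1-t)|E_0|$ should read $\le (1-t)\int|e(u)(t)|\,dx\le C(1-t)$ (the energy density need not be nonnegative), and the constant $2$ in $|x-x(t_n)|\le 2(1-t_n)$ should be a generic constant coming from the a priori bound $|x(t)|\le C(1-t)$.
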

\begin{proof}
Let $\Psi(t)=\int xe(u)$, where $e(u)$ is defined by \eqref{defeu}

By \eqref{def_ell} and conservation of momentum and identity \eqref{identity4'} in Claim \ref{C:identity2},
\begin{equation}
\label{Y'}
 \Psi'(t)=-\int \nabla u(t)\partial_t u(t)=\ell E(u_0,u_1)\vec{e}_1.
\end{equation}
Write 
\begin{equation}
\label{intermY}
\Psi(t)=x(t)E(u_0,u_1)+\int_{|x|\leq 1-t} (x-x(t))e(u),
\end{equation} 
where $e(u)(t,x)$ is defined by \eqref{defeu}.
Fix $\eps>0$. Using the compactness of $K_+$, one may find $A_{\eps}>0$ such that
\begin{equation}
\label{def_Aeps}
\forall t\in [0,1),\quad \int_{|x-x(t)|\geq A_{\eps}\lambda(t)}r(u)\leq \eps,
\end{equation} 
where
\begin{equation}
 \label{defru}
r(u)(t,x)=|\nabla u(t,x)|^2+(\partial_t u(t,x))^2+|u|^{\frac{2N}{N-2}}+\frac{1}{|x|^2}|u|^2.
\end{equation}
Then
\begin{equation*}
\left|\int (x-x(t))e(u)\right|=\left|\int_{|x-x(t)|\leq A_{\eps}\lambda(t)} (x-x(t))e(u) +\int_{|x-x(t)|\geq A_{\eps}\lambda(t)} (x-x(t))e(u) \rg|,
\end{equation*} 
and thus, in view of the bound $|x(t)|\leq C(1-t)$, and the fact that $|x|\leq 1-t$ on the support of $u$,
\begin{equation*}
\left|\int_{|x|\leq 1-t} (x-x(t))e(u)\right|\leq CA_{\eps}\lambda(t)+C\,\eps(1-t).
\end{equation*} 
By \eqref{subseq}, and using that $\eps>0$ is arbitrary, we get in view of \eqref{intermY},
$$\lim_{n\to +\infty}\lf|\frac{1}{1-t_n}\left(\Psi(t_n)-x(t_n)E(u_0,u_1)\rg)\rg|=0.$$
Using that, by \eqref{Y'},
$$ \Psi(t_n)=-\vec{e}_1\int_{t_n}^1 \ell E(u_0,u_1)dt=-\ell E(u_0,u_1)(1-t_n)\vec{e}_1,$$
we get the conclusion of the lemma.
\end{proof}
We next show:
\begin{lemma}
\label{L:cpct_subseq}
 Let $u$ and $\{t_n\}_n$ be as in Lemma \ref{L:cpct_space}. Then
$$ \lim_{n\to \infty} \frac{1}{1-t_n}\int_{t_n}^1\int |\partial_t u(t)+\ell\partial_{x_1}u(t)|^2\,dx\,dt=0.$$
\end{lemma}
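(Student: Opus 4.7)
The plan is to combine the three identities \eqref{identity5}, \eqref{identity3'} and \eqref{identity2'} of Claim \ref{C:identity2} (which apply since $u(t)$ has compact support in $\{|x|\leq 1-t\}$), each integrated in time on $[t_n,1]$, together with the estimates $\lambda(t_n)/(1-t_n)\to 0$ provided by Theorem \ref{T:no_ss} and $x(t_n)/(1-t_n)\to -\ell\vec{e}_1$ provided by Lemma \ref{L:cpct_space}, and with the quantitative compactness bound \eqref{def_Aeps} coming from the precompactness of $K_+$.

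First I would expand $(\partial_tu+\ell\partial_{x_1}u)^2=(\partial_tu)^2+2\ell\,\partial_tu\,\partial_{x_1}u+\ell^2(\partial_{x_1}u)^2$ and set $A_n=\int_{t_n}^1\!\!\int(\partial_tu)^2\,dx\,dt$, $B_n=\int_{t_n}^1\!\!\int|\nabla u|^2\,dx\,dt$, $C_n=\int_{t_n}^1\!\!\int(\partial_{x_1}u)^2\,dx\,dt$. The cross term is immediate: by conservation of the momentum and the definition \eqref{def_ell} of $\ell$, $\int\partial_tu\,\partial_{x_1}u\,dx=-\ell E_0$ for every $t$, so $\int_{t_n}^1\!\!\int\partial_tu\,\partial_{x_1}u\,dx\,dt=-\ell E_0(1-t_n)$.

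To evaluate $A_n$, $B_n$ and $C_n$ up to $o(1-t_n)$, I will integrate \eqref{identity5}, \eqref{identity3'} and \eqref{identity2'}, using $\int|u|^{2N/(N-2)}=\frac{N}{N-2}(\int(\partial_tu)^2+\int|\nabla u|^2)-\frac{2N}{N-2}E_0$ to reduce their right-hand sides to the bulk quantities. The boundary terms at $t=1$ vanish in each case since $|x|\leq 1-t$ on $\supp u$ and $\|\nabla u\|_{L^2}+\|\partial_tu\|_{L^2}$ remains bounded. The delicate point is the boundary at $t=t_n$. For \eqref{identity3'} the boundary term is $\int x\cdot\nabla u(t_n)\,\partial_tu(t_n)$; splitting $x=x(t_n)+(x-x(t_n))$, the leading piece equals $x(t_n)\cdot\int\nabla u(t_n)\,\partial_tu(t_n)=-\ell E_0\,x(t_n)_1=\ell^2E_0(1-t_n)+o(1-t_n)$ by conservation of the momentum and Lemma \ref{L:cpct_space}, while the remainder $\int(x-x(t_n))\cdot\nabla u\,\partial_tu$ is $o(1-t_n)$: split $\RR^N$ into $\{|x-x(t_n)|\leq A_\eps\lambda(t_n)\}$, where Cauchy--Schwarz together with $\lambda(t_n)=o(1-t_n)$ gives $O(A_\eps\lambda(t_n))$, and its complement, where \eqref{def_Aeps} combined with $|x-x(t_n)|\leq C(1-t_n)$ on $\supp u$ gives $\leq C\eps(1-t_n)$. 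This yields the first relation $NA_n+B_n=(N+\ell^2)E_0(1-t_n)+o(1-t_n)$. The same analysis applied to \eqref{identity2'} (with $x$ replaced by $x_1$) gives $C_n=-A_n+E_0(1-t_n)+\ell^2E_0(1-t_n)+o(1-t_n)$. For \eqref{identity5} the relevant boundary term is $2\int u(t_n)\,\partial_tu(t_n)$, and the crucial input is $\|u(t_n)\|_{L^2}=o(1-t_n)$: on the same two regions, the concentration part contributes $O(\lambda(t_n)^2)=o((1-t_n)^2)$ (after the change of variables $y=(x-x(t_n))/\lambda(t_n)$ and $\int_{|y|\leq A}U_n^2\lesssim A^2\|\nabla U_n\|_{L^2}^2$ via Hardy), while on the complement $\int u^2\leq(1-t_n)^2\int u^2/|x|^2\leq\eps(1-t_n)^2$ by \eqref{def_Aeps} and $|x|\leq 1-t_n$ on $\supp u$. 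This yields the second relation $(N-1)A_n+B_n=NE_0(1-t_n)+o(1-t_n)$. Subtracting the two relations gives $A_n=\ell^2E_0(1-t_n)+o(1-t_n)$, and then $C_n=E_0(1-t_n)+o(1-t_n)$.

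Assembling everything,
\begin{equation*}
\int_{t_n}^1\!\!\int(\partial_tu+\ell\partial_{x_1}u)^2\,dx\,dt=A_n-2\ell^2E_0(1-t_n)+\ell^2C_n=\ell^2E_0(1-t_n)-2\ell^2E_0(1-t_n)+\ell^2E_0(1-t_n)+o(1-t_n)=o(1-t_n),
\end{equation*}
and dividing by $1-t_n$ gives the claim. I expect the main obstacle to be the boundary analysis at $t=t_n$, where all three pieces of information (the concentration property of Theorem \ref{T:no_ss}, the center-translation estimate of Lemma \ref{L:cpct_space}, and the quantitative precompactness of $K_+$) must be combined to produce exactly the coefficient $\ell^2E_0(1-t_n)$ which, once put into the three linear identities, produces the triple cancellation above; dropping any one of the three ingredients would only give an $O(1-t_n)$ bound.
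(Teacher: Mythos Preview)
Your proof is correct. You use the same ingredients as the paper (the virial identities of Claim~\ref{C:identity2}, the compactness bound~\eqref{def_Aeps}, the input $\lambda(t_n)/(1-t_n)\to 0$, and Lemma~\ref{L:cpct_space}), and the same splitting of $\RR^N$ into $\{|x-x(t_n)|\leq A_\eps\lambda(t_n)\}$ and its complement.

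The organization differs, however. You expand $(\partial_t u+\ell\partial_{x_1}u)^2$ and compute each of the bulk integrals $A_n$, $B_n$, $C_n$ to leading order $\ell^2E_0(1-t_n)$, $NE_0(1-\ell^2)(1-t_n)$, $E_0(1-t_n)$ by integrating three separate identities and extracting the leading-order boundary contributions at $t=t_n$; the cancellation then appears only at the very end. The paper instead anticipates the cancellation: it combines the three identities into a single functional
\[
Z(t)=(\ell^2-1)\int (x+\ell (1-t)\vec e_1)\cdot \nabla u\,\partial_t u+\tfrac{N-2}{2}(\ell^2-1)\int u\,\partial_t u-\ell^2\int (x_1+\ell(1-t))\partial_{x_1}u\,\partial_t u,
\]
with coefficients chosen so that $Z'(t)=\int(\partial_t u+\ell\partial_{x_1}u)^2$ exactly. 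The key point is that $Z$ is centered at $-\ell(1-t)\vec e_1$, which by Lemma~\ref{L:cpct_space} is within $o(1-t_n)$ of $x(t_n)$; hence each summand of $Z(t_n)$ is already $o(1-t_n)$ and no leading-order constants need to be tracked. Your route is more computational but entirely valid; the paper's buys the cancellation up front by a judicious choice of centering.
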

\begin{proof}
 Let 
\begin{multline*}
Z(t)=(\ell^2-1)\int (x+\ell (1-t)\vec{e}_1)\cdot \nabla u\partial_t u\,dx+\frac{N-2}{2}\lf(\ell^2-1\rg)\int u\partial_t u\,dx\\ -\ell^2\int \lf(x_1+\ell(1-t)\rg)\partial_{x_1}u\partial_t u\,dx.
\end{multline*}
Then by Claim \ref{C:identity2} and using that $\int \nabla u_0 u_1=-\ell E(u_0,u_1)\vec{e}_1$, we get
\begin{equation*}
 Z'(t)=\int \lf(\partial_tu +\ell\partial_{x_1}u\rg)^2\,dx.
\end{equation*} 
Integrating the preceding equality between $t_n$ and $1$, we see that it is sufficient to show:
\begin{equation}
\label{limZ}
\lim_{n\to \infty} \frac{Z(t_n)}{1-t_n}=0. 
\end{equation}
We first show: 
\begin{equation}
\label{limZ1}
\lim_{n\to\infty}\frac{1}{1-t_n} \left|\int u(t_n)\partial_t u(t_n)\,dx\right|=0. 
\end{equation} 
Fix $\eps>0$, and let $A_{\eps}$ satisfying \eqref{def_Aeps}. Then
\begin{multline*}
 \int u(t_n)\partial_t u(t_n)\,dx=\int_{|x-x(t_n)|\geq A_{\eps}\lambda(t_n)}|x-x(t_n)|\frac{1}{|x-x(t_n)|}u(t_n)\partial_t u(t_n)\,dx\\
+\int_{|x-x(t_n)|\leq A_{\eps}\lambda(t_n)}|x-x(t_n)|\frac{1}{|x-x(t_n)|}u(t_n)\partial_t u(t_n)\,dx,
\end{multline*}
and we get, as in the proof of Lemma \ref{L:cpct_space} (and using Hardy's inequality),
$$ \left|\int u(t_n)\partial_t u(t_n)\,dx\right|\leq C\,\eps(1-t_n)+CA_{\eps}\lambda(t_n).$$
Using \eqref{subseq}, and the fact that $\eps$ is arbitrary in the preceding equality, we get \eqref{limZ1}. We next show
\begin{equation}
\label{limZ2}
\lim_{n\to\infty}\frac{1}{1-t_n} \left|\int (x+\ell (1-t_n)\vec{e}_1)\cdot \nabla u(t_n)\partial_t u(t_n)\,dx\right|=0. 
\end{equation} 
Fix again $\eps>0$, and $A_{\eps}$ as in \eqref{def_Aeps}, and divide the integral between the regions $|x-x(t_n)|\leq A_{\eps}\lambda(t_n)$ and $|x-x(t_n)|\geq A_{\eps}\lambda(t_n)$. By \eqref{def_Aeps} and again the fact that $|x|\leq 1-t$ on the support of $u$, 
\begin{equation*}
 \left|\int_{|x-x(t_n)|\geq A_{\eps}\lambda(t_n)} (x+\ell (1-t_n)\vec{e}_1)\cdot \nabla u(t_n)\partial_t u(t_n)\,dx\right|\leq C(1-t_n)\eps.
\end{equation*} 
Furthermore, if $|x-x(t_n)|\leq A_{\eps}\lambda(t_n)$, then
\begin{equation*}
|x+\ell (1-t_n)\vec{e}_1|\leq |x-x(t_n)|+|x(t_n)+\ell (1-t_n)\vec{e}_1|\leq A_{\eps}\lambda(t_n)+|x(t_n)+\ell (1-t_n)\vec{e}_1|,    
 \end{equation*}
which shows by Lemma \ref{L:cpct_space} that
\begin{equation*}
 \frac{1}{1-t_n}\lim_{n\to \infty}\left|\int_{|x-x(t_n)|\leq A_{\eps}\lambda(t_n)} (x+\ell (1-t_n)\vec{e}_1)\cdot \nabla u(t_n)\partial_t u(t_n)\,dx\right|=0.
\end{equation*}  
Combining these estimates and using that $\eps>0$ is arbitrary, we get \eqref{limZ2}. To conclude the proof of \eqref{limZ}, and thus of the lemma, it remains to show 
\begin{equation}
\label{limZ3}
\lim_{n\to\infty}\frac{1}{1-t_n} \left|\int (x_1+\ell (1-t_n))\partial_{x_1}u(t_n)\partial_t u(t_n)\,dx\right|=0. 
\end{equation} 
The proof of \eqref{limZ3} is the same than the one of \eqref{limZ2} and therefore we omit it.
\end{proof}

To show Proposition \ref{P:global} it remains to prove the following proposition:
\begin{prop}
\label{P:end_finite_time}
 There is no function $u$ as in Proposition \ref{P:global} such that $T_+=1$ and for some sequence $t_n\to 1$,
\begin{equation}
\label{hyp_tn}
\lim_{n\to \infty} \frac{1}{1-t_n}\int_{t_n}^1\int |\partial_t u(t)+\ell\partial_{x_1}u(t)|^2\,dt\,dx=0,
\end{equation} 
where $\ell$ is defined by \eqref{def_ell}.
\end{prop}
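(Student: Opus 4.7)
The plan is to argue by contradiction, supposing such a $u$ exists. Set $g(t) = \|\partial_t u(t) + \ell\partial_{x_1}u(t)\|_{L^2}^2$; by Markov's inequality applied to \eqref{hyp_tn}, one can extract a subsequence $s_n \in [t_n, (t_n+1)/2]$ with $1-s_n$ comparable to $1-t_n$ and $g(s_n) \to 0$. By the compactness of $K_+$, passing to a further subsequence, the rescaled data
\begin{equation*}
\bigl(\lambda(s_n)^{N/2-1} u(s_n,\lambda(s_n)y+x(s_n)),\;\lambda(s_n)^{N/2} \partial_t u(s_n,\lambda(s_n)y+x(s_n))\bigr)
\end{equation*}
converge strongly in $\hdot\times L^2$ to a limit $(V_0,V_1)$. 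Since the quantity $\|\psi+\ell\partial_{y_1}\varphi\|_{L^2}$ is both scale-invariant and continuous on $\hdot\times L^2$, the vanishing $g(s_n)\to 0$ forces $V_1 = -\ell\partial_{y_1}V_0$.

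Next, using uniqueness for \eqref{CP}, the solution emanating from $(V_0,V_1)$ is the traveling wave $V(t,y)=V_0(y_1-\ell t,\bar y)$, so substituting into the equation shows that $V_0$ satisfies the asymmetric elliptic equation \eqref{asym_ell}. The bound \eqref{bound_nabla2W} passes to the limit, so $\int|\nabla V_0|^2 < 2\int|\nabla W|^2$, and Lemma \ref{L:asym_ell} yields $V_0 = \pm \lambda_0^{1-N/2}\,W_\ell(0,(\cdot-X_0)/\lambda_0)$; up to the symmetry $u\mapsto -u$, the sign is $+$. The strong convergence combined with invariance of the energy and momentum under the rescaling then forces \eqref{cond_energy} to hold for $u$. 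Apply Claim \ref{C:trapping}: the case $d_\ell\equiv 0$ means $u$ is a translate/rescale of $W_\ell$, which is globally defined and contradicts $T_+(u)=1$; hence $d_\ell$ has constant nonzero sign on $[0,1)$, and without loss of generality $d_\ell(t) > 0$ throughout (the opposite sign being treated symmetrically).

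The convergence above, together with the continuity and scale-invariance of $d_\ell$, yields $d_\ell(s_n)\to 0^+$, so for large $n$ Lemma \ref{L:modulation} applies on an open interval $I_n\ni s_n$, furnishing modulation parameters $\lambda_m(t), x_m(t), \alpha_m(t)$ obeying \eqref{modul1}--\eqref{modul2}. The main obstacle lies here: translating the bound $|\lambda_m'(t)|\lesssim|d_\ell(t)|$ from \eqref{modul2} and the equivalence $|d_\ell(t)|\approx|\alpha_m(t)|$ from \eqref{modul1} into a contradiction with Theorem \ref{T:no_ss}. The strategy will be to exploit the ODE for $\alpha_m$ coming from linearizing \eqref{CP} at $W_\ell$---an extension of the analysis of \cite{DuMe08} to the boosted soliton---under the sign condition $\alpha_m>0$, which together with boundedness of $\alpha_m$ on $I_n$ forbids the purely unstable mode and yields a definite lower bound on $\lambda_m$ throughout the modulation interval. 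Combining this with the averaged smallness of $g$ (which ensures the complement of the union of modulation intervals inside $[t_n,1)$ has asymptotically negligible measure) and the comparability of $\lambda_m$ with the compactness scale $\lambda$ on each $I_n$, we will obtain $\lambda(s_n)\gtrsim 1-s_n$, in direct conflict with the rate $\lambda(t_n)/(1-t_n)\to 0$ provided by Theorem \ref{T:no_ss}. The delicate step is making this ODE analysis rigorous in the presence of all the translation, scaling, and Lorentz-boost freedoms around $W_\ell$.
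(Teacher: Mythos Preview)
Your first half is correct and essentially reproduces Lemma \ref{L:ene_mom}: extracting $s_n$ with $g(s_n)\to 0$, passing to a limit via compactness, identifying the limit as a rescaled $W_\ell$ through Lemma \ref{L:asym_ell}, and concluding that \eqref{cond_energy} holds for $u$. The application of Claim \ref{C:trapping} to reduce to $d_\ell$ of constant nonzero sign is also fine.

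The second half, however, has a genuine gap. Your plan is to use the sign condition on $\alpha_m$ to ``forbid the purely unstable mode'' and thereby obtain a \emph{lower} bound $\lambda(s_n)\gtrsim 1-s_n$, contradicting Theorem \ref{T:no_ss}. But the modulation Lemma \ref{L:modulation} only gives $|\lambda'|\lesssim |d_\ell|$ and $|\alpha|\approx |d_\ell|$; it does not single out or control the unstable spectral component, and a sign constraint on $d_\ell$ (or $\alpha_m$) does not by itself rule out the unstable direction---indeed it is precisely the unstable mode that typically drives $|d_\ell|$ away from zero. Even granting some spectral decomposition, there is no mechanism in your sketch linking the linearized dynamics to a lower bound on $\lambda_m$ in terms of $1-t$; the modulation theory is purely local near the soliton and knows nothing about the light-cone scale $1-t$.

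The paper's argument is quite different. After Lemma \ref{L:ene_mom}, it introduces the virial-type quantity $\Phi(t)$ of \eqref{def_Phi}, for which one computes $\Phi'(t)=d_\ell(t)$ exactly. The crucial input is the compact support $\supp u(t)\subset\{|x|\leq 1-t\}$: comparing the tail of $W_\ell$ outside the cone with the error $\eps$ gives $\lambda(t)\leq C(1-t)|d_\ell(t)|^{2/(N-2)}$, and similar arguments yield $|y_\ell(t)|,\,|\Phi(t)|\leq C(1-t)|d_\ell(t)|^{1+2/N}$. Combining the last bound with $\Phi'=d_\ell$ produces the differential inequality
\[
|\Phi(t)|\leq C(1-t)\,|\Phi'(t)|^{1+2/N},
\]
which integrates (since the exponent exceeds $1$) to $|\Phi(t)|/(1-t)\geq c>0$. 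This contradicts the fact that along the sequence $t_n'$ where $d_\ell(t_n')\to 0$ one has $|\Phi(t_n')|/(1-t_n')\to 0$. The contradiction thus comes from a quantitative ODE inequality for $\Phi$, not from spectral analysis around $W_\ell$; the light-cone support condition is what ties the modulation scale to $1-t$.
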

Let us first show:
\begin{lemma}
\label{L:ene_mom}
Let $u$ be as in Proposition \ref{P:end_finite_time}. Then $\ell\in (-1,+1)$,
\begin{gather*}
E(u_0,u_1)=E(W_{\ell}(0),\partial_tW_{\ell}(0)=\frac{1}{\sqrt{1-\ell^2}}E(W,0),\\
\int \nabla u_0\,u_1=\int \nabla W_{\ell}(0)\partial_t W_{\ell}(0)=-\frac{\ell}{\sqrt{1-\ell^2}}E(W,0)\vec{e}_1.
\end{gather*}
\end{lemma}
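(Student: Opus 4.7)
The plan is to produce, along a well-chosen subsequence, a rescaling of $u$ that converges strongly in $\hdot\times L^2$ to the initial data of a traveling-wave solution of \eqref{CP}; by Lemma \ref{L:asym_ell} this limit must be (after the symmetries of the equation) the pair $(W_{\ell}(0),\partial_tW_{\ell}(0))$, and Claim \ref{C:value_W} then yields the desired identities.

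\textbf{Phase 1 (selecting a good sequence and passing to the limit).} By Chebyshev's inequality applied to \eqref{hyp_tn}, the set of $t\in[t_n,1]$ where $\|\partial_tu(t)+\ell\partial_{x_1}u(t)\|_{L^2}^2$ exceeds a small threshold has measure $o(1-t_n)$. Combining this with the smoothness of $\lambda$ and Theorem \ref{T:no_ss} (which gives $\lambda(t_n)/(1-t_n)\to 0$), a pigeonhole argument followed by diagonalization produces a sequence $\sigma_n\in[t_n,1]$ with $\mu_n:=\lambda(\sigma_n)$ satisfying $\mu_n/(1-t_n)\to 0$ and such that, for each fixed $T>0$,
\[
\sup_{|\tau|\le T}\big\|\partial_\tau\tilde u_n(\tau)+\ell\partial_{y_1}\tilde u_n(\tau)\big\|_{L^2}\xrightarrow[n\to\infty]{}0,
\]
where $\tilde u_n(\tau,y):=\mu_n^{(N-2)/2}u(\sigma_n+\mu_n\tau,\mu_n y+x(\sigma_n))$ is again a solution of \eqref{CP} (the key point being that $\|\partial_tu+\ell\partial_{x_1}u\|_{L^2}$ is scale invariant). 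By compactness of $K_+$, a subsequence of the initial data $(\tilde u_n(0),\partial_\tau\tilde u_n(0))$ converges strongly to some $(U,V)\in\hdot\times L^2$, and local well-posedness gives $\tilde u_n\to\tilde u$ in $C_{\loc}(I_{\max}(\tilde u);\hdot\times L^2)$, where $\tilde u$ solves \eqref{CP} with initial data $(U,V)$. Passing to the limit, $\partial_\tau\tilde u(\tau,y)+\ell\partial_{y_1}\tilde u(\tau,y)=0$ for all $\tau\in I_{\max}(\tilde u)$.

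\textbf{Phase 2 (identifying the limit as a Lorentz boost of $W$).} Integrating the transport relation yields $\tilde u(\tau,y)=U(y-\tau\ell\vec e_1)$, and substituting into \eqref{CP} gives the asymmetric elliptic equation
\[
(1-\ell^2)\partial_{y_1}^2 U+\sum_{j=2}^N\partial_{y_j}^2 U+|U|^{\frac{4}{N-2}}U=0.
\]
By strong convergence and \eqref{bound_nabla2W}, $\int|\nabla U|^2<2\int|\nabla W|^2$, so Lemma \ref{L:asym_ell} applies and yields $|\ell|<1$ together with $U(y)=\pm\lambda_0^{-(N-2)/2}W_{\ell}(0,(y-X_0)/\lambda_0)$ for some $\lambda_0>0$, $X_0\in\RR^N$.

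\textbf{Phase 3 (energy and momentum identities).} Conservation of energy, together with the scale, translation and sign invariance of $E$ and Claim \ref{C:value_W}, gives
\[
E(u_0,u_1)=E(U,V)=E\big(W_{\ell}(0),\partial_tW_{\ell}(0)\big)=\frac{1}{\sqrt{1-\ell^2}}E(W,0).
\]
The momentum identity then follows from \eqref{def_ell} combined with the formula for $\int\nabla W_{\ell}(0)\,\partial_tW_{\ell}(0)$ in Claim \ref{C:value_W}. The main obstacle is Phase 1: arranging that $\|\partial_\tau\tilde u_n(\tau)+\ell\partial_{y_1}\tilde u_n(\tau)\|_{L^2}$ be small uniformly on $|\tau|\le T$ (and not merely at $\tau=0$) is what transfers the infinitesimal symmetry from one instant to the whole trajectory of $\tilde u$; this requires selecting $\sigma_n$ whose scale-$\mu_n$ neighborhood lies inside the ``good set'' where $\|\partial_tu+\ell\partial_{x_1}u\|_{L^2}$ is small, which in turn uses the fact that this good set has nearly full relative measure in $[t_n,1]$.
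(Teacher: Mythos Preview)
Your proof is correct and follows the same strategy as the paper: extract (via compactness of $K_+$ and a pigeonhole/averaging argument, which the paper attributes to the proof of Corollary~5.3 in \cite{DuKeMe09P}) a strongly convergent rescaled subsequence whose limit satisfies $\partial_\tau\tilde u+\ell\partial_{y_1}\tilde u=0$, apply Lemma~\ref{L:asym_ell}, and conclude via conservation of energy and momentum together with Claim~\ref{C:value_W}. One small over-claim: the \emph{uniform}-in-$\tau$ smallness you assert in Phase~1 is stronger than what the pigeonhole naturally yields; the paper works instead with the integrated control $\int_0^T\|\partial_\tau\tilde u+\ell\partial_{y_1}\tilde u\|_{L^2}^2\,d\tau\to 0$, which is easier to extract from \eqref{hyp_tn} and already suffices to pass to the limit.
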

\begin{proof}
In view of Lemma \ref{L:cpct_subseq}, one may show, using the argument of the proof of Corollary 5.3 in \cite{DuKeMe09P}, that there exists a sequence $\lf\{t_n'\rg\}_n$ such that in $\hdot\times L^2$
$$ \lim_{n\to +\infty} \lf(\lambda^{\frac{N-2}{2}}(t_n')u(t_n',\lambda(t_n')x+x(t_n')),\lambda^{\frac{N}{2}}(t_n')\partial_t u(t_n',\lambda(t_n')x+x(t_n'))\rg)=(U_0,U_1),$$
and the solution $U$ of \eqref{CP} with initial condition $(U_0,U_1)$ satisfies for some $T\in (0,T_+(U))$:
$$ \int_0^{T} \int_{\RR^N}|\partial_t U+\ell\partial_{x_1}U|^2=0.$$
As a consequence,
\begin{equation}
\label{equation_U}
\partial_t U+\ell\partial_{x_1}U=0\text{ in }(0,T)\times \RR^N.
\end{equation} 
Differentiating with respect to $t$, we get
\begin{equation}
\label{eq_U}
\Delta U+|U|^{\frac{4}{N-2}}U -\ell^2\partial_{x_1}^2U=0\text{ in }(0,T)\times \RR^N.
\end{equation} 
%
Using that $U(0)$ satisfies the equation \eqref{asym_ell}, that by \eqref{bound_nabla2W}, $\|\nabla U(0)\|_{L^2}^2<\frac{4\sqrt{N-1}}{N}\|\nabla W\|_{L^2}^2$, 
and that $U\neq 0$ (the energy of $U$ is positive), we get by Lemma \ref{L:asym_ell} that $\ell^2<1$ and that there exists $\lambda_0>0$, $x_0\in \RR^N$ such that 
$$U_0(x)=\pm \frac{1}{\lambda_0^{\frac{N}{2}-1}}W_{\ell}\left(0,\frac{x-x_0}{\lambda_0}\right).$$
By \eqref{equation_U}, we get
$$U_1(x)=\pm \frac{1}{\lambda_0^{\frac{N}{2}}}\partial_t W_{\ell}\left(0,\frac{x-x_0}{\lambda_0}\right),$$
which shows that
$$ U(t,x)=\pm \frac{1}{\lambda_0^{\frac{N}{2}-1}}W_{\ell}\left(t,\frac{x-x_0}{\lambda_0}\right).$$
The conclusion of the lemma follows by conservation of energy and momentum.
\end{proof}
We are now ready to prove Proposition \ref{P:end_finite_time}. Let us mention that this part of the proof fills a small gap in the paper \cite{DuMe08}. Indeed Proposition 2.7 of this paper is a direct consequence of \cite{KeMe08} only in the case of self-similar blow-up. To show that $T_+(u)=+\infty$ under the general assumption of Proposition 2.7 of \cite{DuMe08}, one must use the Steps 1, 3 and 4 of the proof below (Step 2 is only needed in the case of nonzero momentum).

Recall from \S \ref{SS:modulation} the definition of $d_{\ell}(t)$ and $\delta_0$. By \S \ref{SS:modulation}, if $|d_{\ell}(t)|<\delta_0$, there exist $\lambda(t)>0$, $x(t)\in \RR^N$ and $\alpha(t)$ such that
\begin{gather*}
 \lambda(t)^{\frac{N-2}{2}}u(t,\lambda(t)x+x(t))=(1+\alpha(t))W_{\ell}(0,x)+f(t,x),\\
\|f\|_{\hdot}+|\alpha|+\|\partial_t u+\ell\partial_{x_1}u\|_{L^2}\leq C|d_{\ell}(t)|. 
\end{gather*}
It is easy to see that we can replace the $\lambda(t)$ and $x(t)$ defining $K_+$ by the above $\lambda(t)$ and $x(t)$ for all $t$ such that $|\delta_{\ell}(t)|<\delta_0$, without losing the compactness of $\overline{K}_+$ in $\hdot\times L^2$, which we will do in the remainder of this proof. For these $x(t)$ and $\lambda(t)$ we still have
\begin{equation}
\label{bound_lambda_x}
\forall t\in [0,1), \quad |x(t)|+|\lambda(t)|\leq C(1-t).
\end{equation} 
Let
\begin{equation}
\label{def_Phi}
\Phi(t)=(N-2)\int (x+(1-t)\ell e_1)\cdot \nabla u\partial_tu+\frac{(N-2)(N-1)}{2}\int u\partial_t u.
\end{equation}
By the conservation of momentum, \eqref{identity1'} and \eqref{identity3'} in Claim \ref{C:identity2}, and the fact that $\int \nabla u\partial_tu=-\ell E(u_0,u_1)\vec{e}_1$, we get
\begin{equation}
\label{dif_Phi}
\Phi'(t)=d_{\ell}(t).
\end{equation}
\EMPH{Step 1. Bound on $\lambda(t)$}
Let us show
\begin{equation}
 \label{bound_lambda}
|\lambda(t)|\leq C(1-t)\lf|d_{\ell}(t)\rg|^{\frac{2}{N-2}}.
\end{equation} 
If $|d_{\ell}(t)|\geq \delta_0$, the bound follows from \eqref{bound_lambda_x}. Let us assume that
$|d_{\ell}(t)|\leq \delta_0$. Then by \S \ref{SS:modulation} and the choice of $\lambda(t)$ and $x(t)$, we have
$$ u(t,x)=\frac{1}{\lambda(t)^{\frac{N-2}{2}}}W_{\ell}\left(0,\frac{x-x(t)}{\lambda(t)}\right)+\frac{1}{\lambda(t)^{\frac{N-2}{2}}}\eps\left(t,\frac{x-x(t)}{\lambda(t)}\right),$$
where $\|\eps(t)\|_{\hdot}\leq C|d_{\ell}(t)|$. Using \eqref{bound_lambda_x} and that on the support of $u$, $|x|\leq 1-t$, we obtain that $u(t,x)=0$ if $|x-x(t)|\geq C_1(1-t)$ for some large constant $C_1$. In particular
\begin{multline*}
\int_{|x-x(t)|\geq C_1(1-t)}\frac{1}{\lambda(t)^N}\lf|\nabla W_{\ell}\lf(0,\frac{x-x(t)}{\lambda(t)}\rg)\rg|^2dx\\
=\int_{|x-x(t)|\geq C_1(1-t)}\frac{1}{\lambda(t)^N}\lf|\nabla \eps\lf(t,\frac{x-x(t)}{\lambda(t)}\rg)\rg|^2dx\leq C\lf(d_{\ell}(t)\rg)^2.
\end{multline*}
As a consequence
\begin{equation}
\label{interm_lambda}
 C|d_{\ell}(t)|^2\geq \int_{|y|\geq \frac{C_1(1-t)}{\lambda(t)}}\lf|\nabla W_{\ell}\lf(0,y\rg)\rg|^2dy\geq c\lf(\frac{\lambda(t)}{1-t}\rg)^{N-2},
\end{equation} 
hence \eqref{bound_lambda}.
The last inequality in \eqref{interm_lambda} follows from the expression \eqref{def_Wl} of $W_{\ell}$. Indeed $|\nabla W_{\ell}(0,y)|\approx |y|^{-(N-1)}$ for large $y$  and thus $\int_{|y|\geq A}|\nabla W_{\ell}(0,y)|^2\,dy\approx A^{2-N}$ for large $A>0$. 

\EMPH{Step 2} Let 
$$ y_{\ell}(t)=x(t)+(1-t)\ell \vec{e}_1.$$
In this step we show
\begin{equation}
 \label{bound_y}
\lf|y_{\ell}(t)\rg|\leq C(1-t)\lf|d_{\ell}(t)\rg|^{1+\frac{2}{N}}.
\end{equation} 
We define $S(t)$ by
\begin{equation}
\label{def_S}
S(t)=\int_{\RR^N} \lf(x+(1-t)\ell \vec{e}_1\rg)e(u)dx,
\end{equation}
where $e(u)$ is the density of energy defined in \eqref{defeu}. 
Then using that $u$ is a solution of \eqref{CP} such that, by Lemma \ref{L:ene_mom}, 
$$ E(u_0,u_1)=\frac{1}{\sqrt{1-\ell^2}}E(W,0),\quad \int \nabla u_0\,u_1=-\frac{\ell}{\sqrt{1-\ell^2}}E(W,0)\vec{e}_1.
$$
we get by \eqref{identity4'} in Claim \ref{C:identity2} that $S'(t)=0$. Furthermore, as $|x|\leq 1-t$ on the support of $u$, we get that $S(t)\to 0$ as $t\to 1$, which shows
that $S(t)$ is identically $0$. As a consequence
\begin{equation}
\label{exp_yl}
 y_{\ell}(t)E(u_0,u_1)=-\int(x-x(t))e(u).
\end{equation} 
 It remains to show
\begin{equation}
\label{bound_xe}
 \left|\int(x-x(t))e(u)\rg|\leq C(1-t)|d_{\ell}(t)|^{1+\frac{2}{N}}.
\end{equation} 
If $|d_{\ell}(t)|\geq \delta_0$, where $\delta_0$ is given by Lemma \ref{L:modulation}, the bound follows from the fact that $u$ is supported in the light cone $\{|x|\leq 1-t\}$ and from the bound  on $x(t)$ in \eqref{bound_lambda_x}.

Assume $|d_{\ell}(t)|<\delta_0$. Then by Lemma \ref{L:modulation}, one has
\begin{align}
\label{u=W+eps1}
u(t,x)&=\frac{1}{\lambda(t)^{\frac{N-2}{2}}}W_{\ell}\left(0,\frac{x-x(t)}{\lambda(t)}\right)+\frac{1}{\lambda(t)^{\frac{N-2}{2}}}\eps\left(t,\frac{x-x(t)}{\lambda(t)}\right)\\
\label{u=W+eps2}
\partial_t u(t,x)&=\frac{1}{\lambda(t)^{\frac{N}{2}}}\partial_t W_{\ell}\left(0,\frac{x-x(t)}{\lambda(t)}\right)+\frac{1}{\lambda(t)^{\frac{N}{2}}}\eps_1\left(t,\frac{x-x(t)}{\lambda(t)}\right),
\end{align}
where
\begin{equation}
\label{small_eps}
\|\eps(t)\|_{\hdot}+\|\eps_1\|_{L^2}\leq C\lf|d_{\ell}(t)\rg|
 \end{equation} 
(the bound on $\eps_1$ follows from the bound $\|\partial_t u+\ell\partial_{x_1}u\|_{L^2}\lesssim d_{\ell}(t)$).
Then, developing the density of energy $e(u)$,
\begin{multline}
\label{dev_eu}
  \left|\int(x-x(t))e(u)\rg|= \left|\int_{|x-x(t)|\leq C_1(1-t)} (x-x(t))e(u)\rg|\\
\lesssim \left|\int_{|x-x(t)|\leq C_1(1-t)} (x-x(t))e\left(W_{\ell,\lambda(t),x(t)}(0,x)\right)\rg|+R(t)+(1-t)|d_{\ell}(t)|^2,
\end{multline}
where we have denoted by 
$$W_{\ell,\lambda(t),x(t)}(s,x)=\frac{1}{\lambda(t)^{\frac{N-2}{2}}}W_{\ell}\left(s,\frac{x-x(t)}{\lambda(t)}\right),$$
and
\begin{multline}
\label{def_R}
 R(t)=\int_{|x-x(t)|\leq C_1(1-t)} \frac{|x-x(t)|}{\lambda(t)^N}\left|\nabla_{t,x} W_{\ell}\lf(0,\frac{x-x(t)}{\lambda(t)}\rg)\rg|
 \times \left| \sqrt{\left|\nabla_{x} \eps\right|^2+|\eps_1|^2}\lf(t,\frac{x-x(t)}{\lambda(t)}\rg)\rg|\,dx\\
+\int_{|x-x(t)|\leq C_1(1-t)} \frac{|x-x(t)|}{\lambda(t)^N}\left|W_{\ell}\lf(0,\frac{x-x(t)}{\lambda(t)}\rg)\rg|^{\frac{N+2}{N-2}}\times \left|\eps\lf(t,\frac{x-x(t)}{\lambda(t)}\rg)\rg|\,dx.
\end{multline}
We have used the notation $|\nabla_{t,x}v|^2=|\nabla v|^2+|\partial_t v|^2$.
The first term in the second line of \eqref{dev_eu} is $0$ by the parity of $\lf|W_{\ell}(0)\rg|$ and $\lf|\partial_t W_{\ell}(0)\rg|$. 
Let us show
\begin{equation}
\label{bound_R}
R(t)\leq C|d_{\ell}(t)|^{1+\frac{2}{N}}(1-t),
 \end{equation} 
which would conclude this step. We show the bound \eqref{bound_R} on the first term $R_1$ in \eqref{def_R}, the proof of the bound on the second term is similar. First remark that by the change of variable $y=\frac{|x-x(t)|}{\lambda(t)}$,
\begin{equation*}
 R_1(t)=\lambda(t)\int_{|y|\leq C_1\frac{1-t}{\lambda(t)}} |y|\big|\nabla_{t,x} W_{\ell}\lf(0,y\rg)\big|\,
\sqrt{\lf|\nabla_{x} \eps\lf(t,y\rg)\rg|^2+|\eps_1(t,y)|^2}\,dy.
 \end{equation*}
Let $A=A(t)\geq 1$ be a parameter and divide the preceding integral between the regions $|y|\geq A$ and $|y|\leq A$. By Cauchy-Schwarz and using the explicit decay of $W_{\ell}(0,y)$ as $|y|\to \infty$, we get 
\begin{multline*}
 \lambda(t)\int_{A\leq |y|\leq C_1\frac{1-t}{\lambda(t)}} |y|\,\big|\nabla_{t,x} W_{\ell}\lf(0,y\rg)\big|
\,\big(\sqrt{\lf|\nabla_{x} \eps\lf(t,y\rg)\rg|^2+|\eps_1(t,y)|^2}\,dy\\
\leq C(1-t)|d_{\ell}(t)|\sqrt{\int_{|y|\geq A}\left|\nabla_{t,x} W_{\ell}\lf(0,y\rg)\rg|^2}
\leq C(1-t)|d_{\ell}(t)|A^{1-\frac{N}{2}}.
\end{multline*}
By Cauchy-Schwarz:
\begin{equation*}
 \lambda(t)\int_{|y|\leq \min\{C_1\frac{1-t}{\lambda(t)},A\}} |y|\,\big|\nabla_{t,x} W_{\ell}\lf(0,y\rg)\big|\,
\sqrt{\lf|\nabla_{x} \eps\lf(t,y\rg)\rg|^2+|\eps_1(t,y)|^2}\,dy
\leq \lambda(t)A|d_{\ell}(t)|.
\end{equation*}
Taking $A=C\left(\frac{1-t}{\lambda(t)}\right)^{\frac{2}{N}}$ and combining the two bounds with \eqref{bound_lambda},  we obtain \eqref{bound_R}, which concludes step 2.

\EMPH{Step 3. Bound on $\Phi(t)$}
Let us show
\begin{equation}
\label{bound_Phi}
 |\Phi(t)|\leq C(1-t)|d_{\ell}(t)|^{1+\frac 2N}.
\end{equation} 
As usual, the bound for $|d_{\ell}(t)|\geq \delta_0$ follows from the condition on the support of $u$ and from the bound $|x(t)|\leq C(1-t)$. 
Let us assume that $|d_{\ell}(t)|<\delta_0$. 
Write
\begin{multline}
\label{re_Phi}
\Phi(t)=(N-2)y_{\ell}(t)\cdot\int_{|x-x(t)|\leq C_1(1-t)} \nabla u\partial_tu\\
+(N-2)\int_{|x-x(t)|\leq C_1(1-t)} (x-x(t))\cdot \nabla u\partial_tu+\frac{(N-2)(N-1)}{2}\int_{|x-x(t)|\leq C_1(1-t)} u\partial_t u.
\end{multline}
The first term of \eqref{re_Phi} is bounded by step 2.
To handle the other terms, decompose $u$ as in \eqref{u=W+eps1}, \eqref{u=W+eps2}. Then
\begin{multline*}
 \left|\int_{|x-x(t)|\leq C_1(1-t)}(x-x(t))\nabla u\partial_t u  \right|\leq CR(t)+C(1-t)|d_{\ell}(t)|^2\\+\left|\int_{|x-x(t)|\leq C_1(1-t)}(x-x(t))\nabla W_{\ell}\lf(0,\frac{x-x(t)}{\lambda(t)}\rg)\,\partial_t W_{\ell}\lf(0,\frac{x-x(t)}{\lambda(t)}\rg)\right|,
\end{multline*}
where $R(t)$ is defined by \eqref{def_R}. Noting that the last integral is $0$ by the parity of $W_{\ell}$, and bounding $R(t)$ by \eqref{bound_R}, we get 
$$ \left|\int_{|x-x(t)|\leq C_1(1-t)}(x-x(t))\nabla u\partial_t u  \right|\leq C(1-t)|d_{\ell}(t)|^{1+\frac{2}{N}}.$$
Writing 
$$ \int_{|x-x(t)|\leq C_1(1-t)} u\partial_tu=\int_{|x-x(t)|\leq C_1(1-t)}|x-x(t)|\frac{1}{|x-x(t)|}u\partial_tu,$$
and using the same argument, we get the bound
$$ \left|\int_{|x-x(t)|\leq C_1(1-t)}u\partial_t u  \right|\leq C(1-t)|d_{\ell}(t)|^{1+\frac{2}{N}},$$
which completes step 3.

\EMPH{Step 4. End of the proof} 
By \eqref{bound_Phi}, then \eqref{dif_Phi}, 
\begin{equation}
\label{last_ineg_Phi}
 |\Phi(t)|\leq C(1-t)|d_{\ell}(t)|^{1+\frac 2N}\leq C(1-t)\lf|\Phi'(t)\rg|^{1+\frac 2N}.
\end{equation}
Thus
\begin{equation*}
 \frac{1}{(1-t)^{\frac{1}{1+2/N}}}\leq \frac{C\lf|\Phi'\rg|}{\lf|\Phi\rg|^{\frac{1}{1+2/N}}}.
\end{equation*}
Integrating and using that $\frac{1}{1+2/N}<1$, we obtain
$$(1-t)^{1-\frac{1}{1+2/N}}\leq C |\Phi(t)|^{1-\frac{1}{1+2/N}},$$
and thus
\begin{equation}
\label{self_sim_Phi}
C\frac{|\Phi(t)|}{1-t}\geq 1.
\end{equation}
By the proof of Lemma \ref{L:ene_mom}, there exists a sequence of times $t_n'\to 1$ such that $d_{\ell}(t_n')\to 0$.
Applying the first inequality of \eqref{last_ineg_Phi} to this sequence, we get
\begin{equation*}
 \lim_{n\to \infty} \frac{1}{1-t_n'}|\Phi(t_n')|=0,
\end{equation*}
which contradicts \eqref{self_sim_Phi}. The proof of Proposition \ref{P:end_finite_time} is complete.
\qed

\subsection{Convergence for a sequence of times}
\label{SS:subseq}
\begin{lemma}
\label{L:subseq}
 Let $u$ be a solution which is compact up to modulation, globally defined and satisfies the bound \eqref{bound_nabla2W}. 
Assume after a space rotation around the origin that there exists a $\ell\in\RR$ such that
\begin{equation*}
-\frac{\int\nabla u_0u_1}{E(u_0,u_1)}=\ell \vec{e}_1
 \end{equation*} 
Then $|\ell|<1$, and there exist $t_n\to +\infty$, $\lambda_0>0$, $x_0\in \RR^N$ and a sign $\pm$ such that 
\begin{multline*}
\lim_{n\to \infty} \left(\lambda(t_n)^{\frac{N-2}{2}}u\lf(t_n,\lambda(t_n)x+x(t_n)\rg),\lambda(t_n)^{\frac{N}{2}}\partial_t u\lf(t_n,\lambda(t_n)x+x(t_n)\rg)\rg)\\
=\pm\lf( \frac{1}{\lambda_0^{\frac{N-2}{2}}}W_{\ell}\lf(0,\frac{x-x_0}{\lambda_0}\rg),\frac{1}{\lambda_0^{\frac{N}{2}}}\partial_t W_{\ell}\lf(0,\frac{x-x_0}{\lambda_0}\rg)\rg)
\end{multline*}
in $\hdot\times L^2$.
\end{lemma}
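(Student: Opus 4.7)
The plan is to mirror the strategy used for the finite-time blow-up case (Lemma \ref{L:cpct_subseq}, Proposition \ref{P:end_finite_time}, Lemma \ref{L:ene_mom}), adapted to the global setting. The starting point is a virial-type quantity whose derivative is $\int(\partial_t u+\ell\partial_{x_1}u)^2$; showing that this quantity grows sublinearly in $t$ will force $\partial_t u+\ell\partial_{x_1}u\to 0$ in $L^2$ along some sequence $t_n\to+\infty$. Compactness up to modulation then produces a limiting profile $(U_0,U_1)$ to which the asymmetric elliptic rigidity Lemma \ref{L:asym_ell} applies, simultaneously yielding $|\ell|<1$ and pinning the limit down to a Lorentz-boosted $W_\ell$. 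Note that $E:=E(u_0,u_1)>0$ by Lemma \ref{L:positive_energy}, so $\ell$ is well defined by the hypothesis.

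Concretely, set
\begin{equation*}
Z(t)=(\ell^2-1)\int (x-\ell t\vec{e}_1)\cdot\nabla u\,\partial_t u\,dx+\frac{N-2}{2}(\ell^2-1)\int u\,\partial_t u\,dx-\ell^2\int(x_1-\ell t)\partial_{x_1}u\,\partial_t u\,dx.
\end{equation*}
A direct calculation via Claim \ref{C:identities}, applied after a smooth spatial truncation $\chi(x/R)$ with $R\to\infty$ (legitimate because, by compactness up to modulation, the error terms localize in the region where $\nabla\chi(x/R)$ is supported and vanish in the limit), combined with the conservation of momentum $\int \nabla u\,\partial_t u=-\ell E\vec{e}_1$, yields
\begin{equation*}
Z'(t)=\int\big(\partial_t u+\ell\partial_{x_1}u\big)^2\,dx\geq 0.
\end{equation*}

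The main obstacle is to show $|Z(t)|=o(t)$ as $t\to+\infty$. Expanding $x=x(t)+(x-x(t))$ inside each integrand and exploiting compactness of the closure of $K$ (for every $\eps>0$ there is $A_\eps$ with $\int_{|x-x(t)|\geq A_\eps\lambda(t)}r(u)\leq\eps$, where $r(u)$ is as in \eqref{defru}), one reduces everything to controlling $x(t)$ and $\lambda(t)$. For (i) the drift, the identity $\frac{d}{dt}\int x\,e(u)\,dx=-\int\nabla u\,\partial_t u=\ell E\vec{e}_1$ (applied after truncation and justified by the same cutoff argument as in Lemma \ref{L:cpct_space}) gives $x(t)=\ell t\vec{e}_1+o(t)$. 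For (ii) the scale, $\lambda(t)/t\to 0$ along a sequence, essentially by Theorem \ref{T:no_ss} reused at the global level (the compactness of $\overline{K}$ prevents $\lambda(t)/t$ from being bounded below without contradicting finiteness of the energy). Plugging both into the expansion of $Z(t)$ shows each summand is $o(t)$. The monotonicity $Z'\geq 0$ together with the mean-value theorem on dyadic intervals $[T,2T]$ then yields a sequence $t_n\to+\infty$ with
\begin{equation*}
\big\|\partial_t u(t_n)+\ell\partial_{x_1}u(t_n)\big\|_{L^2}\longrightarrow 0.
\end{equation*}

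By compactness up to modulation I can extract a further subsequence (still denoted $t_n$) along which
\begin{equation*}
\big(\lambda(t_n)^{\frac{N-2}{2}}u(t_n,\lambda(t_n)\cdot+x(t_n)),\;\lambda(t_n)^{\frac{N}{2}}\partial_t u(t_n,\lambda(t_n)\cdot+x(t_n))\big)\longrightarrow(U_0,U_1)\quad\text{in }\hdot\times L^2,
\end{equation*}
and rescaling the $L^2$ smallness gives $U_1+\ell\partial_{x_1}U_0=0$. Propagating this identity to a time interval for the nonlinear solution $U$ with data $(U_0,U_1)$---exactly as in the proof of Lemma \ref{L:ene_mom}, via the argument of \cite[Cor.\,5.3]{DuKeMe09P}---one obtains $\partial_t U+\ell\partial_{x_1}U\equiv 0$ on some $[0,T)$; differentiating in $t$ and substituting the equation gives
\begin{equation*}
(1-\ell^2)\partial_{x_1}^2 U_0+\sum_{j=2}^N\partial_{x_j}^2U_0+|U_0|^{\frac{4}{N-2}}U_0=0.
\end{equation*}
The bound \eqref{bound_nabla2W} passes to the limit, so $\int|\nabla U_0|^2<2\int|\nabla W|^2$, and $U_0\not\equiv 0$ since $E(U_0,U_1)=E(u_0,u_1)>0$. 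Lemma \ref{L:asym_ell} therefore gives $\ell^2<1$ together with $U_0=\pm\lambda_0^{-(N-2)/2}W_\ell(0,(\cdot-x_0)/\lambda_0)$ for some $\lambda_0>0$, $x_0\in\RR^N$, and the relation $U_1=-\ell\partial_{x_1}U_0=\pm\lambda_0^{-N/2}\partial_t W_\ell(0,(\cdot-x_0)/\lambda_0)$ is immediate from \eqref{def_Wl}, completing the proof.
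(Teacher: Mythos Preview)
Your overall strategy matches the paper's: establish that $\int(\partial_t u+\ell\partial_{x_1}u)^2\to 0$ in time average via a virial identity, extract a sequence by compactness, and apply Lemma~\ref{L:asym_ell}. However, there is a genuine gap in your handling of the scaling parameter.

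You assert that $\lambda(t)/t\to 0$ ``along a sequence, essentially by Theorem~\ref{T:no_ss} reused at the global level.'' This does not work: Theorem~\ref{T:no_ss} is a statement about finite-time blow-up solutions (where $\lambda(t)\to 0$ and the self-similar regime is ruled out), and its proof does not transfer. More importantly, ``along a sequence'' is not enough. The paper's Step~1 shows $\lambda(t)/t\to 0$ for \emph{all} $t\to+\infty$ by a different argument: if $\lambda(t_n)/t_n\to 1/\tau_0>0$, one passes to a limiting compact solution $w$, which is global by Proposition~\ref{P:global}; then evaluating the rescaled $u$ at time $-t_n/\lambda(t_n)\to -\tau_0$ forces $\lambda(t_n)$ to stay bounded, a contradiction. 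You need this full limit because both the drift estimate $x(t)-\ell t\vec{e}_1=o(t)$ and the virial bound require integrating truncated identities over $[0,\tau]$, and the cutoff errors at \emph{every} intermediate time $t\in[0,\tau]$ must be small---this needs $\sup_{t\in[0,\tau]}\lambda(t)=o(\tau)$, not just control at the endpoints.

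A related issue: in the global case $u$ is not compactly supported, so your untruncated $Z(t)$ is not a priori well-defined, and the cutoff cannot simply be ``sent to infinity'' after differentiating. The paper works throughout with a truncated functional $Z_R$ at scale $R=\eps T$ (chosen after the full limits on $\lambda$ and $x$ are in hand), bounds $|Z_R(0)|+|Z_R(T)|\leq CR=C\eps T$, and controls the cutoff error in $Z_R'$ uniformly on $[0,T]$ by $\eps$ via compactness. Your sketch should be reorganized along these lines.
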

Note that from Lemma \ref{L:positive_energy}, the energy of $u$ is $>0$, which justifies the definition of $\ell$.
\begin{proof}
As usual, we may assume that $x(t)$ and $\lambda(t)$ are continuous functions of $t$.

\EMPH{Step 1}
We show that 
\begin{equation}
 \label{lambda0}
\lim_{t\to +\infty}\frac{\lambda(t)}{t}=0.
\end{equation}
The proof is standard (see \cite{KeMe08}). We argue by contradiction.
By finite speed of propagation, $\lambda(t)/t$ is bounded for $t\geq 1$. If \eqref{lambda0} does not hold, then there exists a sequence $t_n\to +\infty$ and a $\tau_0 \in (0,+\infty)$ such that 
\begin{equation}
\label{lambda_pas0}
 \lim_{n\to\infty}\frac{\lambda(t_n)}{t_n}=\frac{1}{\tau_0}.
\end{equation}
Let 
$$w_n(s,y)=\lambda(t_n)^{\frac{N-2}{2}}u\lf(t_n+\lambda(t_n)s,\lambda(t_n)y+x(t_n)\rg).$$
Then after extraction there exists $(w_0,w_1)\in \hdot\times L^2$ such that
$$ \lim_{n\to \infty} (w_n(0),\partial_t w_n(0))=(w_0,w_1)\text{ in }\hdot\times L^2.$$
Let $w$ be the solution with initial data $(w_0,w_1)$. Let us show that $w$ is globally defined. For this we check that $w$ is compact up to modulation. For $s\in (T_-(w),T_+(w))$, let
\begin{align*}
u_{0n}(y)&=\lambda\Big(t_n+\lambda(t_n)s\Big)^{\frac{N-2}{2}}u\Big[t_n+\lambda(t_n)s,\lambda\Big(t_n+\lambda(t_n)s\Big)y+x\Big(t_n+\lambda(t_n)s\Big)\Big]\\ 
u_{1n}(y)&=\lambda\Big(t_n+\lambda(t_n)s\Big)^{\frac{N}{2}}\partial_t u\Big[t_n+\lambda(t_n)s,\lambda\Big(t_n+\lambda(t_n)s\Big)y+x\Big(t_n+\lambda(t_n)s\Big)\Big].
\end{align*}
Then by the definition of $K$, $(u_{0n},u_{1n})\in K$. Thus after extraction, $(u_{0n},u_{1n})$ has a limit as $n\to\infty$ which is in $\overline{K}$ (and thus, by energy conservation, not identically $0$). Next note that
\begin{align*}
u_{0n}(y)&=\left[\frac{\lambda\big(t_n+\lambda(t_n)s\big)}{\lambda(t_n)}\right]^{\frac{N-2}{2}}w_n\lf[s,\frac{\lambda\big(t_n+\lambda(t_n)s\big)}{\lambda(t_n)}y+\frac{x\big(t_n+\lambda(t_n)s\big)-x(t_n)}{\lambda(t_n)}\rg]\\ 
u_{1n}(y)&=\left[\frac{\lambda\big(t_n+\lambda(t_n)s\big)}{\lambda(t_n)}\right]^{\frac{N}{2}}\partial_s w_n\lf[s,\frac{\lambda\big(t_n+\lambda(t_n)s\big)}{\lambda(t_n)}y+\frac{x\big(t_n+\lambda(t_n)s\big)-x(t_n)}{\lambda(t_n)}\rg].
\end{align*}
Using that by continuity of the flow
$$ \lim_{n\to \infty}\lf(w_n(s),\partial_tw_n(s)\rg)=\lf(w(s),\partial_t w(s)\rg)\neq 0\text{ in }\hdot\times L^2,$$
we get that there exists $C(s)>0$ such that for all $n$,
\begin{equation*}
 \frac{1}{C(s)}\leq \frac{\lambda\big(t_n+\lambda(t_n)s\big)}{\lambda(t_n)}\leq C(s),\quad \lf|\frac{x\big(t_n+\lambda(t_n)s\big)-x(t_n)}{\lambda(t_n)}\rg|\leq C(s).
\end{equation*}
After extraction of a subsequence, this two quantities converge to $\tilde{\lambda}(s)$, $\tilde{x}(s)$. As a consequence, we get that
$$\left(\tilde{\lambda}(s)^{\frac{N-2}{2}}w\lf(s,\tilde{\lambda}(s)y+\tilde{x}(s)\rg),\tilde{\lambda}(s)^{\frac{N}{2}}\partial_sw\lf(s,\tilde{\lambda}(s)y+\tilde{x}(s)\rg)\rg)\in \overline{K}.$$
In particular, $w$ is compact up to modulation and satisfies the bound \eqref{bound_nabla2W}. By Proposition \ref{P:global}, $w$ is globally defined.

Let $s_n=-t_n/\lambda(t_n)$. Then
$$ \lf(w_n(s_n,y),\partial_t w_n(s_n,y)\rg)=\Big(\lambda(t_n)^{\frac{N}{2}-1}u\big(0,\lambda(t_n)y+x(t_n)\big),\lambda(t_n)^{\frac{N}{2}}\partial_t u\big(0,\lambda(t_n)y+x(t_n)\big)\Big),$$
and by \eqref{lambda_pas0}
$$ \lim_{n\to \infty} \big(w_n(s_n,y),\partial_t w_n(s_n,y)\big)=\big(w(-\tau_0,y),\partial_t w(-\tau_0,y)\big)\text{ in }\hdot\times L^2.$$
This shows that $\lambda(t_n)$ is bounded, a contradiction with \eqref{lambda_pas0}. Step 1 is complete.

\EMPH{Step 2}
By finite speed of propagation, there exists a constant $M>0$ such that 
\begin{equation}
\label{finite_speedx}
 \forall t\geq 0,\quad |x(t)|\leq M+|t|.
\end{equation}
In this step we show
\begin{equation}
\label{local_t}
 \lim_{t\to +\infty}\frac{\lf|x(t)-t\ell \vec{e}_1\rg|}{t}=0.
\end{equation}
Fix $\eps>0$.
Let $r(u)$ be as in \eqref{defru}. Let $\delta_{\eps}>0$ be such that
\begin{equation}
\label{bound_ru}
\forall t,\quad \int_{|x-x(t)|\geq\frac{\lambda(t)}{\delta_{\eps}}}r(u)\leq \eps.
\end{equation}
In view of step 1, \eqref{finite_speedx} and the continuity of $x(t)$ and $\lambda(t)$, there exists $t_0\gg 1$ such that for $\tau\geq t_0$,
\begin{equation}
\label{small_lambda_x}
\sup_{t\in[0,\tau]}\lambda(t)\leq \eps \delta_{\eps}\tau,\quad \sup_{t\in [0,\tau]}|x(t)|\leq \frac{3}{2}\tau. 
\end{equation} 
Let $\tau \geq t_0$ and, for $t\in [0,\tau]$,
$$ \Psi_{\tau}(t)=\int x\varphi\left(\frac{x}{\tau}\right)e(u)(t,x)\,dx,$$
where $\varphi(x)=1$ for $|x|\leq 3$, $\varphi(x)=0$ for $|x|\geq 4$. Then by \eqref{identity4} in Claim \ref{C:identities}, 
\begin{equation}
\label{dif_Y}
 \Psi'_{\tau}(t)=-\int\nabla u\partial_t u+\OOO\lf(\int_{|x|\geq 3\tau} r(u)\rg)=\ell E(u_0,u_1)\vec{e}_1+\OOO\lf(\int_{|x|\geq 3\tau} r(u)\rg),
\end{equation}
where $r(u)$ is defined by \eqref{defru}.
If $t\in[0,\tau]$, then by \eqref{small_lambda_x} (and using that $\eps\leq \frac 32$),
\begin{equation*}
 |x|\geq 3\tau\Longrightarrow \frac{|x-x(t)|}{\lambda(t)}\geq \frac{3\tau-|x(t)|}{\lambda(t)}\geq \frac{3\tau-\frac{3}{2}\tau}{\eps\,\delta_{\eps}\,\tau}\geq \frac{1}{\delta_{\eps}},
\end{equation*}
and thus by \eqref{bound_ru},
\begin{equation*}
 t\in [0,\tau]\Longrightarrow \left|\int_{|x|\geq 3\tau} r(u)(t,x)\,dx\right|\leq \eps.
\end{equation*}
Integrating \eqref{dif_Y}, we get
\begin{equation}
\label{Y_tau}
\left|\Psi_{\tau}(\tau)-\Psi_{\tau}(0)-\tau\ell E(u_0,u_1)\vec{e}_1\right|\leq C\tau\eps.
\end{equation}
Furthermore,
\begin{multline}
\label{dev_Y_tau}
 \Psi_{\tau}(\tau)-x(\tau)E(u_0,u_1)=\int\lf(x\varphi\lf(\frac{x}{\tau}\rg)-x(\tau)\rg)e(u)\\
=\int_{\left|x-x(\tau)\right|\leq \frac{\lambda(\tau)}{\delta_{\eps}}} \left(x\varphi\lf(\frac{x}{\tau}\rg)-x(\tau)\right)e(u)- x(\tau)\int_{\left|x-x(\tau)\right|\geq \frac{\lambda(\tau)}{\delta_{\eps}}}e(u)+\int_{\left|x-x(\tau)\right|\geq \frac{\lambda(\tau)}{\delta_{\eps}} }x\varphi\lf(\frac{x}{\tau}\rg)e(u).
\end{multline}
Notice that $\lf|x\varphi\lf(x/\tau\rg)\rg|\leq 4\tau$. By \eqref{bound_ru}, we bound the third integral in the second line of \eqref{dev_Y_tau} as follows
\begin{equation*}
\lf|\int_{\left|x-x(\tau)\right|\geq \frac{\lambda(\tau)}{\delta_{\eps}} }x\varphi\lf(\frac{x}{\tau}\rg)e(u)\rg|\leq C\eps\tau.
\end{equation*}
By \eqref{bound_ru} and \eqref{small_lambda_x}, the second integral can be estimated by $C\eps\tau$.  To bound the first integral in the second line of \eqref{dev_Y_tau}, write
$$ \left|x-x(\tau)\right|\leq \frac{\lambda(\tau)}{\delta_{\eps}}\Longrightarrow |x|\leq |x(\tau)|+\frac{\lambda(\tau)}{\delta_{\eps}}\leq \frac{5}{2}\tau,$$
and thus on the support of the first integral, $\varphi(x/\tau)=1$. As a consequence, by \eqref{small_lambda_x},
$$ \lf|\int_{\left|x-x(\tau)\right|\leq \frac{\lambda(\tau)}{\delta_{\eps}}} \left(x\varphi\lf(\frac{x}{\tau}\rg)-x(\tau)\right)e(u)\rg|\leq C\frac{\lambda(\tau)}{\delta_{\eps}}\leq C\,\eps\,\tau.$$
Combining the estimates, we get, in view of \eqref{Y_tau},
$$ \frac{1}{\tau}\left|x(\tau)-\tau\ell \vec{e}_1\right|E(u_0,u_1)\leq C\eps+\frac{1}{\tau}|\Psi_{\tau}(0)|,$$
and \eqref{local_t} follows, using that by dominated convergence, 
\begin{equation*}
 \lim_{\tau\to +\infty}\frac{1}{\tau}|\Psi_{\tau}(0)|=0.
\end{equation*} 
\EMPH{Step 3} In this step we show
\begin{equation}
\label{CV_in_mean}
 \lim_{T\to \infty}\frac{1}{T}\int_0^T \int (\partial_t u+\ell\partial_{x_1}u)^2\,dx\,dt=0.
\end{equation} 
Let $R>0$ be a parameter and define 
\begin{multline}
\label{def_ZR}
Z_R(t)=(\ell^2-1)\int (x-t\ell e_1)\cdot \nabla u\partial_tu\,\varphi\lf(\frac{x-t\ell \vec{e}_1}{R}\rg)\\
+\frac{N-2}{2}(\ell^2-1)\int u\partial_t u\,\varphi\lf(\frac{x-t\ell \vec{e}_1}{R}\rg)-\ell^2\int (x_1-t\ell)\cdot \partial_{x_1}u\partial_tu\,\varphi\lf(\frac{x-t\ell \vec{e}_1}{R}\rg),
\end{multline}
where $\varphi\in C_0^{\infty}$, $\varphi(x)=1$ for $|x|\leq 3$, $\varphi(x)=0$ for $|x|\geq 4$.
From \eqref{identity1}, \eqref{identity2} and \eqref{identity3} in Claim \ref{C:identities}, and using that $\int \nabla u\partial_tu=-\ell E(u_0,u_1)\vec{e}_1$, we get
\begin{equation}
\label{dif_ZR}
\left|Z'_R(t)-\int (\partial_tu+\ell\partial_{x_1}u)^2\rg|\leq C\int_{\left|x-t\ell \vec{e}_1\right|\geq 3R} r(u).
\end{equation}
Let $\eps>0$. As in the preceding step, choose $\delta_{\eps}$ such that \eqref{bound_ru} holds. 
In view of steps 1 and 2, and the continuity of $\lambda$ and $x$, there exists $t_0=t_{0}(\eps)\gg 1$ such that for $T\geq t_0$, 
\begin{equation}
\label{local_lambda_x}
\sup_{t\in [0,T]} \lambda(t)\leq \eps\delta_{\eps}T,\quad \sup_{t\in [0,T]}\lf|x(t)-t\ell \vec{e}_1\rg|\leq \eps\,T.
\end{equation} 
Take
\begin{equation*}
 T\geq t_0(\eps),\quad R=\eps \,T.
\end{equation*}
Then
\begin{equation*}
\frac{\left|x- t\ell \vec{e}_1\right|}{R}\leq \frac{|x-x(t)|}{\eps T}+\frac{|t\ell \vec{e}_1-x(t)|}{\eps T}\leq 1+\frac{\delta_{\eps}|x-x(t)|}{\lambda(t)}.
\end{equation*}
In particular $\frac{\left|x- t\ell \vec{e}_1\right|}{R}\geq 3\Longrightarrow \frac{|x-x(t)|}{\lambda(t)}\geq \frac{2}{\delta_{\eps}}$,
and thus
\begin{equation}
 \label{other_bound_r}
\int_{|x-t\ell \vec{e}_1|\geq 3R}r(u)\leq \eps.
\end{equation} 
Integrating \eqref{dif_ZR} between $t=0$ and $t=T$, we get, for $T\geq t_0$, $R=\eps T$,
\begin{equation*}
 \frac{1}{T}\int_0^T \int (\partial_t u+\ell\partial_{x_1}u)^2\,dx\,dt\leq \frac{1}{T}\left(|Z_R(T)|+|Z_{R}(0)|\right)+C\eps.
\end{equation*} 
Using that $|Z_R(t)|\leq C R$ for all $t$, we get the bound $|Z_R(0)|+|Z_R(T)|\leq C\,T\,\eps$, hence
$$ \limsup_{T\to +\infty}\frac{1}{T}\int_0^T \int (\partial_t u+\ell\partial_{x_1}u)^2\,dx\,dt\leq C\eps,$$
which gives \eqref{CV_in_mean}.

\EMPH{Step 4. End of the proof}
As in \cite[Proof of Corollary 5.3]{DuKeMe09P}, we deduce from Step 3 that there exists a sequence $\{t_n\}$ such that  $t_n\to +\infty$ and
\begin{align*}
\lim_{n\to\infty}\lambda(t_n)^{\frac{N-2}{2}}u\left(t_n,\lambda(t_n)x+x(t_n)\right)&= U_0\text{ in }\hdot\\
\lim_{n\to\infty}\lambda(t_n)^{\frac{N}{2}}\partial_t u\left(t_n,\lambda(t_n)x+x(t_n)\right)&= U_1\text{ in }L^2,
\end{align*}
where the solution $U$ with initial condition $(U_0,U_1)$ satisfies, for some small $\tau_0\in (0,T_+(U))$,
$$ \partial_t U+\ell \partial_{x_1}U=0\text{ for }t\in [0,\tau_0].$$
As in the proof of Lemma \ref{L:ene_mom}, we deduce from Lemma \ref{L:asym_ell} that $\ell^2<1$ and $(U_0,U_1)=\pm(W_{\ell}(0),\partial_t W_{\ell}(0)$ up to space rotation, space translation and scaling.
\end{proof}

\subsection{End of the proof}
\label{SS:endofproof}
Let $u$ be as in Theorem \ref{T:compact}. By a standard argument, we can assume that the parameters $\lambda(t)$ and $x(t)$ defining $K$ as in \eqref{def_K} are, for $|d_{\ell}(t)|<\delta_0$ ($\delta_0$ given by Lemma \ref{L:modulation}), the modulation parameters given by Lemma \ref{L:modulation}, and that $x(t)$ and $\lambda(t)$ are continuous functions of $t$.

By Lemma \ref{L:subseq} applied to $u$ and $t\mapsto u(-t)$, there exist sequences $t_n\to +\infty$, $t_n'\to -\infty$ such that
\begin{equation}
\label{dln0}
\lim_{n\to \infty} \lf|d_{\ell}(t_n)\rg|+\lf|d_{\ell}(t_n')\rg|=0,
\end{equation}
where $d_{\ell}$ is defined by \eqref{def_dl}. We start by rescaling the solution between $t_n'$ and $t_n$. Let
\begin{equation*}
\label{def_lambdan}
\lambda_n=\max_{t\in [t'_n,t_n]}\lambda(t).
\end{equation*}
Let $T_n=\frac{t_n-t_n'}{\lambda_n}$, and for $\tau\in [0,T_n]$, $y\in \RR^N$, define $u_n(\tau,y)$ by 
\begin{equation*}
 u(t,x)=\frac{1}{\lambda_n^{\frac{N-2}{2}}}u_n\left(\frac{t-t'_n}{\lambda_n},\frac{x-x(t_n')}{\lambda_n}\right),\quad t\in [t_n',t_n].
\end{equation*}
Then
\begin{equation}
\label{inK}
 \forall \tau \in [0,T_n],\quad \left(\big(\mu_n(\tau)\big)^{\frac{N-2}{2}}u_n\lf(\tau,\mu_n(\tau)y+y_n(\tau)\rg),\big(\mu_n(\tau)\big)^{\frac{N}{2}}\partial_{\tau}u_n\lf(\tau,\mu_n(\tau)y+y_n(\tau)\rg)\right)\in K 
\end{equation}
where by definition, for $\tau\in [0,T_n]$,
\begin{equation*}
 \mu_n(\tau)=\frac{\lambda(\lambda_n\tau+t_n')}{\lambda_n},\quad y_n(\tau)=\frac{x\lf(\lambda_n\tau+t_n'\rg)-x\lf(t_n'\rg)}{\lambda_n}.
\end{equation*}
Indeed, \eqref{inK} follows from 
$$ \lambda(t)^{\frac{N-2}{2}}u\left(t,\lambda(t)x+x(t)\rg)=\left(\frac{\lambda(t)}{\lambda_n}\rg)^{\frac{N-2}{2}}u_n\lf(\frac{t-t_n'}{\lambda_n},\frac{\lambda(t)x+x(t)-x(t_n')}{\lambda_n}\rg),$$
and the analoguous equality for the time derivative of $u$. Note that by the choice of $u_n$,
\begin{equation*}
y_n(0)=0\text{ and }\forall \tau\in [0,T_n],\quad 0<\mu_n(\tau)\leq 1.
\end{equation*}
Define
\begin{gather*}
 Y_n(\tau)=y_n(\tau)-\ell \tau\vec{e}_1,\\ d_n(\tau)=d_{\ell}(\lambda_n\tau+t_n')=\int \lf|\nabla u_n(\tau)\rg|^2+\int \lf(\partial_tu_n(\tau)\rg)^2-\int \lf|\nabla W_{\ell}(0)\rg|^2-\int \lf|\partial_t W_{\ell}(0)\rg|^2.
\end{gather*}
We claim:
\begin{lemma}[Parameter control]
 \label{L:parameter_control}
There exists a constant $C>0$ such that for all $n$, if $0\leq \sigma<\tau\leq T_n$,  then
\begin{enumerate}
 \item \label{I:close} If $|\tau-\sigma|\leq 2\mu_n(\tau)$, then
\begin{equation*}
\frac{1}{C}\leq \lf|\frac{\mu_n(\tau)}{\mu_n(\sigma)}\rg|\leq C,\quad \left|Y_n(\tau)-Y_n(\sigma)\rg|\leq C\mu_n(\tau).
\end{equation*}
\item \label{I:far}If $|\tau-\sigma|\geq \mu_n(\tau)$, then
\begin{equation*}
 \lf|\mu_n(\sigma)-\mu_n(\tau)\rg|+\lf|Y_n(\sigma)-Y_n(\tau)\rg|\leq C\int_{\sigma}^{\tau} |d_n(s)|ds.
\end{equation*}
\end{enumerate} 
\end{lemma}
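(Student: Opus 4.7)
The plan is to prove part \ref{I:close} by a direct rescaling-and-compactness argument, and part \ref{I:far} by partitioning $[\sigma,\tau]$ to combine part \ref{I:close} with the infinitesimal estimate \eqref{modul2} of Lemma~\ref{L:modulation}.

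For part \ref{I:close}, I fix $\tau$ and introduce the doubly rescaled solution
$$ w(s,y) = \mu_n(\tau)^{\frac{N-2}{2}} u_n\bigl(\tau+\mu_n(\tau)s,\,y_n(\tau)+\mu_n(\tau)y\bigr). $$
By \eqref{inK} applied at time $\tau$, the pair $(w(0),\partial_s w(0))$ lies in the compact set $K$; by \eqref{inK} at every nearby time combined with continuous dependence for \eqref{CP} and the compactness of $K$ in $\hdot\times L^2$, the rescaled parameters
$$ \widetilde\mu(s) = \frac{\mu_n(\tau+\mu_n(\tau)s)}{\mu_n(\tau)}, \qquad \widetilde Y(s) = \frac{Y_n(\tau+\mu_n(\tau)s)-Y_n(\tau)}{\mu_n(\tau)} $$
stay, for $s\in[-2,0]$, inside a fixed compact subset of $(0,\infty)\times\RR^N$ uniformly in $n$. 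Evaluating at $s=(\sigma-\tau)/\mu_n(\tau)\in[-2,0]$ then gives the two bounds of part \ref{I:close}.

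For part \ref{I:far}, I first transport \eqref{modul2} from the original time variable to the rescaled one via $\mu_n'(s)=\lambda'(\lambda_n s+t_n')$ and $Y_n'(s)=x'(\lambda_n s+t_n')-\ell\vec{e}_1$, obtaining $|\mu_n'(s)| + |Y_n'(s)| \leq C|d_n(s)|$ on the open set $G:=\{s\in[\sigma,\tau]:|d_n(s)|<\delta_0\}$, with $\delta_0$ from Lemma~\ref{L:modulation}. I would then partition $[\sigma,\tau]$ into finitely many consecutive subintervals $[s_i,s_{i+1}]$ of one of two types: (i) $[s_i,s_{i+1}]\subset G$, on which the differential inequality integrates to $|\mu_n(s_{i+1})-\mu_n(s_i)|+|Y_n(s_{i+1})-Y_n(s_i)|\leq C\int_{s_i}^{s_{i+1}}|d_n|$; or (ii) $[s_i,s_{i+1}]\subset [\sigma,\tau]\setminus G$ of length comparable to $\mu_n(s_i)$, for which part \ref{I:close} bounds the variation of $\mu_n$ and $Y_n$ by $C\mu_n(s_i)$, while the lower bound $\int_{s_i}^{s_{i+1}}|d_n|\geq\delta_0(s_{i+1}-s_i)\geq c\mu_n(s_i)$ converts this into a bound by $C'\int_{s_i}^{s_{i+1}}|d_n|$. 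Summation over the partition yields part \ref{I:far}.

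The principal technical obstacle is producing this partition without losing control of constants: in type~(ii) steps the local scale $\mu_n(s_i)$ varies with $i$, and one needs part \ref{I:close} (applied with a careful step length respecting the asymmetric hypothesis $s_{i+1}-s_i\leq 2\mu_n(s_{i+1})$) to keep the ratio $\mu_n(s_{i+1})/\mu_n(s_i)$ bounded by a fixed $C$; a Vitali-type selection then produces a finite partition whose costs sum to a uniform constant multiple of $\int_\sigma^\tau|d_n|$. The standing hypothesis $\tau-\sigma\geq\mu_n(\tau)$ of part \ref{I:far} guarantees that at least one partition step of the above types fits inside $[\sigma,\tau]$, so the argument is not vacuous.
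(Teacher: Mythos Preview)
Your argument for part \eqref{I:close} is correct and is exactly what the paper does (it simply cites the analogous step in \cite[Lemma~3.10]{DuMe08}).

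For part \eqref{I:far} your strategy has the right two ingredients---the modulation estimate \eqref{modul2} where $|d_n|<\delta_0$, and part \eqref{I:close} plus a lower bound on $\int|d_n|$ where $|d_n|$ is large---but the partition you describe need not exist. You require each subinterval to lie \emph{entirely} in $G=\{|d_n|<\delta_0\}$ or \emph{entirely} in its complement with length $\approx\mu_n(s_i)$; however the complement of $G$ can be an arbitrary closed subset of $[\sigma,\tau]$ (for instance, $|d_n|$ may oscillate across the level $\delta_0$ on arbitrarily short scales), so there is in general no type~(ii) piece of length $\gtrsim\mu_n$ contained in it. A Vitali selection will give you a finite family of intervals of the right length \emph{centered} on points of the complement, but those intervals will typically intrude into $G$, and then your lower bound $\int_{s_i}^{s_{i+1}}|d_n|\ge\delta_0(s_{i+1}-s_i)$ fails.

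The paper closes this gap with one extra compactness step: there exists $\delta_1\in(0,\delta_0)$ such that if $|d_n(\theta)|\ge\delta_0$ at \emph{some} point of an interval of length $\le 2\mu_n(\tau)$ about $\tau$, then $|d_n|\ge\delta_1$ on the \emph{whole} interval. This is proved by contradiction using the compactness of $K$ and continuity of the flow (if $d_n(\sigma_k)\to 0$ the rescaled limit is $\pm W_\ell$ up to symmetries, forcing $d_n(\theta_k)\to 0$ as well). With this in hand the paper avoids your $G$-based partition entirely: it first proves \eqref{I:far} on a single ``unit'' interval $\mu_n(\tau)\le\tau-\sigma\le 2\mu_n(\tau)$ by a dichotomy---either $|d_n|<\delta_0$ throughout (integrate \eqref{modul2}), or $|d_n|\ge\delta_0$ somewhere and hence $|d_n|\ge\delta_1$ everywhere (use part \eqref{I:close} and $\int|d_n|\ge\delta_1\mu_n(\tau)$)---and then subdivides a general interval into such unit pieces, stepping by the local scale $\mu_n$. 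Adding the $\delta_1$ lemma to your scheme would make it work; without it, the partition step is a genuine gap.
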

\begin{lemma}[Virial-type estimate]
\label{L:virial}
 For all $n$,
\begin{equation*}
 \int_0^{T_n}|d_n(s)|\,ds\leq C\left(1+\max_{\tau\in [0,T_n]}|Y_n(\tau)|\right)\lf(|d_n(0)|+|d_n(T_n)|\rg)+C|Y_n(T_n)|.
\end{equation*}
\end{lemma}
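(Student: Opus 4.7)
The plan is to adapt the virial strategy from the proof of Proposition \ref{P:end_finite_time}: introduce a localized functional $\Phi_n(\tau)$ with $\Phi_n'\approx d_n$, control its endpoint values using modulation, and integrate. By Claim \ref{C:trapping} applied to $u_n$ (which has the same energy and momentum as $u$ by scaling), $d_n$ keeps a constant sign on $[0,T_n]$, so $\int_0^{T_n}|d_n|=\lf|\int_0^{T_n}d_n\rg|$. By Lemma \ref{L:subseq}, for large $n$ we have $|d_n(0)|,|d_n(T_n)|<\delta_0$, making the modulation of Lemma \ref{L:modulation} available at both endpoints.

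Fix a standard cutoff $\chi\in C_0^\infty(\RR^N)$ with $\chi=1$ on $\{|z|\leq 1\}$ and $\supp\chi\subset\{|z|\leq 2\}$, and set $R=R_0\lf(1+\max_{[0,T_n]}|Y_n(\tau)|\rg)$ where $R_0$ is a large constant governed by the compactness of $K$. Since $\mu_n(\tau)\leq 1$ and $K$-compactness forces $\eps$-localization of $u_n(\tau)$ in a ball of radius $\OOO(R_0)$ around $y_n(\tau)$, and $|y_n(\tau)-\ell\tau\vec{e}_1|=|Y_n(\tau)|\leq R/R_0$, the solution lies safely inside the ball of radius $R$ around $\ell\tau\vec{e}_1$. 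Define
\begin{equation*}
\Phi_n(\tau)=(N-2)\!\int\chi\!\lf(\tfrac{y-\ell\tau\vec{e}_1}{R}\rg)(y-\ell\tau\vec{e}_1)\cdot\nabla u_n\,\partial_\tau u_n\,dy+\tfrac{(N-2)(N-1)}{2}\!\int\chi\!\lf(\tfrac{y-\ell\tau\vec{e}_1}{R}\rg)u_n\,\partial_\tau u_n\,dy.
\end{equation*}
Using identities \eqref{identity1} and \eqref{identity3} together with the momentum conservation $\int\nabla u_n\,\partial_\tau u_n=-\ell E\vec{e}_1$, the very algebra that gave $\Phi'(t)=d_\ell(t)$ in the proof of Proposition \ref{P:end_finite_time} yields $\Phi_n'(\tau)=d_n(\tau)+\EEE_n(\tau)$, where $\EEE_n$ collects annulus terms coming from $\partial_\tau\chi$ and from gradients of $\chi$. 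Compactness of $K$ (and Hardy's inequality for the $u_n\partial_\tau u_n$ piece) makes these tails arbitrarily small, so by choosing $R_0$ sufficiently large, $\int_0^{T_n}|\EEE_n|$ is absorbable into the left-hand side.

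To bound $\Phi_n$ at the endpoints, split $y-\ell\tau\vec{e}_1=(y-y_n(\tau))+Y_n(\tau)$. The $Y_n(\tau)$ piece, combined with $\int\chi(\cdot)\nabla u_n\,\partial_\tau u_n\approx-\ell E\vec{e}_1$, contributes a term $-(N-2)\ell E\,Y_{n,1}(\tau)$. The remaining $(y-y_n(\tau))$-weighted virial $\Psi_n(\tau)$ is estimated via Lemma \ref{L:modulation}: in the rescaled frame $z=(y-y_n(\tau))/\mu_n(\tau)$, the pair is $\OOO(|d_n(\tau)|)$-close in $\hdot\times L^2$ to $\pm(W_\ell(0,z),-\ell\partial_{z_1}W_\ell(0,z))$ (using $\partial_tW_\ell=-\ell\partial_{x_1}W_\ell$), and the virial evaluated on the pure $W_\ell$ background vanishes by the coordinatewise parity of $W_\ell(0,\cdot)$ (as in $\int z\cdot\nabla W_\ell\,\partial_tW_\ell=\int W_\ell\,\partial_tW_\ell=0$); a linear Taylor expansion then gives $|\Psi_n(\tau)|\leq C\mu_n(\tau)|d_n(\tau)|\leq C|d_n(\tau)|$, while the residual cutoff correction costs an additional factor of $1+\max|Y_n|$. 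Using $Y_n(0)=0$ and $|Y_{n,1}(T_n)|\leq|Y_n(T_n)|$, one obtains $|\Phi_n(0)|+|\Phi_n(T_n)|\leq C(1+\max|Y_n|)(|d_n(0)|+|d_n(T_n)|)+C|Y_n(T_n)|$, and integrating $\Phi_n'=d_n+\EEE_n$ finishes the proof.

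The main technical obstacle is the cutoff bookkeeping: $R$ must be large enough not to cut into $u_n$, yet small enough for the endpoint bound on $\Phi_n$ not to swamp the right-hand side; absorbing $\int_0^{T_n}|\EEE_n|$ rests on the delicate interplay among the compactness of $K$, the parity-vanishing of the $W_\ell$-virial at the endpoints, and the modulation estimates of Lemma \ref{L:modulation}.
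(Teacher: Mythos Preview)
Your overall architecture matches the paper's: define a localized virial $\Phi_n$ centered at $\ell\tau\vec{e}_1$ with scale $R\sim 1+\max|Y_n|$, compute $\Phi_n'=d_n+\EEE_n$, bound $\Phi_n$ at $\tau=0,T_n$ via modulation and parity, and integrate. The use of Claim~\ref{C:trapping} to collapse $\int|d_n|$ to $|\int d_n|$ is fine (the paper achieves the same effect implicitly, since its pointwise bound forces $\Phi_n'$ and $d_n$ to share a sign).

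The gap is in your handling of $\EEE_n$. Compactness of $K$ alone only gives $|\EEE_n(\tau)|\leq\eps$ uniformly once $R_0$ is large, hence $\int_0^{T_n}|\EEE_n|\leq\eps T_n$. That is \emph{not} absorbable into $\int_0^{T_n}|d_n|$: you have no a~priori lower bound on $|d_n|$, and indeed the whole point is to show $\int|d_n|$ is small. What is actually needed is the \emph{pointwise} estimate $|\EEE_n(\tau)|\leq\tfrac14|d_n(\tau)|$. When $|d_n(\tau)|\geq\delta_0$ your compactness argument suffices. But when $|d_n(\tau)|<\delta_0$ the tail bound $|\EEE_n|\leq\eps$ is useless, and you must modulate and exploit a cancellation you have not identified: the remainder $B=\EEE_n$ in $\Phi_n'=d_n+B$ vanishes \emph{identically} when evaluated on any translated, rescaled $W_\ell$. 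This is not the parity cancellation you invoke for the endpoint values of $\Phi_n$; it is a separate fact, obtained in the paper by observing that $\Phi$ evaluated on a traveling wave $\frac{1}{\Lambda^{(N-2)/2}}W_\ell(\tau/\Lambda,(y-X)/\Lambda)$ is exactly constant in $\tau$, so its $\tau$-derivative---which equals $d_\ell[W_\ell]+B[W_\ell]=0+B[W_\ell]$---is zero. With $B[W_\ell]=0$ in hand, expanding $u_n$ about $W_\ell$ gives $|\EEE_n(\tau)|\lesssim(\text{tail})\cdot|d_n(\tau)|$, which is the required absorbable bound. Your final paragraph gestures at a ``delicate interplay'' but locates the $W_\ell$-cancellation at the endpoints rather than in the derivative; that is where the argument, as written, would fail.
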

\begin{lemma}[Large time control of the space translation]
 \label{L:space_control}
Let $\eps>0$. Then there exists a constant $C_{\eps}>0$ such that for all $n$,
\begin{equation*}
 |Y_n(T_n)|\leq \eps\int_0^{T_n} |d_n(\tau)|\,d\tau+C_{\eps}\left(1+\max_{\tau\in [0,T_n]} |Y_n(\tau)|\rg)\Big(\lf|d_n\lf(T_n\rg)\rg|+\lf|d_n(0)\rg|\Big).
\end{equation*}
\end{lemma}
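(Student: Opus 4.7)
The plan is to track the ``center of energy in the Lorentz frame''
$$\Psi(\tau) := \int (y-\ell\tau\vec{e}_1)\, e(u_n)(\tau,y)\, dy,$$
which is (formally) conserved by momentum conservation $\int\nabla u_n\,\partial_\tau u_n=-\ell E\vec{e}_1$ and identity \eqref{identity4'} applied to $u_n$. Evaluating $\Psi$ at $\tau=0$ and $\tau=T_n$ (where $|d_n|<\delta_0$, by Lemma \ref{L:subseq}, otherwise the conclusion is trivial) via the modulation of Lemma \ref{L:modulation} and the parity of $W_\ell$, one formally obtains $\Psi(0)\approx 0$ and $\Psi(T_n)\approx Y_n(T_n)\cdot E(W_\ell(0),\partial_tW_\ell(0))$, yielding $Y_n(T_n)\approx 0$. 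The proof makes this rigorous by truncating with a cutoff and carefully estimating the resulting tail.

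Concretely, set $Y^*=\max_{\tau\in[0,T_n]}|Y_n(\tau)|$ and pick $\eta>0$ small (to be specified in terms of $\eps$). Using the compactness of $K$ together with $\mu_n(\tau)\leq 1$, choose $R_\eta$ so that $\int_{|y-y_n(\tau)|\geq R_\eta} r(u_n)(\tau,y)\,dy\leq \eta$ for all $\tau$, where $r(u)$ is the density in \eqref{defru}. Let $R=2Y^*+R_\eta$ and take $\chi\in C_0^\infty(\RR^N)$ with $\chi\equiv 1$ on $|z|\leq 1$, $\chi\equiv 0$ on $|z|\geq 2$. Define
$$\Psi_j(\tau)=\int (y-\ell\tau\vec{e}_1)_j\,\chi\!\left(\tfrac{y-\ell\tau\vec{e}_1}{R}\right) e(u_n)(\tau,y)\,dy, \quad j=1,\ldots,N.$$
Applying identity \eqref{identity4} with $\varphi(\tau,y)=(y-\ell\tau\vec{e}_1)_j\chi((y-\ell\tau\vec{e}_1)/R)$ and using momentum conservation, the ``bulk'' contributions cancel, and $|\Psi_j'(\tau)|$ is controlled by $C\int_{|y-\ell\tau\vec{e}_1|\geq R} r(u_n)(\tau,y)\,dy$. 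At each endpoint $\tau\in\{0,T_n\}$, the decomposition supplied by Lemma \ref{L:modulation} together with the parity of $W_\ell$ and the fact that $R\geq 2|Y_n(\tau)|$ ensures $\chi\equiv 1$ on the effective support of the profile, give
$$\Psi(\tau)=Y_n(\tau)\cdot E(W_\ell(0),\partial_tW_\ell(0)) + \OOO\!\left((1+|Y_n(\tau)|)\,|d_n(\tau)|\right).$$

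Using $Y_n(0)=0$ and integrating $\Psi_j'$ from $0$ to $T_n$ yields an estimate of the form
$$|Y_n(T_n)|\cdot E(W_\ell(0),\partial_tW_\ell(0)) \leq C(1+Y^*)\big(|d_n(0)|+|d_n(T_n)|\big) + C\int_0^{T_n}\!\!\int_{|y-\ell\tau\vec{e}_1|\geq R}\!\!r(u_n)(\tau,y)\,dy\,d\tau.$$
The main obstacle, and the crux of the proof, is to bound this remaining spacetime tail integral by $\eps\int_0^{T_n}|d_n(\tau)|\,d\tau$ plus boundary contributions; a naive application of compactness produces $C\eta T_n$, which is too large. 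The refinement splits $[0,T_n]$ into good times $G=\{|d_n(\tau)|<\delta_0\}$ and bad times $B=[0,T_n]\setminus G$. On $G$, Lemma \ref{L:modulation} gives $u_n(\tau)=W_\ell^{\mathrm{mod}}+\varepsilon(\tau)$ with $\|\varepsilon(\tau)\|_{\hdot\times L^2}\leq C|d_n(\tau)|$, so the tail of $r(u_n)$ at distance $\geq R$ from $\ell\tau\vec{e}_1$ is dominated by the deterministic $W_\ell$-tail outside radius $(R-|Y_n|)/\mu_n(\tau)$ (which becomes negligible as $R_\eta$ is enlarged, treating separately the regime $\mu_n(\tau)\approx 1$) plus a term of order $|d_n(\tau)|$; on $B$, the Lebesgue measure satisfies $|B|\leq\delta_0^{-1}\int|d_n|$ and the tail is $\leq \eta$ by compactness. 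Choosing $\eta$ small enough after $R_\eta$ consolidates the two regimes into the desired bound, with $C_\eps$ depending on $\eta$, $R_\eta$ and $\delta_0$.
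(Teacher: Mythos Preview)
Your overall strategy is right, and your functional $\Psi$ is exactly the one the paper uses. The endpoint analysis at $\tau=0$ and $\tau=T_n$ is essentially correct (though the paper is more careful: the cutoff produces an extra $\eps|Y_n(T_n)|$ term, since $W_\ell$ is not compactly supported). The real problem is in your estimate of $\Psi'$.

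You bound $|\Psi_j'(\tau)|\leq C\int_{|y-\ell\tau\vec e_1|\geq R}r(u_n)$ and then try to bound this tail. On the good set $G$ you write the tail of $r(u_n)$ as ``$W_\ell$-tail $+\ O(|d_n|)$''. But the $W_\ell$-tail is a fixed positive number (of size roughly $(\mu_n/R_\eta)^{N-2}$), \emph{not} something proportional to $|d_n(\tau)|$. Since $\mu_n$ equals $1$ somewhere and can stay close to $1$ on a set of measure comparable to $T_n$, integrating this contribution over $G$ gives a term of order $R_\eta^{-(N-2)}T_n$, which you cannot absorb into $\eps\int_0^{T_n}|d_n|$ (there is no a priori relation between $T_n$ and $\int|d_n|$ at this stage --- that is precisely what Lemmas \ref{L:virial} and \ref{L:space_control} are meant to establish). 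Your remark ``treating separately the regime $\mu_n\approx 1$'' does not resolve this.

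The missing idea is a \emph{cancellation}, not a smallness: the specific tail expression $A=\Psi'$ (kept as a signed combination, not bounded in absolute value by $r$) vanishes identically when evaluated on any rescaled and translated copy of $(W_\ell(0),\partial_tW_\ell(0))$. This is because $\Psi$ is exactly constant along the $W_\ell$ trajectory (the soliton moves with velocity $\ell\vec e_1$, so $y-\ell\tau\vec e_1$ is stationary in its frame). Once you know $A[W_\ell^{\mathrm{mod}}]=0$, expanding $u_n=W_\ell^{\mathrm{mod}}+\eps_n$ in $A$ leaves only cross terms $\int_{\mathrm{tail}}\nabla W_\ell\cdot\nabla\eps_n$ and quadratic terms $\int_{\mathrm{tail}}|\nabla\eps_n|^2$. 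Both of these \emph{are} bounded by $\eps|d_n(\tau)|$: for the cross term use Cauchy--Schwarz with $\|\eps_n\|\lesssim|d_n|$ and the smallness of the $W_\ell$-tail; for the quadratic term use that the family $\{\eps_n(\tau)\}$ itself has compact closure in $\hdot\times L^2$ (inherited from $K$), so its tail is uniformly small, and then interpolate with $\|\eps_n\|\lesssim|d_n|$.
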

\begin{proof}[Proof of Theorem \ref{T:compact}]
 Let us prove Theorem \ref{T:compact} assuming Lemmas \ref{L:parameter_control}, \ref{L:virial} and \ref{L:space_control}. We will use that by the choice of the sequences $\{t_n\}$ and $\{t_n'\}_n$,
\begin{equation}
\label{dn0}
 \lim_{n\to \infty}\big(|d_n(0)|+|d_n(T_n)|\big)=0.
\end{equation} 
Combining Lemma \ref{L:virial} and \ref{L:space_control} (with a small $\eps$), we get
\begin{equation}
 \label{new_virial}
\int_0^{T_n} |d_n(s)|\,ds\leq C\left(1+\max_{\tau\in [0,T_n]}|Y_n(\tau)|\right)\lf(|d_n(0)|+|d_n(T_n)|\rg).
\end{equation} 

\EMPH{Step 1. Uniform bound on the modulation parameters}
We first show that there exists a constant $C>0$ such that for all $n$,
\begin{equation*}
\max_{\tau\in [0,T_n]}|Y_n(\tau)|\leq C,\quad \min_{\tau\in [0,T_n]}\mu_n(\tau) \geq \frac{1}{C}. 
\end{equation*} 
By continuity of $Y_n$, there exist $\theta_n\in [0,T_n]$ such that $|Y_n(\theta_n)|=\max_{\tau\in [0,T_n]} |Y_n(\tau)|$. If $\theta_n\leq \mu_n(0)$, then by \eqref{I:close} in Lemma \ref{L:parameter_control}, 
$$|Y_n(\theta_n)|=|Y_n(\theta_n)-Y_n(0)|\leq C\mu_n(0)\leq C.$$
If $\theta_n\geq \mu_n(0)$ then 
combining \eqref{new_virial} with Lemma \ref{L:parameter_control} \eqref{I:far}, we get
$$ |Y_n(\theta_n)|=|Y_n(\theta_n)-Y_n(0)|\leq C\int_0^{\theta_n}|d_n(s)|ds\leq C(1+|Y_n(\theta_n)|)\lf(|d_n(0)|+|d_n(T_n)|\rg),$$
and the boundedness of $|Y_n(\theta_n)|$ follows from \eqref{dn0}.

Similarly, let $\theta_n',\theta_n''\in [0,T_n]$ be such that 
$$\mu_n(\theta_n')=\min_{\tau\in [0,T_n]} \mu_n(\tau),\quad \mu_n(\theta_n'')=\max_{\tau\in [0,T_n]} \mu_n(\tau)=1.$$
Then if $|\theta_n'-\theta_n''|\leq \mu(\theta_n'')=1$ we get immediately by Lemma \ref{L:parameter_control}, \eqref{I:close} that $\mu(\theta_n')\geq\frac{1}{C}$. On the other hand, if $|\theta_n'-\theta_n''|\geq 1$, we obtain, combining \eqref{new_virial} with Lemma \ref{L:parameter_control}, \eqref{I:far} and the uniform boundedness of $Y_n$,
$$ \lf|\mu_n(\theta_n')-1\rg|\leq C\lf(|d_n(0)|+|d_n(T_n)|\rg),$$
and the fact that $\mu_n(\theta_n')$ is bounded from below by a positive constant follows again from \eqref{dn0}.

\EMPH{Step 2. End of the proof}
From Step 1 and \eqref{new_virial}, 
\begin{equation}
 \label{new_new_virial}
\int_0^{T_n} |d_n(s)|\,ds\leq C\lf(|d_n(0)|+|d_n(T_n)|\rg).
\end{equation} 
To conclude the proof, we will show that
\begin{equation}
 \label{dn00}
\lim_{n\to \infty} \max_{\tau\in [0,T_n]} |d_n(\tau)|=0.
\end{equation} 
This would imply that
$$ d_{\ell}(0)=d_n\left(\frac{-t_n'}{\lambda_n}\rg)\underset{n\to \infty}{\longrightarrow} 0,$$
and thus that $d_{\ell}(0)=0$, and Theorem \ref{T:compact} would follow from the first point of Claim \ref{C:trapping}.

To show \eqref{dn00}, we argue by contradiction. By the continuity of the flow of \eqref{CP} in $\hdot\times L^2$, $d_n(\tau)$ is a continuous function of $\tau$. If \eqref{dn00} does not hold, there exists $\eps_0\in [0,\delta_0/2]$ and, for large $n$, $\tau_n\in [0,T_n]$ such that
\begin{equation}
\label{absurd_taun}
\tau\in [0,\tau_n)\Rightarrow |d_n(\tau)|<\eps_0,\text{ and }|d_n(\tau_n)|=\eps_0.
\end{equation}
Recall the modulation parameter $\alpha$ defined in Lemma \ref{L:modulation}. Let
$$ \alpha_n(\tau)=\alpha(\lambda_n\tau+t_n')$$
be the corresponding parameter for the solution $u_n$. Using the modulation estimate of Lemma \ref{L:modulation} and Step 1, we get
$$ \forall \tau\in [0,\tau_n],\quad |\alpha_n'(\tau)|\leq C\frac{|d_n(\tau)|}{\mu_n(\tau)}\leq C|d_n(\tau)|.$$
Integrating between $0$ and $\tau_n$, we get
$$ |\alpha_n(0)-\alpha_n(\tau_n)|\leq C\int_0^{\tau_n}|d_n(\tau)|\,d\tau\leq C\int_0^{T_n}|d_n(\tau)|\,d\tau\leq C\left(|d_n(0)|+|d_n(T_n)|\rg).$$
By \eqref{dn0},
$$ \lim_{n\to \infty} \lf|\alpha_n(0)-\alpha_n(\tau_n)\rg|=0,$$
contradicting \eqref{absurd_taun} since by Lemma \ref{L:modulation} $|\alpha_n(\tau)|\approx |d_n(\tau)|$, and $d_{n}(0)\to 0$ as $n\to \infty$. The proof of \eqref{dn00} is complete, concluding the proof of Theorem \ref{T:compact}.
\end{proof}
It remains to show Lemmas \ref{L:parameter_control}, \ref{L:virial}, \ref{L:space_control}.

\begin{proof}[Proof of Lemma \ref{L:space_control}]
Fix $\eps>0$, and let
\begin{equation}
\label{def_Rn}
 R_n=C_{\eps}\left(1+\max_{\tau\in [0,T_n]} |Y_n(\tau)|\right),
\end{equation}
for some $C_{\eps}>0$ to be chosen.
Let
\begin{equation}
\label{def_Psin}
\Psi_n(\tau)=\Psi\Big[R_n,u_n(\tau),\partial_\tau u_n(\tau),\tau\Big]=\int_{\RR^N} (y-\tau \ell \vec{e}_{1})e(u_n)(\tau)\,\varphi\left(\frac{y-\tau\ell \vec{e}_1}{R_n}\right),
\end{equation} 
where the smooth function $\varphi$ satisfies $\varphi(x)=1$ if $|x|\leq 1$ and $\varphi(x)=0$ if $|x|\geq 2$.

\EMPH{Step 1}
Let $v$ be any solution of \eqref{CP} such that 
$$E(v_0,v_1)=E(W_{\ell}(0),\partial_t W_{\ell}(0)\text{ and } \int \nabla v_0\,v_1=\int \nabla W_{\ell}(0)\,\partial_tW_{\ell}(0).$$
To simplify notations, denote $\partial_0=\partial_t$, and $\partial_j=\partial_{x_j}$ if $j=1\ldots N$.
Then, fixing $R>0$, we have
\begin{equation}
\label{deriv_Phi}
 \frac{d}{dt} \Psi\Big[R,u(t),\partial_t v(t),t\Big]=A\Big[R,v(t),\partial_t v(t),t\Big],
\end{equation} 
where $A\big[R,v(t),\partial_t v(t),t\big]$ is of the form
\begin{equation}
\label{form_A}
A\Big[R,v(t),\partial_t v(t),t\Big]=\sum_{0\leq i,j\leq N} \int \partial_iv\partial_jv\,\psi_{ij}\left(\frac{x-t\ell\vec{e}_1}{R}\right)\,dx+\sum_{0\leq i\leq N} \int \frac{1}{|x|}v\partial_jv\,\psi_{j}\left(\frac{x-t\ell\vec{e}_1}{R}\right)\,dx, 
\end{equation} 
and the smooth functions $\psi_{ij}$ and $\psi_j$ are supported in $|x|\geq 1$. The equality \eqref{deriv_Phi} follows from explicit computation and \eqref{identity4} in Claim \ref{C:identities}.

\EMPH{Step 2} We fix $R>0$, $\Lambda>0$ and $X\in \RR^N$. Then 
$$  \Psi\left[R,\frac{1}{\Lambda^{\frac{N-2}{2}}}W_{\ell}\left(\frac{\tau}{\Lambda},\frac{y-X}{\Lambda}\right),\frac{1}{\Lambda^{\frac{N}{2}}}\partial_t W_{\ell}\left(\frac{\tau}{\Lambda},\frac{y-X}{\Lambda}\right),\tau\right]$$
is independent of $\tau$. Indeed
$$\frac{1}{\Lambda^{\frac{N-2}{2}}}W_{\ell}\left(\frac{\tau}{\Lambda},\frac{y-X}{\Lambda}\right)=\frac{1}{\Lambda^{\frac{N-2}{2}}}W_{\ell}\left(0,\frac{y-X-\tau\ell\vec{e}_1}{\Lambda}\right),$$
and the statement follows from the definition of $\Psi$. For example, the gradient term in the definition of $\Psi$ gives:
\begin{multline*}
 \frac{1}{2\Lambda^N}\int_{\RR^N} \left(y-\tau\ell \vec{e}_1\right)\left|\nabla W_{\ell}\left(0,\frac{y-X-\tau\ell \vec{e}_1}{\Lambda}\right)\right|^2\varphi\left(\frac{y-\tau\ell \vec{e}_1}{R}\right)\\
=
 \frac{1}{2\Lambda^N}\int_{\RR^N} z\left|\nabla W_{\ell}\left(0,\frac{z-X}{\Lambda}\right)\right|^2\varphi\left(\frac{z}{R}\right),
\end{multline*}
which is independent of $\tau$.

Combining this with Step 1, we get
\begin{equation*}
\forall R>0,\; \forall \Lambda>0,\; \forall X\in \RR^N,\; \forall \tau,\quad A\left[R,\frac{1}{\Lambda^{\frac{N-2}{2}}}W_{\ell}\left(\frac{\tau}{\Lambda},\frac{y-X}{\Lambda}\right),\frac{1}{\Lambda^{\frac{N}{2}}}\partial_{\tau} W_{\ell}\left(\frac{\tau}{\Lambda},\frac{y-X}{\Lambda}\right),\tau\right]=0.
\end{equation*} 
As a consequence, replacing $X$ by$X-\tau\ell \vec{e}_1$ in the preceding equality, we get by the definition of $W_{\ell}$:
\begin{multline}
 \label{cestnul}
\forall R>0,\; \forall \Lambda>0,\; \forall X\in \RR^N,\; \forall \tau,\\ A\left[R,\frac{1}{\Lambda^{\frac{N-2}{2}}}W_{\ell}\left(0,\frac{y-X}{\Lambda}\right),\frac{1}{\Lambda^{\frac{N}{2}}}\partial_{\tau} W_{\ell}\left(0,\frac{y-X}{\Lambda}\right),\tau\right]=0.
\end{multline} 
\EMPH{Step 3. Bounds on $\Psi_n(0)$ and $\Psi_n(T_n)$}
In this step we show that if $C_{\eps}$ is chosen large, then for large $n$,
\begin{align}
 \label{bound_Psin_0}
|\Psi_{n}(0)|&\leq C R_n|d_n(0)|\\
\label{bound_Psin_Tn}
|\Psi_{n}(T_n)-Y_n(T_n)E(u_0,u_1)|&\leq C R_n|d_n(T_n)|+\eps|Y_n(T_n)|.
\end{align} 
Fix  $\tau\in \{0,T_n\}$. Then if $n$ is large, $|d_{n}(\tau)|<\delta_0$. By Lemma \ref{L:modulation}, one can write (for some sign $\pm$),
\begin{align}
\label{u=W+eps1'}
\pm u_n(\tau,y)&=\frac{1}{\mu_n(\tau)^{\frac{N-2}{2}}}W_{\ell}\left(0,\frac{y-Y_n(\tau)-\tau\ell \vec{e}_1)}{\mu_n(\tau)}\right)+\frac{1}{\mu_n(\tau)^{\frac{N-2}{2}}}\eps_n\left(\tau,\frac{y-Y_n(\tau)-\tau\ell\vec{e}_1}{\mu_n(\tau)}\right)\\
\label{u=W+eps2'}
\pm \partial_t u_n(\tau,y)&=\frac{1}{\mu_n(\tau)^{\frac{N}{2}}}\partial_t W_{\ell}\left(0,\frac{y-Y_n(\tau)-\tau\ell\vec{e}_1}{\mu_n(\tau)}\right)+\frac{1}{\mu_n(\tau)^{\frac{N}{2}}}\eps_{1,n}\left(t,\frac{y-Y_n(\tau)-\tau\ell\vec{e}_1}{\mu_n(\tau)}\right),
\end{align}
where
\begin{equation}
\label{bound_eps'}
\|\eps_n(\tau)\|_{\hdot}+\|\eps_{1,n}(\tau)\|_{L^2}\leq C|d_n(\tau)|.
\end{equation} 
Expanding the expression \eqref{def_Psin} of $\Psi_n(\tau)$, we get  by \eqref{bound_eps'}, the facts that $|y-\tau\ell\vec{e}_1|\leq R_n$ on the domain of integration and that by the definition of $R_n$, $|Y_n(\tau)|\leq R_n$, 
\begin{multline}
\label{bound_Psin}
\left|
 \Psi_n(\tau)-\Psi\left[R_n,\frac{1}{\mu_n^{\frac{N-2}{2}}}W_{\ell}\lf(0,\frac{y-Y_n(\tau)-\tau\ell \vec{e}_1}{\mu_n}\rg),\frac{1}{\mu_n^{\frac{N}{2}}}\partial_t W_{\ell}\lf(0,\frac{y-Y_n(\tau)-\tau\ell \vec{e}_1}{\mu_n}\rg),\tau\right]\right|\\
 \leq C\left(R_n|d_n(\tau)|+R_n|d_n(\tau)|^2\right).
\end{multline} 
Recall that $Y_n(0)=0$. By the definition of $\Psi_n$ and the parity of $W_{\ell}$ we obtain
$$\Psi\left[R_n,\frac{1}{\mu_n(0)^{\frac{N-2}{2}}}W_{\ell}\lf(0,\frac{y}{\mu_n(0)}\rg),\frac{1}{\mu_n(0)^{\frac{N}{2}}}\partial_t W_{\ell}\lf(0,\frac{y}{\mu_n(0)}\rg),0\right]=0.$$
Hence \eqref{bound_Psin_0} follows. To show \eqref{bound_Psin_Tn}, we must estimate
\begin{multline}
\label{big_ineg}
\Psi\left[R_n,\frac{1}{\mu_n(T_n)^{\frac{N-2}{2}}}W_{\ell}\lf(0,\frac{y-Y_n(T_n)-T_n\ell \vec{e}_1}{\mu_n(T_n)}\rg),\frac{1}{\mu_n(T_n)^{\frac{N}{2}}}\partial_t W_{\ell}\lf(0,\frac{y-Y_n(T_n)-T_n\ell \vec{e}_1}{\mu_n(T_n)}\rg),T_n\right]\\=
 \int (z+Y_n(T_n))e\left(\frac{1}{\mu_n^{\frac{N-2}{2}}}W_{\ell}\left(0,\frac{z}{\mu_n}\right)\rg)\varphi\left(\frac{z+Y_n(T_n)}{R_n}\right)dz\\
=Y_n(T_n)E(u_0,u_1)+Y_n(T_n)\int e\left(\frac{1}{\mu_n^{\frac{N-2}{2}}}W_{\ell}\left(0,\frac{z}{\mu_n}\right)\rg)\left(\varphi\left(\frac{z+Y_n(T_n)}{R_n}\right)-1\right)dz\\
+
\int z\,e\left(\frac{1}{\mu_n^{\frac{N-2}{2}}}W_{\ell}\left(0,\frac{z}{\mu_n}\right)\rg)\left(\varphi\left(\frac{z+Y_n(T_n)}{R_n}\right)-\varphi\left(\frac{z}{R_n}\right)\right)dz,\\
=Y_n(T_n)E(u_0,u_1)+(I)+(II),
\end{multline}
where in the last line we have used that by the parity of $W_{\ell}$, 
$$\int z\,e\left(\frac{1}{\mu_n^{\frac{N-2}{2}}}W_{\ell}\left(0,\frac{z}{\mu_n}\right)\rg)\varphi\left(\frac{z}{R_n}\right)dz=0.$$
By the definition of $R_n$ (taking $C_{\eps}\geq 2$), $|z+Y_n(T_n)|\geq R_n\Longrightarrow |z|\geq R_n/2\geq C_{\eps}/2$. Chosing $C_{\eps}$ large so that for a large constant $C>0$,
\begin{equation}
\label{choice_Ceps}
 \int_{|x|\geq C_{\eps}} r(W_{\ell})(0,x)\,dx\leq \frac{\eps}{C},
\end{equation} 
(where $r$ is defined in \eqref{defru}), we get (using that $\mu_n(T_n)\leq 1$) that the term $(I)$ in \eqref{big_ineg} satisfies:
$$ |(I)|\leq \eps|Y_n(T_n)|.$$
By the mean value theorem, there exists $c\in [0,1]$ such that
$$(II)=\int z\,e\left(\frac{1}{\mu_n^{\frac{N-2}{2}}}W_{\ell}\left(0,\frac{z}{\mu_n}\right)\rg)\, \frac{Y_n(T_n)}{R_n}\cdot \nabla \varphi\left(\frac{z+c Y_n(T_n)}{R_n}\right)dz,$$
and we get, again by \eqref{choice_Ceps},
$$|(II)|\leq \eps|Y_n(T_n)|,$$
which concludes the proof of \eqref{bound_Psin_Tn}.

\EMPH{Step 4. Bound on the derivative of $\Psi_n$}
We show that for an appropriate choice of $C_{\eps}$,
\begin{equation}
\label{bound_Psin'}
 \forall\tau\in [0,T_n],\quad \left|\Psi_n'(\tau)\right|\leq \eps |d_n(\tau)|.
\end{equation} 
First assume $|d_n(\tau)|\geq \delta_0$. Then by the compactness of $K$ and the fact that $\mu_n\leq 1$, we get, if $C_{\eps}$ is large,
$$ \int_{|y-\tau\ell e_1|\geq R_n} r(u_n)\leq \frac{\eps}{C}.$$
Indeed, 
$$|y-\tau\ell \vec{e}_1|\geq R_n\Longrightarrow |y-y_n(\tau)|\geq \frac{R_n}{2}\geq\frac{C_{\eps}}{2}\Longrightarrow \frac{|y-y_n(\tau)|}{\mu_n(\tau)}\geq \frac{C_{\eps}}{2}.$$
The bound \eqref{bound_Psin'} follows, in this case, by the expression of the derivative of $\Psi$ obtained in Step 1.

We next assume $|d_n(\tau)|<\delta_0$. Write $u_n$ as in \eqref{u=W+eps1'}, \eqref{u=W+eps2'}. Expanding the expression \eqref{form_A} of $A(R_n,u,\partial_t u,\tau)$, we must bound, in view of \eqref{cestnul}, the following terms
\begin{gather}
\label{first_term}
\int_{|y-\tau\ell\vec{e}_1|\geq R_n} \frac{1}{\mu_n^N}  \sqrt{|\nabla\eps_n|^2+|\eps_{1,n}|^2}\lf(\tau,\frac{y-Y_n(\tau)-\tau\ell\vec{e}_1}{\mu_n}\right)\,\left|\nabla_{\tau,x}W_{\ell}\lf(0,\frac{y-Y_n(\tau)-\tau\ell\vec{e}_1}{\mu_n}\right)\right|\,dy\\
\label{second_term}
 \int_{|y-\tau\ell\vec{e}_1|\geq R_n} \frac{1}{\mu_n^N} \big(\left|\nabla\eps_n\right|^2+\eps_{1,n}^2\big)\lf(\tau,\frac{y-Y_n(\tau)-\tau\ell\vec{e}_1}{\mu_n}\right)\,dy.
\end{gather}
One can choose $C_{\eps}$ large so that (for a large constant $C>0$),
\begin{equation}
\label{bnd_cpct}
 \int_{|y|\geq C_{\eps}/2} |\nabla\eps_n(\tau)|^2+|\eps_{1,n}(\tau)|^2+\lf|\nabla_{\tau,y}W_{\ell}(0)\rg|^2\,dy\leq \frac{\eps^2}{C}.
\end{equation}
Indeed the set of all $\lf(\eps_n(\tau),\eps_{1,n}(\tau)\right)$ where $n\in \NN$ and $\tau\in [0,T_n]$ stays in a compact subset of $\hdot\times L^2$ as can be deduced from \eqref{inK}, \eqref{u=W+eps1'} and \eqref{u=W+eps2'}.

Using again that $|y-\tau\ell\vec{e}_1|\geq R_n\Longrightarrow \frac{|y-y_n(\tau)|}{\mu_n}\geq \frac{C_{\eps}}{2}$, we bound the terms \eqref{first_term} and \eqref{second_term} by $\eps |d_{n}(\tau)|$ by Cauchy-Schwarz inequality, the bound \eqref{bound_eps'} on $(\eps_n,\eps_{1,n})$ and \eqref{bnd_cpct}. Hence \eqref{bound_Psin'} follows.

\EMPH{Step 5. End of the proof}
By Step 4,
\begin{equation*}
 \left|\Psi_n(T_n)-\Psi_n(0)\right|\leq \eps\int_0^{T_n}|d_n(\tau)|\,d\tau.
\end{equation*}
Combining with Step 3, we get
\begin{equation*}
 |Y_n(T_n)|E(u_0,u_1)\leq CR_n\left(|d_n(0)|+|d_n(T_n)|\rg)+\eps|Y_n(T_n)|+\eps\int_0^{T_n}|d_n(\tau)|\,d\tau.
\end{equation*}
Using that $\eps$ is small and that $E(u_0,u_1)=E(W_{\ell}(0),\partial_tW_{\ell}(0))>0$ we get, by the definition of $R_n$,
\begin{equation*}
 |Y_n(T_n)|\frac{E(u_0,u_1)}{2}\leq C_{\eps}\left(1+\max_{\tau\in [0,T_n]}|Y_n(\tau)|\right)\left(|d_n(0)|+|d_n(T_n)|\rg)+\eps\int_0^{T_n}|d_n(\tau)|\,d\tau,
\end{equation*}
which concludes the proof of Lemma \ref{L:space_control}.
\end{proof}
\begin{proof}[Proof of Lemma \ref{L:virial}]
The proof is very close to the one of Lemma \ref{L:space_control}, and is also a variant of the proof of Lemma 3.8 of \cite{DuMe08}, and we only sketch it. We divide it in the same 5 steps as the proof of \ref{L:space_control}.
Let
$$ R_n=C_0\left(1+\max_{\tau\in [0,T_n]}|Y_n(\tau)|\right),$$
where the large constant $C_0>0$ is to be specified later. Define
\begin{multline*}
 \Phi_n(\tau)=\Phi\Big[R_n,u_n(\tau),\partial_{\tau}u_n(\tau),\tau\Big]\\
=(N-2)\int (y-\tau\ell\vec{e}_1)\nabla u_n\partial_{\tau}u_n\,\varphi\left(\frac{y-\tau\ell\vec{e}_1}{R_n}\right)dy+\frac{(N-2)(N-1)}{2}\int u_n\partial_{\tau}u_n\,\varphi\left(\frac{y-\tau\ell\vec{e}_1}{R_n}\right)dy.
\end{multline*}
\EMPH{Step 1}
By explicit computation (see \eqref{identity1} and \eqref{identity3} in Claim \ref{C:identities}), for any solution $v$ of \eqref{CP} such that $E(v_0,v_1)=E(W_{\ell}(0),\partial_tW_{\ell}(0))$ and $\int \nabla v_0\,v_1=\int \nabla W_{\ell}(0)\,\partial_tW_{\ell}(0)$ and for any $R$,
\begin{multline}
\label{Phi'}
 \frac{d}{dt}\Phi\Big[R,v(t),\partial_{t}v(t),t\Big]\\
=\int |\nabla v|^2+\int (\partial_tv)^2-\int |\nabla W_{\ell}(0)|^2-\int |\partial_t W_{\ell}(0)|^2+B\Big[R,v(t),\partial_{t}v(t),t\Big],
\end{multline}
where $B$ is of the same type \eqref{form_A} as the $A$ of the proof of Lemma \ref{L:space_control}.

\EMPH{Step 2}
As in step 2 of the proof of Lemma \ref{L:space_control}, we notice that for any $R>0$, $\Lambda>0$, $X\in \RR^N$, 
$$ \frac{d}{d\tau}\left( \Phi\left[R,\frac{1}{\Lambda^{\frac{N-2}{2}}}W_{\ell}\left(\frac{\tau}{\Lambda},\frac{y-X}{\Lambda}\right),\frac{1}{\Lambda^{\frac{N}{2}}}\partial_t W_{\ell}\left(\frac{\tau}{\Lambda},\frac{y-X}{\Lambda}\right),\tau\right]\right)=0$$
and deduce that
\begin{equation*}
 B\left[R,\frac{1}{\Lambda^{\frac{N-2}{2}}}W_{\ell}\left(0,\frac{y-X}{\Lambda}\right),\frac{1}{\Lambda^{\frac{N}{2}}}\partial_t W_{\ell}\left(0,\frac{y-X}{\Lambda}\right),\tau\right]=0.
\end{equation*}
\EMPH{Step 3. Bound on $\Phi_n(0)$ and $\Phi_n(T_n)$}
We show
\begin{equation}
\label{step3}
 |\Phi_n(0)|\leq CR_n|d_n(0)|,\quad |\Phi_n(T_n)|\leq CR_n|d_n(T_n)|+C|Y_n(T_n)|.
\end{equation}
Let $\tau\in \{0,T_n\}$. For large $n$, $|d_n(\tau)|<\delta_0$. By \eqref{u=W+eps1'}, \eqref{u=W+eps2'} and \eqref{bound_eps'}.
\begin{multline}
\label{interm_virial}
 \bigg|\int (y-\tau\ell \vec{e}_1)\nabla u_n\partial_{\tau}u_n\varphi\lf(\frac{y-\tau\ell \vec{e}_1}{R_n}\rg)\\
-\int \frac{y-\tau\ell \vec{e}_1}{\mu_n^N}\nabla W_{\ell}\lf(0,\frac{y-\tau\ell \vec{e}_1-Y_n(\tau)}{\mu_n}\rg) \partial_{\tau}W_{\ell}\lf(0,\frac{y-\tau\ell \vec{e}_1-Y_n(\tau)}{\mu_n}\rg)\varphi\lf(\frac{y-\tau\ell \vec{e}_1}{R_n}\rg)\,dy\bigg|\\
\leq CR_n|d_n(\tau)|.
\end{multline}
By the change of variable $z=y-\tau\ell\vec{e}_1-Y_n(\tau)$, we write the term in the second line of \eqref{interm_virial} as
\begin{multline*}
\int \frac{z}{\mu_n^N}\nabla W_{\ell}\lf(0,\frac{z}{\mu_n}\rg) \partial_{\tau}W_{\ell}\lf(0,\frac{z}{\mu_n}\rg)\varphi\lf(\frac{z+Y_n(\tau)}{R_n}\rg)\,dz\\
+
\int \frac{Y_n(\tau)}{\mu_n^N}\nabla W_{\ell}\lf(0,\frac{z}{\mu_n}\rg) \partial_{\tau}W_{\ell}\lf(0,\frac{z}{\mu_n}\rg)\varphi\lf(\frac{z+Y_n(\tau)}{R_n}\rg)\,dz=(I)+(II).
\end{multline*}
Clearly $|(II)|\leq |Y_n(\tau)|$ (in particular $(II)=0$ if $\tau=0$). Furthermore, using the parity of $W_{\ell}$ the mean value theorem, and the bound $|Y_n(\tau)|\leq R_n$, we obtain
\begin{multline*}
|(I)|=\lf|\int \frac{z}{\mu_n^N}\nabla W_{\ell}\lf(0,\frac{z}{\mu_n}\rg) \partial_{\tau}W_{\ell}\lf(0,\frac{z}{\mu_n}\rg)\lf(\varphi\lf(\frac{z+Y_n(\tau)}{R_n}\rg)-\varphi\lf(\frac{z}{R_n}\rg)\rg)\,dz\rg|\\
\leq \lf|\frac{Y_n(\tau)}{R_n}\rg|\int_{|z|\leq 4R_n}\frac{|z|}{\mu_n^N}\left|\nabla_{t,x}W_{\ell}\lf(0,\frac{z}{\mu_n}\rg)\rg|^2\,dz\leq C|Y_n(\tau)|,
\end{multline*}
which yields the estimates \eqref{step3} (recalling again that $Y_n(0)=0$).

\EMPH{Step 4. Bound on $\Phi_n'(\tau)$}
Let us show that if $C_0$ in the definition of $R_n$ is large,
\begin{equation}
 \label{step4}
\forall \tau\in [0,T_n],\quad \left|\Phi_n'(\tau)-d_n(\tau)\rg|\leq \frac{1}{4}|d_n(\tau)|.
\end{equation} 
It is sufficient to show
\begin{equation}
\label{step4bis}
\forall \tau\in [0,T_n],\quad\lf|B\Big[R_n,u_n(\tau),\partial_{\tau}u_n(\tau),\tau\Big]\right|\leq \frac{1}{4}|d_n(\tau)|.
\end{equation} 
Let $\tau\in [0,T_n]$. First assume that $|d_n(\tau)|\geq \delta_0$. Then by definition of $B$,
\begin{multline*}
 \lf|B\Big[R_n,u_n(\tau),\partial_{\tau}u_n(\tau),\tau\Big]\right|\\
\leq \int_{|y-\tau\ell\vec{e}_1|\geq R_n} \left|\nabla_{t,x}u_n\left(\tau,y\right)\right|^2\leq \int_{|y-\tau\ell \vec{e}_1-Y_n(\tau)|\geq \frac{C_0}{2}\mu_n(\tau)} \left|\nabla_{t,x}u_n\left(\tau,y\right)\right|^2,
\end{multline*}
where we used the inequalities $\mu_n(\tau)\leq 1$, $|Y_n(\tau)|\leq \frac{R_n}{2}$ and $C_0\leq R_n$.
From \eqref{inK} and the compactness of $\overline{K}$, we get that for $C_0$ large,
$$\lf|B\Big[R_n,u_n(\tau),\partial_{\tau}u_n(\tau),\tau\Big]\right|\leq \frac{\delta_0}{4}\leq \frac{|d_n(\tau)|}{4}.$$
We next treat the case $|d_n(\tau)|<\delta_0$. By \eqref{u=W+eps1'}, \eqref{u=W+eps2'}, \eqref{bound_eps'} and Step 2, we get that
 $\lf|B\big[R_n,u_n,\partial_{\tau}u_n,\tau\big]\right|$ is bounded (up to a multiplicative constant) by \eqref{first_term} and \eqref{second_term}, and the same argument as in Step 4 of the proof of Lemma \ref{L:space_control} gives \eqref{step4bis} if the constant $C_0$ in the definition of $R_n$ is large enough.

\EMPH{Step 5. End of the proof}
By Step 3 and 4,
$$ \int_0^{T_n}|d_n(\tau)|d\tau\leq CR_n\left(|d_n(0)|+|d_n(T_n)|\right)+C|Y_n(T_n)|,$$
which concludes the proof of Lemma \ref{L:virial} in view of the definition of $R_n$.
\end{proof}
\begin{proof}[Sketch of the proof of Lemma \ref{L:parameter_control}]
 The proof is very close to the proof of Lemma 3.10 in \cite{DuMe08}.

We first notice that the point \eqref{I:close} follows from \eqref{inK} and the compactness of $K$ (see Step 1 of the proof of 
\cite[Lemma 3.10]{DuMe08}).

We next show that there exists $\delta_1>0$ such that 
$$\forall n,\; \forall \tau\in [0,T_n], \;\forall \theta,\sigma\in [\tau-2\mu_n(\tau),\tau+2\mu_n(\tau)]\cap [0,T_n],\quad |d_n(\theta)|\geq \delta_0\Longrightarrow |d_n(\sigma)|\geq \delta_1.$$
If not, there exists a sequence $n_k$ of indexes (which might be stationary), and for each $k$, $\tau_k\in [0,T_{n_k}]$, $\theta_k,\sigma_k\in [\tau_k-2\mu_{n_k}(\tau_k),\tau_k+2\mu_{n_k}(\tau_k)]\cap [0,T_{n_k}]$ such that
\begin{equation}
\label{dnk}
 \lf|d_{n_k}(\theta_k)\rg|\geq \delta_0,\quad \lf|d_{n_k}(\sigma_k)\rg|\leq \frac 1k.
\end{equation}
After extraction of a subsequence, we can find $(U_0,U_1)\in \overline{K}$ such that in $\hdot\times L^2$,
\begin{multline*}
 \lim_{k\to \infty} \lf(\mu_{n_k}(\sigma_k)^{\frac{N-2}{2}} u_{n_k}\lf(\sigma_k,\mu_{n_k}(\sigma_k)y+y_{n_k}(\sigma_k)\rg),\mu_{n_k}(\sigma_k)^{\frac{N}{2}} \partial_{\tau} u_{n_k}\lf(\sigma_k,\mu_{n_k}(\sigma_k)y+y_{n_k}(\sigma_k)\rg)\rg)\\
=\lf(U_0,U_1\rg).
\end{multline*} 
By \eqref{dnk} and Claim \ref{C:trapping}, $(U_0,U_1)=(\pm W_{\ell}(0),\pm \partial_t W_{\ell}(0))$ up to scaling, space translation and rotation. Furthermore
$$ \theta_k=\sigma_k+\frac{\theta_k-\sigma_k}{\mu_{n_k}(\sigma_k)}\mu_{n_k}(\sigma_k).$$
As $\frac{\theta_k-\sigma_k}{\mu_{n_k}(\sigma_k)}$ is bounded by \eqref{I:close} we get by continuity of the flow
$$ \lim_{k\to \infty} |d_{n_k}(\theta_k)|=0,$$
a contradiction with \eqref{dnk}.

We next prove \eqref{I:far} if $\mu_n(\tau)\leq |\tau-\sigma|\leq 2\mu_n(\tau)$. We distinguish two cases. If for all $\theta$ in $[\tau,\sigma]$, $|d_{n}(\theta)|<\delta_0$, then \eqref{I:far} follows from the modulation estimate \eqref{modul2}. On the other hand, if there exists $\theta'\in[\tau,\sigma]$ such that $|d_n(\theta')|\geq \delta_0$, then for all $\theta\in [\tau,\sigma]$, $|d_n(\theta)|\geq \delta_1$. By \eqref{I:close},
$$ |Y_n(\tau)-Y_n(\sigma)|\leq C\mu_n(\tau)\leq C\leq \frac{C}{\delta_1}\int_{\tau}^{\sigma} d_n(s)ds,$$
and
$$ |\mu_n(\tau)-\mu_n(\sigma)|=\left|1-\frac{\mu_n(\sigma)}{\mu_n(\tau)}\right|\mu_n(\tau)\leq C\leq  \frac{C}{\delta_1}\int_{\tau}^{\sigma}d_n(s)ds.$$
The proof of the general case for \eqref{I:far} then follows by subdividing the interval.
\end{proof}

\subsection{Bound of Strichartz norms below the threshold}
\label{SS:bound}
As a consequence of Theorem \ref{T:compact}, we get the following:
\begin{corol}
\label{C:below_threshold}
 Let $M$ such that $0<M<\int |\nabla W|^2$. Then there exists a constant $C_M>0$ such that for any solution $u$ of \eqref{CP} defined on an interval $I$,
$$ \sup_{t\in I}\|\nabla u(t)\|_{L^2}^2+\frac{N-2}{2}\|\partial_t u(t)\|_{L^2}^2\leq M\Longrightarrow \|u\|_{S(I)}\leq C_M.$$
Furthermore, for all $\eps>0$, there exists a constant $C_{M,\eps}>0$ such that for any \emph{radial} solution $u$ of \eqref{CP} defined on an interval $I$,
$$ \sup_{t\in I}\|\nabla u(t)\|_{L^2}^2+\eps\|\partial_t u(t)\|_{L^2}^2\leq M\Longrightarrow \|u\|_{S(I)}\leq C_{M,\eps}.$$
\end{corol}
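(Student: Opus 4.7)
My plan is to prove Corollary \ref{C:below_threshold} by a standard concentration-compactness/rigidity argument, with Theorem \ref{T:compact} serving as the rigidity ingredient. I argue by contradiction, treating the non-radial statement first.

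Suppose the first statement fails. By the small data theory (specifically \eqref{almost_linear}) there is a critical threshold
$$M_c := \sup\{M \in (0, \|\nabla W\|_{L^2}^2) : \text{the conclusion holds for this } M\},$$
which is strictly positive, and I suppose $M_c < \|\nabla W\|_{L^2}^2$. Select solutions $u_n$ on intervals $I_n$ with
$$\sup_{t\in I_n}\|\nabla u_n(t)\|_{L^2}^2+\tfrac{N-2}{2}\|\partial_t u_n(t)\|_{L^2}^2 \longrightarrow M_c \quad \text{and} \quad \|u_n\|_{S(I_n)}\to\infty,$$
and pick $t_n\in I_n$ splitting each $S(I_n)$-norm in half on each side. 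Apply Proposition \ref{P:profile} to $(u_n(t_n),\partial_t u_n(t_n))$ and form the nonlinear profiles via Notation \ref{N:nonlinearprofiles}. Using the Pythagorean expansions \eqref{pythagore1a}--\eqref{pythagore1b}, the defining property of $M_c$, and Proposition \ref{P:lin_NL}, a standard minimality argument shows that exactly one nonlinear profile $U^1$ can fail to scatter, and it must fail on both sides (otherwise $\|u_n\|_{S(I_n)}$ would remain bounded). The resulting solution $U$ is then compact up to modulation on its maximal interval and satisfies
$$\sup_{t\in I_{\max}(U)}\|\nabla U(t)\|_{L^2}^2+\tfrac{N-2}{2}\|\partial_t U(t)\|_{L^2}^2\leq M_c<\|\nabla W\|_{L^2}^2.$$

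In particular $\|\nabla U(t)\|_{L^2}^2 < 2\|\nabla W\|_{L^2}^2$, so the hypothesis \eqref{bound_nabla2W} of Theorem \ref{T:compact} holds, and $U$ must equal, up to the symmetries of \eqref{CP}, a solitary wave $W_\ell$ for some $\ell\in(-1,1)$. Since the quantity $\|\nabla u\|_{L^2}^2+\frac{N-2}{2}\|\partial_t u\|_{L^2}^2$ is invariant under rescaling, translation, rotation and sign flip, the second line of Claim \ref{C:value_W} yields
$$M_c \geq \|\nabla W_\ell(0)\|_{L^2}^2+\tfrac{N-2}{2}\|\partial_t W_\ell(0)\|_{L^2}^2 \geq \|\nabla W\|_{L^2}^2,$$
contradicting $M_c<\|\nabla W\|_{L^2}^2$. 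This establishes the first statement.

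For the radial case one runs the same argument starting from radial data; the profile decomposition from Proposition \ref{P:profile} can then be chosen with $x_{j,n}\equiv 0$, so the critical element $U$ is radial, and the bound $\|\nabla u\|_{L^2}^2+\eps\|\partial_t u\|_{L^2}^2\leq M_c$ still gives $\|\nabla U\|_{L^2}^2<2\|\nabla W\|_{L^2}^2$. Theorem \ref{T:compact} then forces $U$ to be a rescaled $W_\ell$; but only $\ell=0$ (i.e.\ $W$ itself) is spherically symmetric, and $\partial_t W\equiv 0$ collapses the bound to $\|\nabla W\|_{L^2}^2\leq M_c$, contradicting $M_c<\|\nabla W\|_{L^2}^2$ regardless of the positive parameter $\eps$. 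The main obstacle is the construction of the critical element at the threshold $M_c$: one must combine the Pythagorean expansions with the nonlinear profile machinery of Proposition \ref{P:lin_NL} to isolate a single non-scattering profile, which is the delicate concentration-compactness step. The rigidity step itself is then immediate from Theorem \ref{T:compact} combined with the explicit computation in Claim \ref{C:value_W}.
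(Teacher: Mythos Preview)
Your proposal is correct and follows essentially the same Kenig--Merle concentration-compactness/rigidity scheme as the paper's proof: define the critical threshold $M_c$, extract a critical element via profile decomposition and Proposition~\ref{P:lin_NL}, show it is compact up to modulation, and conclude by rigidity. The only notable difference is in the radial case, where the paper invokes the earlier radial rigidity theorem from \cite{DuKeMe09P} directly, whereas you route through Theorem~\ref{T:compact} and observe that only $\ell=0$ yields a radial $W_\ell$; both arguments are valid.
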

\begin{remark}
 In the lemma, $I$ does not have to be the maximal interval of existence $I_{\max}$ of $u$. The case $I=I_{\max}$ under stronger hypothesis (see Remark \ref{R:generalization}) is the object of \cite[Corollary 7.3]{KeMe08}. Corollary \ref{C:below_threshold} is a generalization of this result.
\end{remark}
\begin{remark}
\label{R:remark_corol}
 Corollary \ref{C:global} immediately follows from Corollary \ref{C:below_threshold} and the blow-up criterion \eqref{FBUC}. Note that in the radial case, \eqref{bound_nabla_corol_radial} implies by conservation of the energy that $\|\partial_t u(t)\|_{L^2}$ remains bounded as $t\to T_+$, and thus (using again \eqref{bound_nabla_corol_radial}), that there exists $\eps>0$ such that
\begin{equation*}
\limsup_{t\to T_+(u)} \|\nabla u(t)\|_{L^2}^2+\eps\|\partial_tu(t)\|_{L^2}^2<\|\nabla W\|_{L^2}^2, 
\end{equation*} 
which shows that the second part of Corollary \ref{C:below_threshold} applies.
\end{remark}

\begin{proof}[Sketch of proof]
\EMPH{Step 1. Contradiction argument}
We follow the scheme of the proof of \cite{KeMe08}.
For $M>0$, denote by $(\PPP_M)$ the property of the corollary. By the small data well-posedness theory, $(\PPP_M)$ holds for small positive $M$. Let $M_C=\sup\left\{M>0\;:\; (\PPP_M) \text{ holds}\right\}.$ Because of the solution $W$, $M_C\leq\int |\nabla W|^2$. We must show that 
$M_C=\int |\nabla W|^2$.

We argue by contradiction, assuming 
\begin{equation}
 \label{absurd_below}
M_C<\int |\nabla W|^2.
\end{equation} 
Let $\{u_n\}$ be a sequence of solutions to \eqref{CP}, $\{I_n\}_n$ a sequence of intervals such that $u_n$ is defined on $I_n$ and
\begin{equation*}
 \sup_{t\in I_n} \|\nabla u_n(t)\|^2_{L^2}+\frac{N-2}{2}\|\partial_t u_n(t)\|^2_{L^2}\leq M_C+\frac 1n,\quad \lim_{n\to +\infty} \|u_n\|_{S(I_n)}=+\infty.
\end{equation*} 
Taking a smaller $I_n$ if necessary, rescaling and translating in time we can assume that $I_n$ is a finite length interval $(a_n,b_n)$ with $[a_n,b_n]\subset I_{\max}(u_n)$, $a_n<0<b_n$ and 
\begin{gather}
\label{absurd_sequence1}
 \sup_{t\in [a_n,b_n]} \|\nabla u_n(t)\|^2_{L^2}+\frac{N-2}{2}\|\partial_t u_n(t)\|^2_{L^2}\leq M_C+\frac 1n,\\ 
\label{absurd_sequence2}
\lim_{n\to +\infty} \|u_n\|_{S((a_n,0))}=\lim_{n\to +\infty} \|u_n\|_{S((0,b_n))}=+\infty.
 \end{gather} 

\EMPH{Step 2. Existence of a critical element}
In this step we show that for \emph{any} sequence $\{u_n\}_n$ satisfying \eqref{absurd_sequence1} and \eqref{absurd_sequence2}, there exists a subsequence of $\{u_n\}_n$, parameters $\lambda_n>0$ and $x_n\in \RR^N$, and $(v_0,v_1)\in \hdot\times L^2$ such that, in $\hdot\times L^2$,
$$ \lim_{n\to\infty}\left(\lambda_n^{\frac{N-2}{2}}u_n\lf(0,\lambda_n x+x_n\right),\lambda_n^{\frac{N}{2}}\partial_t u_n\lf(0,\lambda_n x+x_n\right)\right)=(v_0,v_1).$$
Consider a profile decomposition $\lf\{U_{\lin}^j\rg\}_{j\geq 1}$, $\lf\{\lambda_{j,n};x_{j,n};t_{j,n}\rg\}_{j,n}$ for the sequence $(u_n(0),\partial_tu_n(0))$. Let $\{U^j\}_{j\geq 1}$ be the corresponding nonlinear profiles. 

At least one of the profiles is nonzero: otherwise this would contradict the fact that $\|u_n\|_{S(0,b_n)}$ tends to infinity.
We must show that there is only one nonzero profile. If not, we may assume, reordering the profiles, that for a small $\eps_0$,
$$ \lf\|\nabla U^{j}_{\lin}\lf(\frac{-t_{j,n}}{\lambda_{j,n}}\rg)\rg\|_{L^2}^2+\lf\|\partial_t U^{j}_{\lin}\lf(\frac{-t_{j,n}}{\lambda_{j,n}}\rg)\rg\|_{L^2}^2\geq 10\eps_0,\quad j=1,2.$$
By the small data theory, we get that for $j=1,2$ and $t$ in the domain of definition of $U^j$,  
\begin{equation}
\label{lower_bnd}
\lf\|\nabla U^{j}\lf(t\rg)\rg\|_{L^2}^2+\frac{N-2}{2}\lf\|\partial_t U^{j}\lf(t\rg)\rg\|_{L^2}^2\geq 2\eps_0.
\end{equation} 
Let $C_0$ be a large constant to be specified later, depending only on $\eps_0$ and $C_{M_C-\eps_0}$. For $n$ large, chose $T_n\in (0,b_n)$ such that 
\begin{equation}
\label{u_S_C0}
 \|u_n\|_{S(0,T_n)}=C_0.
\end{equation} 
Using \eqref{u_S_C0}, one can show with Proposition \ref{P:lin_NL} that for all $j$ such that $T_+(U^j)<\infty$,  for all large $n$,  $T_n<T_+(U^j)\lambda_{j,n}+t_{j,n}$. Taking into account that there is a finite number of such $j$, we have that for all large $n$:
\begin{equation}
\label{T_n_defined}
T_n<\inf_{j\geq 1}\left(T_+(U^j)\lambda_{j,n}+t_{j,n}\right).
\end{equation} 
(with the convention that the righthand side is infinite if $T_+(U^j)=+\infty$).
Define
$$ \SSS_n=\sup_{j}\int_0^{T_n} \int_{\RR^N} \left|U^j\left(\frac{T_n-t_{j,n}}{\lambda_{j,n}},\frac{x-x_{j,n}}{\lambda_{j,n}}\right)\right|^{\frac{2(N+1)}{N-2}}\frac{dx\,dt}{\lambda_{j,n}^{N+1}}.$$
By \eqref{u_S_C0} and Proposition \ref{P:lin_NL}, the sequence $\left\{\SSS_n\right\}_n$ is bounded.
We will show
\begin{equation}
\label{limsup_F_n} 
\limsup_{n\to \infty} \SSS_n\leq C_{M_C-\eps_0},
\end{equation} 
where the constant $C_{M_C-{\eps_0}}$ is given by the property $(\PPP_{M_C-\eps_0})$. Indeed using \eqref{u_S_C0}, Proposition \ref{P:lin_NL} and the orthogonality of the parameters $\left\{\lambda_{j,n};x_{j,n};t_{j,n}\right\}$, we get that any sequence of times $\{\sigma_n\}_n$ such that $0<\sigma_n<T_n$ satisfies the following Pythagorean expansion:
\begin{multline*}
\lim_{n\to \infty} \bigg(\|\nabla u_n(\sigma_n)\|^2_{L^2}+\frac{N-2}{2}\|\partial_t u_n(\sigma_n)\|^2_{L^2}
-
\sum_{j=1}^J\lf\|\nabla U^{j}\lf(\frac{\sigma_n-t_{j,n}}{\lambda_{j,n}}\rg)\rg\|_{L^2}^2\\
-\frac{N-2}{2}\sum_{j=1}^J\lf\|\partial_t U^{j}\lf(\frac{\sigma_n-t_{j,n}}{\lambda_{j,n}}\rg)\rg\|_{L^2}^2-\lf\|\nabla w_n^{J}\lf(\sigma_n\rg)\rg\|_{L^2}^2-\frac{N-2}{2}\lf\|\partial_t w_n^J\lf(\sigma_n\rg)\rg\|_{L^2}^2\bigg)=0.
\end{multline*} 
Combining with \eqref{absurd_sequence1} and \eqref{lower_bnd}, we get that the bound 
$$ \forall j,\quad \sup_{t\in [0,T_n]} \lf\|\nabla U^{j}\lf(\frac{t-t_{j,n}}{\lambda_{j,n}}\rg)\rg\|_{L^2}^2+\frac{N-2}{2}\lf\|\partial_t U^{j}\lf(\frac{t-t_{j,n}}{\lambda_{j,n}}\rg)\rg\|_{L^2}^2\leq M_C-\eps_0$$
holds for large $n$. Thus \eqref{limsup_F_n} follows from $\left(\PPP_{M_C-\eps_0}\rg)$.

By the argument in the proof of Lemma 4.9 in \cite{KeMe06}, using again the orthogonality of the parameters, we can show that \eqref{absurd_sequence1} and \eqref{limsup_F_n} imply that there exists a constant $C_1$, depending only on $\eps_0$, $M_C$ and $C_{M_C-\eps_0}$ such that
$$ \limsup_{n\to \infty}\|u_n\|_{S(0,T_n)}\leq C_1.$$
Chosing the constant $C_0$ in \eqref{u_S_C0} strictly greater than $C_1$ yields a contradiction, which shows that there is only one nonzero profile, say $U_1$, in the profile decomposition of $(u_n(0),\partial_t u_n(0))$. Similarly, we can show that the dispersive part $(w_{0n}^1,w_{1n}^1)$ tends to $0$ in $\hdot\times L^2$. It remains to show that $-t_{1,n}/\lambda_{1,n}$ is bounded, which follows from the conditions $\|u_n\|_{S(0,b_n)}\to +\infty$ (which implies that $-t_{1,n}/\lambda_{1,n}$ is bounded from above) and $\|u_n\|_{S(a_n,0)}\to +\infty$ (which implies that it is bounded from below).

\EMPH{Step 3. Compactness of the critical element and end of the proof}
Let $v$ be the solution to \eqref{CP}  with initial condition $(v_0,v_1)$ and $(T_-(v),T_+(v))$ its maximal interval of existence. Then $v$ inherits the following properties from $u_n$:
\begin{gather}
\label{bound_nabla_v}
\sup_{T_-(v)<t<T_+(v)} \|\nabla v(t)\|_{L^2}^2+\frac{N-2}{2}\|\partial_t v(t)\|_{L^2}^2\leq M_C\\
\label{v_non_scat}
\|v\|_{S(T_-(v),0)}=\|v\|_{S(0,T_+(v))}=+\infty.
\end{gather}
Indeed, if $t\in (T_-(v),T_+(v))$, \eqref{absurd_sequence2} shows that for large $n$, $\lambda_n t\in (a_n,b_n)$. Using that by perturbation theory,
$$ \left(\lambda_n^{\frac{N}{2}-1}u_n(\lambda_n t,\lambda_nx+x_n),\lambda_n^{\frac{N}{2}}\partial_t u_n(\lambda_n t,\lambda_nx+x_n)\rg)\underset{n\to\infty}{\longrightarrow} (v(t),\partial_t v(t))\text{ in }\hdot\times L^2,$$
we get that \eqref{bound_nabla_v} follows from \eqref{absurd_sequence1}.

If \eqref{v_non_scat} does not hold, say $\|v\|_{S(0,T_+(v))}<\infty$, then $T_+(v)=+\infty$, and for large $n$, $T_+(u_n)=+\infty$, and $\|u_n\|_{S(0,+\infty)}\leq 2\|v\|_{S(0,+\infty)}<+\infty$, contradicting \eqref{absurd_sequence2}. 

Let us show that $v$ is compact up to modulation. By a standard lifting argument, it is sufficient to show that for any sequence $\{t_n\}_n$, $t_n\in I_{\max}(v)$, there exist sequences $\{\lambda_n\}_n$ and $\{x_n\}_n$ and a subsequence of $\left\{\left(\frac{1}{\lambda_n^{\frac{N-2}{2}}}v\left(t_n,\frac{\cdot-x_n}{\lambda_n}\right),\frac{1}{\lambda_n^{\frac{N}{2}}}\partial_t v\left(t_n,\frac{\cdot-x_n}{\lambda_n}\right)\right)\right\}_n$ that converges in $\hdot\times L^2$. By \eqref{bound_nabla_v} and \eqref{v_non_scat}, the sequences of solutions $\{u_n\}$ with initial data $(v(t_n),\partial_t v(t_n))$ satisfies \eqref{absurd_sequence1} and \eqref{absurd_sequence2} for a suitable choice of $a_n$ and $b_n$. By Step 2, we get the desired result.

By Claim \ref{C:value_W} and Theorem \ref{T:compact}, the only solution compact up to modulation satisfying \eqref{bound_nabla_v} with $M_C<\int|\nabla W|^2$ is $0$, which concludes the proof.

In the radial case, the proof above works as well, replacing all constants $\frac{N-2}{2}$ by $\eps$, and Theorem \ref{T:compact} by \cite[Theorem 2]{DuKeMe09P} which states that the only radial solutions of \eqref{CP} that are compact up to modulations are (up to scaling and sign change) $0$ and $W$.
\end{proof}

\appendix

\section{Modulation theory}
\label{A:modulation}
In this appendix we show Claim \ref{C:trapping} and Lemma \ref{L:modulation}. Consider a solution $u$ of \eqref{CP} which satisfies \eqref{cond_energy}.

If $x=(x_1,\ldots,x_N)\in \RR^N$, denote by $\ovx=(x_2,\ldots,x_N)\in \RR^{N-1}$. Let 
$$\tilde{u}(t)=u\lf(t,\sqrt{1-\ell^2}\,x_1,\ovx\rg),\quad \tilde{u}_1(t)=\lf(\partial_t u\rg)\lf(t,\sqrt{1-\ell^2}\,x_1,\ovx\rg)+\lf(\ell\partial_{x_1}u\rg)\lf(t,\sqrt{1-\ell^2}\,x_1,\ovx\rg).$$
By Claim \ref{C:value_W}, we get, in view of \eqref{cond_energy}, 
\begin{equation}
\label{eq_tilde}
E(\tilde{u}_0(t),\tilde{u}_1(t))=E(W,0),\quad d_{\ell}(t)=\sqrt{1-\ell^2}\left(\int |\nabla \tu(t)|^2+\int \lf(\tu_1(t)\rg)^2-\int |\nabla W|^2\right),
 \end{equation} 
where $d_{\ell}$ is defined by \eqref{def_dl}. Thus if $d_{\ell}(0)=0$, we get
$$ \int |\tilde{u}(0)|^{\frac{2N}{N-2}}=\int |W|^{\frac{2N}{N-2}},\quad \int |\nabla \tu(0)|^2=\int |\nabla W|^2-\int |\tilde{u}_1|^2,$$
and the fact that $W$ is a minimizer for the Sobolev inequality shows, as usual, that there exist $x_0$, $\lambda_0$ and a sign $\pm$ such that 
$$\tilde{u}(0)=\pm\frac{1}{\lambda_0^{\frac{N-2}{2}}}W\lf(\frac{x-x_0}{\lambda_0}\rg),\quad \tilde{u}_1(0)=0.$$
Coming back to the solution $u$, we get
\begin{equation*}
u_0=\pm \frac{1}{\lambda_0^{\frac{N-2}{2}}}W_{\ell}\lf(0,\frac{x-x_0}{\lambda_0}\rg),\quad u_1=\pm \frac{1}{\lambda_0^{\frac{N}{2}}}\partial_t W_{\ell}\lf(0,\frac{x-x_0}{\lambda_0}\rg).
 \end{equation*}
Thus 
\begin{equation*}
 u(t,x)=\pm \frac{1}{\lambda_0^{\frac{N-2}{2}}}W_{\ell}\lf(\frac{t}{\lambda_0},\frac{x-x_0}{\lambda_0}\rg),
\end{equation*}
which shows the first point of Claim \ref{C:trapping}. The two other points follow by continuity of $d_{\ell}(t)$ with respect to $t$ and the intermediate value theorem.

Let us show Lemma \ref{L:modulation}. Assume that for a small $\delta_0$, $|d_{\ell}(t)|<\delta_0$. 
Then by \eqref{eq_tilde}, $\int |\nabla \tilde{u}(t)|^2$ is close to $\int |\nabla W|^2$, $\int |\tu(t)|^{\frac{2N}{N-2}}$ is close to $\int |W|^{\frac{2N}{N-2}}$  and $\int |\tilde{u}_1(t)|^2$ is small. In particular, by the characterization of $W$ (\cite{Au76,Ta76}), $\tilde{u}$ is close to $W$ or $-W$ after a space translation and a scaling. To fix ideas, we assume that $\tilde{u}$ is close to $W$ after space translation and scaling. As stated in \cite[Claim 3.5]{DuMe08}, by a standard argument using the implicit function theorem (see \cite[Claim 3.5]{DuMe09a} for a proof in a very similar case), one can show that there exists $\lambda(t)$, $\tilde{x}(t)$ such that
\begin{equation*}
 \lambda(t)^{\frac{N-2}{2}}\tilde{u}(t,\lambda(t)x+\tilde{x}(t))\in\lf\{\partial_{x_1}W,\ldots \partial_{x_N}W, x\cdot \nabla W+\frac{N-2}{2}W\rg\}^{\bot},
\end{equation*}
where the orthogonality has to be understood in $\hdot(\RR^N)$. Letting 
\begin{equation*}
 \alpha(t)=\frac{1}{\int |\nabla W|^2}\left(\int \lambda(t)^{\frac{N}{2}}\nabla \tilde{u}\big(t,\lambda(t)x+\tilde{x}(t)\big)\cdot\nabla W(x)\,dx\right)-1,
\end{equation*}
we obtain 
\begin{equation*}
 \lambda(t)^{\frac{N-2}{2}}\tilde{u}(t,\lambda(t)x+\tx(t))=(1+\alpha(t))W(x)+\tilde{f}(t,x).
\end{equation*}
Furthermore:
\begin{equation}
\label{ortho_tildef}
\tilde{f}(t)\perp \vect\left\{W,\,\partial_{x_1}W,\ldots, \partial_{x_N}W, \,x\cdot \nabla W+\frac{N-2}{2}W\right\}. 
 \end{equation}
By the proof of (3.19) in \cite[Lemma 3.7]{DuMe08}, we get the estimates
\begin{equation}
\label{estimates}
|\alpha(t)|\approx \lf\|\nabla\lf(\alpha W+\tilde{f}\rg)\rg\|_{L^2}\approx \lf\|\nabla \tf(t)\rg\|_{L^2}+\lf\|\tu_1(t)\rg\|_{L^2}\approx |d_{\ell}(t)|.
\end{equation}
In \cite[(3.19)]{DuMe08}, $(\tilde{u}(t),\tilde{u}_1(t))$ is replaced by a couple $(u(t),\partial_tu(t))$, where $u$ is a solution to \eqref{CP} such that 
$$E(u_0,u_1)=E(W,0)\text{ and }\left|\int |\nabla u(t)|^2\,dx+\int (\partial_t u(t))^2\,dx-\int |\nabla W|^2\,dx\right|<\delta_0.$$ However, the fact that $u$ is a solution is not used in the proof of estimates \eqref{estimates}, where the time variable is only a parameter. Indeed \eqref{estimates} follows from the fact that $E(\tilde{u}(t),\tilde{u}_1(t))=E(W,0)$, $d_{\ell}(t)$ is small and $\tilde{f}(t)$ satisfies the orthogonality conditions \eqref{ortho_tildef}. It remains to show the estimates \eqref{modul2} on the derivatives of the parameters. The proof is very similar to the one of (3.20) in \cite[Lemma 3.7]{DuMe08}\footnote{in the cited paper, the function $\mu(t)$ is the analogue of our parameter $1/\lambda(t)$}. We sketch it for the sake of completness.

Write $\tilde{u}(t,x)=\frac{1}{\lambda(t)^{\frac{N-2}{2}}}U\lf(t,\frac{x-\tilde{x}(t)}{\lambda(t)}\rg)$, where 
\begin{equation}
\label{def_U}
U(t,x)=\lf(1+\alpha(t)\rg)W+\tilde{f}.
\end{equation} 
By \eqref{estimates},
\begin{equation}
\label{estimates2}
\lf\|\tilde{u}_1(t)\rg\|_{L^2}\leq C\lf|d_{\ell}(t)\rg|.
\end{equation}
Furthermore
\begin{multline*}
\tilde{u}_1(t)=
\partial_t \tilde{u}(t)+\frac{\ell}{\sqrt{1-\ell^2}}\partial_{x_1}\tilde{u}(t)=\\
-\frac{N-2}{2}\,\frac{\lambda'}{\lambda^{\frac{N}{2}}}U\lf(t,\frac{x-\tilde{x}(t)}{\lambda}\rg)+\frac{1}{\lambda^{\frac{N-2}{2}}}\partial_t U\lf(t,\frac{x-\tilde{x}(t)}{\lambda}\rg)
-\frac{\lambda'}{\lambda^{\frac{N+2}{2}}}(x-\tilde{x}(t))\cdot\nabla U\lf(t,\frac{x-\tilde{x}(t)}{\lambda}\rg)\\ -\frac{1}{\lambda^{\frac{N}{2}}}\tilde{x}'(t)\cdot\nabla U\lf(t,\frac{x-\tilde{x}(t)}{\lambda}\rg)+\frac{\ell}{\sqrt{1-\ell^2}\lambda^{\frac{N}{2}}}\partial_{x_1}U\lf(t,\frac{x-\tilde{x}(t)}{\lambda}\rg).
\end{multline*}
By \eqref{def_U},
\begin{multline}
\label{scaled_tu1}
 \lambda^{\frac{N}{2}}\tilde{u}_1\lf(t,\lambda x+\tilde{x}(t)\rg)=-\lambda'\left(\frac{N-2}{2}U+x\cdot\nabla U\rg)+\lambda\partial_t U-\tilde{x}'(t)\cdot\nabla U+\frac{\ell}{\sqrt{1-\ell^2}}\partial_{x_1}U\\
=-\lambda'\left(\frac{N-2}{2}W+x\cdot\nabla W\rg)+\lambda\alpha' W-\tilde{x}'(t)\cdot\nabla W+\frac{\ell}{\sqrt{1-\ell^2}}\partial_{x_1}W+\lambda\partial_t \tilde{f}+g,
\end{multline}
where by definition
$$ g=\left[-\lambda'\left(\frac{N-2}{2}+x\cdot\nabla \rg)-\tilde{x}'(t)\cdot\nabla +\frac{\ell}{\sqrt{1-\ell^2}}\partial_{x_1}\right]\lf(\alpha W+\tilde{f}\rg).$$
Notice that
\begin{multline*}
\left\|\frac{1}{1+|x|}g\right\|_{L^2}\leq C\left(\lf|\lambda'\rg|+\lf|\tilde{x}'-\frac{\ell}{\sqrt{1-\ell^2}}\vec{e}_1\rg|\rg)\lf(|\alpha|+\lf\|\tilde{f}\rg\|_{\hdot}\rg)\\
\leq C\left(\lf|\lambda'(t)\rg|+\lf|\tilde{x}'-\frac{\ell}{\sqrt{1-\ell^2}}\vec{e}_1\rg|\rg)d_{\ell}(t).
\end{multline*}
Taking the scalar product of \eqref{scaled_tu1} in $L^2$ with $\Delta\partial_{x_1}W$,\ldots,$\Delta\partial_{x_n} W$, $\Delta\lf(\lf(\frac{N-2}{2}+x\cdot\nabla\rg)W\rg)$, $\Delta W$ and using that $\partial_t \tilde{f}$ is orthogonal in $L^2(\RR^N)$ with all these functions, we obtain, in view of \eqref{estimates2},
\begin{equation*}
 \lf|\lambda'(t)\rg|+\lf|\tilde{x}'(t)-\frac{\ell}{\sqrt{1-\ell^2}}\vec{e}_1\rg|+\lambda(t)|\alpha'(t)|\leq Cd_{\ell}(t)+ C\left(\lf|\lambda'(t)\rg|+\lf|\tilde{x}'(t)-\frac{\ell}{\sqrt{1-\ell^2}}\vec{e}_1\rg|\rg)d_{\ell}(t).
\end{equation*}
Assuming that $d_{\ell}(t)$ is small enough, which may be obtained by taking a smaller $\delta_0$, we obtain
\begin{equation*} 
 \lf|\lambda'(t)\rg|+\lf|\tilde{x}'(t)-\frac{\ell}{\sqrt{1-\ell^2}}\vec{e}_1\rg|+\lambda|\alpha'(t)|\leq Cd_{\ell}(t),
\end{equation*}
which yields estimates \eqref{modul2} taking $\tilde{x}(t)=\lf(\frac{1}{\sqrt{1-\ell^2}}x_1(t),x_2(t),\ldots,x_n(t)\rg)$. The proof of Lemma \ref{L:modulation} is complete.

\bibliographystyle{alpha} 
\bibliography{blowup}

\end{document}